\newcommand{\LST}{\mathscr{L}}
\newcommand{\MKC}{\mathcal{M}^C_{k, \sigma}}
\newcommand{\Span}{{\rm span}}
\newcommand{\sign}{{\rm sign}}
\newcommand{\im}{{\rm im}}
\newcommand{\ad}{{\rm ad}}
\newcommand{\var}{{\rm var}}
\newcommand{\diag}{{\rm diag}}
\newcommand{\0}{\mathbf{0}}
\newcommand{\x}{\mathbf{x}}
\newcommand{\y}{\mathbf{y}}
\newcommand{\m}{\mathbf{m}}
\newcommand{\C}{C}
\newcommand{\ie}{{\em i.e.,} }
\newcommand{\eg}{{\em e.g.,} }
\newtheorem{thm}{Theorem}[section]
\newtheorem{lem}[thm]{Lemma}
\theoremstyle{definition}
\newtheorem{defn}[thm]{Definition}
\newtheorem{exm}[thm]{Example}
\newtheorem{rem}[thm]{Remark}
\newtheorem{notation}[thm]{Notation}
\numberwithin{equation}{section}
\def\Blem {\begin{lem}}
\def\Elem {\end{lem}}
\def\be {\begin{equation}}
\def\ee {\end{equation}}
\def\ba {\begin{eqnarray}}
\def\ea {\end{eqnarray}}
\def\bes {\begin{equation*}}
\def\ees {\end{equation*}}
\def\bas {\begin{eqnarray*}}
\def\eas {\end{eqnarray*}}
\def\bpr {\begin{proof}}
\def\epr {\end{proof}}
\begin{document}
\baselineskip=18pt
\renewcommand {\thefootnote}{ }

\pagestyle{empty}

\begin{center}
\leftline{}
\vspace{-0.500 in}
{\Large \bf %Eulerian multiple-Hopf singularities:
Toral CW complexes and bifurcation control in Eulerian flows with multiple Hopf singularities
%controller designs for parametric oscillators with applications to oscillator systems
} \\ [0.3in]

{\large Majid Gazor\(^{\dag}\)\footnote{$^\dag\,$Corresponding author. Phone: (98-31) 33913634; Fax: (98-31) 33912602;
Email: mgazor@iut.ac.ir; Email: ahmad.shoghi@math.iut.ac.ir.} and Ahmad Shoghi }

\vspace{0.105in} {\small {\em Department of Mathematical Sciences, Isfahan University of Technology
\\[-0.5ex]
Isfahan 84156-83111, Iran }}

\today

\vspace{0.05in}

\noindent
\end{center}

\vspace{-0.10in}

\baselineskip=16pt

\:\:\:\:\ \ \rule{5.88in}{0.012in}

\begin{abstract}
We are concerned with bifurcation analysis and control of nonlinear Eulerian flows with non-resonant \(n\)-tuple Hopf singularity. The analysis is involved with {\it CW complex bifurcations} of flow-invariant {\it Clifford hypertori}, where we refer to these toral manifolds by {\it toral CW complexes}. We observe from primary to tertiary flow-invariant toral CW complex bifurcations for one-parametric systems associated with two most generic cases. In a particular case, a tertiary toral CW complex bifurcates from and resides outside a secondary toral CW complex. When the parameter varies, the secondary internal toral CW complex collapses with the origin. However, the tertiary external toral CW complex continues to live even after the secondary internal toral manifold disappears. Our analysis starts with a flow-invariant primary cell-decomposition of the state space. Each open cell admits a secondary cell-decomposition via a smooth flow-invariant foliation. Each leaf of the foliations is a minimal flow-invariant realization of the state space configuration for all Eulerian flows with \(n\)-tuple Hopf singularities. Permissible leaf-vector field preserving transformations are introduced via a Lie algebra structure for nonlinear vector fields on the leaf-manifold. Complete parametric leaf-normal form classification is provided for singular leaf-flows. {\it Leaf-bifurcation} analysis of {\it leaf-normal forms} are performed for three most leaf-generic cases associated with one to three unfolding bifurcation-parameters. Leaf-bifurcation varieties are derived. Leaf-bifurcations provides a venue for cell-bifurcation control of invariant toral CW complexes. The results are implemented and verified using {\sc Maple} for practical bifurcation control of such parametric nonlinear oscillators.
\end{abstract}

%\vspace{0.10in}

\noindent {\it Keywords:} \ Bifurcation control; Toral CW complexes; Flow-invariant foliation; Lie algebras on manifolds; Leaf-systems and Leaf-normal forms; Leaf and cell-bifurcation.

\vspace{0.10in} \noindent {\it 2010 Mathematics Subject Classification}:\, 34H20; 55U10; 57N15; 34C20.

%\tableofcontents

%\noindent \rule{5.88in}{0.012in}

%\vspace{0.2in}

\section{Introduction }
Some parts of the proofs are omitted here for briefness. Full version of this paper is available upon request from authors. 

Depending on their applications, oscillations can be beneficial or damaging. Thus, the amplitude-size control, signal synchronization and the oscillation frequency management address important characteristics of parametric oscillator systems. These have many different engineering control applications such as instrumentation in vibrators and robotics \cite{SlotineRobot,GazorShoghiEulNF}, sinusoidal and waveform generators \cite{BookSinusoidalOscillators}, signal processing and information transmission \cite{BookCommCircts}, multi-agent and geometric control systems in robotics and finally, a computer harmonic music design \cite{GazorShoghiMusic}. Many robotic designs such as parallel robots and vibrators need a system design to have a restricted movements within a geometric space (a manifold) as the realization of the state space configuration. Further, a synchronised and an organised harmonic joint movements within the state space configuration are essentially required for an efficient and harmonic operation; also see \cite{GazorShoghiEulNF}. Our proposed treatment of such problems involves an Eulerian type state-feedback controller system design; see \cite{GazorShoghiRobotic}. The Eulerian structure formulates the geometric state space configuration. However, the actual {\it realization} within the state space configuration is controlled by permissible families of flow-invariant leaf-manifolds. Permissible leaves represent the desired realizations of the state space configuration and are determined by integral foliations. Hence, our approach requires the study of integral foliations and the bifurcation control of the governing dynamics on the individual leaves of the foliations; also see \cite{HamziKangCenter05,HamziCharNormalF,Kang98,KangKrener,KangIEEE,GazorShoghiMusic,GazorShoghiRobotic,GazorShoghiEulNF,Langford2HopdSIADS,
MacKayPhysicaD,MacKay,MacKayOnTorus,MacKayCuckPhysicaD,AshwinHopfSN,AshwinN2Oscillators}.

\pagestyle{myheadings} \markright{{\footnotesize {\it M. Gazor and A. Shoghi \hspace{2.2in} {\it  Bifurcation control with multiple Hopf singularity}}}}

{\it Bifurcation} refers to a {\it qualitative type change} in the dynamics of a system when some parameters of the system slightly change around their critical values. Hence, a change in the number and/or the stability types of the equilibria, periodic and/or quasi-periodic orbits and flow-invariant manifolds is called a {\it bifurcation}. We are concerned with non-resonant \(n\)-tuple Hopf singularity with nonlinear Eulerian (and rotational) type coupling throughout this paper. Hopf bifurcation is an important venue to generate {\it limit cycles} from an equilibrium while a coupled \(n\)-tuple Hopf bifurcation for Eulerian flows generates a flow-invariant toral-manifold bifurcated from a steady-state solution. A periodic orbit as an \(\alpha\) or \(\omega\)-limit set is a {\it limit cycle} while an invariant torus in non-resonant cases consists of quasi-periodic solutions. A flow-invariant toral manifold here refers to a family of flow-invariant Clifford hypertori that are smoothly parameterized by open CW-cells of a CW complex. Thus, these toral manifolds are called by {\it toral CW complexes}. A toral CW complex refers to a flow-invariant connected topological space with a partition into hypertorus bundles over open CW-cells of a regular CW complex where they are topologically glued together; see Definition \ref{TorCWDef}. By considering a regular CW complex and its hypertorus bundle, we construct a {\it toral CW complex}. The toral CW complex is the quotient space of the hypertorus bundle over an appropriated constructed equivalence relation; see Lemma \ref{TS1}. This provides an actual description for what singular Eulerian flows in this paper experience. As the parameters slightly change, a parametric Eulerian flow with a multiple Hopf singularity may experience bifurcations of invariant toral CW complexes and, thus, we have a complex oscillating dynamics for the nearby orbits. The cell-bifurcation analysis of invariant toral CW complexes provide the information for our proposed parametric state-feedback controller design and suitable leaf-choices in its control applications. Yet, leaf-bifurcations are chosen and controlled by initial data and bifurcation controller parameters. This approach lays the ground for a desired {\it controlled realization} of the oscillating dynamics in the state space; see \cite{GazorShoghiMusic,GazorShoghiRobotic}.

We are concerned with hypernormalization, bifurcation analysis and control of {\it flows} of nonlinear \(n\)-tuple Hopf singularities given by
\ba\label{Eq1}
&\Theta+v(\x), \;\hbox{ where }\; v(\x):= \sum_{i=1}^{n}\Theta^i_{f_i}+ E_{g}, \; \Theta:=\sum_{i=1}^{n}\omega_i\Theta_{\0}^i, \; {\prod}_{i=1}^{n}\omega_i\neq0, f_i(0)=g(0)=0, &
\ea
\bas
& E_{\0}:= \sum^n_{i=1} x_i\frac{\partial}{\partial x_i}+y_i\frac{\partial}{\partial y_i}, \qquad E_g:= gE_{\0}, \qquad \Theta_{\0}^i:= -y_i\frac{\partial}{\partial x_i}+x_i\frac{\partial}{\partial y_i}, \qquad \Theta^i_{f_i}:= f_i\Theta^i_{\0}, &
\eas \(f_i, g\in \mathbb{R}[[\x]],\) \(\frac{\omega_i}{\omega_j}\notin\mathbb{Q}\) for \(1\leq i< j\leq n\), and \(\x:=(x_1, y_1, \ldots, x_n, y_n)\). We call \(E_g\) and \(\Theta^i_{f_i},\) an Eulerian and a rotational vector field, respectively. Any such vector field is associated with a formal autonomous differential system and vice versa. Thus, the terminology and notations of vectors, vector fields, formal flows, and differential systems are interchangeably used throughout this paper. We simply refer to the formal flows associated with \eqref{Eq1} by Eulerian flows. A bifurcation control of systems of type \eqref{Eq1} does not simply follow the classical normal form theory. This is because classical normal forms usually destroy the Eulerian and rotating structure of these vector fields. As a result, the bifurcation analysis of the truncated classical normal forms does not reflect what occurs in the actual dynamics of the system. Our goal in this paper is to do the analysis by taking into account the structural symmetry of such systems. This paper is the second draft in our project on bifurcation control of singular Eulerian flows with applications in {\it parametric oscillator systems}, {\it robotic team control}, {\it computer harmonic music design and analysis}; also see \cite{GazorShoghiMusic,GazorShoghiRobotic,GazorShoghiEulNF}.

We first provide a flow-invariant primary cell-decomposition of the state space into open-cells {\it invariant under all Eulerian flows}. Each cell is a \(2k\)-manifold for some \(k\leq n\) and is diffeomorphic to the product of \(k\)-copies of the cylinder \(\mathbb{S}^1\times {\mathbb{R}^+}\); see Lemma \ref{CellDecom}. A reduction of a given Eulerian (plus rotational) vector field over a \(2k\)-{\it closed cell} (the {\it \textbf{closure} of an open \(2k\)-cell}) gives rise to an Eulerian (plus rotational) {\it \(2k\)-cell vector field}. Each open cell admits an irreducible flow-invariant \(k+1\)-dimensional foliation. Each leaf of the foliations is an {\it integral manifold} whose {\it tangent bundle} is spanned by all Eulerian vector fields. In other words, leaves are minimal realizations of the state space configuration for all Eulerian flows with multiple Hopf singularity. Each of the leaves is a manifold homeomorphic to \(\mathbb{T}_k\times \mathbb{R}^+,\) where \(\mathbb{T}_k\) is a Clifford hypertorus of \(k\)-dimension for \(1\leq k\leq n\). These leaves are parameterized by positive vectors \(C\) from the \(k-1\)-dimensional unit sphere and an \(n\)-permutation \(\sigma\), say \(\MKC\); see Theorem \ref{ThmMK}. We refer to a vector field reduced to an invariant leaf by a {\it leaf-vector field}. Using permissible changes of state variables, the associated leaf-dependent simplified system is called a {\it leaf-normal form} system. We further allow time rescaling and dependence on bifurcation parameters to obtain (formal) {\it parametric leaf-normal forms}. Finite determinacy analysis and bifurcation analysis of (formal) parametric (leaf) normal forms gives rise to a comprehensive understanding about the local dynamics of the original singular system. We distinguish between a {\it leaf-bifurcation}, a {\it \(2k\)-cell bifurcation}, and a {\it toral CW complex bifurcation}. A {\it leaf-bifurcation} or a {\it leaf-transition} variety is associated with a {\it leaf-vector field}. A {\it \(2k\)-cell bifurcation} is concerned with the dynamics of an Eulerian \(2k\)-cell system. When \(k=n,\) {\it a cell bifurcation} is simply called {\it a bifurcation}. However, {\it a toral CW complex bifurcation} refers to {\it appearance} or {\it disappearance} of a flow-invariant toral CW complex through {\it a bifurcation} or {\it a cell bifurcation}.  This terminology is similar in a sense to the classical {\it limit cycle bifurcations}. The vector field \eqref{Eq1} may experience the leaf-bifurcation of multiple invariant \(k\)-hypertori for \(1\leq k\leq n\) and cell-bifurcations of multiple toral CW complexes.

There are topologically equivalent systems associated different parameters of a parametric Eulerian flow in \(2n\)-dimension whose \(\MKC\)-leaf dynamics
are different; see Theorems \ref{Thms1}-\ref{Thms2}, and compare them with Theorems \ref{LemS1Gamma+}, \ref{TopEqu1}, \ref{Lem7.5}, and \ref{TopEqu}. Therefore, a leaf-bifurcation variety is not necessarily a bifurcation variety for the \(2k\)-dimensional cell systems for \(k\leq n\). This is what enforces our distinction between {\it leaf-bifurcations} and {\it cell-bifurcations}. Cell-bifurcations are here involved with flow-invariant toral manifolds. Due to the complexity of these bifurcated flow-invariant manifolds, we introduce toral CW complexes as a technical means for their comprehensive description. Our definition is specific to the actual flow-invariant manifold cell-bifurcations in Section \ref{SecTCWBif}. We observe a \(2k\)-cell-bifurcation of an isolated secondary flow-invariant toral  CW complex whose leaf-sections within the state space are \(l\)-hypertori for \(1\leq l\leq k\). Secondary flow-invariant toral CW complexes refer to the flow-invariant manifolds bifurcated from an equilibrium (the origin) of a \(2k\)-cell system. The secondary invariant toral CW complex may further undergo a cell-bifurcation. An external tertiary toral CW complex is born from the secondary toral CW complex in a specific case, \ie the secondary toral manifold lives inside the tertiary manifold. Tertiary cell-bifurcations refer to toral CW complexes bifurcating from a secondary toral CW complex (but not from the origin). When the origin is stable, the external toral CW complex is also stable and solutions approach to either the origin or to the external invariant manifold.

The basic tools for analysis is the derivation of normal forms. The idea in normal form theory is to use near-identity changes of state variables to transform a singular vector field into a {\it qualitatively equivalent} but {\it simple} vector field, that is called {\it normal form vector field}. This facilitates the bifurcation analysis of a given {\it nonlinear singular} vector field. It is known that there is a one-to-one correspondence between near-identity transformations and the time-one mappings associated with the flow of {\it nonlinear vector fields without constant and linear parts}; see equations \eqref{v}-\eqref{w}. In the latter case, {\it nonlinear vector fields without constant and linear parts} are called by {\it transformation generators}. Due to the Lie bracket formulation (\(\exp\,\ad \) by equation \eqref{w}) in updating vector fields using time-one mappings, it is an advantage to use the transformation generators in normal form theory. Then, Lie algebraic structures and Lie subalgebras are important tools for structural symmetry-preserving of a given singular system in its normalization process; see \cite{GazorShoghiEulNF}. Our proposed approach here is to make an invariant leaf-reduction and then, obtain leaf-normal forms. Hence, we design a Lie algebra structure for leaf-vector fields on \(\MKC\) through a linear-epimorphism between Eulerian (plus rotational) vector fields on \(\mathbb{C}^{k}\times \mathbb{R}^+\). This Lie algebra structure is required for introducing permissible leaf-preserving transformations for the leaf-normal form derivation. Next, a complete parametric leaf-normal form classification for vector field types \eqref{Eq1} are provided.

This paper is organized as follows. Flow-invariant cell-decomposition of the state space is provided in Section \ref{SecCell}. We show that each cell admits an irreducible foliation that is flow-invariant under all Eulerian flows. Flow-invariant leaf reductions of singular Eulerian flows are introduced in Section \ref{sec4}. Parametric leaf normal form classification is provided in Section \ref{SecLeafNF}. Section \ref{SecBif} deals with bifurcation analysis of one-parametric truncated leaf normal forms of three most generic cases. Bifurcation analysis of \(2k\)-cell systems are described  in Section \ref{SecTCWBif}. A comprehensive description of the \(2k\)-cell bifurcations is achieved by an introduction of toral CW complexes associated with CW complexes. Our normal form approach provides an algorithmically computable method in bifurcation controller design for singular oscillator systems.
%We describe how leaf bifurcation control can be applied for an efficient state-feedback controller design on a cup-carry multi-arms robot in Section \ref{SecRobot}. This is an instance of potential applications in robotic and geometric control applications.

\section{Primary cell-decomposition and flow-invariant foliations }\label{SecCell}

This section first presents a primary decomposition of the state space into \(2k\)-dimensional cells invariant under all Eulerian flows. Each cell is homeomorphic to the product of \(k\)-copies of the cylinder \(\mathbb{R}^+\times \mathbb{S}^1.\) Next, we show that each \(2k\)-cell admits a \(k+1\)-dimensional foliation. This further splits the \(2k\)-cells into the minimal flow-invariant manifolds. More precisely, each leaf of the foliation is a {\it minimal} manifold that is {\it invariant} under all singular Eulerian flows and is homeomorphic to a \(\mathbb{T}_k\times \mathbb{R}^+,\) where \(\mathbb{T}_k\) is a \(k\)-dimensional Clifford hypertorus. More precisely, minimal invariant manifolds are integral leaves of the foliations associated with all Eulerian flows. Since the cell decomposition and leaves of the foliations are flow-invariant under all Eulerian flows, we employ these in sequence as a reduction technique for their normal from, bifurcation analysis and control in sections \ref{SecLeafNF} and \ref{SecBif}.

\begin{notation}\label{Not2.1}
\begin{enumerate}
\item We use \(\sqcup_i A_i\) for the {\it union} of {\it disjoint sets} \(A_i.\) The set \(\mathbb{R}^+\) stands for nonzero positive real numbers and
\(\mathbb{T}_k\) for a \(k\)-dimensional Clifford torus. Notation \(\mathbb{S}^{n-1}\) stands for the \(n-1\)-dimensional unit sphere in \(\mathbb{R}^n\). Denote the \(n\)-vector \((c_i)^n_{i=1}\) by \((c_1, c_2, \ldots, c_n)\) and
\be\mathbb{S}^{n-1}_{>0}:= \left\{(c_1, \ldots, c_n)\in \mathbb{S}^{n-1} |\, c_{i}>0 \hbox{ for } 1\leq i\leq n\right\}.\ee
\item Denote
\be\label{Skn} S^k_n:=\{\sigma\in S_n| \sigma(i)<\sigma(j) \hbox{ for } i<j\leq k \hbox{ and for } k<i<j\leq n\}\ee where \(S_n\) is the group of
permutations over \(\{1, 2, \ldots, n\}.\) We denote the identity map in \(S^k_n\) by \(I.\) Thus, \(S^k_n\) has \({n\choose k}\)-number of elements and \(S^n_n= S^0_n=\{I\}.\) For \(\sigma\in S^{k}_n,\) denote
\be\label{sksig}\mathbb{S}^{k-1, \sigma}_{>0}:= \left\{(c_1, \ldots, c_n)\in \mathbb{R}^n \,|\, (c_{\sigma(1)}, \ldots, c_{\sigma(k)})\in \mathbb{S}^{k-1} \hbox{ and } c_{\sigma(i)}>0 \hbox{ for } 1\leq i\leq k\right\}.\ee
Here, \(\sigma(i)\) for \(i=1, \ldots, k\) represents nonzero elements \(c_{\sigma(i)}\neq0\) from \(\mathbf{c}\in \mathbb{S}^{k-1, \sigma}_{>0}\) while \(c_{\sigma(i)}=0\) for \(i=k+1, \ldots, n.\) For an instance, \(\mathbb{S}^{0, \sigma}_{>0}=\{\mathbf{e}^n_{\sigma(1)}\}\) and  \(\mathbb{S}^{k-1, \sigma}_{>0}= \mathbb{S}^{k-1}_{>0}.\) Here, \(\mathbf{e}^n_i\in \mathbb{R}^n\) stands for the \(i\)-th element from the standard basis of \(\mathbb{R}^n.\)
\end{enumerate}
\end{notation}

\begin{lem}[Flow-invariant cell-decomposition of the state space]\label{CellDecom} There exists a disjoint Eulerian flow-invariant decomposition of the state space into open \(2k\)-manifolds \(\mathcal{M}_{k, \sigma}\) for \(1\leq k\leq n,\) and \(\sigma\in S^k_n,\) \ie \(\mathbb{R}^{2n}= \bigsqcup^n_{k=0} \bigsqcup_{\sigma\in S^k_n} \mathcal{M}_{k, \sigma}\) and \(\mathcal{M}_{0, I}=\{\0\}.\) For each \(k=1, \ldots, n\), there are \({n\choose k}\) number of \(2k\)-dimensional cells corresponding to permutations \(\sigma\in S^k_n,\) while cells of odd dimension are empty. Each \(\mathcal{M}_{k, \sigma}\) is diffeomorphic to the product of \(k\) number of cylinders \(\mathbb{S}^1\times\mathbb{R}^+\), \ie \((\mathbb{S}^1\times\mathbb{R}^+)^k.\)
\end{lem}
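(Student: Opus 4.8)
The plan is to pass to polar coordinates in each of the $n$ coordinate planes and read off the dynamics in a decoupled form. Writing $x_i = r_i\cos\theta_i$ and $y_i = r_i\sin\theta_i$, a direct computation gives $E_{\0} = \sum_{i=1}^n r_i\frac{\partial}{\partial r_i}$ and $\Theta^i_{\0} = \frac{\partial}{\partial\theta_i}$. Consequently every Eulerian-plus-rotational field of the form \eqref{Eq1} becomes $\sum_{i=1}^n (\omega_i + f_i)\frac{\partial}{\partial\theta_i} + g\sum_{i=1}^n r_i\frac{\partial}{\partial r_i}$, so that the associated system reads $\dot r_i = g\,r_i$ and $\dot\theta_i = \omega_i + f_i$ for $1\le i\le n$.

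The key structural observation is that the radial equations all share the same multiplier $g = g(\x)$ and are linear in the respective $r_i$. Hence the set of indices for which a radius vanishes is preserved by the flow, \emph{uniformly in the data} $f_i, g$: an index with $r_i = 0$ keeps $r_i \equiv 0$, while an index with $r_i > 0$ satisfies $r_i(t) = r_i(0)\exp\!\big(\int_0^t g\big) > 0$ and never reaches zero in finite time. Since this conclusion is independent of the particular $f_i$ and $g$, the induced stratification of $\mathbb{R}^{2n}$ by the vanishing pattern of $(r_1,\ldots,r_n)$ is simultaneously invariant under every Eulerian flow.

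I would then define the cells accordingly: for $\sigma\in S^k_n$ let $\mathcal{M}_{k,\sigma}$ be the locus where $r_{\sigma(i)} > 0$ for $1\le i\le k$ and $r_{\sigma(i)} = 0$ for $k < i\le n$. The ordering convention built into \eqref{Skn} makes the pair $(k,\sigma)$ a faithful label for the size-$k$ subset of active planes, so the $\mathcal{M}_{k,\sigma}$ are pairwise disjoint and exhaust $\mathbb{R}^{2n}$; there are $\binom{n}{k}$ of them in dimension $2k$, the case $k=0$ giving the single point $\{\0\}$, and every cell has even dimension because each active plane contributes exactly two coordinates. Restricting polar coordinates to the $k$ active planes supplies the explicit diffeomorphism $\mathcal{M}_{k,\sigma}\cong(\mathbb{S}^1\times\mathbb{R}^+)^k$.

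The only genuine subtlety --- and the step I would treat most carefully --- is the invariance claim, since the $f_i, g$ are formal power series and the flows are a priori formal objects. I would reduce invariance to tangency of the field to each stratum, which is manifest from \eqref{Eq1}: the $(x_i,y_i)$-components of both $\Theta^i_{\0}$ and $E_{\0}$ vanish on $\{x_i=y_i=0\}$, so the field is tangent to every coordinate subspace obtained by setting some planes to the origin. Persistence of positivity of the surviving radii then follows from the exponential solution formula together with local existence--uniqueness in any smooth or analytic realization. This tangency-plus-positivity dichotomy, holding uniformly over all admissible $f_i$ and $g$, is the heart of the argument; the counting and the diffeomorphism are routine bookkeeping.
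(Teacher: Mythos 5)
Your proof is correct and takes essentially the same approach as the paper: both stratify \(\mathbb{R}^{2n}\) by the vanishing pattern of the radii \(r_i=\|(x_i,y_i)\|\), establish invariance under \emph{every} Eulerian flow from the common radial equation \(\dot r_i = g\,r_i\) (tangency on \(\{x_i=y_i=0\}\) together with positivity of the surviving radii via the exponential formula), and realize \(\mathcal{M}_{k,\sigma}\cong(\mathbb{S}^1\times\mathbb{R}^+)^k\) by exactly the polar-coordinate map the paper calls \(\phi_{\mathcal{M}_{k,\sigma}}\). If anything, your explicit handling of the tangency issue at the singular locus of polar coordinates and of the formal-power-series nature of \(f_i, g\) is slightly more careful than the paper's own argument.
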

\bpr For any \bes\0\neq\x=(x, y)=(x_1, x_2, \ldots, x_n, y_1, y_2, \ldots, y_n)\in \mathbb{R}^{2n},\ees denote \((x_{n_j}, y_{n_j})\) (say \(1\leq j\leq k\)) for the nonzero pairs of \(\x\)  and let \(m_j\) with \(1\leq j\leq n-k\) stand for the remaining indices, \ie \((x_{m_j}, y_{m_j})=(0, 0).\) These spaces are pairwise disjoint and their union is the whole state space \(\mathbb{R}^{2n}\). Let \(\rho_{i}(t)= \|(x_i(t), y_i(t))\|\) and \(v:= E_f+\sum_{i=1}^{n}\Theta^i_{g_i}\). Consider the initial value problem associated with \(v\) and initial condition \(\x(t^\circ)\in \mathcal{M}_{k, \sigma}.\) Hence, the manifold \(\{\x\in\mathbb{R}^{2n}|\|(x_i, y_i)\|=0\},\) for an \(i\leq k,\) and its complement are both invariant under the flow associated with \(v.\) Thus, \({\rho}_{\sigma(i)}(t)\neq 0\) for all \(t\) when \(\x(t^\circ)\in \mathcal{M}_{k, \sigma}\) and \(1\leq i\leq k.\) When \(1\leq j\leq n-k\) and \(t\geq t^\circ,\) \({\rho}_{m_j}(t)=0\) for all trajectories starting from a point \(\x(t^\circ)\) on the manifold \(\mathcal{M}_{k, \sigma}.\) Hence, \(\mathcal{M}_{k, \sigma}\) is a \(2k\)-dimensional flow-invariant subspace and the union of \(\mathcal{M}_{k, \sigma}\) over \(k\) and \(\sigma\in S^k_n\) gives rise to \(\mathbb{R}^{2n}\setminus\{0\}\). Define the map \(\phi_{\mathcal{M}_{k, \sigma}}: \mathcal{M}_{k, \sigma}\rightarrow (\mathbb{S}^1\times {\mathbb{R}^+})^k\) by
\ba\label{phiMK}
&\phi_{\mathcal{M}_{k, \sigma}}(\x):= \Big(\frac{(x_{\sigma(i)}(t), y_{\sigma(i)}(t))}{\|(x_{\sigma(i)}(t), y_{\sigma(i)}(t))\|}, \|(x_{\sigma(i)}(t), y_{\sigma(i)}(t))\|\Big)^k_{i=1}.&
\ea
This is well-defined and smooth. Here, the cylinder \(\mathbb{S}^1\times {\mathbb{R}^+}\) is parameterized by \((\cos\theta, \sin\theta, r)\) for \(r\in \mathbb{R}^+\) and \(\theta\in \frac{\mathbb{R}}{2\pi \mathbb{Z}}.\) Therefore, the inverse function \(\phi^{-1}_{\mathcal{M}_{k, \sigma}}: (\mathbb{S}^1\times {\mathbb{R}^+})^k\rightarrow \mathcal{M}_{k, \sigma}\) is a diffeomorphism.
\epr

Now we show that each \(2k\)-manifold \(\mathcal{M}_{k, \sigma}\) admits a secondary decomposition (via a smooth foliation) as a union of disjoint flow-invariant connected \(k+1\)-submanifolds denoted by \(\MKC\). Each submanifold \(\MKC\) is called a leaf. Here, every point has an open neighborhood \(U\) and a local coordinate-system, say \((y_1, \cdots, y_{2k}) : U \rightarrow \mathbb{R}^{2k},\) so that each leaf within \(U\) can be described by \(y_{k+2}=constant, \ldots, y_{2k}=constant.\) The leaves \(\MKC\) of the foliations are parameterized by \(1\leq k\leq n\) and \(\C\in \mathbb{S}^{k-1}_{>0}\). Each leaf \(\MKC\) is a minimal manifold that is invariant under every Eulerian flow with multiple Hopf singularity. Then, each vector field type in equation \eqref{Eq1} can be reduced on these individual flow-invariant minimal leaves. Next, leaf parametric normal form classifications provide further reduction of the vector field \eqref{Eq1}. Hence, the analysis of the infinite level leaf parametric normal form \(v^{(\infty)}_{k, \C}\) for \(1\leq k\leq n\) and \(\C\in \mathbb{S}^{k-1}_{>0}\) provides all bifurcation scenarios of the vector field \(v.\)
%We prove that \(\MKC\) is homeomorphic to \(\mathbb{T}_k\times \mathbb{R}^+.\)

\begin{thm}[Irreducible flow-invariant foliations]\label{ThmMK} There is a smooth \(k+1\)-dimensional foliation for further refinements of each \(\mathcal{M}_{k, \sigma}\) into the disjoint leaves \(\MKC\) of the foliations parameterized by \(C\in \mathbb{S}^{k-1, \sigma}_{>0},\) indeed, \(\mathcal{M}_{k, \sigma}=\sqcup_{C\in \mathbb{S}^{k-1}_{>0}}\MKC.\) Each leaf \(\MKC\) is a flow-invariant manifold homeomorphic to \(\mathbb{T}_k\times \mathbb{R}^+\) and \(\mathcal{M}_{k, \sigma}\) is homeomorphic to \(\mathbb{T}_k\times \mathbb{R}^+\times \mathbb{S}^{k-1}_{>0}.\) The set \(\MKC\) is a minimal manifold that is invariant under all flows associated with Eulerian and rotational vector fields.
\end{thm}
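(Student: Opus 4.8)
The plan is to construct the foliation explicitly using the diffeomorphism $\phi_{\mathcal{M}_{k,\sigma}}$ from Lemma \ref{CellDecom}, and then verify the three claims: that the leaves $\MKC$ are disjoint and exhaust $\mathcal{M}_{k,\sigma}$, that each leaf is flow-invariant and homeomorphic to $\mathbb{T}_k\times\mathbb{R}^+$, and that each leaf is a \emph{minimal} invariant manifold. First I would recall that $\mathcal{M}_{k,\sigma}\cong(\mathbb{S}^1\times\mathbb{R}^+)^k$ via $\phi_{\mathcal{M}_{k,\sigma}}$, which records each nonzero pair $(x_{\sigma(i)},y_{\sigma(i)})$ as an angle $\theta_i\in\mathbb{S}^1$ together with a radius $\rho_{\sigma(i)}=\|(x_{\sigma(i)},y_{\sigma(i)})\|>0$. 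The natural coordinates on the cell are thus $(\theta_1,\dots,\theta_k,\rho_{\sigma(1)},\dots,\rho_{\sigma(k)})$. The key observation, which I would extract from the form of \eqref{Eq1}, is that every Eulerian and rotational vector field acts on the radial variables only through the common Eulerian scaling $E_{\0}$ (contributing $g\cdot\rho_{\sigma(i)}$ to $\dot\rho_{\sigma(i)}$) and on the angles through the $\Theta^i_{f_i}$ terms; in particular the \emph{ratios} $\rho_{\sigma(i)}/\rho_{\sigma(j)}$ of the radii are left unchanged by the Eulerian scaling, because $E_{\0}$ scales all radii by the same factor $g$.

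This motivates defining the leaf through a fixed direction $C=(c_{\sigma(1)},\dots,c_{\sigma(k)})\in\mathbb{S}^{k-1}_{>0}$ in the radius-sphere: set
\be
\MKC:=\Big\{\x\in\mathcal{M}_{k,\sigma}\;\Big|\;\frac{(\rho_{\sigma(1)},\dots,\rho_{\sigma(k)})}{\|(\rho_{\sigma(1)},\dots,\rho_{\sigma(k)})\|}=C\Big\}.
\ee
Then I would verify that as $C$ ranges over $\mathbb{S}^{k-1,\sigma}_{>0}$ these sets are pairwise disjoint and their union is all of $\mathcal{M}_{k,\sigma}$, which is immediate since normalizing the radius-vector of any point of the cell yields a unique such $C$; this establishes $\mathcal{M}_{k,\sigma}=\sqcup_{C}\MKC$. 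For the smooth $(k+1)$-dimensional foliation structure, I would use $\phi_{\mathcal{M}_{k,\sigma}}$ to transport the problem to $(\mathbb{S}^1\times\mathbb{R}^+)^k$, where the leaf becomes $\{(\theta_i,\rho_{\sigma(i)}):(\rho_{\sigma(i)})/\|(\rho_{\sigma(i)})\|=C\}$, i.e. the angles $\theta_1,\dots,\theta_k$ are free (giving the $\mathbb{T}_k$ factor) and the radial vector is constrained to the ray $\mathbb{R}^+\cdot C$ (giving the single $\mathbb{R}^+$ factor). Writing the radii in the ``spherical'' coordinates adapted to $C$ exhibits precisely $k$ transverse constants $y_{k+2},\dots,y_{2k}$ (the $k-1$ normalized radius-ratios) plus the $k+1$ leaf-coordinates $(\theta_1,\dots,\theta_k,r)$ with $r=\|(\rho_{\sigma(i)})\|$, matching the local foliation chart demanded in the statement. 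The homeomorphism $\MKC\cong\mathbb{T}_k\times\mathbb{R}^+$ and $\mathcal{M}_{k,\sigma}\cong\mathbb{T}_k\times\mathbb{R}^+\times\mathbb{S}^{k-1}_{>0}$ then follow by reassembling these coordinate factors.

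For flow-invariance, I would compute $\dot\rho_{\sigma(i)}$ along the flow of an arbitrary field from \eqref{Eq1}: only $E_g=gE_{\0}$ affects the radii, giving $\dot\rho_{\sigma(i)}=g(\x)\,\rho_{\sigma(i)}$, so each normalized ratio $\rho_{\sigma(i)}/\rho_{\sigma(j)}$ is constant along trajectories, whence the normalized radius-vector $C$ is a first integral and each $\MKC$ is invariant. The last and genuinely substantive point is \emph{minimality}: I must show the tangent space of $\MKC$ at each point is spanned by the Eulerian and rotational vector fields, so that no proper submanifold is invariant under all such flows. I would argue that at a generic point the rotational fields $\Theta^i_{\0}$ span the $k$ angular directions $\partial/\partial\theta_i$ and the Eulerian field $E_{\0}$ spans the single radial-dilation direction $\sum_i\rho_{\sigma(i)}\partial/\partial\rho_{\sigma(i)}$ tangent to the ray $\mathbb{R}^+\cdot C$; together these $k+1$ vectors are linearly independent and span $T_{\x}\MKC$, so the distribution they generate is exactly the tangent distribution of the foliation, and $\MKC$ is the integral manifold. \textbf{I expect this minimality/spanning step to be the main obstacle}, since it requires checking that the collection of \emph{all} admissible $f_i,g$ (equivalently the Lie algebra they generate) produces no tangent directions beyond these $k+1$, and that the evaluation $E_{\0},\Theta^1_{\0},\dots,\Theta^k_{\0}$ is everywhere transverse and nondegenerate on the cell where all $\rho_{\sigma(i)}>0$; the Frobenius integrability of the distribution (which guarantees the foliation is genuinely smooth) would be confirmed by noting the bracket relations among $E_{\0}$ and the $\Theta^i_{\0}$ close within the span, as the rotational generators mutually commute and commute with $E_{\0}$.
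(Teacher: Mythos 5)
Your proposal is correct and takes essentially the same route as the paper: the paper likewise defines the leaves by fixing the normalized radius vector \(C\) and builds local foliation charts whose transverse coordinates are precisely your radius-ratio first integrals (the quantities \(\frac{c^\circ_{\sigma(j-1)}\rho_{\sigma(j)}}{c^\circ_{\sigma(j)}}-\rho_{\sigma(j-1)}\)), with the angles and one radius as leaf coordinates. If anything, you spell out two steps the paper leaves terse — the computation \(\dot\rho_{\sigma(i)}=g\,\rho_{\sigma(i)}\) giving flow-invariance of the ratios, and the spanning of \(T_{\x}\MKC\) by \(E_{\0},\Theta^{\sigma(1)}_{\0},\dots,\Theta^{\sigma(k)}_{\0}\), which in fact holds at \emph{every} point of the cell (not merely generic ones), since all \(\rho_{\sigma(i)}>0\) there and \(E_g,\Theta^i_{f_i}\) are pointwise scalar multiples of \(E_{\0},\Theta^i_{\0}\).
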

\bpr Consider a point \(\0\neq\x^\circ\in \mathcal{M}_{k, \sigma}\). We parameterize the leaves of the foliation by \(C\in \mathbb{S}^{k-1, \sigma}_{>0}.\) Let \(c^\circ_{\sigma(j)}:=\frac{||(x^\circ_{\sigma(j)}, y^\circ_{\sigma(j)})||}{||\x^\circ||}\neq 0\) for \(j\leq k,\) and \(c^\circ_{\sigma(j)}=0\) for \(k<j.\) Thus, \(C_{\x^\circ}:=(c^\circ_1, \ldots, c^\circ_n)\in \mathbb{S}^{k-1, \sigma}_{>0}.\) Denote \((\theta_{\sigma(j)}, \rho_{\sigma(j)})\) for \((x_{\sigma(j)}, y_{\sigma(j)})\) in the polar coordinates and \(\mathcal{N}_{\x^\circ}\subset \mathcal{M}_{k, \sigma}\) for a small open neighborhood around \(\x^\circ\). Note that for all \(\x\in \mathcal{N}_{\x^\circ},\) \((x_{\sigma(i)}, y_{\sigma(i)})\neq 0\) when \(i\leq k\) and \((x_{\sigma(i)}, y_{\sigma(i)})= 0\) for \(i>k.\)
Then, we introduce the map \(\varphi_{\x^\circ}: \mathcal{N}_{\x^\circ}\subset\mathcal{M}_{k, \sigma}\rightarrow \mathbb{R}^{2k}\) by
\bes \varphi_{\x^\circ}(\x):= \left(\theta_{\sigma(1)}, \theta_{\sigma(2)}, \ldots, \theta_{\sigma(k)}, ||(x_{\sigma(1)}, y_{\sigma(1)})||, \frac{c^\circ_{\sigma(1)}\rho_{\sigma(2)}}{c^\circ_{\sigma(2)}}-\rho_{\sigma(1)}, \ldots, \frac{c^\circ_{\sigma(k-1)}\rho_{\sigma(k)}}{c^\circ_{\sigma(k)}}-\rho_{\sigma({k-1})}
\right).\ees The neighborhood \(\mathcal{N}_{\x^\circ}\) can be chosen small enough so that the family \(\varphi_{\x^\circ}\) would construct a {\it smooth} system of local coordinates within the invariant \(\mathcal{M}_{k, \sigma}.\) Let \(\x\neq\y.\) Thus,
\bes \emptyset\neq \varphi_{\x}^{-1}(\mathbb{R}^{k+1}\times \0_{k-1})\cap\varphi_{\y}^{-1}(\mathbb{R}^{k+1}\times \0_{k-1}) \quad\hbox{ if and only if } \quad C_{\x}=aC_{\y}\ees for some \(0\neq a\in \mathbb{R}^+.\) Since \(C_{\x}, C_{\y}\in \mathbb{S}^{k-1,\sigma}_{>0},\) we have \(a=1.\)
Hence, the family \bes \cup_{\{x| C_x=C\}}\varphi_{\x}^{-1}(\mathbb{R}^{k+1}\times \0_{k-1}) \; \hbox{  for } \; C\in \mathbb{S}^{k-1, \sigma}_{>0} \;\hbox{ partitions } \; \mathcal{M}_{k, \sigma}\ees into disjoint connected sub-manifolds. Thereby, these sub-manifolds can be parameterized by \(C\in\mathbb{S}^{k-1,\sigma}_{>0}\). The leaf \(\MKC\) is a \(k+1\)-manifold invariant under all Eulerian flows.
This provides a diffeomorphism between \(\MKC\) and the toral cylinder \(\mathbb{T}_k\times {\mathbb{R}^+}.\)
\epr

%%%%%%%%%%%%%%%%%%%%%%%%%%%%%%%%%%%%%%%%%%%%%%%%%%%%%%%%%%%%%%%%%%%%%%%%%

\section{Transformation generators, leaf reductions and Lie algebras on a leaf}\label{sec4}

This section is devoted to leaf-reduction of vector fields and the study of leaf-preserving transformation generators using a Lie algebra structure for leaf-vector fields. We first recall how a nonlinear (formal) vector field \(Y(\x)\) generates a near-identity transformation when \(Y(0)= D_\x Y (0)=0,\) and \(D_\x\) stands for derivatives with respect \(x\); \eg see \cite[format 2b]{MurdBook}. Consider the initial value problem
\be\label{v}\frac{d}{dt}{\x}(t, \y)= Y(\x(t, \y)), \quad \x(0, \y)= \y.\ee Then, the time-one mapping \(\x:=\phi_Y(\y )= \x(1; \y)\) is a near-identity coordinate transformation generated by \(Y\). Assume that this transforms the new variable \(\y\) to the old variable \(\x\). Then, a vector field \(v(\x)\) is transformed to
\ba\label{w}
&w(y):= [(D_\y\phi_Y)(y)]^{-1}v(\phi_Y(y))= \exp \ad_Y v= v+ [Y, v]+ \frac{1}{2}\big[Y, [Y, v]\big]+ \cdots, &
\ea
where \(\ad_Yv= [Y, v]:= {\rm Wronskian}(v, Y)= (D_\x Y)v-(D_\x v)Y\); \eg see \cite{Wang2014,Wang3DJDE2014,MurdBook}. Then, system \eqref{v} is transformed into \(\dot{\y}= w(\y).\) Therefore, a Lie subalgebra structure for transformation generators is sufficient to preserve a structural symmetry using these types of transformations. Hence, the time-one flows associated with nonlinear vector fields of type
\ba&\label{Y} Y:=p(\mathbf{u})E_{\0}+\sum_{i=1}^{n} h_i(\mathbf{u})\Theta^i_{\0}, \hbox{ for } u_i=\bar{v}_i, p(\0) = h(\0)=0, &
\ea preserve the structural symmetry type given in \eqref{Eq1}. Therefore, the normal form and (universal asymptotic) unfolding problems of such singular vector fields can be treated using these time-one maps; see \cite{GazorShoghiEulNF,GazorYuSpec}. By asymptotic unfolding problem, we mean finding a \(k\)-truncated simplest parametric normal form with least number of parameters to fully unfold a system with respect to the \(k\)-equivalence relation; \eg see \cite{GazorSadri,GazorSadriBT}. However, the truncated simplest parametric normal form systems are not yet sufficiently simple for bifurcation analysis and control. Thus, we alternatively use the flow-invariant leaf reduction of the vector fields to obtain leaf-vector fields in this section. Given transformation generators described above, we further study the leaf-vector field preserving transformation generators through a Lie algebra structure for leaf-vector fields. Then, in section \ref{SecLeafNF}, we do the parametric normal form of the leaf-vector fields on the manifold \(\mathbb{T}_k\times \mathbb{R}^+\). Next, the estimated transition varieties associated with parametric leaf-normal forms establish a bifurcation control criteria in a parametric state-feedback controlled system. These are indeed necessary for any meaningful bifurcation analysis and bifurcation control of an Eulerian flow with multiple Hopf singularity; see sections \ref{SecLeafNF} and \ref{SecBif}. Now we add some extra notations to those described in Notation \ref{Not2.1}.

\begin{notation}
Given \(\sigma\in S^k_n,\) \(n\)-vectors \(a= (a_1, a_2, \ldots, a_n)\) and \(b:=(b_1, \ldots, b_n),\) we denote \(\hat{a}:= (\hat{a}_1, \hat{a}_2, \ldots, \hat{a}_k):= (a_{\sigma(1)}, a_{\sigma(2)}, \ldots a_{\sigma(k)}),\) \(a^b:= \Pi^n_{i=1}{a_i}^{b_i},\) \(\cos a:=\left(\cos a_{1}, \ldots, \cos a_{n}\right),\) \(\sin\!^{\hat{b}}\hat{a}:=\Pi^k_{j=1}
\sin\!^{b_{n_j}}a_{n_j},\) and \(|a|=\sum^n_{i=1} |a_i|.\)
\end{notation}

%%%%%%%%%%%%%%%%%%%%%%%%%%%%%%%%%%%%%%%%%%%%%%%%%%%%%%%%%%%%%%%%%%%%%%%%%
\begin{lem}[\(\MKC\)-leaf reduction]\label{LeafReduction}
Consider \(v\) given in equation \eqref{Eq1}, \(g(\x):=\!\sum_{|\alpha|+|\beta|\geq 1}\! a_{\alpha, \beta}x^{\alpha}y^{\beta},\) $f_i(\x):=\sum_{|\alpha|+|\beta|\geq 1} b^i_{\alpha, \beta}x^{\alpha}y^{\beta},$ \(\sigma\in S^k_n,\) \(C\in \mathbb{S}^{k-1, \sigma}_{>0}, k\leq n,\) and \((x, y)\in \mathcal{M}_{k, \sigma}.\) Then, the \(\MKC\)-leaf reduction of \(\Theta+v\) is given by \(\Theta_k+ v_{\sigma, C}\) where \(\Theta_k:=\sum_{j=1}^{k}\omega_{\sigma(j)}\frac{\partial}{\partial \theta_{\sigma(j)}}\) and
\begin{eqnarray}\label{LeafReduced}
&v_{\sigma, C}:=\sum_{|\hat{\alpha}|+|\hat{\beta}|=1}^{\infty} {\rho_{\sigma(k)}}\!^{|\hat{\alpha}|+|\hat{\beta}|}\cos^{\hat{\alpha}}\!{\hat{\theta}}\sin^{\hat{\beta}}\!{\hat{\theta}}
\left(\tilde{a}_{\hat{\alpha}, \hat{\beta}}^k(\C)\sum^k_{j=1}\frac{c_{\sigma(j)}}{c_{\sigma(k)}}\frac{\rho_{\sigma(k)}\partial}{\partial \rho_{\sigma(j)}}+\sum_{j=1}^{k}\tilde{b}_{\hat{\alpha}, \hat{\beta}}^{k, j}(\C)\frac{\partial}{\partial \theta_{\sigma(j)}}\right).&
\end{eqnarray} Here, \(\rho_{\sigma(k)}\in \mathbb{R}^+\) and \(\hat{\theta}:= (\theta_{\sigma(1)}, \theta_{\sigma(2)}, \ldots, \theta_{\sigma(k)})\in \mathbb{T}_k.\) When \(|\hat{\alpha}||\hat{\beta}|<|\alpha||\beta|,\) we have \(\tilde{a}_{\hat{\alpha}, \hat{\beta}}^k(\C)=\tilde{b}^{k, j}_{\hat{\alpha}, \hat{\beta}}(\C)=0.\) Otherwise,
\begin{eqnarray}\label{alphatilde}
&\tilde{a}_{\hat{\alpha}, \hat{\beta}}^k(\C):=\frac{\prod_{j=1}^{k}{c_{\sigma(j)}}^{\alpha_{\sigma(j)}+\beta_{\sigma(j)}}}{{c_{\sigma(k)}}^{|\alpha|+|\beta|}}a_{\alpha, \beta},\quad
\tilde{b}^{k, j}_{\hat{\alpha}, \hat{\beta}}(\C):=\frac{\prod_{j=1}^{k}{c_{\sigma(j)}}^{\alpha_{\sigma(j)}+\beta_{\sigma(j)}}}{{c_{\sigma(k)}}^{|\alpha|+|\beta|}}b^j_{{\alpha}, {\beta}}& \quad\text{for}\quad 1\leq j\leq k.
\end{eqnarray}
\end{lem}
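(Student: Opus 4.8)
The plan is to pass to polar coordinates on each plane and then restrict to the leaf $\MKC$, whose defining constraints are supplied by Theorem~\ref{ThmMK}. Writing $(x_i,y_i)=(\rho_i\cos\theta_i,\rho_i\sin\theta_i)$, a direct computation gives the two structural identities
\be
\Theta^i_{\0}=-y_i\frac{\partial}{\partial x_i}+x_i\frac{\partial}{\partial y_i}=\frac{\partial}{\partial \theta_i},\qquad E_{\0}=\sum_{i=1}^{n}x_i\frac{\partial}{\partial x_i}+y_i\frac{\partial}{\partial y_i}=\sum_{i=1}^{n}\rho_i\frac{\partial}{\partial\rho_i}.
\ee
Hence $\Theta=\sum_i\omega_i\partial_{\theta_i}$ and $v=g\sum_i\rho_i\partial_{\rho_i}+\sum_i f_i\partial_{\theta_i}$, so the entire problem reduces to expressing the scalar coefficients $g$ and $f_i$ in polar form and evaluating them along the leaf. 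Since $\MKC\subset \mathcal{M}_{k,\sigma}$ carries $\rho_{\sigma(i)}=0$ for $i>k$, and since by Theorem~\ref{ThmMK} the field $\Theta+v$ is tangent to the flow-invariant leaf, the restriction is a genuine vector field on $\MKC$ and only the active block $\{\sigma(1),\dots,\sigma(k)\}$ survives.

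For a single monomial I would compute, using the polar substitution,
\be
x^{\alpha}y^{\beta}=\prod_{i=1}^{n}\rho_i^{\alpha_i+\beta_i}\cos^{\alpha_i}\!\theta_i\,\sin^{\beta_i}\!\theta_i.
\ee
Restricting to $\MKC$ kills every monomial having $\alpha_i+\beta_i>0$ for some inactive index $i\in\{\sigma(k+1),\dots,\sigma(n)\}$, because $\rho_i=0$ there; this is precisely the asserted vanishing $\tilde a^k_{\hat\alpha,\hat\beta}=\tilde b^{k,j}_{\hat\alpha,\hat\beta}=0$ of the coefficients whose support leaves the active block (equivalently, whenever $|\hat\alpha|+|\hat\beta|<|\alpha|+|\beta|$). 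For a surviving monomial the product runs only over $j=1,\dots,k$, and I would then invoke the leaf characterization $\rho_{\sigma(j)}=\frac{c_{\sigma(j)}}{c_{\sigma(k)}}\rho_{\sigma(k)}$ (read off from $c^\circ_{\sigma(j)}=\|(x^\circ_{\sigma(j)},y^\circ_{\sigma(j)})\|/\|\x^\circ\|$ in the proof of Theorem~\ref{ThmMK}) to factor out the radial dependence:
\be
\prod_{j=1}^{k}\rho_{\sigma(j)}^{\alpha_{\sigma(j)}+\beta_{\sigma(j)}}=\frac{\prod_{j=1}^{k}c_{\sigma(j)}^{\alpha_{\sigma(j)}+\beta_{\sigma(j)}}}{c_{\sigma(k)}^{|\alpha|+|\beta|}}\,\rho_{\sigma(k)}^{|\hat\alpha|+|\hat\beta|}.
\ee
This produces exactly the coefficients $\tilde a^k_{\hat\alpha,\hat\beta}(\C)$ and $\tilde b^{k,j}_{\hat\alpha,\hat\beta}(\C)$ of \eqref{alphatilde} together with the common factor $\rho_{\sigma(k)}^{|\hat\alpha|+|\hat\beta|}\cos^{\hat\alpha}\hat\theta\,\sin^{\hat\beta}\hat\theta$.

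Finally I would reassemble the field. On the leaf $E_{\0}=\sum_{j=1}^{k}\rho_{\sigma(j)}\partial_{\rho_{\sigma(j)}}=\sum_{j=1}^{k}\frac{c_{\sigma(j)}}{c_{\sigma(k)}}\rho_{\sigma(k)}\partial_{\rho_{\sigma(j)}}$, so $gE_{\0}$ contributes exactly the radial sum $\tilde a^k_{\hat\alpha,\hat\beta}\sum_j\frac{c_{\sigma(j)}}{c_{\sigma(k)}}\rho_{\sigma(k)}\partial_{\rho_{\sigma(j)}}$ appearing in \eqref{LeafReduced}; the rotational part $\sum_i f_i\Theta^i_{\0}$ contributes $\sum_{j=1}^k f_{\sigma(j)}\partial_{\theta_{\sigma(j)}}$, the inactive terms dropping out because $\Theta^{\sigma(i)}_{\0}$ vanishes at $(x_{\sigma(i)},y_{\sigma(i)})=0$ for $i>k$, which yields the $\tilde b^{k,j}_{\hat\alpha,\hat\beta}\partial_{\theta_{\sigma(j)}}$ terms; and $\Theta$ restricts to $\Theta_k=\sum_{j=1}^k\omega_{\sigma(j)}\partial_{\theta_{\sigma(j)}}$. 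Summing gives $\Theta_k+v_{\sigma,C}$. The main obstacle is organizational rather than conceptual: one must track the exponents so that the $c_{\sigma(k)}$-power in the denominator is $|\alpha|+|\beta|$ while the retained power of $\rho_{\sigma(k)}$ is $|\hat\alpha|+|\hat\beta|$, and verify that these two bookkeepings coincide exactly on the surviving monomials (where $|\alpha|=|\hat\alpha|$ and $|\beta|=|\hat\beta|$) — this is the precise point at which the vanishing condition does its real work.
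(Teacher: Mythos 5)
Your proposal is correct: the paper omits the proof of Lemma \ref{LeafReduction} with the remark that it follows by direct calculation, and your computation---passing to polar coordinates via $\Theta^i_{\0}=\partial_{\theta_i}$, $E_{\0}=\sum_i\rho_i\partial_{\rho_i}$, killing monomials supported on inactive indices (where $\rho_{\sigma(i)}=0$), and substituting the leaf relation $\rho_{\sigma(j)}=\frac{c_{\sigma(j)}}{c_{\sigma(k)}}\rho_{\sigma(k)}$---is exactly that intended calculation, including the key bookkeeping point that $|\hat\alpha|+|\hat\beta|=|\alpha|+|\beta|$ on surviving monomials, which reconciles the $c_{\sigma(k)}^{|\alpha|+|\beta|}$ denominator in \eqref{alphatilde} with the power $\rho_{\sigma(k)}^{|\hat\alpha|+|\hat\beta|}$ in \eqref{LeafReduced}. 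Your reading of the paper's vanishing condition ``$|\hat{\alpha}||\hat{\beta}|<|\alpha||\beta|$'' as the support condition $|\hat\alpha|+|\hat\beta|<|\alpha|+|\beta|$ is the evidently intended one and is the correct hypothesis for the computation.
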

\begin{proof} Proof is omitted. \epr

%%%%%%%%%%%%%%%%%%%%%%%%%%%%%%%%%%%%%%%%%%%%%%%%%%%%%%%%%%%%%%%%%%%%%%%%%%%%%%%%%%%%%%%%%%%%%%%%%%%%%

Since \(\MKC\) is homeomorphic to \(\mathbb{T}_k\times \mathbb{R}^+\), we may identify the leaf-vector field \eqref{LeafReduced} with an Eulerian type vector field plus a rotational vector field on \(\mathbb{T}_k\times \mathbb{R}^+\). This is described as follows. Let \((\varrho_j, \vartheta_j)\) stands for \((\rho_{\sigma(j)}, \theta_{\sigma(j)}).\) Thus, let \((\mathscr{X}_i, \mathscr{Y}_i):=(\cos\vartheta_i, \sin\vartheta_i),\) \((\mathscr{X}, \mathscr{Y})\in \mathbb{T}_k= \mathbb{S}^1\times\mathbb{S}^1\cdots\times\mathbb{S}^1\) and denote
\ba\label{LkC}
&\mathscr{L}_{\mathbb{T}_k\times \mathbb{R}^+}:= \underset{1 \leq i \leq k}{\Span}\!\left\{\frac{q_i\partial}{\partial \vartheta_i}, \sum^k_{j=1}\frac{\hat{c}_j\varrho_k h\partial}{\hat{c}_k\partial \varrho_j}\Big|\, h, q_i\in \mathbb{R}[[\mathscr{X}, \mathscr{Y}, \varrho_k]], q_i(\0)=h(\0)=0\right\}, &\\\nonumber
&\mathcal{R}:=\Span\{g\in\mathbb{C}[[\upsilon_{1}, \ldots, \upsilon_{k}, r]]\,|\,\upsilon_i\in\{z_i, w_i\}\, \hbox{ for }\, i\leq k\},&
\ea where \(\mathbf{z}:=(z, w)=(z_1, \ldots, z_k, w_1, \ldots, w_k),\) \(w_i= \overline{z}_i.\)
We remark that the monomials appearing in \(\mathcal{R}\) are in terms of either \(z_i\) or \(w_i\) but, a monomial cannot include both \(z_i\) and \(w_i\) for the same index \(i\), \eg \(z_1z_2w_3\in\mathcal{R}\) and \(z_1w_1\notin\mathcal{R}\). The main goal in the next lemma is to construct a leaf-dependent Lie algebra structure over the class of leaf-vector fields. The idea is to use the pushforward maps associated with projection of coordinate changes from a complex coordinate system on \(\mathbb{C}^k\times \mathbb{R}^+\) onto \(\mathbb{T}_k\times \mathbb{R}^+\). The Lie algebra structure introduces permissible transformation generators for their leaf-normal form classification.

\begin{thm}[Lie algebra structure on invariant leaves]\label{FractLie}  There exists a linear-isomorphism
\begin{eqnarray}\label{J}
&\Psi: \mathscr{L}_{\mathbb{T}_k\times \mathbb{R}^+}\rightarrow\mathscr{J}:= \underset{1 \leq i \leq k}{\Span}\left\{\frac{r g\partial}{\partial r}, \frac{f_iw_i\partial}{\mathbf{i}\partial w_i}-\frac{f_i z_i\partial}{\mathbf{i}\partial z_i}\Big| g, f_i\in \mathcal{R}\right\},\;\;\;&
\end{eqnarray} where \(f_i(z, w, r)=\overline{f_i}(z, w, r),\) \(g(z, w, r)=\overline{g}(z, w, r),\) \(f_i(\0)=g(\0)=0,\) and \(\varrho, r\in \mathbb{R}^+.\) Further, there are Lie algebra structures on \(\mathscr{L}_{\mathbb{T}_k\times \mathbb{R}^+}\) and \(\mathscr{J}\) so that \(\Psi\) is a Lie isomorphism.
\end{thm}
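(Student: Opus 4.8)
The plan is to realize $\Psi$ as the formal change of coordinates relating two descriptions of the \emph{same} vector fields on the cylinder $\mathbb{T}_k\times\mathbb{R}^+$, and then to transport the ambient commutator bracket across it. On the torus factor I would write $z_i:=\mathscr{X}_i+\mathbf{i}\mathscr{Y}_i$ and $w_i:=\mathscr{X}_i-\mathbf{i}\mathscr{Y}_i$, so that $z_iw_i=\mathscr{X}_i^2+\mathscr{Y}_i^2=1$, and set $r:=\varrho_k$; these invert to $\mathscr{X}_i=\tfrac12(z_i+w_i)$ and $\mathscr{Y}_i=\tfrac1{2\mathbf{i}}(z_i-w_i)$. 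First I would define $\Psi$ on the two families of generators of $\mathscr{L}_{\mathbb{T}_k\times\mathbb{R}^+}$ by
\[
q_i\frac{\partial}{\partial\vartheta_i}\ \longmapsto\ \frac{f_iw_i}{\mathbf{i}}\frac{\partial}{\partial w_i}-\frac{f_iz_i}{\mathbf{i}}\frac{\partial}{\partial z_i},\qquad \sum_{j=1}^{k}\frac{\hat c_j\varrho_k}{\hat c_k}\,h\,\frac{\partial}{\partial\varrho_j}\ \longmapsto\ rg\frac{\partial}{\partial r},
\]
where $f_i,g$ are the images of $q_i,h$ under the substitution $\mathscr{X}_i\mapsto\tfrac12(z_i+w_i)$, $\mathscr{Y}_i\mapsto\tfrac1{2\mathbf{i}}(z_i-w_i)$, $\varrho_k\mapsto r$. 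A direct computation in complex coordinates ($z=x+\mathbf{i}y$, $w=x-\mathbf{i}y$) gives $\Theta_{\0}^i=\mathbf{i}(z_i\partial_{z_i}-w_i\partial_{w_i})$, so the right-hand rotational generator is exactly $f_i\Theta_{\0}^i=f_i\partial/\partial\vartheta_i$ on the circle, while $\sum_j\tfrac{\hat c_j}{\hat c_k}\partial/\partial\varrho_j$ is the radial unit vector along the leaf; thus $\Psi$ merely rewrites the same geometric field in $z,w,r$ coordinates.

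Next I would check that $\Psi$ is a well-defined linear isomorphism. The substitution is a homomorphism of formal power-series rings, and the decomposition of an element of $\mathscr{L}_{\mathbb{T}_k\times\mathbb{R}^+}$ into its $\partial/\partial\vartheta_i$-components and its radial component is unique, so $q_i,h$ are determined and $\Psi$ is well-defined and linear. Bijectivity is where the defining restriction on $\mathcal{R}$ enters: since $z_iw_i=1$ on the circle, every monomial in $z_i,w_i$ reduces to a pure power of $z_i$ or of $w_i$, which is exactly the normal form singled out in the definition of $\mathcal{R}$; under $z_i^m=e^{\mathbf{i}m\vartheta_i}$ and $w_i^m=e^{-\mathbf{i}m\vartheta_i}$ this identifies $\mathcal{R}$ with the formal Fourier/power series on $\mathbb{T}_k\times\mathbb{R}^+$, matching the real trigonometric algebra generated by $\mathscr{X}_i,\mathscr{Y}_i$ subject to $\mathscr{X}_i^2+\mathscr{Y}_i^2=1$. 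The reality conditions $f_i=\overline{f_i}$, $g=\overline{g}$ correspond precisely to invariance under $z_i\leftrightarrow w_i$, i.e. to $q_i,h$ being real-valued; hence Euler's formula furnishes the two-sided inverse $z_i\mapsto\mathscr{X}_i+\mathbf{i}\mathscr{Y}_i$, $w_i\mapsto\mathscr{X}_i-\mathbf{i}\mathscr{Y}_i$, $r\mapsto\varrho_k$, proving $\Psi$ is a linear isomorphism onto $\mathscr{J}$.

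For the Lie structure I would equip $\mathscr{J}$ with the ordinary commutator of (formal) vector fields on $\mathbb{C}^k\times\mathbb{R}^+$ and verify closure, so $\mathscr{J}$ becomes a Lie subalgebra; this reduces to the generator pairings. Using $\Theta_{\0}^i(z_i^m)=\mathbf{i}m z_i^m$ and $\Theta_{\0}^i(w_i^m)=-\mathbf{i}m w_i^m$ one sees that $\Theta_{\0}^i$ preserves $\mathcal{R}$ and creates no mixed $z_iw_i$ terms, after which $[rg_1\partial_r,rg_2\partial_r]=r\big(r(g_1\partial_r g_2-g_2\partial_r g_1)\big)\partial_r$, the bracket $[f_i\Theta_{\0}^i,f_j\Theta_{\0}^j]=f_i\Theta_{\0}^i(f_j)\,\Theta_{\0}^j-f_j\Theta_{\0}^j(f_i)\,\Theta_{\0}^i$, and $[rg\partial_r,f_i\Theta_{\0}^i]=rg\,\partial_r(f_i)\,\Theta_{\0}^i-r\big(f_i\Theta_{\0}^i(g)\big)\partial_r$ all lie again in $\mathscr{J}$, with coefficients in $\mathcal{R}$ vanishing at $\0$. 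Transporting this bracket through $\Psi^{-1}$ defines the bracket on $\mathscr{L}_{\mathbb{T}_k\times\mathbb{R}^+}$, which agrees with the intrinsic commutator of leaf-vector fields because $\Psi$ is a change of coordinates; consequently $\Psi$ is a Lie isomorphism.

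The main obstacle is not the bracket computation, which is routine once the special form is arranged, but the bookkeeping that makes $\Psi$ a bijection. One must confirm that the circle relation $z_iw_i=1$ (mirroring $\mathscr{X}_i^2+\mathscr{Y}_i^2=1$) is precisely the identity that both forces the ``$z_i$ or $w_i$, not both'' normal form defining $\mathcal{R}$ and guarantees that Euler's formula inverts the substitution, and that the $\hat c_j/\hat c_k$ weights in the radial generator collapse consistently to the single field $r\partial_r$ along the leaf. Getting these identifications exactly right is what simultaneously secures the isomorphism and the compatibility of the two Lie brackets.
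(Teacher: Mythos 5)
Your construction follows essentially the strategy the paper indicates (its own proof is omitted ``for briefness,'' but the discussion preceding the theorem announces exactly this: pushforward of the coordinate projection from \(\mathbb{C}^k\times\mathbb{R}^+\) onto \(\mathbb{T}_k\times\mathbb{R}^+\)). Your identification \(z_i=\mathscr{X}_i+\mathbf{i}\mathscr{Y}_i\), \(w_i=\overline{z}_i\), with \(z_iw_i=1\) on the torus forcing the pure-power normal form that defines \(\mathcal{R}\), the recognition that \(\frac{f_iw_i}{\mathbf{i}}\frac{\partial}{\partial w_i}-\frac{f_iz_i}{\mathbf{i}}\frac{\partial}{\partial z_i}=f_i\Theta^i_{\0}\) acts as \(f_i\frac{\partial}{\partial\vartheta_i}\) on the circle, the matching of the reality conditions with invariance under \(z_i\leftrightarrow w_i\), and the collapse of the weighted radial generator to \(rg\frac{\partial}{\partial r}\) along the leaf are all correct, and the linear-isomorphism half of your argument is sound.

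The gap is in the Lie-bracket paragraph. You equip \(\mathscr{J}\) with the \emph{ordinary} commutator of vector fields on \(\mathbb{C}^k\times\mathbb{R}^+\) and assert that the three displayed brackets ``all lie again in \(\mathscr{J}\), with coefficients in \(\mathcal{R}\).'' This fails, because \(\mathcal{R}\) is not closed under multiplication: products create the mixed monomials that the definition of \(\mathcal{R}\) excludes. Concretely, take \(f=z_1+w_1\) and \(\tilde f=\frac{1}{\mathbf{i}}(z_1-w_1)\), both real elements of \(\mathcal{R}\) with no constant term; then \([f\Theta^1_{\0},\tilde f\Theta^1_{\0}]=\big(f\,\Theta^1_{\0}(\tilde f)-\tilde f\,\Theta^1_{\0}(f)\big)\Theta^1_{\0}=4z_1w_1\,\Theta^1_{\0}\), and \(z_1w_1\notin\mathcal{R}\). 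So the ambient commutator does not restrict to \(\mathscr{J}\), and you cannot transport through \(\Psi^{-1}\) a bracket that does not exist on \(\mathscr{J}\); your transport runs in the wrong direction. The repair is short and uses machinery you already set up: take as canonical the intrinsic commutator of vector fields on the manifold \(\mathbb{T}_k\times\mathbb{R}^+\) (the natural structure on \(\mathscr{L}_{\mathbb{T}_k\times\mathbb{R}^+}\)) and \emph{define} the bracket on \(\mathscr{J}\) by transport through \(\Psi\); equivalently, on \(\mathscr{J}\) compose the ambient commutator with the Fourier normal-form reduction \(z_iw_i\mapsto 1\), after which your three bracket formulas become correct and the Jacobi identity is automatic. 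Note also that the same example shows the reduced coefficient can acquire a constant term (\(4z_1w_1\mapsto 4\)), so the constraint \(f_i(\0)=0\) is preserved only on the graded subclass actually produced by Lemma \ref{LeafReduction}, where each trigonometric degree \(d\) carries the factor \({\varrho_k}^{d}\) (resp.\ \(r^d\)); your closure argument, like the theorem's loose statement, glosses over this, but within that graded class the fixed construction goes through.
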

\begin{proof} Proof is omitted for briefness. \epr

\begin{rem}
An alternative dynamics reduction can be made using projective space of the state space. However, this is fruitless due to the fact that the dynamics on the projective space is trivial.
\end{rem}

\section{Parametric leaf-normal form classification }\label{SecLeafNF}

This section is devoted to derive the formal leaf-normal forms of singular systems with multiple Hopf singularity. We use near-identity changes of the state variables. For the parametric vector fields, we also use the rescaling of time and it is also important to allow the state- and time-transformations to depend on the bifurcation parameters; see \cite{Langford2HopdSIADS,LangfTori,AnnMathZung,YuHopfZero,YuNonlinearity,StolovitchNonl,StolovitchAnnMath,ZoladekNF2015,ZoladekNF2015,Stroyzyna17,MacKay,MacKayPhysicaD,Wang3DJDE2014,Wang2014,NFTAMS07,Knobloch86,KangIEEE,IoosNF,Walcher2012,AshwinHopfSN} for a recent literature on normal forms, convergence and their optimal truncations. Given the proof of Theorem \ref{FractLie} and the convenience of notations, we identify leaf-vector fields with those on \({\mathbb{T}_k\times \mathbb{R}^+}.\) Then, our leaf-normal form derivation uses the map \({\hat{{\psi}}_*}^{-1}\) to transform a vector field on \({\mathbb{T}_k\times \mathbb{R}^+}\) into a vector field on \(\mathbb{C}^{k}\times \mathbb{R}^+\). Next, normal forms are derived and then, the pushforward map \(\psi_*\) is applied to project the normal form vector field back to a normal form vector field on \(\mathbb{T}_k\times\mathbb{R}^+.\)

\begin{thm}[The first level \(\MKC\)-leaf normal form]\label{1stLeafNF} For any \(C\in \mathbb{S}^{k-1}_{>0},\) there exists a near-identity changes of the state variables transforming the \(\MKC\)-leaf vector field \(\Theta+v_{\sigma, C}\) given by \eqref{LeafReduced} into a first level \(\MKC\)-leaf normal form \(\Theta_k+v^{(1)}_{\sigma, C},\) where
\begin{eqnarray}\label{LeafNormalForm}
& v^{(1)}_{\sigma, C}:= \sum_{p=0}^{\infty}({x_{\sigma(k)}}^2+{y_{\sigma(k)}}^2)^{p}\mathbf{A}^\sigma_{p}(x_{\sigma(1)}, y_{\sigma(1)}, \ldots, x_{\sigma(k)}, y_{\sigma(k)})^t, &\\\nonumber&
\mathbf{A}^\sigma_{p}:=\diag(\mathbf{A}^\sigma_{1, p}, \mathbf{A}^\sigma_{2, p}, \ldots, \mathbf{A}^\sigma_{k, p}), \quad
\mathbf{A}^\sigma_{i, p}:=({a_{p}}^2+{b^i_{p}}^2)^\frac{1}{2} R_{\theta_{p}^i}, &
\end{eqnarray} \(R_{\theta_{p}^i}\) is the standard counterclockwise rotation matrix, \(\theta_{p}^i:=\tan^{-1}\left(\frac{b^i_{p}}{a_{p}}\right)\) is the rotation angle, and \(a_{0}=0, b^i_{0}=\omega_{\sigma(i)}\) for \(1\leq i\leq k.\) The coefficients \(a_{p}\) and \(b^{j}_p\) are \(C\)-dependent polynomials in terms of \(\tilde{a}_{\hat{\alpha}, \hat{\beta}}^k\) and \(\tilde{b}^{k, j}_{\hat{\alpha}, \hat{\beta}}\) given in equation \eqref{alphatilde} for \(|\hat{\alpha}+\hat{\beta}|\leq p\).
\end{thm}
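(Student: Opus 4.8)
The plan is to carry out a standard semisimple normal-form reduction, but performed on the lifted complex model $\mathbb{C}^{k}\times\mathbb{R}^+$ supplied by the Lie isomorphism $\Psi$ of Theorem \ref{FractLie}, and then to push the result back to $\mathbb{T}_k\times\mathbb{R}^+$. First I would use $\hat{\psi}_*^{-1}$ to transport the leaf-vector field $\Theta_k+v_{\sigma, C}$ of \eqref{LeafReduced} into $\mathscr{J}$, where the unperturbed part $\Theta_k=\sum_{j=1}^{k}\omega_{\sigma(j)}\partial_{\theta_{\sigma(j)}}$ becomes $\sum_j \mathbf{i}\,\omega_{\sigma(j)}(z_j\partial_{z_j}-w_j\partial_{w_j})$, a semisimple operator. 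Grading the admissible generators \eqref{Y} by their radial degree and expanding each trigonometric coefficient $\cos^{\hat{\alpha}}\hat{\theta}\sin^{\hat{\beta}}\hat{\theta}$ into Fourier modes $e^{\mathbf{i}\langle m,\theta\rangle}\leftrightarrow z^{m^+}w^{m^-}$, I would compute the eigenvalues of $\ad_{\Theta_k}$ on a monomial generator: since $\Theta_k$ annihilates both the radial field $\sum_j(\hat{c}_j/\hat{c}_k)\varrho_k\partial_{\varrho_j}$ and each $\partial_{\theta_{\sigma(l)}}$, the eigenvalue attached to the Fourier mode $m$ is simply $\mathbf{i}\langle\hat{\omega},m\rangle$.

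The key step is then the non-resonance hypothesis $\omega_i/\omega_j\notin\mathbb{Q}$: it forces $\langle\hat{\omega},m\rangle=0$ exactly when $m=0$, so $\ker(\ad_{\Theta_k})$ consists precisely of the angle-independent generators. Because $\ad_{\Theta_k}$ is semisimple, each graded component splits as $\ker\oplus\im$, and I would solve the homological equation $[\Theta_k,Y]+(\text{previously generated terms})=(\text{normal-form term})$ order by order via the exponential formula \eqref{w}; this is the standard induction producing a near-identity transformation generated by elements of \eqref{Y}, which therefore preserves the leaf structure. Projection onto the $m=0$ mode replaces $\cos^{\hat{\alpha}}\hat{\theta}\sin^{\hat{\beta}}\hat{\theta}$ by its torus mean, which vanishes unless $|\hat{\alpha}|+|\hat{\beta}|$ is even; this is exactly why only even radial powers $\rho_{\sigma(k)}^{2p}=(x_{\sigma(k)}^2+y_{\sigma(k)}^2)^p$ survive in \eqref{LeafNormalForm}, and why the surviving $a_p,b^j_p$ emerge as $C$-dependent polynomials in the $\tilde{a}^k_{\hat{\alpha},\hat{\beta}},\tilde{b}^{k,j}_{\hat{\alpha},\hat{\beta}}$ of \eqref{alphatilde} with $|\hat{\alpha}+\hat{\beta}|\leq p$.

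Finally I would push the normalized field back through $\psi_*$ and re-express it in the Cartesian pairs $(x_{\sigma(i)},y_{\sigma(i)})$. The angle-independent resonant generators are precisely the amplitude-scaled Euler field $a_p E_{\0}$ and the rotational fields $b^i_p\Theta^i_{\0}$, whose action on each pair $(x_{\sigma(i)},y_{\sigma(i)})$ is the matrix $\left(\begin{smallmatrix} a_p & -b^i_p \\ b^i_p & a_p \end{smallmatrix}\right)=(a_p^2+(b^i_p)^2)^{1/2}R_{\theta_p^i}$; assembling these over $i$ yields the block-diagonal $\mathbf{A}^\sigma_p$. At $p=0$ only the linear part $\Theta_k$ remains, giving $a_0=0$ and $b^i_0=\omega_{\sigma(i)}$ as asserted.

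The main obstacle I anticipate is not the eigenvalue bookkeeping but verifying that the whole reduction descends correctly from $\mathbb{C}^{k}\times\mathbb{R}^+$ to the constrained leaf: on $\MKC$ the $k$ radii are tied by the fixed ratio vector $C$, so the complex amplitudes satisfy $|z_j|=(\hat{c}_j/\hat{c}_k)\varrho_k$ and the torus relation $z_jw_j=1$ collapses all independent modulus monomials into powers of the single amplitude $\varrho_k=\rho_{\sigma(k)}$. Ensuring that the kernel computed in the unconstrained complex model restricts faithfully to the leaf---so that no spurious resonant terms are created or lost, and the coefficient polynomials are correctly read off through \eqref{alphatilde}---is the delicate part; it is handled by working throughout inside the image $\mathscr{J}$ of $\Psi$, where exactly the leaf-admissible generators \eqref{J} live.
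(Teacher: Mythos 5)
Your overall route coincides with the paper's declared strategy: the paper's own proof is suppressed (``direct calculations''), but the preamble of Section \ref{SecLeafNF} prescribes exactly what you do, namely transport the leaf field by \({\hat{\psi}_*}^{-1}\) to \(\mathbb{C}^k\times\mathbb{R}^+\), normalize with respect to the semisimple operator \(\ad_{\Theta_k}\) inside \(\mathscr{J}\), and push back by \(\psi_*\). Your parity argument (the torus mean of \(\cos^{\hat{\alpha}}\hat{\theta}\sin^{\hat{\beta}}\hat{\theta}\) vanishes unless each \(\alpha_j+\beta_j\) is even, whence only \(\rho_{\sigma(k)}^{2p}\) survives) and your final assembly of the resonant generators \(a_pE_{\0}+b^i_p\Theta^i_{\0}\) into the blocks \(({a_p}^2+{(b^i_p)}^2)^{1/2}R_{\theta^i_p}\), with \(a_0=0\), \(b^i_0=\omega_{\sigma(i)}\), are both correct.

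There is, however, one step that is false as written: the claim that the hypothesis \(\omega_i/\omega_j\notin\mathbb{Q}\) ``forces \(\langle\hat{\omega},m\rangle=0\) exactly when \(m=0\).'' Pairwise irrationality of the ratios is strictly weaker than \(\mathbb{Z}\)-linear independence once \(k\geq 3\): for \((\omega_1,\omega_2,\omega_3)=(1,\sqrt{2},1+\sqrt{2})\) all pairwise ratios are irrational, yet \(\omega_1+\omega_2-\omega_3=0\). In that situation \(\ad_{\Theta_k}\) annihilates mixed Fourier modes such as the one carried by \(r^3z_1z_2w_3\) --- note that \(z_1z_2w_3\in\mathcal{R}\) is precisely the paper's own illustrative monomial --- so angle-dependent resonant terms would survive every level of your induction, and the normal form \eqref{LeafNormalForm}, which retains only the \(m=0\) modes, would not be reached. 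For \(k=2\) your deduction is sound. The repair is to invoke the hypothesis in the strength the conclusion actually needs, \ie full non-resonance \(\langle m,\hat{\omega}\rangle\neq 0\) for all \(0\neq m\in\mathbb{Z}^k\) (the natural reading of ``non-resonant \(n\)-tuple Hopf'' in this paper, though equation \eqref{Eq1} literally states only the pairwise condition), and to say so explicitly where you compute \(\ker(\ad_{\Theta_k})\). With that correction, the remainder of your argument --- the graded splitting \(\ker\oplus\im\) by semisimplicity, the order-by-order homological equation solved with generators of type \eqref{Y}, and the verification that the reduction descends faithfully to the leaf because you work inside the image \(\mathscr{J}\) of \(\Psi\) --- goes through and delivers the theorem.
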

\bpr The proof follows direct calculations.
\epr

%%%%%%%%%%%%%%%%%%%%%%%%%%%%%%%%%%%%%%%%%%%%%%%%%%%%%%%%%%%%%%%%%%%%%%%%%%%%%%%%%%%%%%%%%%%%%%%%%%%%%

The family of the first level leaf-normal form vector fields of type \eqref{L1st} is a Lie subalgebra in \(\mathscr{L}_{\mathbb{T}_k\times \mathbb{R}^+}.\) Hence, we denote them by %\(\mathscr{L}^{(1)}_{\mathbb{T}_k\times \mathbb{R}^+}\)
\(\LST\!:= \underset{1 \leq i \leq k}{\Span}\left\{\frac{q_i\partial}{\partial \vartheta_i},
\sum^k_{j=1}h\frac{\hat{c}_j\varrho_k \partial}{\hat{c}_k\partial \varrho_j}\,\big|\,  q_i, h\in \mathbb{R}[[{\varrho_k}^{2}]], m\geq 1\right\}.\) A permissible direction-preserving time-rescaling is given by \(\tau:=(1+T)t,\) where \(T({x_{\sigma(k)}}\!^2+{y_{\sigma(k)}}\!^2)\) is a formal power series without constant terms. This time rescaling transforms a vector field \(v\in\LST\) into \(v+Tv\in \LST.\) Hence, we consider \(\mathscr{R}\) as the formal power series generated by \(Z_i:=\left({x_{\sigma(k)}}\!^2+{y_{\sigma(k)}}\!^2\right)\!^i\) and denote
\(\hat{E}_{\0}:=\sum^k_{j=1}\frac{\hat{c}_j\varrho_k \partial}{\hat{c}_k\partial \varrho_j}.\)

%%%%%%%%%%%%%%%%%%%%%%%%%%%%%%%%%%%%%%%%%%%%%%%%%%%%%%%%%%%%%%%%%%%%%%%%%%%%%%%%%%%%%%%%%%%%%%%%%%%%%
\begin{lem}
The vector space \(\LST\) constitutes a Lie subalgebra in \(\mathscr{L}_{\mathbb{T}_k\times \mathbb{R}^+}\) so that for every
\(\alpha, \beta\in \mathbb{R}\), \(m, n\in \mathbb{Z}^{\geq 0}\) and \(1\leq i\leq j\leq k\), the structure constants are given by
\ba\nonumber
&\left[{\rho_{\sigma(k)}}\!^{2m} \hat{E}_{\0}, \frac{\alpha}{2}{\rho_{\sigma(k)}}\!^{2n} \hat{E}_{\0}\!+\!\frac{\beta}{2}{\rho_{\sigma(k)}}\!^{2l} \Theta^{\sigma(i)}_{\0}\right]\!=\!(m\!-\!n)\alpha{\rho_{\sigma(k)}}\!^{2(m\!+\!n)}\hat{E}_{\0}\!-\!l\beta {\rho_{\sigma(k)}}\!^{2(m+l)}\Theta^{\sigma(i)}_{\0},&\\\label{LeafStructureConstant}
&\left[{\rho_{\sigma(k)}}\!^{2m} \Theta^{\sigma(i)}_{\0}, {\rho_{\sigma(k)}}\!^{2n} \Theta^{\sigma(j)}_{\0}\right]\!=\!0.\,\,&
\ea
The Lie algebra \(\LST\) is also an \(\mathscr{R}\)-module that is consistent with time rescaling of vector fields, where for every \(\beta_i\in
\mathbb{R}\) and \(1\leq i\leq k,\)
\begin{eqnarray}\label{Action R Over algebra}
&Z_m {\rho_{\sigma(k)}}\!^{2n} \hat{E}_{\0}={\rho_{\sigma(k)}}\!^{2(m+n)} \hat{E}_{\0},\quad Z_m\sum_{i=1}^{k} {\beta_i\rho_{\sigma(k)}}\!^{2\sigma(i)} \Theta^{\sigma(i)}_{\0}=\sum_{i=1}^{k}\beta_i{\rho_{\sigma(k)}}\!^{2(m+\sigma(i))} \Theta^{\sigma(i)}_{\0}.&
\end{eqnarray} %where \(z_m:=\left({x_{\sigma(k)}}^2+{y_{\sigma(k)}}^2\right)^m.\)
\end{lem}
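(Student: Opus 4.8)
The plan is to pass to the single leaf-adapted chart $(r,\hat{\theta})$ with $r:=\varrho_k=\rho_{\sigma(k)}\in\mathbb{R}^+$ and $\hat{\theta}=(\vartheta_1,\dots,\vartheta_k)\in\mathbb{T}_k$, to reduce the two generating fields to their simplest coordinate forms, and then to read off every bracket from one-variable computations. The decisive preliminary fact is that on the leaf $\MKC$ the radial coordinates are not independent: the vanishing of the last $k-1$ slots of the chart $\varphi_{\x^\circ}$ in the proof of Theorem \ref{ThmMK} pins them to the ratios $\varrho_j=(\hat{c}_j/\hat{c}_k)\,\varrho_k$. First I would check that $\hat{E}_{\0}=\sum_{j=1}^{k}\frac{\hat{c}_j\varrho_k}{\hat{c}_k}\frac{\partial}{\partial \varrho_j}$ is tangent to the leaf, i.e.\ that it annihilates each constraint function $\varrho_j-(\hat{c}_j/\hat{c}_k)\varrho_k$; then, evaluating it on the leaf coordinates gives $\hat{E}_{\0}(r)=r$ and $\hat{E}_{\0}(\vartheta_i)=0$, so $\hat{E}_{\0}$ restricts to the Euler field $r\,\partial/\partial r$, while trivially $\Theta^{\sigma(i)}_{\0}=\partial/\partial\vartheta_i$.

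With these reductions the three identities in \eqref{LeafStructureConstant} become elementary evaluations of the Wronskian bracket $[Y,v]=(D_{\x}Y)v-(D_{\x}v)Y$ from \eqref{w}. For two radial fields $a(r)\frac{\partial}{\partial r}$ and $b(r)\frac{\partial}{\partial r}$ this bracket equals $(a'b-b'a)\frac{\partial}{\partial r}$; with $a=r^{2m+1}$, $b=r^{2n+1}$ it yields $2(m-n)\rho_{\sigma(k)}^{2(m+n)}\hat{E}_{\0}$. For the mixed pair only the radial derivative of the purely radial coefficient $\rho_{\sigma(k)}^{2l}$ contributes and the $\vartheta$-independence of $\hat{E}_{\0}$ kills the reverse term, giving $[\rho_{\sigma(k)}^{2m}\hat{E}_{\0},\rho_{\sigma(k)}^{2l}\Theta^{\sigma(i)}_{\0}]=-2l\,\rho_{\sigma(k)}^{2(m+l)}\Theta^{\sigma(i)}_{\0}$. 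For two rotational fields both coefficients depend only on $r$ and both fields are $\vartheta$-directional, so both Wronskian terms vanish and $[\rho_{\sigma(k)}^{2m}\Theta^{\sigma(i)}_{\0},\rho_{\sigma(k)}^{2n}\Theta^{\sigma(j)}_{\0}]=0$. Assembling the $\tfrac{\alpha}{2},\tfrac{\beta}{2}$ combination by bilinearity produces the first displayed formula, and the second is the rotational--rotational computation.

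Closure then follows at once: each structure constant is again an even-power radial multiple of $\hat{E}_{\0}$ or of a single $\Theta^{\sigma(i)}_{\0}$, hence lies in $\LST$; extending bilinearly over the coefficients $h,q_i\in\mathbb{R}[[\rho_{\sigma(k)}^{2}]]$ shows $\LST$ is a Lie subalgebra of $\mathscr{L}_{\mathbb{T}_k\times\mathbb{R}^+}$. For the module statement I would note that $\mathscr{R}=\mathbb{R}[[Z_1]]$ with $Z_m=\rho_{\sigma(k)}^{2m}$ acts by multiplication of the radial coefficient, so $Z_m\cdot\rho_{\sigma(k)}^{2n}\hat{E}_{\0}=\rho_{\sigma(k)}^{2(m+n)}\hat{E}_{\0}$ and likewise on the rotational generators; this is well defined and manifestly keeps us inside $\LST$. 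Its consistency with time rescaling is exactly the observation recorded before the lemma, namely that $\tau=(1+T)t$ transforms $v\in\LST$ into $(1+T)v$, i.e.\ acts by the unit $1+T\in\mathscr{R}$.

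The computations are routine; real care is needed in only two places. The first is the reduction $\hat{E}_{\0}\mapsto r\,\partial/\partial r$: one must impose the leaf constraint $\varrho_j=(\hat{c}_j/\hat{c}_k)\varrho_k$ rather than treat the $\varrho_j$ as free coordinates, otherwise spurious off-diagonal radial terms would survive in the brackets. The second is the sign bookkeeping dictated by the Wronskian (as opposed to the standard) bracket convention in \eqref{w}: this is precisely what makes the Euler--Euler bracket come out with the factor $+2(m-n)$ matching \eqref{LeafStructureConstant}, and the opposite convention would flip every structure constant. Neither is a deep difficulty, so I expect the effort to be essentially organizational.
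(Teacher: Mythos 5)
Your proposal is correct and is essentially the paper's own argument: the paper's proof is simply the statement that relations \eqref{LeafStructureConstant} and \eqref{Action R Over algebra} follow by direct calculation with the Wronskian bracket of \eqref{w}, and your computations (the factor $2(m-n)$ giving $(m-n)\alpha$, the mixed bracket $-2l$ giving $-l\beta$, the vanishing rotational--rotational bracket, and the $\mathscr{R}$-action as multiplication by the unit $1+T$) all check out. One minor remark: the reduction $\hat{E}_{\0}\mapsto r\,\partial/\partial r$ via the leaf constraint is a clean simplification but not actually needed to avoid spurious terms, since every coefficient in $\LST$ depends only on $\varrho_k$ and the components of $\hat{E}_{\0}$ are fixed multiples of $\varrho_k$, so the same structure constants emerge from the ambient computation in the coordinates of \eqref{LkC}.
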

\begin{proof} The argument for relations \eqref{LeafStructureConstant} and \eqref{Action R Over algebra} follows direct calculations.
%of the commutator \([v_1, v_2]=v_1v_2-v_2v_1\) while the structure constants \eqref{Action R Over algebra} establishes an \(\mathscr{R}\)-module %structure.
\end{proof}
%%%%%%%%%%%%%%%%%%%%%%%%%%%%%%%%%%%%%%%%%%%%%%%%%%%%%%%%%%%%%%%%%%%%%%%%%%%%%%%%%%%%%%%%%%%%%%%%%%%%%
%The first level normal form of the vector field \(v\) in equation \eqref{Eq1} reads
%\begin{eqnarray}\label{ClassicalNF}
%& v^{(1)}:= \sum_{|\m|=1}^{\infty}\prod_{i=1}^{n}({x_i}^2+{y_i}^2)^{m_i}A_{\m}\x&
%\end{eqnarray}===
%where  \(A_{\m}:=\diag(A^1_{\m}, A^2_{\m}, \ldots, A^n_{\m})\), \text{{\footnotesize $ A^i_{\m}:=\left({a_{\m}}^2+{b^i_{\m}}^2\right)^\frac{1}{2} R_{\theta_{\m}^i}$}} and
%\(\m:=(m_1, \ldots, m_n).\) Here \(R_{\theta_{\m}^i}\) is a rotation matrix with characteristic \(\theta_{\m}^i:=\tan^{-1}\left(\frac{b^i_{\m}}{a_{\m}}\right)\) and
%\(a_{\0}=0, b^i_{\0}=\omega_i\) for \(1\leq i\leq n.\) \coR A \(k+1\)-state dimensional vector field \(w\) is called a \(\mathcal{M}_{k, C}\)-leaf normal form vector field for
% \(v\) when there is a near-identity transformation of a coordinate system on the invariant \(\mathcal{M}_{k, C}\) so that the redacted vector field \(v\) on
%\(\mathcal{M}_{k, C}\) takes the form \(w\) in the new coordinates. Since equation \eqref{ReducedClassicalNF} is invariant on manifold
%\(\mathcal{M}_{k, C}\), the proof directly follows the substitution of \({c_{i}}^2({x_{n_j}}^2+{y_{n_j}}^2)={c_{j}}^2({x_{i}}^2+{y_{i}}^2)\) for \(1\leq i\leq n\)
%in equation \eqref{ClassicalNF}.
%\end{proof}
The leaf-normal form \eqref{LeafNormalForm} in polar coordinates reads
\begin{eqnarray}\label{CartesianForm}
&\Theta_k+ \sum^k_{i=1}\sum_{p\geq 1}{\rho_{\sigma(k)}}\!^{{2p}}\,\frac{a_{p} c_{\sigma(i)}\rho_{\sigma(k)}\partial}{c_{\sigma(k)}\partial \rho_{\sigma(i)}}+ \sum_{j=1}^{k}\sum_{p\geq 1} b^{j}_p{\rho_{\sigma(k)}}\!^{ 2p}\frac{\partial}{\partial \Theta_{\sigma(j)}}.&
\end{eqnarray} The first level parametric normal form of every vector field \eqref{Eq1} is similar to equation \eqref{CartesianForm}, except that the coefficients \(a_{m}\) and \(b_{m}^i\) depend on the parameter vector \(\mu:=(\mu_1, \mu_2, \ldots, \mu_r)\) and we denote them by \(a_{m}(\mu, C)\) and \(b^i_{m}(\mu, C)\) where
\bes a_{0}(0,C)=0, \; b^i_{0}(0,C)=0\quad \hbox{ for }\quad 1\leq i\leq n.\ees
Hence, parametric version of \eqref{CartesianForm} with respect to Eulerian and rotational vector fields in \(\LST\) is expressed by
\begin{eqnarray}\label{PrametricReducedNF}
&\Theta_k+v^{(1)}_{\sigma, \C}:=\sum_{i=1}^{k}\omega_{\sigma(i)}\Theta^{\sigma(i)}_{\0}+\sum_{j\geq 0}a_j(\mu, C){\rho_{\sigma(k)}}\!^{2j} \hat{E}_{\0}+\sum_{i=1}^{k}\sum_{j\geq 0}b_{j}^i(\mu, C){\rho_{\sigma(k)}}\!^{2j}\Theta^{\sigma(i)}_{\0}, &
\end{eqnarray} where we take the notation \(\m:=(m_1, \ldots, m_r)\),\,\(\mu^{\m}:={\mu_1}^{m_1}\cdots{\mu_r}^{m_r},\)
\begin{eqnarray*}
&a_j(\mu, C)=\sum_{|\m|\geq 0}a_{j, \m}\!\left(C\right)\mu^{\m},\; \hbox{ and }\; b^i_j(\mu, C)=\sum_{|\m|\geq 0}b^i_{j, \m}\!\left(C\right)\mu^{\m}\text{\quad for\quad} 1\leq i\leq k.&
\end{eqnarray*} In order to do the hyper-normalisation of vector fields, we define
\begin{equation}\label{seq}
s:=\min\{j|\, a_{j}(\0, C)\neq 0\}
\end{equation} and a grading function by
\bes \delta(\mu^{\m}{\rho_{\sigma(k)}}\!^{2j}\hat{E}_{\0}):=|\m|(s+1)+j,\, \delta(\mu^{\m}{\rho_{\sigma(k)}}\!^{2j}\Theta^{\sigma(i)}_{\0}):=|\m|(s+1)+s+j \quad \hbox{ for } \; 1\leq i\leq k.\ees The grading \(\delta\) decomposes the Lie algebra \(\LST=\sum \LST_i\) into \(\delta\)-homogeneous spaces as a graded Lie algebra, \([\LST_i, \LST_j]\subseteq \LST_{i+j}\). Further, it will be a \(\mathscr{R}\)-graded module.
Let \(\Theta_k+v^{(1)}_{\sigma, \C}:=\sum^\infty_{i=0} v_i\) and \(v_i\in \LST_i\). The map \(d^{i, 1}(T_i, S_i):= T_iv_0+[S_i, v_0]\) is defined for \((T_i, S_i)\in \mathcal{R}_i\times \mathscr{L}_i.\) Using a slight abuse of notation, we inductively define the map
\(d^{i, r}: \ker d^{i-1, r-1}\times \mathcal{R}_i\times \mathscr{L}_i\rightarrow \mathscr{L}_i\) by
\begin{eqnarray*}\label{d}
& d^{i, r}(T^{r-1}_{i-r+1}, \cdots, T^{r-1}_{i-1}, T_i,
S^{r-1}_{i-r+1}, \cdots, S^{r-1}_{i-1}, S_i):= \sum^{r-1}_{i=1} \left(T^{r-1}_{i-i}v_i+ [S^{r-1}_{i-i}, v_i]\right) + T_iv_0+[S_i, v_0],&
\end{eqnarray*}
where \((T^{r-1}_{i-r+1}, \cdots, T^{r-1}_{i-1}, S^{r-1}_{i-r+1}, \cdots, S^{r-1}_{i-1})\in \ker d^{{i-1}, r-1},\) \(i\geq r,\) and \(r\geq 2\). The map \(d^{i, r}\) computes all possible spectral data available as transformation generators to simplify terms in grade \(i\) by using not only the linear part of the vector field but also all terms in the normalising vector field up-to grade \(r-1.\) Thus, \({\rm im}\, d^{i, r}\) represents the space that can be simplified from the vector field in grade \(r\) while a complement space \(\mathcal{C}^r_i\) to \({\rm im}\, d^{i, r},\) for any \(i,\) stands for all possible terms that may not be simplified in the \(r\)-level normal form step; \eg see \cite[Theorem 4.3 and Lemma 4.2]{GazorYuSpec}. Hence, using near-identity changes of state variable and time rescaling (direction preserving), the vector field \(\Theta_k+v^{(1)}_{\sigma, \C}\) can be transformed into a \(r\)-th level parametric normal form \(v^{(r)}:= \sum v_i^{(r)}\), where \(v^r_i\in \mathcal{C}^r_i\). Deriving \(d^{i, i}\) and \(\mathcal{C}^i_i\) for any \(i\geq 1\) gives rise to the computation of the infinite level (simplest) normal form; \eg see \cite{Wang3DJDE2014,GazorYuSpec}.

%%%%%%%%%%%%%%%%%%%%%%%%%%%%%%%%%%%%%%%%%%%%%%%%%%%%%%%%%%%%%%%%%%%%%%%%%

\begin{thm}[Infinite-level parametric leaf-normal forms]\label{InfLPNF} Consider $C\in \mathbb{S}^{k-1}_{>0}$ and a \(\MKC\)-leaf. Then, there are a natural number \(s,\) near-identity changes of state variables and time-rescaling such that they transform the parametric leaf-normal form \eqref{PrametricReducedNF} into the infinite-level parametric leaf-normal form \(\Theta_k+w^{(\infty)}_{\sigma, \C}\) where \(w^{(\infty)}_{\sigma, \C}\) is given by
\begin{small}\begin{eqnarray}\label{ReducedParametricNormalForm}
&\sum^{k, s}_{i=1, j=0}({x_{\sigma(k)}}^2\!+\!{y_{\sigma(k)}}^2)^j\!\Big((\hat{a}_{j}(\mu,\!C) x_{\sigma(i)}\!-\!\hat{b}^i_{j}(\mu,\! C) y_{\sigma(i)})\frac{\partial}{\partial x_{\sigma(i)}}\!+\!(\hat{a}_{j}(\mu,\!C) y_{\sigma(i)}\!+\!\hat{b}^i_{j}(\mu,\!C) x_{\sigma(i)})\frac{\partial}{\partial y_{\sigma(i)}}\Big), \;&
\end{eqnarray}\end{small} where \(\hat{a}_{j}(\0,\!C)=\hat{b}^i_{0}(\0,\! C)=\hat{b}^1_{j}(\mu,\! C)=0\) for \(0\leq j\leq s-1,\) \(1\leq i\leq k,\) and \(0\neq a_s\!:=\!a_s(\0,\!C)=a_s(\mu,\!C).\)
\end{thm}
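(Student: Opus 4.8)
The plan is to run the graded hypernormalization scheme that the excerpt has already assembled, driven entirely by the leading Eulerian term, and to read off the surviving directions directly from the structure constants. Writing \(\rho:=\rho_{\sigma(k)}\), I would first isolate the lowest-grade part \(v_0=\Theta_k+a_s\rho^{2s}\hat{E}_{\0}\) with \(a_s=a_s(\0,C)\neq0\) from \eqref{seq}, and record the decisive simplification: because the rotational fields mutually commute and the mixed bracket \([\rho^{2n}\hat{E}_{\0},\Theta^{\sigma(i)}_{\0}]\) carries the weight \(l=0\) on the frequency block, \(\ad_{\Theta_k}\) annihilates all of \(\LST\). Hence the homological operator underlying every \(d^{i,1}\) collapses to \(\ad_{a_s\rho^{2s}\hat{E}_{\0}}\) together with the \(\mathscr{R}\)-module (time-rescaling) action \eqref{Action R Over algebra}.

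Second, I would compute \(\im d^{i,1}\) grade by grade from \eqref{LeafStructureConstant}. Bracketing the leading Eulerian term against \(\rho^{2l}\Theta^{\sigma(i)}_{\0}\) yields \(-2la_s\rho^{2(s+l)}\Theta^{\sigma(i)}_{\0}\), invertible for \(l\geq1\), so every rotational monomial of radial degree \(\geq s+1\) is removable; bracketing against \(\rho^{2n}\hat{E}_{\0}\) yields \(2a_s(s-n)\rho^{2(s+n)}\hat{E}_{\0}\), invertible except at \(n=s\), and the unique degenerate Eulerian term of degree \(2s\) is instead cleared by the rescaling \(Z_s\hat{E}_{\0}=\rho^{2(2s)}\hat{E}_{\0}\). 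This forces the infinite-level complements \(\mathcal{C}^i_i\) to contain only Eulerian and rotational monomials of radial degree \(\leq s\), which is precisely the truncation \(0\leq j\leq s\) in \eqref{ReducedParametricNormalForm}.

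Third, I would dispose of the two normalizations that live below the leading grade. The leading coefficient is made parameter-free, \(a_s(\mu,C)=a_s(\0,C)\), by choosing the scalar factor \((1+T)\) of the time rescaling to absorb the parametric part of the grade-\(s\) Eulerian coefficient; the single remaining scalar freedom in \((1+T)\) is then spent, recursively in \(j\), to annihilate the first rotational series, \(\hat{b}^1_j\equiv0\) for \(0\leq j\leq s-1\), which is solvable exactly because \(\omega_{\sigma(1)}\neq0\). Since one time rescaling supplies only one scalar function, while rotation generators bracketed against \(v_0\) land solely in radial degree \(\geq s+1\), no further low-degree rotational series can be eliminated — this explains why only the \(i=1\) block is normalized, why \(\hat{b}^i_0(\0,C)=0\) persists from the initialization of \eqref{PrametricReducedNF}, and why the non-parametric \(\hat{a}_j(\0,C)\) vanish for \(j<s\) by the very definition of \(s\). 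Iterating through the maps \(d^{i,r}\) to all orders and invoking the general simplest-normal-form existence result \cite[Theorem 4.3 and Lemma 4.2]{GazorYuSpec} then delivers the formal state- and time-transformation realizing the claimed form.

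I expect the main obstacle to be the bookkeeping of the competition for the single time-rescaling degree of freedom: fixing \(a_s\), eliminating \(\hat{b}^1_j\), and clearing the Eulerian degree \(2s\) all draw on the same factor \((1+T)\), so the operations must be ordered by grade with a verification, at each level, that using \(T\) for one purpose does not resurrect an already-removed higher-degree monomial. The grading \(\delta\) is engineered precisely so that these tasks decouple across grades, and the \(\mathscr{R}\)-graded module compatibility stated after \eqref{Action R Over algebra} guarantees that time rescaling acts diagonally on grades, so the recursion closes; making this convergence-of-levels argument airtight, rather than the individual homological computations, is where the genuine effort lies.
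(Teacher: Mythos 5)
Your proposal follows essentially the same route as the paper's proof: the same grading \(\delta\), the same observation that \(\ad_{\Theta_k}\) annihilates \(\LST\) (so the homological operator reduces to \(\ad_{a_s{\rho}^{2s}\hat{E}_{\0}}\) together with the time-rescaling action), the same inventory of removable directions read off the structure constants \eqref{LeafStructureConstant}, and the same appeal to \cite{GazorYuSpec} for the spectral-sequence machinery. One small slip first: by \eqref{Action R Over algebra}, \(Z_s\hat{E}_{\0}={\rho}^{2s}\hat{E}_{\0}\), not \({\rho}^{2(2s)}\hat{E}_{\0}\); what clears the degenerate Eulerian direction is \(Z_s\) acting on the leading term, \(Z_s\big(a_s{\rho}^{2s}\hat{E}_{\0}\big)=a_s{\rho}^{4s}\hat{E}_{\0}\), which is exactly the paper's choice \(\gamma_{s,\m}:=-a_{2s,\m}(C)/a_s\).

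The genuine gap is your resolution of the competition for \((1+T)\): the claim that ``the grading \(\delta\) is engineered precisely so that these tasks decouple across grades'' is false, and your literal allocation would fail. Making \(a_s\) parameter-free (target \(\mu^{\m}{\rho}^{2s}\hat{E}_{\0}\)) and killing \(\hat{b}^1_0\) (target \(\mu^{\m}\Theta^{\sigma(1)}_{\0}\)) sit in the \emph{same} \(\delta\)-grade \(|\m|(s+1)+s\); more generally, for each \(j<s\) the Eulerian target \(\mu^{\m}{\rho}^{2(s+j)}\hat{E}_{\0}\) and the rotational target \(\mu^{\m}{\rho}^{2j}\Theta^{\sigma(1)}_{\0}\) share the grade \(|\m|(s+1)+s+j\). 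If, as you propose, the time coefficient \(\gamma_{j,\m}\) is spent on the Eulerian target, nothing can remove \(\mu^{\m}{\rho}^{2j}\Theta^{\sigma(1)}_{\0}\): rotation generators only output rotational terms of radial index \(\geq s+1\), and Eulerian generators produce no rotational output at all. The correct mechanism, encoded in the paper's displayed computation of \(d^{|\m|(s+1)+s+j,\,s+1}\), is that within each such grade the pair \((\gamma_{j,\m},\alpha_{j,\m})\) maps onto the two targets via the matrix \(\left(\begin{smallmatrix} a_s & 2(j-s)a_s\\ \omega_{\sigma(1)} & 0\end{smallmatrix}\right)\), whose determinant \(-2(j-s)a_s\omega_{\sigma(1)}\) is nonzero precisely when \(j\neq s\): so \(\gamma_{j,\m}\) is spent on \(\Theta^{\sigma(1)}_{\0}\) and \(\alpha_{j,\m}\) (the state generator \(\mu^{\m}{\rho}^{2j}\hat{E}_{\0}\), which your allocation never invokes for this purpose) on the Eulerian direction. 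At \(j=s\) the matrix degenerates, the single effective coefficient \(\gamma_{s,\m}\) is spent on the Eulerian \({\rho}^{4s}\)-term, and this is exactly why \(\hat{b}^1_s(\mu,C)\) survives in \eqref{ReducedParametricNormalForm} while \(\hat{b}^1_j(\mu,C)=0\) holds only for \(j\leq s-1\). Finally, the level-stabilization step cannot be outsourced wholesale to the citation: the paper closes with the short but necessary argument that generators \(\mu^{\m}Z_j\) and \(\mu^{\m}{\rho}^{2j}\hat{E}_{\0}\) with \(j>s\) enlarge nothing, while those with \(j\leq s\) already contributed at level \(s+1\), whence \(\im\, d^{s+l, s+j+1}=\im\, d^{s+l, s+1}\) for \(l>j\) and level \(s+1\) is already the infinite level; you should supply this rather than gesture at it.
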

\begin{proof} The index for zero vectors indicate their dimension. For \(\rho:= \rho_{\sigma(k)},\) we have
\begin{eqnarray*}
&d^{|\m|(s+1)+s+j,s+1}\left(\gamma_{j,\m}\mu^{\m}Z_{j},\0_s,\alpha_{j,\m} \mu^{\m}{\rho}^{2j}E_{\0}+\sum_{i=1}^{k}\beta^i_{j-s,\m} \mu^{\m}{\rho}^{2(j-s)}\Theta^{\sigma(i)}_{\0},\0_s\right)&\\
&=a_s\left(\gamma_{j, \m}+2\alpha_{j, \m}(j-s)\right)\mu^{\m}{\rho}^{2(s+j)}\hat{E}_{\0}+\sum_{i=1}^{k}\left(\omega_{\sigma(i)}\gamma_{j, \m}+2\beta^i_{j-s,\m}(j-s)a_s\right) \mu^{\m}{\rho}^{2j}\Theta^{\sigma(i)}_{\0}.&
\end{eqnarray*} This implies that all terms \(\mu^{\m}{\rho}^{2(s+j)}\hat{E}_{\0}\) for \(j\geq 0\) and \(\mu^{\m}{\rho}^{2j}\Theta^{\sigma(1)}_{\0}\) for \(j\geq 1\) are simplified from the \(s+1\)-th level orbital leaf-normal form system. Consider the case \(j=s\). Hence, we take \(\alpha_{s,\m}=\beta^i_{0,\m}=0\) for \(i=1, \ldots, k\) and \(\gamma_{s,\m}:=-\frac{1}{{a_s}}a_{2s,\m}(C)\). Therefore, in \(s+1\)-th level parametric leaf-normal form, the Eulerian terms \(\mu^{\m}{\rho}^{2(s+j)}\hat{E}_{\0}\) for \(j\geq 1, |\m|\geq 0\) and \(\mu^{\m}{\rho}^{2s}\hat{E}_{\0}\) for \(|\m|>0\) are normalized in this level. Further, the rotating terms \(\mu^{\m}{\rho}^{2(s+j)}\Theta^{\sigma(i)}_{\0}\) for \(j\geq 1, |\m|\geq 0, 2\leq i\leq k\) and \(\mu^{\m}{\rho}^{2j}\Theta^{\sigma(1)}_{\0}\) for all \(s\neq j\in \{0\}\cup\mathbb{N}\) are simplified. This is indeed the equation \eqref{ReducedParametricNormalForm} in polar coordinates. Time rescaling generators \(\mu^{\m}Z_{j}\) and state transformation generators \(\mu^{\m}{\rho}^{2j}\hat{E}_{\0}\) for \(j>s\) do not have any influence in enlarging the space \(\im\, d^{s+l, s+j+1}\) in normalization levels higher than \(s+1.\) On the other hand, terms \(\mu^{\m}Z_{j}\) for \(j\leq s\) and \(\mu^{\m}{\rho}^{2j}\hat{E}_{\0}\) for \(j<s\) have already contributed in \(\im\, d^{s+l, s+1}\). Further, \(\im\, d^{s+l, s+1}\subseteq \im\, d^{s+l, s+j+1}.\) Therefore,
\(\im\, d^{s+l, s+j+1}=\im\, d^{s+l, s+1}\) for all \(l>j.\) This completes the proof.
\end{proof}

\section{Leaf-bifurcation analysis }\label{SecBif}

When specific initial values are chosen, the state space configuration is realized within an individual flow-invariant leaf. Then,
leaf-transition varieties make a partition for the parameter space into connected regions. All parameters from an open connected region corresponds to qualitatively the same dynamics for the parametric leaf-vector field. Hence, leaf-varieties classify the {\it persistent} qualitative dynamics of the leaf vector field {\it subjected to small parameter-perturbation}. Thus, the individual leaf-choices and leaf-bifurcations contribute into the state-feedback controller designs in practical bifurcation control applications. This section studies the leaf-bifurcations associated with a leaf-parametric normal form \eqref{ReducedParametricNormalForm} for three most generic cases \(s=1, 2, 3\).

\subsection{Leaf cases $s=1$ and \(s=2\) }

\begin{thm}[Leaf case \(s=1\)]\label{Thms1} Consider the parametric leaf-normal form \eqref{ReducedParametricNormalForm} when \(s=1\) in equation \eqref{seq} for some \(2\leq k\leq n\) and \(C\in \mathbb{S}^{k-1, \sigma}_{>0}.\) Then, there is a leaf-dependent bifurcation of an invariant \(\mathbb{T}_k\)-torus from the origin. This leaf-bifurcation is three-determined and its associated bifurcation variety is given by \(T_{Pch}:=\{\nu_0| \nu_0=0\}.\) When \(\nu_0>0\) and \(a_1<0\), the invariant torus is stable while the origin is unstable. For \(a_1>0\) and \(\nu_0<0\), the origin is stable and the \(\mathbb{T}_k\)-torus is repelling.
\end{thm}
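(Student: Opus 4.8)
The plan is to exploit the skew-product structure of the infinite-level leaf-normal form \eqref{ReducedParametricNormalForm} on the leaf $\MKC\cong\mathbb{T}_k\times\mathbb{R}^+$. In the polar coordinates of \eqref{PrametricReducedNF}--\eqref{CartesianForm} the flow splits into a single radial equation for $\rho:=\rho_{\sigma(k)}$, driven by the Eulerian field $\hat{E}_{\0}$, together with angular equations $\dot\theta_{\sigma(i)}=\omega_{\sigma(i)}+O(\rho^2)$ that are slaved to $\rho$; because on the leaf all radii are proportional, $\rho_{\sigma(j)}=\tfrac{c_{\sigma(j)}}{c_{\sigma(k)}}\rho$, the Eulerian term acts as $\rho\,\partial_\rho$ and the radial dynamics closes. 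Imposing $s=1$ in Theorem \ref{InfLPNF} gives $\hat a_0(\0,C)=0$ and a nonzero constant $a_1=a_s(\mu,C)$, so with the abbreviation $\nu_0:=\hat a_0(\mu,C)$ the radial equation reads
\[
\dot\rho=\nu_0\rho+a_1\rho^3 ,
\]
the time-rescaling of Theorem \ref{InfLPNF} having already removed all radial terms of order $\rho^{5}$ and higher. First I would record the dictionary: a flow-invariant $\mathbb{T}_k$-torus inside the leaf is exactly a positive equilibrium $\rho^\ast>0$ of this equation, since $\{\rho=\rho^\ast\}$ is invariant and, the frequencies $\omega_{\sigma(i)}$ being rationally independent, the angular flow covers the whole $k$-torus, so $\{\rho=\rho^\ast\}\times\mathbb{T}_k$ is a genuine $\mathbb{T}_k$.

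Next I would solve and establish determinacy. Dividing by $\rho>0$, equilibria satisfy $\nu_0+a_1\rho^2=0$, i.e. $\rho^\ast=\sqrt{-\nu_0/a_1}$, which is real and positive exactly when $\nu_0$ and $a_1$ have opposite signs; the unique positive branch crosses into existence as $\nu_0$ passes through $0$, yielding the bifurcation variety $T_{Pch}=\{\nu_0\mid\nu_0=0\}$. Since $a_1\neq0$, the cubic term is the lowest-order nonlinearity and the pitchfork is nondegenerate; as the orbital normal form is already exactly cubic in $\rho$, the third-order truncation coincides with the full leaf flow up to orbital equivalence, so the bifurcation is three-determined and no higher jet is needed.

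Finally I would read off the stability by linearising the radial equation, using $\partial_\rho(\dot\rho)=\nu_0+3a_1\rho^2$. At the origin this equals $\nu_0$, so the origin is radially repelling for $\nu_0>0$ and attracting for $\nu_0<0$; at the torus it equals $\nu_0+3a_1(\rho^\ast)^2=-2\nu_0$. Hence for $\nu_0>0,\,a_1<0$ the torus exists and is radially attracting while the origin is unstable, and for $a_1>0,\,\nu_0<0$ the torus exists and is radially repelling while the origin is stable --- precisely the two regimes claimed. The step I expect to be the main obstacle is transferring this one-dimensional radial conclusion to the full $(k+1)$-dimensional leaf: one must check that the skew-product is genuinely closed in $\rho$ (so the radial equation is autonomous and angle-independent) and then invoke normal hyperbolicity --- the single normal direction is hyperbolic when $\nu_0\neq0$ while the directions tangent to $\mathbb{T}_k$ carry only neutral quasi-periodic rotation --- to promote radial stability to stability of the whole invariant torus within the leaf. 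The determinacy bookkeeping against the higher-order terms of \eqref{ReducedParametricNormalForm} is the secondary technical point.
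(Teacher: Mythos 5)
Your proposal is correct and takes essentially the same route as the paper: both reduce to the scalar amplitude equation \(\dot\rho_{\sigma(k)}=\rho_{\sigma(k)}(\nu_0+a_1{\rho_{\sigma(k)}}^2)\) on the leaf, identify the sub/supercritical pitchfork at \(\nu_0=0\) (whence three-determinacy, since for \(s=1\) the infinite-level leaf-normal form \eqref{ReducedParametricNormalForm} is exactly cubic in the radial variable), and obtain existence and stability of the torus \(\rho^*=\sqrt{-\nu_0/a_1}\) from the sign of the radial linearization, exactly as in the paper's computation of \(\frac{d}{d\rho_{\sigma(k)}}f_{k,\sigma(k)}\). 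Your extra remarks on the closed skew-product structure and normal hyperbolicity only make explicit what the paper leaves implicit.
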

\bpr The assumption is equivalent with \(a_1:=a_{1}(0, C)\neq 0\). For \(i=k\) and only looking for the steady-state solutions, this represents a normal form for subcritical and supercritical pitchfork bifurcation at \((\rho_{\sigma(k)}, \nu_0)=(0, 0)\) when \(a_1>0\) and \(a_1<0,\) respectively. Thus, this is a three-determined differential system. The none-zero equilibrium of this system for $a_1\nu_0<0\) is associated with \(\rho^*=(\rho_1, \ldots, \rho_n) =\sqrt{\frac{-\nu_0}{a_1}}\frac{C}{c_{\sigma(k)} }\). Since \(\frac{d}{d \rho_{\sigma(k)}}f_{k, \sigma(k)}\left(\sqrt{\frac{-\nu_0}{a_1}}, \nu_0, C\right)= 2a_1\sqrt{\frac{-\nu_0}{a_1}}<0\) for \(\nu_0>0\) and \(a_1<0,\) an asymptotically stable \(\mathbb{T}_k\)-torus bifurcates from the origin and the origin is unstable. For \(\nu_0<0\) and \(a_1>0,\) an unstable \(\mathbb{T}_k\)-torus bifurcates from the origin while the origin is asymptotically stable.
\epr

\begin{figure}[t]
\centering
\subfloat[Transition sets when $a_2=-1.$ \label{S2a2-}]{
\includegraphics[width=2.2in]{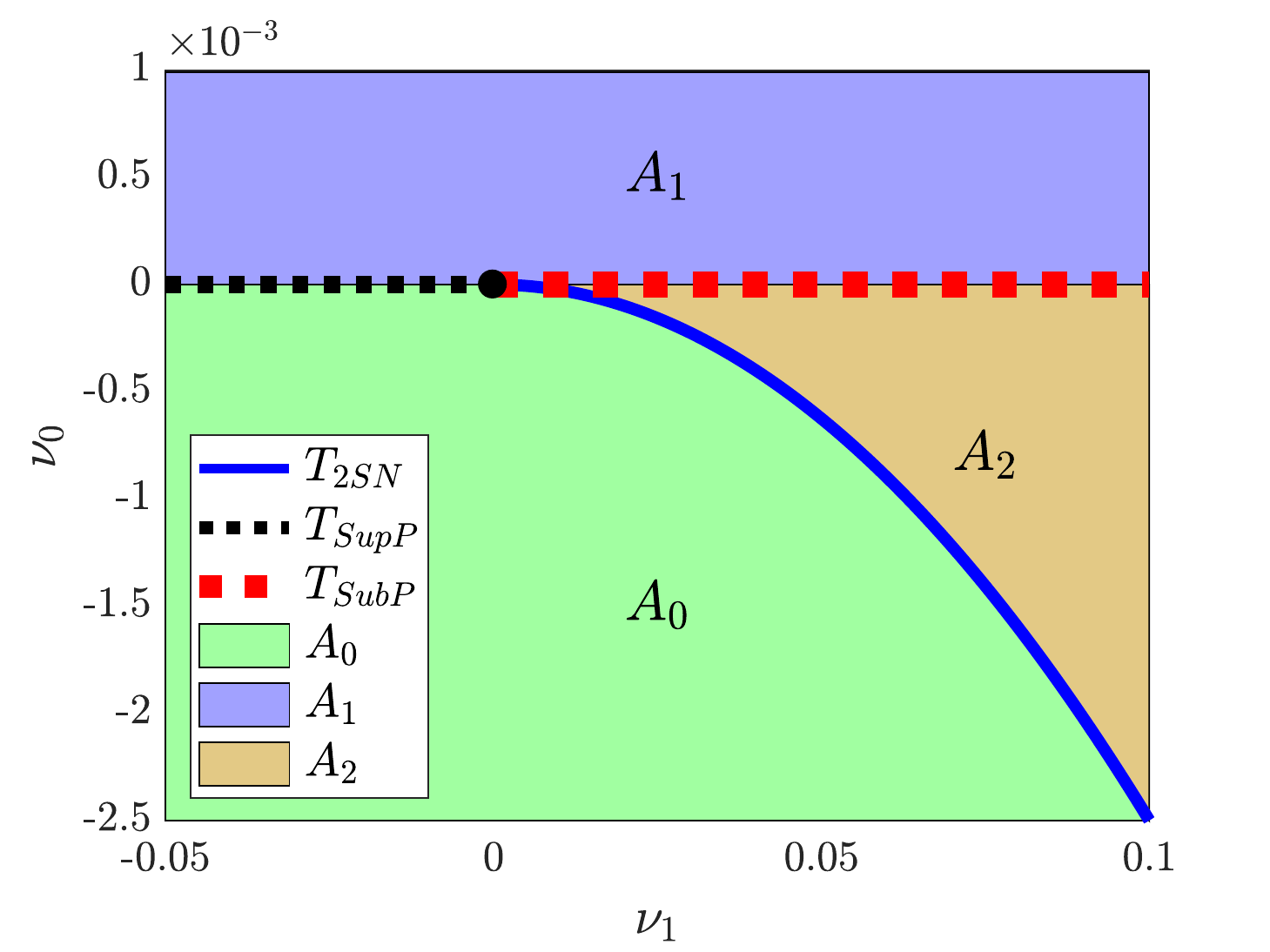}}\;
\subfloat[Curve \(\Gamma\) and bifurcation varieties for $a_2=1$. \label{S2a2+}]{\includegraphics[width=2.2in]{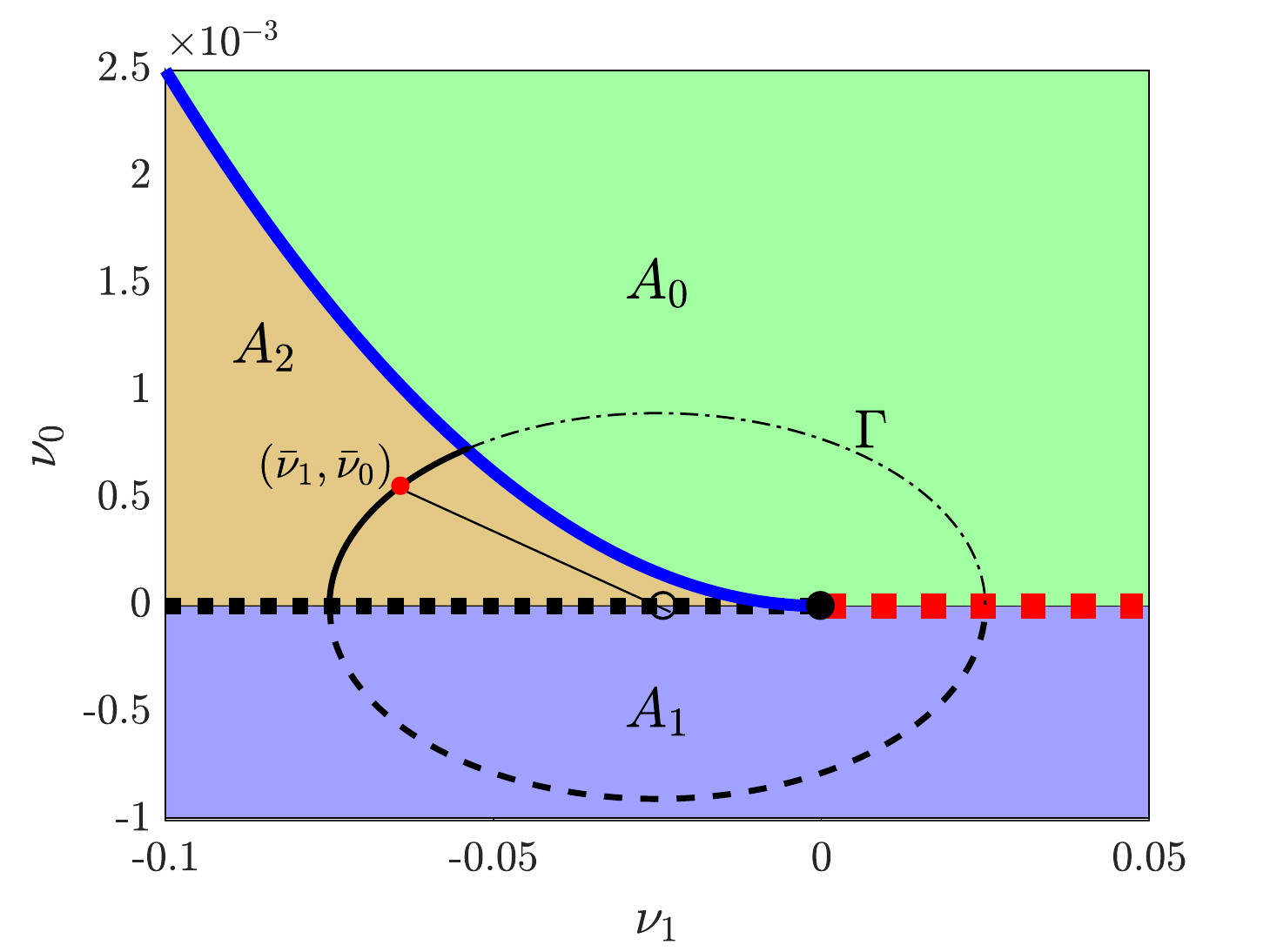}}\;
\subfloat[Root locus corresponding with \(\Gamma\) from Figure \ref{S2a2+}.\label{Sc}]{
\includegraphics[width=1.6in]{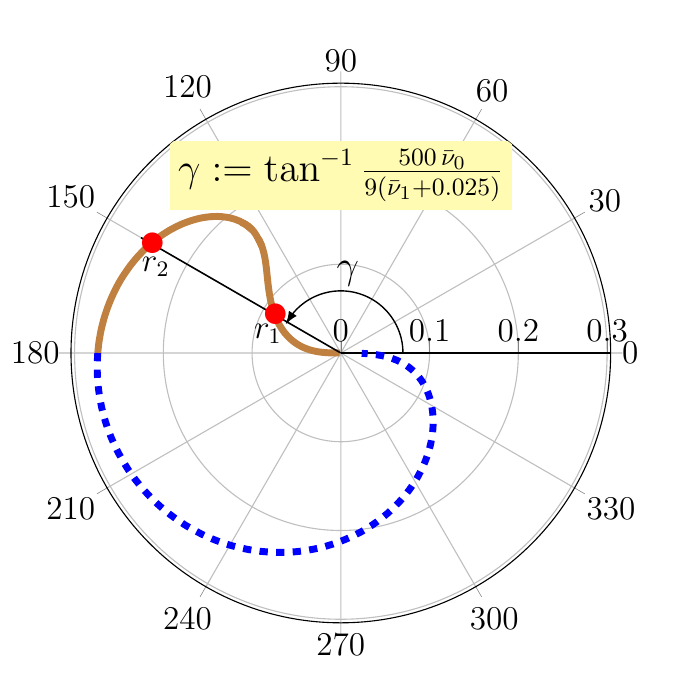}}
\caption{Bifurcation varieties and root loci for the vector field \eqref{ReducedParametricNormalForm}, leaf case \(s=2\). Here, $(\overline{\nu}_1, \overline{\nu}_0)=(\frac{-17}{250}, \frac{9}{2000})$ in Figure \ref{S2a2+} corresponds with roots \(r_1= 0.085\) and \(r_2= 0.245\) in Figure \ref{Sc}. \label{Figleafs2}}
\end{figure}

\begin{thm}[Leaf case \(s=2\)]\label{Thms2} Assume that the leaf parametric normal form \eqref{ReducedParametricNormalForm} is associated with \(s=2\) for some \(2\leq k\leq n\) and \(\C\in \mathbb{S}^{k-1, \sigma}_{>0}.\) A secondary stable flow-invariant \(k\)-hypertorus bifurcates from the origin on \(\MKC\)-leaf variety given by
\be\label{SupP} T_{SupP}:=\{(\nu_0, \nu_1)| \nu_0=0, \nu_1<0\}.\ee
A secondary unstable invariant \(k\)-hypertori bifurcates from the origin on the variety
\be\label{SubP} T_{SubP}:=\{(\nu_0, \nu_1)| \nu_0=0, \nu_1>0\}.\ee
There is a secondary double saddle-node type bifurcation of flow-invariant \(k\)-hypertori at the leaf-transition set
\be\label{2SND} T_{2SD}:=\Big\{(\nu_0, \nu_1)|\Big({\frac{\nu_1}{2a_2}}\Big)^2-\frac{\nu_0}{a_2}=0, a_2\nu_1<0\Big\};\ee
see Figures \ref{Figleafs2}. Here, two invariant hypertori (bifurcated at \(T_{SupP}\) and \(T_{SubP}\)) collide when their radiuses converge, and then, they both disappear as similar to a saddle-node type bifurcation. These bifurcations are five-determined. One of the invariant hypertori always live inside the other one until the radiuses of the inner hypertorus and the outer hypertorus converge. When \({\nu_1}^2>4a_2\nu_0\) and \(\nu_0>0\), the origin and the outer hypertorus are unstable while the inner invariant hypertorus is stable. For \({\nu_1}^2>4a_2\nu_0\) and \(\nu_0<0\), the origin and the outer invariant hypertorus are stable while the inner invariant \(\mathbb{T}_k\)-torus is repelling.
\end{thm}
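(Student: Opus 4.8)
The plan is to reduce the full $2k$-dimensional leaf dynamics to a scalar amplitude (radial) equation and then carry out an elementary but carefully bookkept analysis of its equilibria. First I would read off from the infinite-level normal form \eqref{ReducedParametricNormalForm} (equivalently \eqref{CartesianForm}) that, on the $\MKC$-leaf, the angular coordinates $\theta_{\sigma(j)}$ evolve by rigid rotations governed by $\Theta_k$ together with the $b^j$-terms, while every radius $\rho_{\sigma(j)}$ is slaved to $\rho_{\sigma(k)}$ through the single Eulerian direction $\hat E_{\0}$. Writing $\rho:=\rho_{\sigma(k)}$, this collapses the leaf-flow to
$$\dot\rho = \rho\big(\nu_0 + \nu_1\rho^2 + a_2\rho^4\big), \qquad a_2 := a_2(\0,C)\neq 0,$$
with $\nu_0=\hat a_0(\mu,C)$ and $\nu_1=\hat a_1(\mu,C)$ the two unfolding parameters (for $s=2$ both vanish at $\mu=0$). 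Because the singularity is non-resonant, the angular drift is quasi-periodic and produces no further bifurcation, so the qualitative leaf-dynamics is entirely encoded in this scalar equation; each positive zero $\rho^\ast$ of the bracket lifts, via the homeomorphism $\MKC\cong\mathbb{T}_k\times\mathbb{R}^+$ of Theorem \ref{ThmMK}, to a flow-invariant Clifford $\mathbb{T}_k$-torus of radius $\rho^\ast$.

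Next I would set $R:=\rho^2\ge 0$ and analyse the quadratic $a_2R^2+\nu_1R+\nu_0=0$, whose nonnegative roots are exactly the invariant hypertori. Its discriminant is $\Delta=\nu_1^2-4a_2\nu_0$ and the two branches are $R_\pm=(-\nu_1\pm\sqrt{\Delta})/(2a_2)$. Three degeneracies organise the diagram. When $\nu_0=0$ the quadratic factors as $R(a_2R+\nu_1)=0$, giving a pitchfork at which a torus of radius $\sqrt{-\nu_1/a_2}$ detaches from the origin; this is physical precisely on the half-lines $T_{SupP}$ and $T_{SubP}$, and the sign of $\nu_1$ relative to $a_2$ distinguishes the supercritical birth of the stable branch from the subcritical birth of the unstable one. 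When $\Delta=0$, i.e. $(\nu_1/2a_2)^2-\nu_0/a_2=0$, the two branches coalesce at the double root $R=-\nu_1/(2a_2)$, which is positive exactly when $a_2\nu_1<0$; this is the double saddle-node locus $T_{2SD}$, where the two hypertori collide and annihilate. I would then record stability by evaluating the derivative of the radial field: at a nonzero equilibrium it equals $\rho^\ast F'(\rho^\ast)$ with $F(\rho)=\nu_0+\nu_1\rho^2+a_2\rho^4$, and since $F'$ at $R_\pm$ reduces to $\pm 2\rho^\ast\sqrt{\Delta}$, the $R_-$ branch is always attracting and the $R_+$ branch always repelling; which of the two is the interior torus is decided by the sign of $a_2$ through $R_+-R_-=\sqrt{\Delta}/a_2$. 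Combined with the origin's stability $\sign(-\nu_0)$ this yields the nesting statement and the two stability tables quoted for $\nu_0>0$ and $\nu_0<0$ (the two signs of $a_2$ illustrated in Figure \ref{Figleafs2}).

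For the determinacy claim I would invoke Theorem \ref{InfLPNF}: since \eqref{ReducedParametricNormalForm} is already the infinite-level (simplest) leaf normal form, all monomials beyond degree five are removable, and the number, nesting, stability, and transition varieties of the equilibria are governed by the terms through $\rho^5$; hence the bifurcation is five-determined, just as the $s=1$ case is three-determined. Finally I would check that $T_{SupP}$, $T_{SubP}$ and $T_{2SD}$ are mutually consistent, meeting at the organising centre $(\nu_0,\nu_1)=(0,0)$, and that off these loci the quadratic has a fixed hyperbolic configuration of roots, so the three varieties exhaust the leaf-transition set.

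The step I expect to be the genuine obstacle is not the scalar root count but the faithful passage between the two pictures. Concretely, I must argue that the scalar reduction really captures the full leaf-flow, so that no instability hides in the slaved radii $\rho_{\sigma(j)}$, $j<k$, nor in the quasi-periodic angular motion, and that a hyperbolic zero $\rho^\ast$ of the amplitude equation persists as a normally hyperbolic invariant $\mathbb{T}_k$-torus with the asserted stability in the full $2k$-dimensional sense, so that the saddle-node collision of radii corresponds to an honest collision and disappearance of invariant tori inside $\mathcal{M}_{k,\sigma}$. Keeping the sign bookkeeping straight across the two signs of $a_2$, which interchange which of $R_\pm$ is the inner torus, is the place where errors most easily creep in.
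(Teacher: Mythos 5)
Your proposal is correct and takes essentially the same route as the paper: the paper's proof likewise reduces the leaf-flow to the scalar $\mathbb{Z}_2$-equivariant amplitude equation $\dot{\rho}_{\sigma(k)}=\rho_{\sigma(k)}(\nu_0+\nu_1{\rho_{\sigma(k)}}^2+a_2{\rho_{\sigma(k)}}^4)$ and obtains \eqref{SupP}--\eqref{2SND} from it, the only cosmetic difference being that the paper checks the singularity-theoretic recognition conditions (partial-derivative tests at the origin and, after the substitution $\mu=\frac{\nu_0}{a_2}-\frac{{\nu_1}^2}{4{a_2}^2}$, at $\rho^{\pm}_{\sigma(k)}=\pm\left(\frac{-\nu_1}{2a_2}\right)^{\frac{1}{2}}$), while you perform the equivalent discriminant analysis of the quadratic in $R=\rho^2$ together with an explicit stability computation that the paper states but does not spell out. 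The obstacle you flag at the end is in fact vacuous here, since on a fixed leaf $\MKC\cong\mathbb{T}_k\times\mathbb{R}^+$ all radii are rigidly proportional ($\rho_{\sigma(j)}=\frac{c_{\sigma(j)}}{c_{\sigma(k)}}\rho_{\sigma(k)}$) and the angular motion is a non-resonant rotation, so the scalar reduction is exact rather than an approximation --- which is precisely the (implicit) stance of the paper's proof.
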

\bpr Let \(a_2:=a_2(\0, C)\neq 0\) and \(\nu_1\neq 0.\) Consider the \(\mathbb{Z}_2\)-equivariant differential equation
\bes \dot{\rho}_{\sigma(k)}=f_{k, \sigma(k)}(\rho_{\sigma(k)}, \nu_0):=\rho_{\sigma(k)}(\nu_0+\nu_1 {\rho_{\sigma(k)}}^2+a_2{\rho_{\sigma(k)}}^4).\ees
Then,
\begin{eqnarray*}
&f_{k, \sigma(k)}(0, 0)=\frac{\partial}{\partial \rho_{\sigma(k)}}f_{k, \sigma(k)}(0, 0)=0, \quad \frac{\partial}{\partial \nu_0}f_{k, \sigma(k)}(0, 0)=\frac{\partial^2}{\partial {{\rho}_{\sigma(k)}}^2}f_{k, \sigma(k)}(0, 0)=0,&\\
&\frac{\partial^2}{\partial {{\rho}_{\sigma(k)}}\partial \nu_0}f_{k, \sigma(k)}(0, 0)=1\neq 0,
\quad \hbox{ and }\quad \frac{\partial^3}{\partial {{\rho}_{\sigma(k)}}^3}f_{k, \sigma(k)}(0, 0)=\nu_1\neq 0. &
\end{eqnarray*} This is a five-determined \(\mathbb{Z}_2\)-equivariant type bifurcation and corresponds with the \({\sigma(k)}\)-th amplitude dynamics for trajectories on the \(\MKC\)-leaf manifold. When \(\nu_1<0,\) we have a supercritical pitchfork bifurcation at the origin while \(\nu_1>0\) gives rise to a subcritical pitchfork bifurcation. For \(\nu_1a_2<0,\) take \(\mu:=\frac{\nu_0}{a_2}-\frac{{\nu_1}^2}{4{a_2}^2}\) such that \(f_{k, \sigma(k)}({\rho}_{\sigma(k)}, \nu_0)\) is read by
\(g(\rho_{\sigma(k)}, \mu):=\rho_{\sigma(k)}\big(a_2({\rho_{\sigma(k)}}^2+\frac{\nu_1}{2a_2})^2+\mu\big).\) This function takes its minima at \({\rho}^{\pm}_{\sigma(k)}=\pm\left(\frac{-\nu_1}{2a_2}\right)^{\frac{1}{2}}.\) Since
\begin{eqnarray*}
&g({\rho}^{\pm}_{\sigma(k)}, 0)=\frac{\partial}{\partial \rho_{\sigma(k)}}g({\rho}^{\pm}_{\sigma(k)}, 0)=0,\quad\frac{\partial}{\partial \mu}g({\rho}^{\pm}_{\sigma(k)}, 0)={\rho}^{\pm}_{\sigma(k)}\neq 0,\quad \frac{\partial^2}{\partial {{\rho}_{\sigma(k)}}^2}g({\rho}^{\pm}_{\sigma(k)}, 0)=8a_2{{\rho}^{\pm}_{\sigma(k)}}^3\neq 0,&
\end{eqnarray*} the points \((\rho, \mu)=({\rho}^{\pm}_{\sigma(k)}, 0)\) are bifurcation points and each of them represents a saddle-node type bifurcation.
\epr

 \subsection{Leaf case $s=3$}

\begin{figure}
\centering
\subfloat[$\nu_1=-0.1$.\label{S3v2-}]{\includegraphics[width=2in]{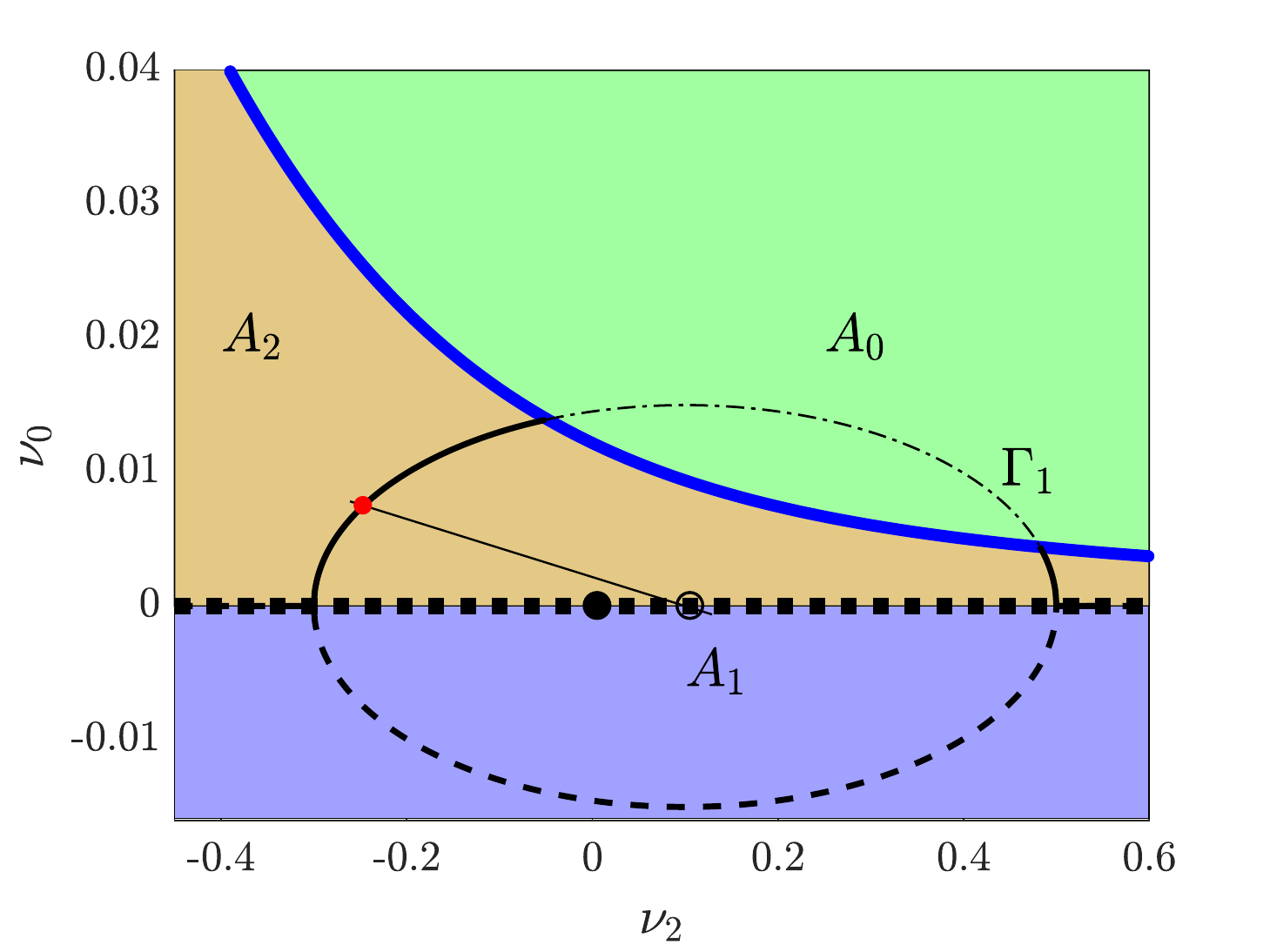}}\!
\subfloat[$\nu_1=0$. \label{S3v2zero}]{\includegraphics[width=2in]{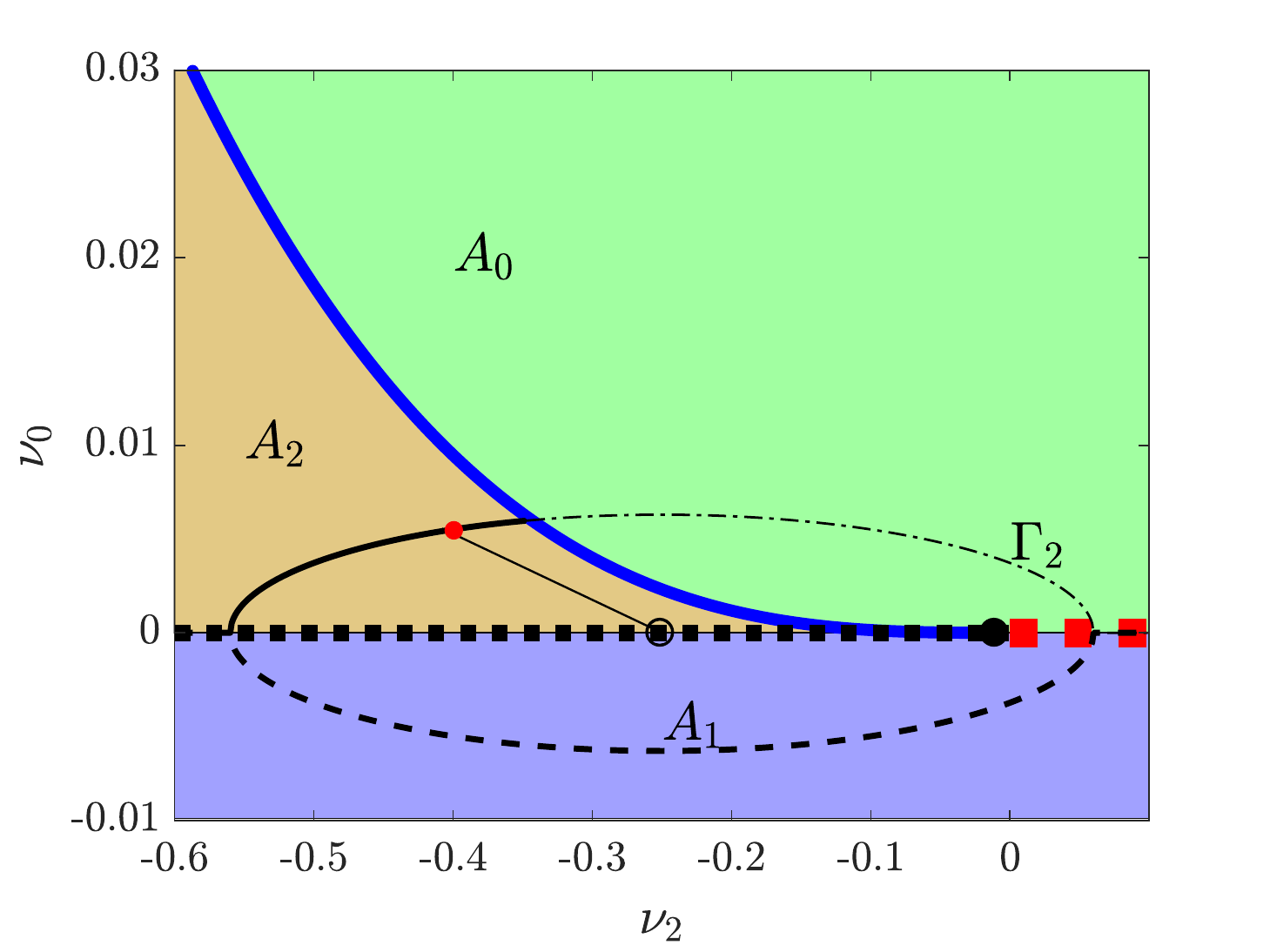}}\!
\subfloat[$\nu_1=0.1$.\label{S3v2+}]{\includegraphics[width=2.46in]{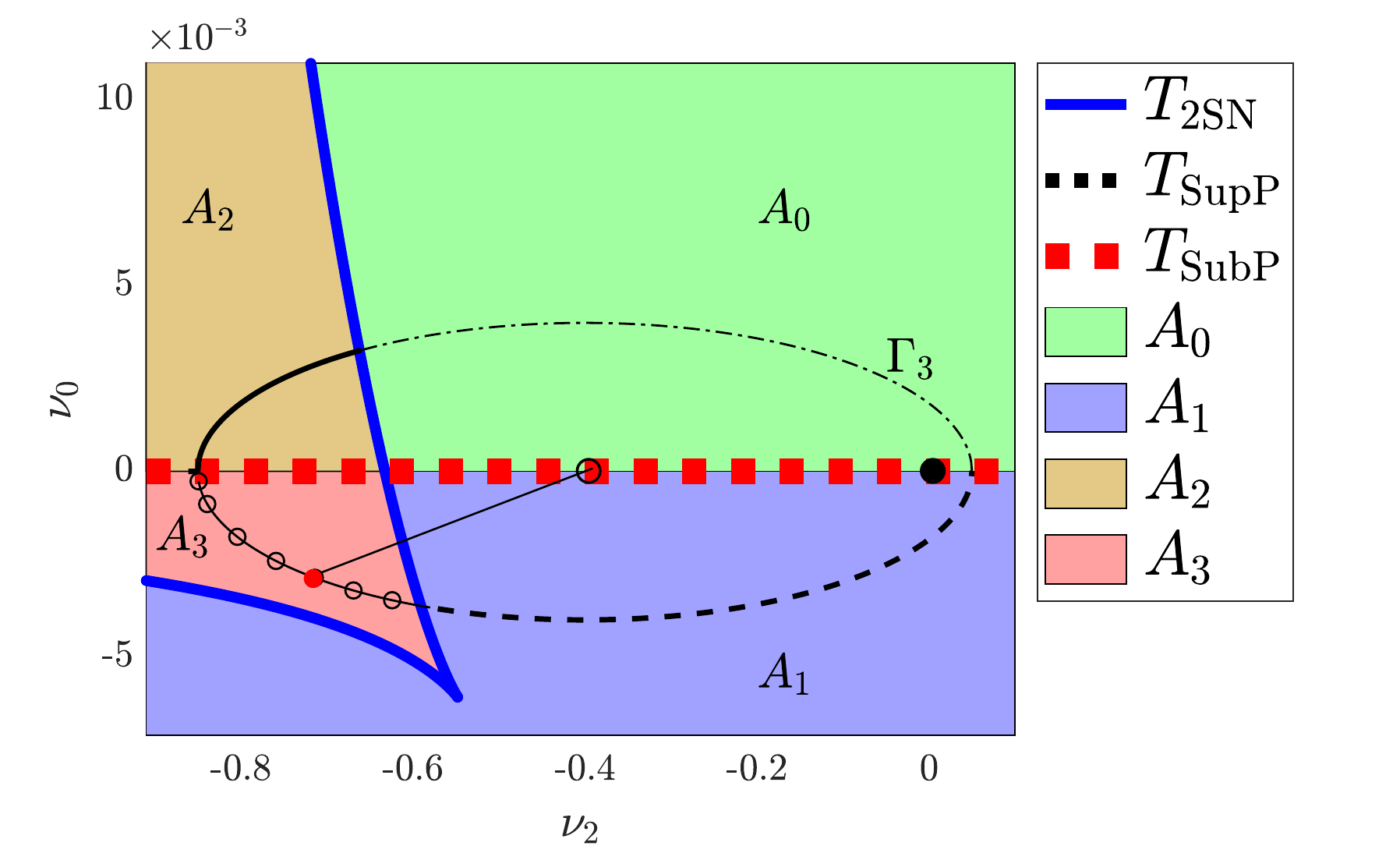}}\\
\caption{Transition sets for leaf case \(s=3\) \label{FigTransS3}}
\end{figure}

\begin{thm}[Leaf case \(s=3\)] Consider the \(\MKC\)-leaf parametric normal form \eqref{ReducedParametricNormalForm}, where \(s=3\) for \(2\leq k\leq n\) and \(\C\in \mathbb{S}^{k-1, \sigma}_{>0}.\) A \(k\)-hypertorus bifurcates from the origin at \(\MKC\)-leaf bifurcation variety
\begin{eqnarray}\label{T_Ps3}
%T_{Psup}:=\{(\nu_0, \nu_1, \nu_2) | \nu_0=0, \nu_2>- 2\sqrt{a_3\nu_1}\},\; T_{Psub}:=\{(\nu_0, \nu_1, \nu_2) | \nu_0=0, \nu_2<- 2\sqrt{a_3\nu_1}\}.
T_{Psup}:=\{(\nu_0, \nu_1, \nu_2) | \nu_0=0\, \text{\rm when either\,} \nu_1<0 \text{\, \rm or\,} \nu_1=0, \nu_2<0\},\nonumber \\
 T_{Psub}:=\{(\nu_0, \nu_1, \nu_2) | \nu_0=0\, \text{\rm when either\,} \nu_1>0 \text{\, \rm or\,} \nu_1=0, \nu_2>0\}.
\end{eqnarray} This hypertorus is unstable when \(\nu_1>0,\) and stable for \(\nu_1<0\). There is a double saddle-node bifurcation variety of hypertori at
\be\label{T_SNs3}
T_{2SN}:=\left\{(\nu_0, \nu_1, \nu_2)\Big| D=0, \hbox{ and either } (\frac{\nu_2}{a_3}<0, \frac{\nu_1}{a_3}> 0) \text{ or } (\frac{\nu_0}{a_3}>0, \frac{\nu_1}{a_3}\leq 0) \right\},
\ee where \(D:= 4\big(\frac{\nu_1}{a_3}-\frac{{\nu_2}^2}{3{a_3}^2}\big)^3 +27\big(\frac{2{\nu_2}^3}{27{a_3}^3} -\frac{\nu_2\nu_1}{3{a_3}^2}+\frac{\nu_0}{a_3}\big)^2.\) These bifurcations are seven-determined.
\end{thm}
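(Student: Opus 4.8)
The plan is to reduce the analysis to the scalar $\sigma(k)$-th amplitude dynamics, exactly as in the $s=1$ and $s=2$ cases. Reading the leaf-normal form \eqref{ReducedParametricNormalForm} with $s=3$ in polar coordinates, the radial equation on the $\MKC$-leaf becomes the $\mathbb{Z}_2$-equivariant ODE
\[
\dot{\rho}_{\sigma(k)}=\rho_{\sigma(k)}\big(\nu_0+\nu_1{\rho_{\sigma(k)}}^2+\nu_2{\rho_{\sigma(k)}}^4+a_3{\rho_{\sigma(k)}}^6\big),
\]
while the angular variables evolve at the frequencies $\omega_{\sigma(i)}$. Nonzero equilibria of this radial equation correspond to flow-invariant $\mathbb{T}_k$-hypertori, and upon setting $u:={\rho_{\sigma(k)}}^2>0$ they are the positive roots of the cubic $P(u):=a_3u^3+\nu_2u^2+\nu_1u+\nu_0$. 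The stability of each hypertorus is then read off from the sign of $P'(u)$ at the corresponding root, so the entire bifurcation picture is governed by the root structure of $P$ restricted to $u>0$.

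For the pitchfork varieties $T_{Psup}$ and $T_{Psub}$, I would verify the standard organizing-center conditions at the origin when $\nu_0=0$. The lowest-order surviving nonlinearity dictates the pitchfork type: when $\nu_1\neq 0$ the bifurcation is supercritical for $\nu_1<0$ and subcritical for $\nu_1>0$, and when $\nu_1=0$ the coefficient $\nu_2$ takes over the role. This is precisely the branching recorded in the definitions of $T_{Psup}$ and $T_{Psub}$. The stability assertion ($\nu_1>0$ unstable, $\nu_1<0$ stable) follows from evaluating $\partial_{\rho_{\sigma(k)}}\dot{\rho}_{\sigma(k)}$ on the emerging branch.

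For the double saddle-node variety $T_{2SN}$, the central observation is that two positive equilibria of the radial ODE collide exactly when $P(u)$ acquires a double root. Depressing the monic cubic $u^3+\tfrac{\nu_2}{a_3}u^2+\tfrac{\nu_1}{a_3}u+\tfrac{\nu_0}{a_3}$ via $u=t-\tfrac{\nu_2}{3a_3}$ produces reduced coefficients $p=\tfrac{\nu_1}{a_3}-\tfrac{{\nu_2}^2}{3{a_3}^2}$ and $q=\tfrac{2{\nu_2}^3}{27{a_3}^3}-\tfrac{\nu_2\nu_1}{3{a_3}^2}+\tfrac{\nu_0}{a_3}$; the discriminant of a depressed cubic vanishes iff $4p^3+27q^2=0$, which is exactly the quantity $D$ in \eqref{T_SNs3}. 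Thus $D=0$ is the locus of a repeated root. I would then confirm the saddle-node nondegeneracy at such a root by checking transversal unfolding in $\nu_0$ together with the nonvanishing of the appropriate second $\rho_{\sigma(k)}$-derivative, reproducing the collision-and-annihilation scenario described for $s=2$ but now occurring among the three radial branches.

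The main obstacle is the sign bookkeeping that isolates the physically meaningful double roots. A repeated root of $P$ need not lie in $u>0$, and only collisions at positive $u$ yield genuine hypertorus saddle-nodes. I would resolve this by combining Descartes' rule of signs with Vieta's relations on $P(u)$ to show that the two listed regimes, namely $(\tfrac{\nu_2}{a_3}<0,\ \tfrac{\nu_1}{a_3}>0)$ and $(\tfrac{\nu_0}{a_3}>0,\ \tfrac{\nu_1}{a_3}\leq 0)$, are precisely the parameter sectors on $\{D=0\}$ for which the double root is positive and is flanked by an admissible branch configuration. Finally, the seven-determinacy follows because the degree-seven truncation (equivalently the degree-three cubic in $u$) already contains every equilibrium and every transversality datum, and all higher-order terms are removable by the near-identity and time-rescaling transformations furnished by Theorem \ref{InfLPNF}; no further normal-form reduction alters the root count or the transversality of these collisions.
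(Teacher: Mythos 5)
Your proposal is correct in substance and, up to the depressed cubic, follows the paper's own skeleton: the same scalar $\mathbb{Z}_2$-equivariant radial equation $\dot{\rho}_{\sigma(k)}=\nu_0\rho_{\sigma(k)}+\nu_1{\rho_{\sigma(k)}}^3+\nu_2{\rho_{\sigma(k)}}^5+a_3{\rho_{\sigma(k)}}^7$, the same substitution $R={\rho_{\sigma(k)}}^2$, the same shift $r=R+\frac{\nu_2}{3a_3}$ yielding $p$ and $q$, and the identification of $\{D=0\}$ with the double-root locus, together with the standard pitchfork and saddle-node nondegeneracy checks (your transversal unfolding in $\nu_0$ is even slightly simpler than the paper's, which unfolds in $D$ itself via an auxiliary function $g(\rho_{\sigma(k)},D)$ and checks $\partial g/\partial D\neq0$). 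Where you genuinely diverge is the positive-root bookkeeping: the paper runs the Routh--Hurwitz criterion, computing $\Delta_1=\frac{\nu_2}{a_3}$, $\Delta_2=\frac{\nu_1\nu_2}{{a_3}^2}-\frac{\nu_0}{a_3}$, $\Delta_3$, counting $n=\var(1,\Delta_1,\frac{\Delta_2}{\Delta_1},\frac{\Delta_3}{\Delta_2})$, and grinding through four Claims covering all nonsingular sign patterns plus the singular strata ($\Delta_1=0$ handled by perturbed determinants $\Delta^*_i$, $\Delta_2=0$ by the factorization $({\rho_{\sigma(k)}}^2+\frac{\nu_2}{a_3})({\rho_{\sigma(k)}}^4+\frac{\nu_1}{a_3})=0$), whereas you propose Descartes plus Vieta restricted to $\{D=0\}$. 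Your route is sound and more elementary for the theorem as stated: on $D=0$ all roots are real, and Vieta's relations $R_1+2R_2=-\frac{\nu_2}{a_3}$, $2R_1R_2+{R_2}^2=\frac{\nu_1}{a_3}$, $R_1{R_2}^2=-\frac{\nu_0}{a_3}$ force the double root $R_2$ to be positive in both listed regimes and, conversely, place any positive double root in one of them, so the sign bookkeeping you defer does close. What the paper's heavier Routh--Hurwitz analysis buys, and your plan does not deliver, is the complete census of the number of positive roots in every open parameter region (the sets $A_i$, $B_i$ and the identity $\mathbb{R}^3\setminus(\cup_{i=0}^3A_i\sqcup\cup_{i=0}^2B_i)=T_P\cup T_{2SN}$), which certifies that the named varieties exhaust all leaf-transitions and underwrites the transition-set figures; your argument establishes the varieties in the statement but not this exhaustiveness. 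Two small cautions: you should actually execute the Descartes/Vieta computation rather than leave it as a plan, and you should note that at $p=q=0$ (the cusp point in the closure of the regime $\frac{\nu_2}{a_3}<0$, $\frac{\nu_1}{a_3}>0$, where the double root becomes triple) the saddle-node nondegeneracy $\partial^2_{\rho_{\sigma(k)}}f\neq0$ fails --- a caveat the paper's proof implicitly shares, since its $\frac{\partial^2}{\partial{\rho_{\sigma(k)}}^2}g(\gamma^{\pm}_i,0)=\pm24\sqrt{\frac{-p}{3}}\,\gamma^{\pm}_i$ also vanishes there.
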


\bpr Consider the \(\mathbb{Z}_2\)-equivariant equation \(\dot{\rho}_{\sigma(k)}= f_{k, \sigma(k)}(\rho_{\sigma(k)}, \nu_0, \nu_1, \nu_2):=\nu_0\rho_{\sigma(k)}+\nu_1{\rho_{\sigma(k)}}^3+\nu_2{\rho_{\sigma(k)}}^5 + a_3 {\rho_{\sigma(k)}}^7,\) where \(a_3\neq0.\) It is easy to prove that this system is a seven-determined differential equation and so is the \(\MKC\)-leaf parametric normal form \eqref{ReducedParametricNormalForm}. We first prove the following claims:
\begin{itemize}
  \item[] Claim 1. The function \(f_{k, \sigma(k)}\) has 3 distinct positive roots if and only if \((\nu_0, \nu_1, \nu_2)\in A_3.\)
  \item[] Claim 2. The parameters \((\nu_0, \nu_1, \nu_2)\in A_2\cup B_2\) if and only if \(f_{k, \sigma(k)}\) has 2 distinct positive roots.
  \item[] Claim 3. The map \(f_{k, \sigma(k)}\) has one positive root if and only if \((\nu_0, \nu_1, \nu_2)\in A_1\cup B_1.\)
  \item[] Claim 4. The function \(f_{k, \sigma(k)}\) has no positive root if and only if \((\nu_0, \nu_1, \nu_2)\in A_0\cup B_0.\)
\end{itemize}
We take \(R:={\rho_{\sigma(k)}}^2\) and consider
\begin{eqnarray}\label{Eq3}&\frac{\nu_0}{a_3} +\frac{\nu_1 }{a_3}{R}+\frac{\nu_2 }{a_3} {R}^2 + {R}^3=0.&\end{eqnarray}
By substitution $ r:={R}+\frac{\nu_2}{3a_3},$ we obtain
\ba\label{Eq4}
&r^3+p r+q=0, \qquad\hbox{ where }\quad p:=-\frac{{\nu_2}^2}{3{a_3}^2}+ \frac{\nu_1}{a_3} \,\hbox{ and } \, q:=-\frac{\nu_2\nu_1}{3{a_3}^2}+\frac{2{\nu_2}^3}{27{a_3}^3}+\frac{\nu_0}{a_3}.&
\ea The number of real roots of the equations \eqref{Eq3} and \eqref{Eq4} are equal. Let $D=\left(\frac{p}{3}\right)^3+\left( \frac{q}{2}\right)^2.$ We discuss the number of real roots by \({\sc sign}(D)\) while the number of roots with positive real part are addressed by Routh-Hurwitz Theorem.
\begin{figure}[t]
\centering
\subfloat[Corresponding with \(\Gamma_1\) from Figure \ref{S3v2-} for \(\nu_1=-0.1\).\label{RLS3v1Min}]{\includegraphics[width=2in]{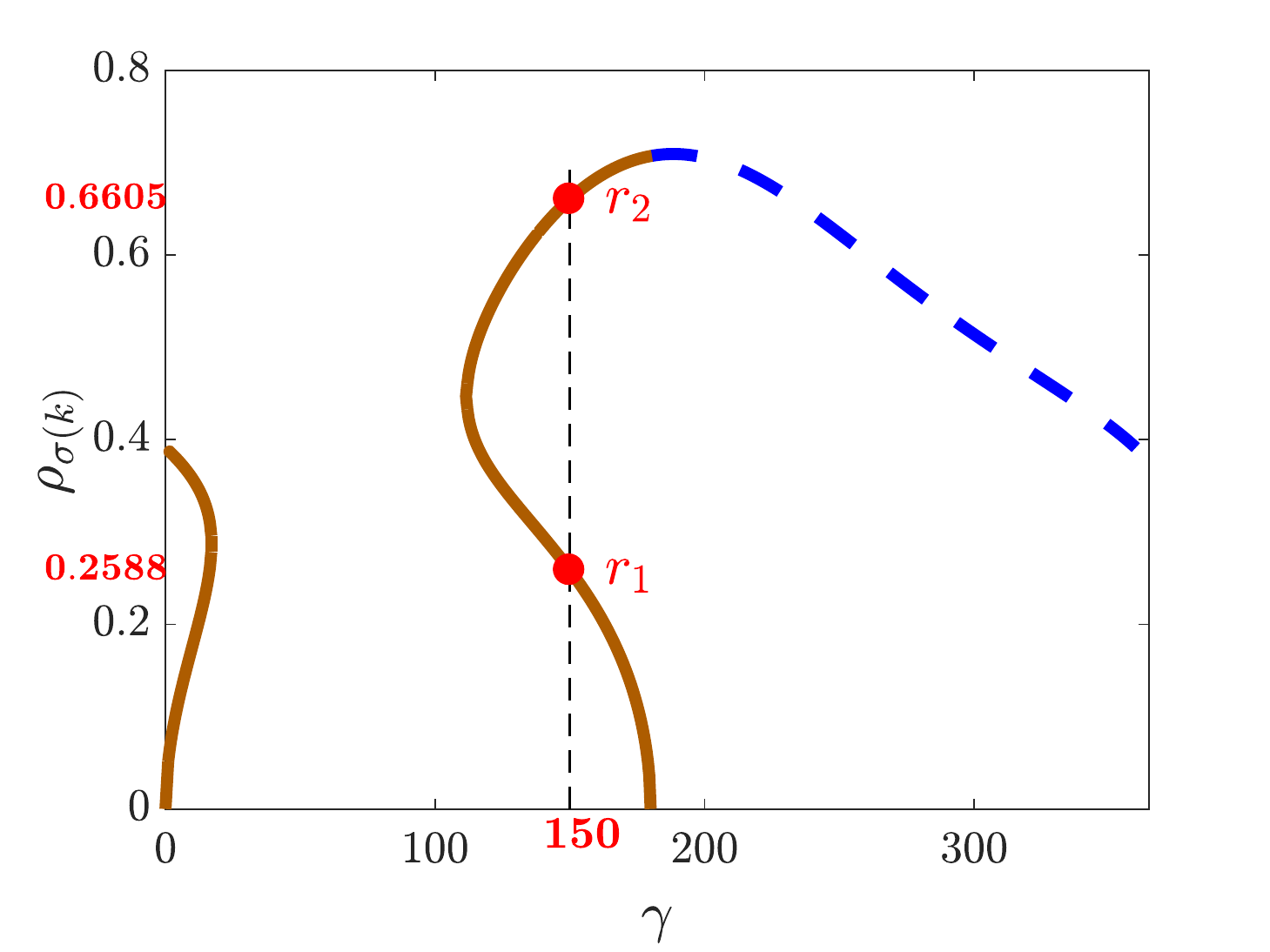}}\,
\subfloat[Associated with \(\Gamma_2\) on Figure \ref{S3v2zero} for \(\nu_1=0\).\label{RLS3v1Zero}]{\includegraphics[width=2in]{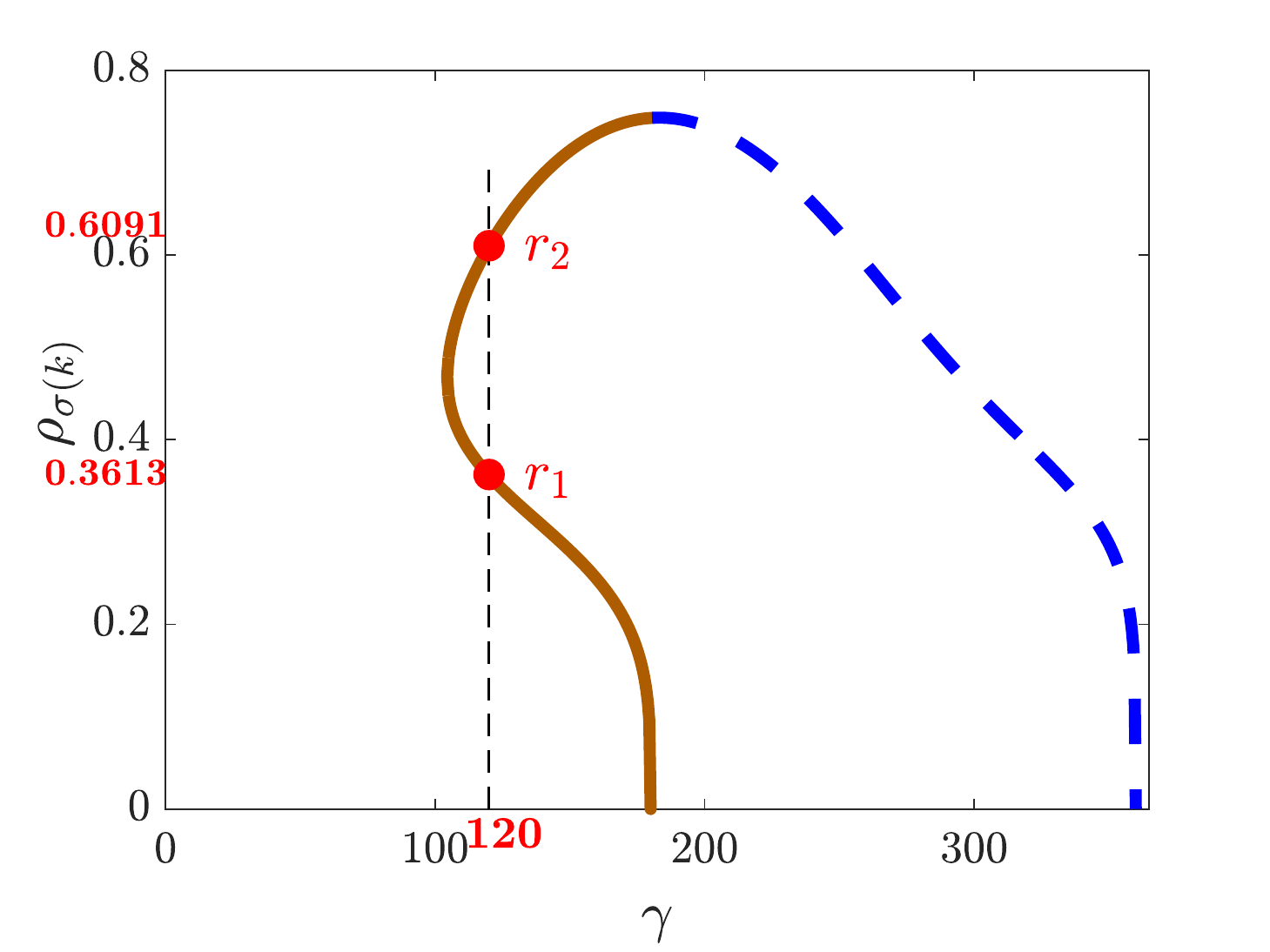}}\,
\subfloat[Radius loci for the ellipse \(\Gamma_3\) from Figure \ref{S3v2+} when \(\nu_1=0.1\).\label{RLS3v1Zero}]{\includegraphics[width=2.6in]{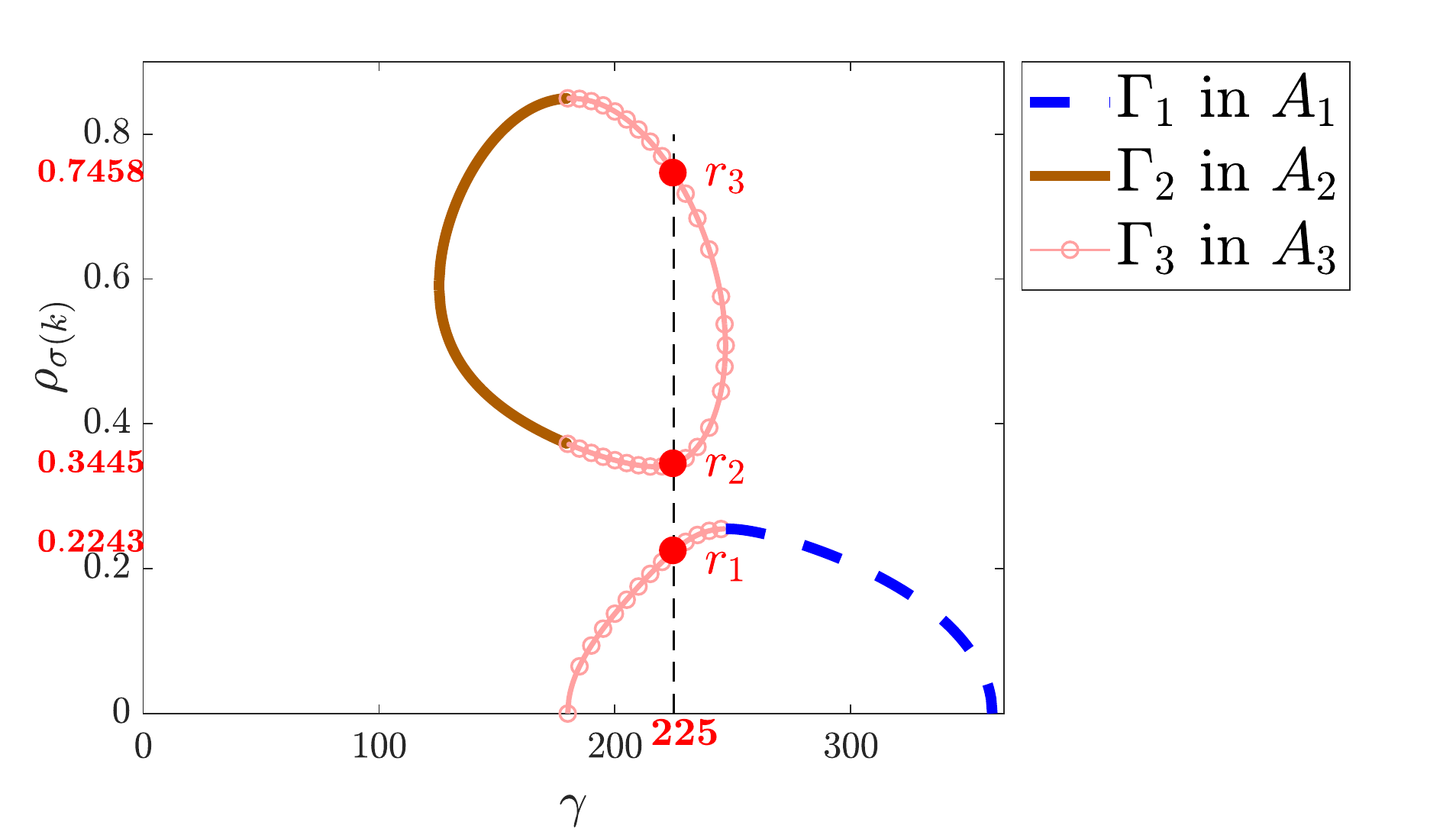}}
\caption{Radius loci of the invariant tori on $\MKC$ along the ellipses \(\Gamma_i, i=1, 2, 3\) when \(\gamma_1:=\frac{4\nu_0}{0.15(\nu_2-0.1)},\) \(\gamma_2:=\frac{31\nu_0}{.54(\nu_2+0.25)},\) and \(\gamma_3:=\frac{46\nu_0}{.47(\nu_2+0.4)},\) associated with
equation \eqref{ReducedParametricNormalForm} where \(s=3.\) }
\end{figure}
The number $0\leq n\leq 3$ of roots of the real polynomial \eqref{Eq3} which lie in the right half-plane is given by the formula
\(n=\var(1, \Delta_1, \frac{\Delta_2}{\Delta_1}, \frac{\Delta_3}{\Delta_2})\) where the function \(\var\) represents the number of sign variations in its  arguments while Hurwitz determinants follow
\begin{eqnarray}\label{HurwitzDeterminants}
&\Delta_1:=\frac{\nu_2}{a_3},\quad \Delta_2:=\frac{\nu_1\nu_2}{{a_3}^2}-\frac{\nu_0}{a_3},\quad \Delta_3:=\frac{\nu_0}{a_3}\frac{\nu_1\nu_2-a_3\nu_0}{{a_3}^2}.&
\end{eqnarray} We only discuss the singular cases of either \(\Delta_1=0\) or \(\Delta_2=0.\) The case \(\Delta_3=0\) leads to either \(\Delta_2=0\) or \(\nu_0=0\). The latter gives rise to \(\Delta_1:=\frac{\nu_2}{a_3},\) \(\Delta_2:=\frac{\nu_1\nu_2}{{a_3}^2},\) and the claim is straightforward.

For \(\Delta_1=0,\) we have \(\nu_2=0.\) Thus, we may use the modified Hurwitz determinants \(\Delta^*_2\) and \(\Delta^*_3\) as long as they retain the same signs as of \(\Delta_2\) and \(\Delta_3,\) respectively. Hence, we introduce \(\nu_2=\epsilon\) where \(\epsilon\) is a positive small number and the modified Hurwitz determinants by \(\Delta^{*}_1:=\epsilon, \Delta^{*}_2:=\frac{\epsilon\nu_1}{a_3}-\frac{\nu_0}{a_3}\) and \(\Delta^{*}_3:=\frac{\nu_0(\epsilon\nu_1-\nu_0)}{{a_3}^2}.\) Therefore, we have
\begin{eqnarray}\label{nu2=0}
&\sign(\Delta^{*}_1)>0, \quad \sign(\frac{\Delta_2^*}{\Delta_1^*})=\sign(\frac{-\nu_0}{\epsilon a_3}),\quad \hbox{ and } \quad \sign(\frac{\Delta_3^*}{\Delta_2^*})=\sign(\frac{\nu_0}{a_3}).&
\end{eqnarray} Hence, we can alternatively study the sign variation function \(\var(1, \Delta_1^*, \frac{\Delta_2^*}{\Delta_1^*}, \frac{\Delta_3^*}{\Delta_2^*}).\)

For the singular case \(\Delta_2=0,\) we have \(\nu_1\nu_2=a_3\nu_0.\) Thus, equation \eqref{Eq3} can be factored and is read as \(({\rho_{\sigma(k)}}^2+\frac{\nu_2}{a_3})({\rho_{\sigma(k)}}^4+\frac{\nu_1}{a_3})=0\). Hence, we discuss the roots of this reduced equation.

Claim 1. The equations \eqref{Eq3} has 3 distinct positive roots iff \(D<0\) and \(n=3.\) Since \(\var(1, \Delta_1, \frac{\Delta_2}{\Delta_1}, \frac{\Delta_3}{\Delta_2})=3,\) \(\Delta_1<0, \frac{\Delta_2}{\Delta_1}>0, \frac{\Delta_3}{\Delta_2}<0.\)
By Hurwitz determinants \eqref{HurwitzDeterminants}, we introduce
\begin{eqnarray*}
&S_3:=\{(\nu_0, \nu_1, \nu_2)|\frac{\nu_2}{a_3}<0, \frac{\nu_1\nu_2-a_3\nu_0}{a_3\nu_2}>0, \frac{\nu_0}{a_3}<0, D<0\}.&
\end{eqnarray*} Therefore, \(\frac{\nu_1}{a_3}-\frac{\nu_0}{\nu_2}>0\) and \(\frac{\nu_1}{a_3}>\frac{\nu_0}{\nu_2}>0.\) This implies that \(S_3\subseteq A_3.\) Let \((\nu_0, \nu_1, \nu_2)\in A_3\). Hence, \(\frac{\nu_2}{a_3}<0,\) \(\frac{\nu_0}{a_3}<0,\) \(\frac{\nu_1}{a_3}>0,\) and \(D<0.\) We claim that
\(\frac{\nu_1\nu_2-a_3\nu_0}{a_3\nu_2}=\frac{\Delta_2}{\Delta_1}>0\). Our argument is by contradiction. Suppose that \(\frac{\Delta_2}{\Delta_1}=\frac{\nu_1\nu_2-a_3\nu_0}{a_3\nu_2}<0.\) Since \(a_3\nu_2<0,\) \(a_3\nu_0<\nu_1\nu_2<0.\) Hence, we have
\begin{eqnarray}\label{Ineq1}
 &D=-\frac{{\nu_2}^2{\nu_1}^2}{ 108{a_3}^4 }-\frac{{\nu_1}{\nu_2}{\nu_0}}{ 6{a_3}^3 }+\frac{{\nu_1}^3}{ 27{a_3}^3 }+\frac{{\nu_2}^3{\nu_0}}{ 27{a_3}^4 }+\frac{{\nu_0}^2}{4{a_3}^2}
 \geq \frac{{\nu_1}^3}{ 27{a_3}^3 }+\frac{{\nu_2}^3{\nu_0}}{ 27{a_3}^4 }+\frac{2{\nu_0}^2}{ 27{a_3}^2 }>0.&
 \end{eqnarray} This, however, is a contradiction. Therefore, \(\frac{\Delta_2}{\Delta_1}>0\) and \(A_3= S_3\).

Claim 2. Let \(\nu_0\neq 0\). The equation \eqref{Eq4} has two distinct positive roots iff $D< 0$ and \(n=2.\) We define
\begin{eqnarray*}
&S_2:=\{(\nu_0, \nu_1, \nu_2)|\var(1, \Delta_1, \frac{\Delta_2}{\Delta_1}, \frac{\Delta_3}{\Delta_2})=2, D<0\}&
\end{eqnarray*} and show that \(S_2=A_2\cup B_2.\) The condition \(n=2\) for none-zero Hurwitz determinants gives rise to three different cases:
\begin{eqnarray*}
&\text{(i)}\; \Delta_1>0, \frac{\Delta_2}{\Delta_1}<0,\frac{\Delta_3}{\Delta_2}>0,\quad
\text{(ii)}\; \Delta_1<0, \frac{\Delta_2}{\Delta_1}>0,\frac{\Delta_3}{\Delta_2}>0,\quad
\text{(iii)}\; \Delta_1<0, \frac{\Delta_2}{\Delta_1}<0,\frac{\Delta_3}{\Delta_2}>0.&
\end{eqnarray*} The singular cases \(\Delta_1^*=0\) and \(\Delta_2^*=0\) adds the following two more cases:
\begin{eqnarray*}
&\text{(iv)}\;\Delta^{*}_1=\epsilon>0, \frac{\Delta_2^*}{\Delta_1^*}=\frac{-\nu_0}{\epsilon a_3}<0, \frac{\Delta_3^*}{\Delta_2^*}=\frac{\nu_0}{a_3}>0, \quad \hbox{ and }\quad
\text{(v)}\;\frac{\nu_1}{a_3}<0, \frac{\nu_2}{a_3}<0.&
\end{eqnarray*} Let \((\nu_0, \nu_1, \nu_2)\in S_2\) satisfy case (i). So, \(\frac{\nu_2}{a_3}>0,\) \(\frac{\nu_0}{a_3}>0,\) and \(\frac{\nu_1\nu_2-a_3\nu_0}{a_3\nu_2}<0.\)
 The latter inequality implies that \(\nu_1\nu_2<a_3\nu_0\). These conditions along with $D<0$ are satisfied only when
 \(\frac{\nu_1}{a_3}<0\). Otherwise, \(\frac{\nu_1}{a_3}\geq 0\) gives rise to \eqref{Ineq1}. This is a contradiction. Hence, \(\frac{\nu_1}{a_3}<0\) and \((\nu_0, \nu_1, \nu_2)\in B_2.\) If \((\nu_0, \nu_1, \nu_2)\in S_2\) satisfies
case (ii), we have \(\frac{\nu_2}{a_3}<0,\) \(\frac{\nu_0}{a_3}>0,\) and \(\frac{\nu_1\nu_2-a_3\nu_0}{a_3\nu_2}>0.\) Similar to the case (i),  the condition \(D<0\) infers that \(\frac{\nu_1}{a_3}>0\) and \((\nu_0, \nu_1, \nu_2)\in A_2.\) Now assume that \((\nu_0, \nu_1, \nu_2)\in S_2\) satisfies the case (iii).
Then, \(\frac{\nu_2}{a_3}<0,\) \(\frac{\nu_0}{a_3}>0,\) and \(\frac{\nu_1}{a_3}< \frac{\nu_0}{\nu_2}<0\). This implies \((\nu_0, \nu_1, \nu_2)\in B_2.\) Either of the singular cases (iv) and (v) along with \(D<0\) concludes that \((\nu_0, \nu_1, \nu_2)\in B_2.\) For \((\nu_0, \nu_1, \nu_2)\in A_2,\) we have \(\frac{\nu_1}{a_3}>0>\frac{\nu_0}{\nu_2}.\) So, \((\nu_0, \nu_1, \nu_2)\) satisfies case (iii) and as a result \((\nu_0, \nu_1, \nu_2)\in S_2.\) When \((\nu_0, \nu_1, \nu_2)\in B_2\) and \(\frac{\nu_2}{a_3}>0\), this results in
\(\frac{\Delta_2}{\Delta_1}= \frac{\nu_1\nu_2-a_3\nu_0}{a_3\nu_2}<0.\) Thereby, the parameters satisfy the case (i). When \((\nu_0, \nu_1, \nu_2)\in B_2,\) \(\frac{\nu_2}{a_3}<0\), and \(\frac{\Delta_2}{\Delta_1}= \frac{\nu_1\nu_2-a_3\nu_0}{a_3\nu_2}\) is either positive or negative, the parameters satisfy one of cases (ii) and (iii). Hence, \((\nu_0, \nu_1, \nu_2)\in S_2.\) For \((\nu_0, \nu_1, \nu_2)\in B_2, \frac{\nu_2}{a_3}<0\) and \(\frac{\Delta_2}{\Delta_1}= \frac{\nu_1\nu_2-a_3\nu_0}{a_3\nu_2}=0,\) singular case (v) is satisfied. Finally when \((\nu_0, \nu_1, \nu_2)\in B_2\) and \(\nu_2=0,\) the singular case (iv) results.

  %{\color{red} If \((\nu_0, \nu_1, \nu_2)\in B_2,\) \(\nu_0=0,\) and \(0<\frac{\nu_1}{a_3}\leq \frac{{\nu_2}^2}{4{a_3}^2}\) holds, the equation \eqref{Eq3}
%  has two positive roots. This claim is similar to Claim (i) in the proof of Theorem \ref{Thms2}. Hence, \(A_2\cup B_2= S_2.\)
%}

Claim 3. Let
\bas &S_1:=\{(\nu_0, \nu_1, \nu_2)|\, \hbox{Either } \var(1, \Delta_1,\frac{\Delta_2}{\Delta_1}, \frac{\Delta_3}{\Delta_1})=1 \hbox{ or } (\var(1, \Delta_1, \frac{\Delta_2}{\Delta_1}, \frac{\Delta_3}{\Delta_1})= 3 \hbox{ when } D>0) \}.&\eas
Thereby, the equation \(\eqref{Eq3}\) has one distinct positive root iff \((\nu_0, \nu_1, \nu_2)\in S_1\). Hence, it suffices to prove \(S_1=A_1\cup B_1.\) Let \((\nu_0, \nu_1, \nu_2)\in S_1.\) Since \(\var(1, \Delta_1,\frac{\Delta_2}{\Delta_1}, \frac{\Delta_3}{\Delta_1})=1\) or \(3,\) we have 4 possible different cases:
  \(\text{(a)}\, \Delta_1<0, \frac{\Delta_2}{\Delta_1}>0,\frac{\Delta_3}{\Delta_2}<0\),
\begin{eqnarray}\label{PossibleCases}
&\text{(b)}\;  \Delta_1>0, \frac{\Delta_2}{\Delta_1}>0,\frac{\Delta_3}{\Delta_2}<0,\quad
 \text{(c)}\; \Delta_1>0, \frac{\Delta_2}{\Delta_1}<0,\frac{\Delta_3}{\Delta_2}<0,\quad
 \text{(d)}\; \Delta_1<0, \frac{\Delta_2}{\Delta_1}<0,\frac{\Delta_3}{\Delta_2}<0.&
\end{eqnarray}
For the singular case \({\Delta_2}=\frac{\nu_1\nu_2}{{a_3}^2}-\frac{\nu_0}{a_3}=0,\) the equation \(({\rho_{\sigma(k)}}^2+ \frac{\nu_2}{a_3})({\rho_{\sigma(k)}}^4+ \frac{\nu_1}{a_3})=0\) has only one positive root iff \(\nu_1\nu_2<0.\) Since \(D=\frac{{\nu_1}}{27a_3}(\frac{\nu_1}{{a_3}}+\frac{2{\nu_2}^2}{{a_3}^2})^2,\) \(D>0\) implies that \(\frac{\nu_1}{a_3}>0,\) \(\frac{\nu_2}{a_3}<0,\) and \(\frac{\nu_0}{a_3}<0.\) This results in \((\nu_0, \nu_1, \nu_2)\in A_1.\) Similarly, for \(D<0\) we have \((\nu_0, \nu_1, \nu_2)\in B_1.\) Given equations \eqref{nu2=0}, for the singular case
\bas
&\Delta_1=\frac{\nu_2}{a_3}=0 \; \hbox{ and }\; var(1, \Delta^*_1,\frac{\Delta^*_2}{\Delta^*_1}, \frac{\Delta^*_3}{\Delta^*_1})=1,&
\eas
the only possible case is \(\frac{\nu_0}{a_3}<0.\) When \(\nu_1>0\), \((\nu_0, \nu_1, \nu_2)\in A_1\) while \(\nu_1\leq 0\) implies that \((\nu_0, \nu_1, \nu_2)\in B_1.\) When \(D>0\) and the conditions of the case (a) are met, \(\frac{\nu_2}{a_3}<0,\) \(\frac{\nu_0}{a_3}<0,\) and \(\frac{\nu_1}{a_3}>\frac{\nu_0}{\nu_2}>0.\) So, \((\nu_0, \nu_1, \nu_2)\in A_1.\) For the case (b), we have
\bas
&\frac{\nu_2}{a_3}>0,\frac{\nu_0}{a_3}<0\; \hbox{ and }\; \frac{\nu_1}{a_3}>\frac{\nu_0}{\nu_2}.&
\eas Hence, for \(\frac{\nu_1}{a_3}>0\) we have \((\nu_0, \nu_1, \nu_2)\in A_1\) and when \(\frac{\nu_1}{a_3}\leq 0,\) \((\nu_0, \nu_1, \nu_2)\in B_1.\) The case (c) leads to \(\frac{\nu_2}{a_3}>0,\) \(\frac{\nu_0}{a_3}<0\) and \(\frac{\nu_1}{a_3}<\frac{\nu_0}{\nu_2}<0.\) This concludes that \((\nu_0, \nu_1, \nu_2)\in B_1.\) Finally, the conditions (d) implies that \(\frac{\nu_2}{a_3}<0,\frac{\nu_0}{a_3}<0\) and \(\frac{\nu_1}{a_3}<\frac{\nu_0}{\nu_2}.\) When \(\frac{\nu_1}{a_3}\leq 0\), \((\nu_0, \nu_1, \nu_2)\in B_1\). By inequalities in \eqref{Ineq1}, we have \(D>0\) for \(\frac{\nu_1}{a_3}> 0\). Therefore, \((\nu_0, \nu_1, \nu_2)\in A_1.\) Hence, \(S_1\subseteq A_1\cup B_1.\)

Now assume that \((\nu_0, \nu_1, \nu_2)\in A_1\cup B_1.\) If \((\nu_0, \nu_1, \nu_2)\in A_1,\) we have the following conditions:
\begin{eqnarray*}
&(\frac{\nu_0}{a_3}<0, \frac{\nu_1}{a_3}>0, \frac{\nu_2}{a_3}<0, D>0), \quad \quad (\frac{\nu_0}{a_3}<0, \frac{\nu_1}{a_3}>0, \frac{\nu_2}{a_3}> 0),
\text{\quad or \quad} (\frac{\nu_0}{a_3}<0, \frac{\nu_1}{a_3}>0, \nu_2=0).&
\end{eqnarray*} From the first group of inequalities, for \(\frac{\nu_1}{a_3}<\frac{\nu_0}{\nu_2}\), we have \(\frac{\Delta_2}{\Delta_1}=\frac{\nu_1\nu_2 -a_3\nu_0}{a_3\nu_2}<0\) and the case (d) is satisfied. For \(\frac{\nu_1}{a_3}>\frac{\nu_0}{\nu_2}\), \(\frac{\Delta_2}{\Delta_1}>0\) and the inequalities in (a) hold. When \(\frac{\nu_1}{a_3}=\frac{\nu_0}{\nu_2}\), the condition \(\Delta_2=0\) is met. Further, recall that \(\nu_1\nu_2<0\). Second group gives rise to \(\frac{\Delta_2}{\Delta_1}=\frac{\nu_1\nu_2-a_3\nu_0}{a_3\nu_2}>0\) and the case (b) while the third group of conditions is a subset of conditions in the singular case \(\Delta_1=0\). By the latter, we have \(\var(1, \Delta^*_1,\frac{\Delta^*_2}{\Delta^*_1}, \frac{\Delta^*_3}{\Delta^*_1})=1\) and \(\frac{\nu_1}{a_3}>0\).
 Hence, \((\nu_0, \nu_1, \nu_2)\in S_1.\)
Let \((\nu_0, \nu_1, \nu_2)\in B_1\) and \(\frac{\nu_2}{a_3}>0\). We decompose the conditions in \(B_1\) into the following cases:
\begin{eqnarray*}
&(\frac{\nu_1}{a_3}< 0, \nu_1\nu_2<a_3\nu_0, \frac{\nu_0}{a_3}<0),\quad (\frac{\nu_1}{a_3}< 0, \nu_1\nu_2=a_3\nu_0, \frac{\nu_0}{a_3}<0),\quad (\frac{\nu_1}{a_3}\leq 0, \nu_1\nu_2>a_3\nu_0, \frac{\nu_0}{a_3}<0).&
\end{eqnarray*} Each group of the above inequalities for nonsingular cases gives rise to \(\var(1, \Delta_1,\frac{\Delta_2}{\Delta_1}, \frac{\Delta_3}{\Delta_1})=1.\) For the singular case \(\nu_1\nu_2=a_3\nu_0,\) we have \(\nu_1\nu_2<0.\) Thus, the equation \(({\rho_{\sigma(k)}}^2+\frac{\nu_2}{a_3})({\rho_{\sigma(k)}}^4+\frac{\nu_1}{a_3})=0\) have one root
 for both cases \(D\leq 0\) and \(D>0\). Hence, \((\nu_0, \nu_1, \nu_2)\in S_1.\) Since \(\frac{\nu_1}{a_3}\leq 0\) and \(\frac{\nu_0}{a_3}< 0,\) the condition \(\frac{\nu_2}{a_3}<0\) concludes that \(\frac{\Delta_2}{\Delta_1} =\frac{\nu_1\nu_2-a_3\nu_0}{a_3\nu_2}<0\). Thereby, \(\var(1, \Delta_1,\frac{\Delta_2}{\Delta_1}, \frac{\Delta_3}{\Delta_1})=1.\)
  So, for both cases \(D\leq 0\) and \(D>0\), \((\nu_0, \nu_1, \nu_2)\in S_1.\)
Since \(\frac{\nu_0}{a_3}< 0\) for \(\nu_2=0\), the condition \(\var(1, \Delta^*_1,\frac{\Delta^*_2}{\Delta^*_1}, \frac{\Delta^*_3}{\Delta^*_1})=1\) holds. Similar to the above, \((\nu_0, \nu_1, \nu_2)\in S_1\).  Then, \(A_1\cup B_1=S_1.\)

Claim 4. The equation \eqref{Eq3} has no roots iff \((\nu_0, \nu_1, \nu_2)\in A_0\cup B_0.\) The boundaries of the sets \(A_3, A_2\cup B_2, A_1\cup B_1\) and \(A_0\cup B_0\) is expressed by the varieties \(T_{P}\) introduced in \eqref{T_Ps3} and \(T_{2SN}\) given by \eqref{T_SNs3}. Further, \(\mathbb{R}^3\setminus(\cup_{i=0}^3A_i\sqcup\cup_{i=0}^2B_i)=T_P\cup T_{2SN}.\) There is always pitchfork bifurcation type on the variety \(T_{P}\) and double saddle node bifurcation on the variety \(T_{2SD}\). The saddle-node bifurcation corresponds with equation \(\frac{d}{dt}\rho_{\sigma(k)}=f_{k, \sigma(k)}(\rho_{\sigma(k)}, \nu_0,\nu_1, \nu_2)\) and the equilibria are given by
\begin{eqnarray*}
&(\gamma^{\pm}_{1}, D)=\Big(\big(-\frac{\nu_2}{a_3}\pm\sqrt{\frac{-p}{3}}\big)^\frac{1}{2}, 0\Big) \hbox{ and }
(\gamma^{\pm}_{2}, D)=\Big(-\big(-\frac{\nu_2}{a_3}\pm\sqrt{\frac{-p}{3}}\big)^\frac{1}{2}, 0\Big).&
\end{eqnarray*} Let
\begin{eqnarray*}
&g(\rho_{\sigma(k)}, D):=a_3\rho_{\sigma(k)}\Big(\big({\rho_{\sigma(k)}}^2+\frac{\nu_2}{3a_3}\pm2\sqrt{\frac{-p}{3}}\big)\!\big({\rho_{\sigma(k)}}^2 +\frac{\nu_2}{3a_3}\mp\sqrt{\frac{-p}{3}}\big)^2
+\frac{4D}{q\pm2(\frac{-p}{3})^\frac{3}{2}}\Big).&
\end{eqnarray*} Here, for \(i=1, 2,\) we have
\begin{eqnarray*}
&g(\gamma^{\pm}_{i}, 0)=\frac{\partial}{\partial \rho_{\sigma(k)}}g(\gamma^{\pm}_{i}, 0)=0,\quad\frac{\partial}{\partial D}g(\gamma^{\pm}_{i}, 0)=\frac{4a_3\gamma^{\pm}_{i}}{q\pm2\left(\frac{-p}{3}\right)^\frac{3}{2}}\neq 0,&\\&
\hbox{ and }\quad \frac{\partial^2}{\partial {{\rho}_{\sigma(k)}}^2}g(\gamma^{\pm}_{i}, 0)=\pm24\sqrt{\frac{-p}{3}}\gamma^{\pm}_i\neq 0.&
\end{eqnarray*}
These correspond with a saddle-node bifurcation.
Let \(\nu_1\neq 0.\) Then,
\begin{eqnarray*}
&f_{k, \sigma(k)}(0, 0)=\frac{\partial}{\partial \rho_{\sigma(k)}}f_{k, \sigma(k)}(0, 0)=0, \quad \frac{\partial}{\partial \nu_0}f_{k, \sigma(k)}(0, 0)=\frac{\partial^2}{\partial {{\rho}_{\sigma(k)}}^2}f_{k, \sigma(k)}(0, 0)=0,&\\&
\frac{\partial^2}{\partial {{\rho}_{\sigma(k)}}\partial \nu_0}f_{k, \sigma(k)}(0, 0)=1\neq 0, \quad\hbox{ and }\quad \frac{\partial^3}{\partial {{\rho}_{\sigma(k)}}^3}f_{k, \sigma(k)}(0, 0)=\nu_1\neq 0&
\end{eqnarray*} The invariant hypertorus is stable for \(\nu_1>0\) while \(\nu_1<0\) corresponds with an unstable flow-invariant hypertorus.
When \(\nu_0\neq 0, \nu_1=0\) and \(D(\nu_0, 0, \nu_2)\neq 0,\) \(D\) does not change its sign as \(\nu_1\) slightly varies. Therefore, there is no qualitative type change in the vicinity of \(\nu_1=0\). The case \((\nu_1, D(\nu_0, 0, \nu_2))=(0, 0)\) implies that \(\frac{1}{27}\frac{{\nu_2}^3\nu_0}{{a_4}^3} +\frac{1}{4}\frac{{\nu_0}^2}{{a_3}^2}=0\) and thereby, \((\nu_0, 0, \nu_2)\in T_{2SN}.\)
%Similarly, the condition \(\nu_2=0\) and \(D\neq0\) does not contribute into a leaf-bifurcation variety.
\epr

\subsection{Examples on leaf-bifurcation control }

Consider the equation
\begin{eqnarray}\label{EulBifExm}
&\frac{d}{dt}\x=\sum_{i=1}^{3}\omega_i\Theta^i_{\0}+f(\mu, \x)E_{\0}&
\end{eqnarray} where \(\omega_i=\sqrt{i}\) for \(1\leq i\leq 3,\)\,\(\x:=(x_1, y_1, x_2, y_2, x_3, y_3)\in \mathbb{R}^6\),
\begin{eqnarray}\label{ScalarFunction}
f(\mu, \x):= \alpha_{0}+ \alpha_1 x_1+\alpha_2y_1 + \alpha_{3}{x_1}^{2}+\alpha_{4}{y_1}^{2}+\alpha_{5}{x_2}^{2}+\alpha_{6}{y_2}^{2}+\alpha_{7}x_{1}{x_3}^2+\alpha_{8}x_{1}{y_3}^2
\end{eqnarray}
and $\alpha_{i}=a_{i}+\mu_{i}$ for $0 \leq i \leq 8$ and \(a_{0}=0.\) Thus, \(n=3.\)
\begin{figure}[t]
\centering
\subfloat[Leaf case \(s=1\) in example \ref{DifLeafTrans1}. Leaf-bifurcation varieties for \(C_1\) and \(C_2.\) \label{BifVerS1S2} ]{\includegraphics[width=.34\linewidth,height=1.4in]{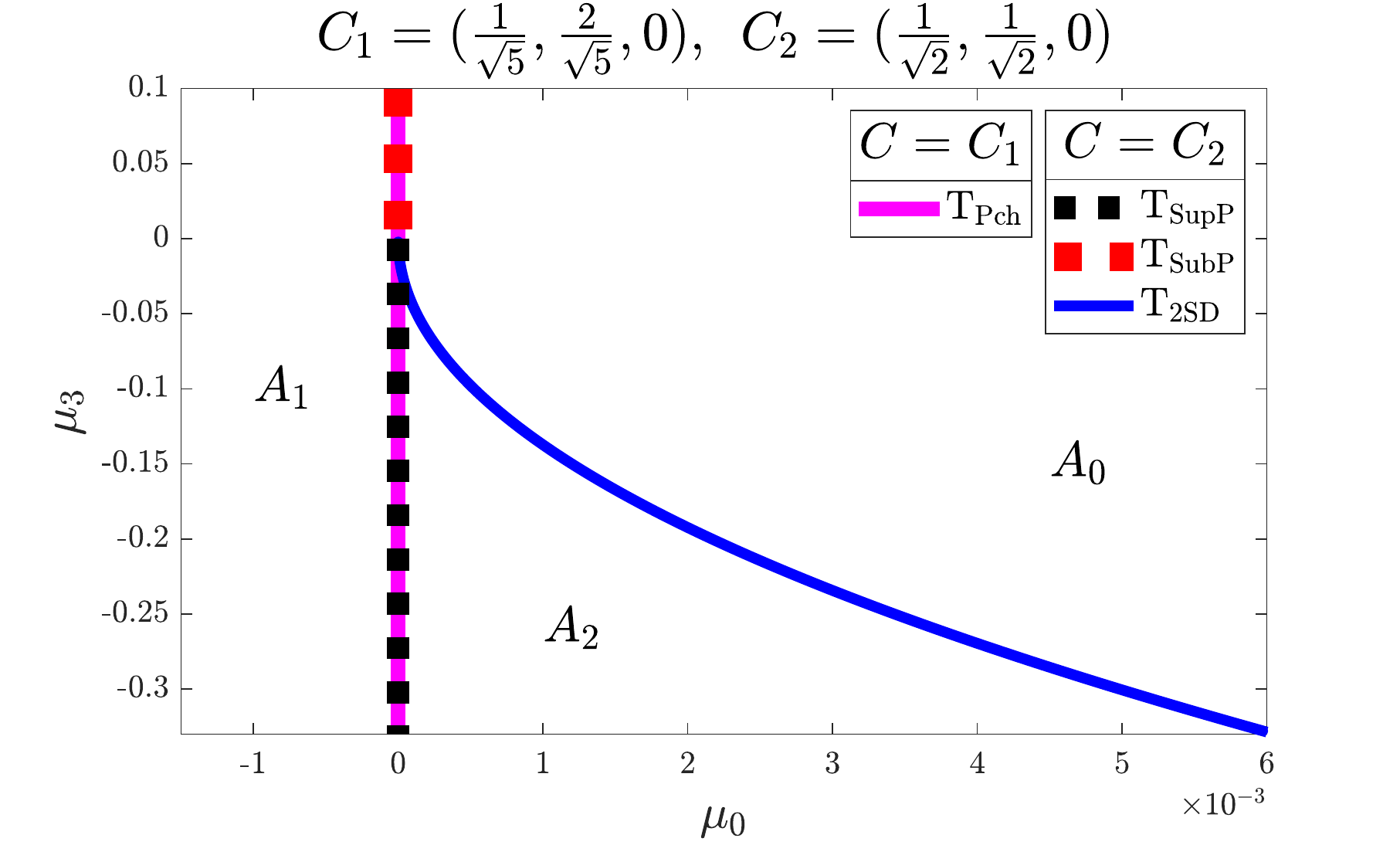}}\;
\subfloat[The transition sets for the leaf case $s=2$ in example \ref{DifLeafTrans2}. \label{S2GTS}]{\includegraphics[width=.26\linewidth,height=1.4in]{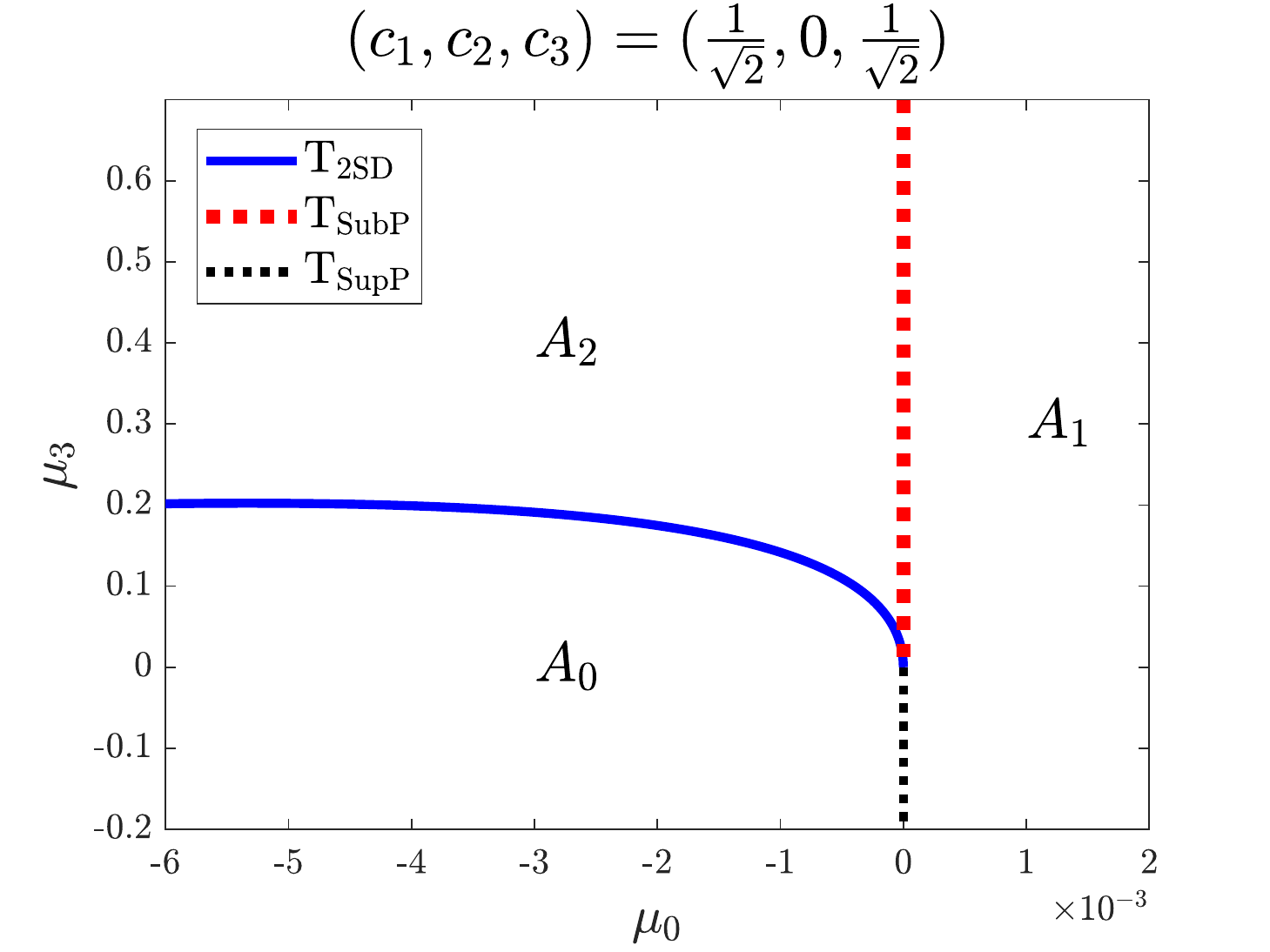}}\;
\subfloat[The leaf-transition sets for leaf case $s=3$ in example \ref{DifLeafTrans2}. \label{S3TS} ]{\includegraphics[width=.26\linewidth,height=1.4in]{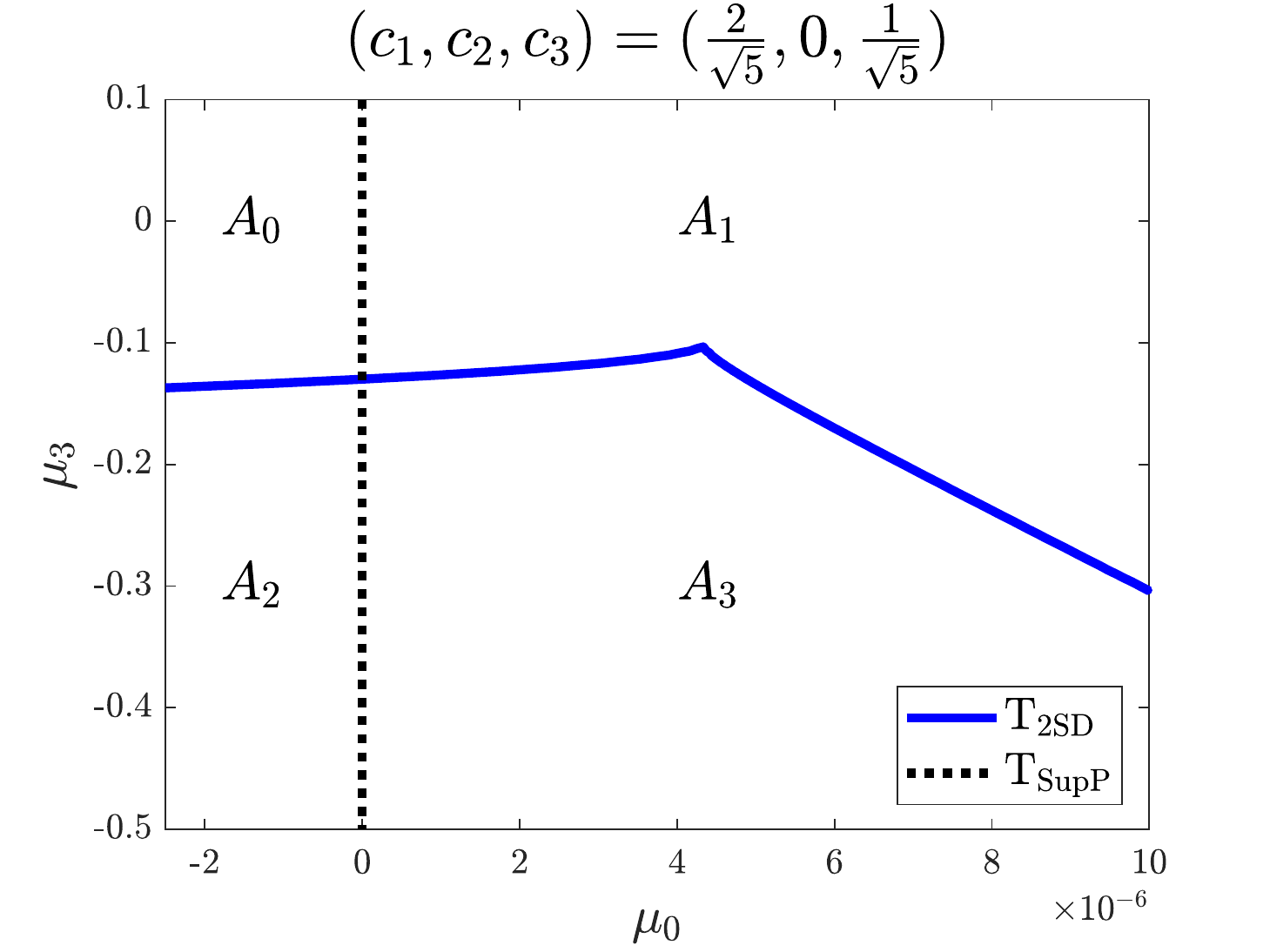}}\,
\caption{Leaf-bifurcation varieties as $(c_1, c_2, c_3)$ varies in examples \ref{DifLeafTrans1} and \ref{DifLeafTrans2}. }\label{TSCaseS2}
\end{figure}
\begin{exm}\label{DifLeafTrans1}
 We take
\bes  k=2 \quad\hbox{ and } \quad\sigma\in S^2_3 \quad\hbox{ for } \quad\sigma:=I.\ees
Hence, we have \(\mathcal{M}_{2, \sigma}=\{\x\in\mathbb{R}^6| (x_1, y_1)\neq 0, (x_2, y_2)\neq 0, (x_3, y_3)=0\}.\) For notation brevity, \(C=(c_1, c_2, 0)\in \mathbb{S}^2_{\geq 0}\) is denoted by \((c_1, c_2).\) Then, the leaf \(\mathcal{M}_{2, \sigma}^C\) follows
\begin{equation*}
\mathcal{M}_{2, \sigma}^{(c_1, c_2)}=\{\x\in\mathcal{M}_{2, \sigma}\,|\,c_2\|(x_1, y_1) \|=c_1\|(x_2, y_2)\| \}.
\end{equation*} The \(\mathcal{M}_{2, \sigma}^{(c_1, c_2)}\)-leaf reduction of the differential system \eqref{EulBifExm} in polar coordinates is associated with
\begin{eqnarray*}
&\sum_{i=1}^{2}\!\left(\frac{\omega_i\partial}{\partial \theta_i}\!+\!\big(\alpha_{0}\!+\!\rho_1(\alpha_1\cos\theta_1\!+\!\alpha_2\sin\theta_1)\!+\!{\rho_1}^2\!( \alpha_{3}\cos^2\theta_1\!+\!\alpha_{4}\sin^2\theta_1\!+\!\frac{\alpha_{5}\cos^2\theta_2\!+\!
\alpha_{6}\sin^2\theta_2}{{c_1}^2{c_2}^{-2}})\big)\!\frac{c_i\rho_{1}\partial}{{c_1}\partial\rho_{i}}\right).&
\end{eqnarray*} The associated vector field in the Lie algebra \(\mathscr{J}\) via the homeomorphism \(\Psi\) is given by
\begin{eqnarray*}
&\sum_{i=1}^{2}\left(\frac{\sqrt{i}w_i\partial}{\mathbf{i}\partial w_i}-\frac{\sqrt{i}z_i\partial}{\mathbf{i}\partial z_i}\right)+\left(\mu_0+\left(\frac{\alpha_1-\mathbf{i}\alpha_2}{2}\right)rz_1+\left(\frac{\alpha_1+\mathbf{i}\alpha_2}{2}\right)rw_1
+\left(\frac{\alpha_3-\alpha_4}{4}\right)r^2{z_1}^2
+\left(\frac{\alpha_3-\alpha_4}{4}\right)r^2{w_1}^2  \right)\frac{r\partial}{\partial r}&\\
&+\left(\left(\frac{{c_1}^2(\alpha_3+\alpha_4)+{c_2}^2(\alpha_5+\alpha_6)}{2{c_1}^2}\right)r^2+\frac{{c_2}^2}{{c_1}^2}
\left(\frac{\alpha_5-\alpha_6}{4}\right)r^2{z_2}^2+
\frac{{c_2}^2}{{c_1}^2}\left(\frac{\alpha_5-\alpha_6}{4}\right)r^2{w_2}^2\right)\frac{r\partial}{\partial r}.&
\end{eqnarray*}
We omit the parameters \(\mu_i\) for \(i=1, 2, 6, 7, 8\) by setting them to zero. Using a {\sc Maple} implementation of Theorem \ref{1stLeafNF} and its proof, a truncated first level parametric leaf-normal form in \(\mathscr{L}_{\mathbb{T}_k\times \mathbb{R}^+}\) (via the map \(\Psi^{-1}\)) up to grade-seven is
\begin{eqnarray*}
&\sum_{i=1}^{2}\frac{\sqrt{i}\partial}{\partial \vartheta_i}+\sum_{i=1}^{2}\left(\mu_0+b_1{\varrho_1}^2+b_2{\varrho_1}^4\right)\frac{c_i\varrho_{1}\partial}{{c_1}\partial\varrho_{i}}&
\end{eqnarray*} where \(b_1(\mu, C):=\frac{{c_1}^2(a_3+a_4)+{c_2}^2(a_5+a_6)}{2{c_1}^2}+\frac{\mu_3+\mu_4}{2}\) and \(b_2(\mu, C):=\frac{{a_1}^2(a_3+3a_4)+{a_2}^2(3a_3+a_4)}{8\ {\omega_{1}}^2}+\frac{({a_{1}}^2+{a_{2}}^2)(a_5+a_6)}{4\ {\omega_{1}}^2}.\)
When \(b_1(0, C)\neq 0,\) we have the generic leaf case \(s=1.\) By Theorem \ref{InfLPNF}, the third level (infinite-level) parametric leaf-normal forms is given by
\begin{eqnarray*}
&\sum_{i=1}^{2}\frac{\sqrt{i}\partial}{\partial \theta_i}+\sum_{i=1}^{2}\left(\mu_0+\frac{{c_1}^2(a_3\!+\!a_4)\!+\!{c_2}^2(a_5\!+\!a_6)}{2{c_1}^2}{\rho_1}^2\right) \frac{c_i\rho_{1}\partial}{{c_1}\partial\rho_{i}}. &
\end{eqnarray*} Let
\be\label{S1cons} a_1:=1, a_2=0, a_3:=-4, a_4:=6, a_5:=-1,\hbox{ and } a_6=-1.\ee Hence, \(b_1(0, C)=\frac{{c_1}^2-{c_2}^2}{{c_1}^2}.\)
By Theorem \ref{Thms1}, \(T_{Pch}:=\{(\mu_0, \mu_3)| \mu_0=0\}\) and for \(\frac{\mu_0{c_1}^2}{{c_1}^2-{c_2}^2}<0\) and sufficiently small values of \((\mu_0, \mu_3)\), an invariant \(\mathbb{T}_2\)-torus bifurcates from origin; see Figures \ref{BifVerS1S2} and \ref{x1y1}. For numerical bifurcation control of the system \eqref{EulBifExm}, we take the leaf corresponding with \((c_1, c_2)=(\frac{1}{\sqrt{5}}, \frac{2}{\sqrt{5}})\) and \(\mu_0=0.025.\) Thus, the initial condition \((x_1, y_1, x_2, y_2, x_3, y_3)=(0.01, 0,  0.02, 0, 0, 0)\) from inside the invariant torus and \((x_1, y_1, x_2, y_2, x_3, y_3)=(0.2, 0,  0.4, 0, 0, 0)\) from outside the stable invariant torus give rise to the numerical phase portraits in \((x_1, y_1)\)-plane and \((x_2, y_2)\)-plane depicted by Figures \ref{x1y1}, respectively.

\begin{figure}[t]
\centering %\hspace{0em}
%\captionsetup{format = hang}
\subfloat[Leaf case \(s=1.\) Two numerical phase portrait trajectories ($x_1(t)$ versus \(y_1(t)\)) and (\(x_2(t)\) versus \(y_2(t)\)) converging to an invariant torus when \((c_1, c_2)=(\frac{1}{\sqrt{5}}, \frac{2}{\sqrt{5}}).\) \label{x1y1} ]{
\includegraphics[width=.25\linewidth,height=1.4in]{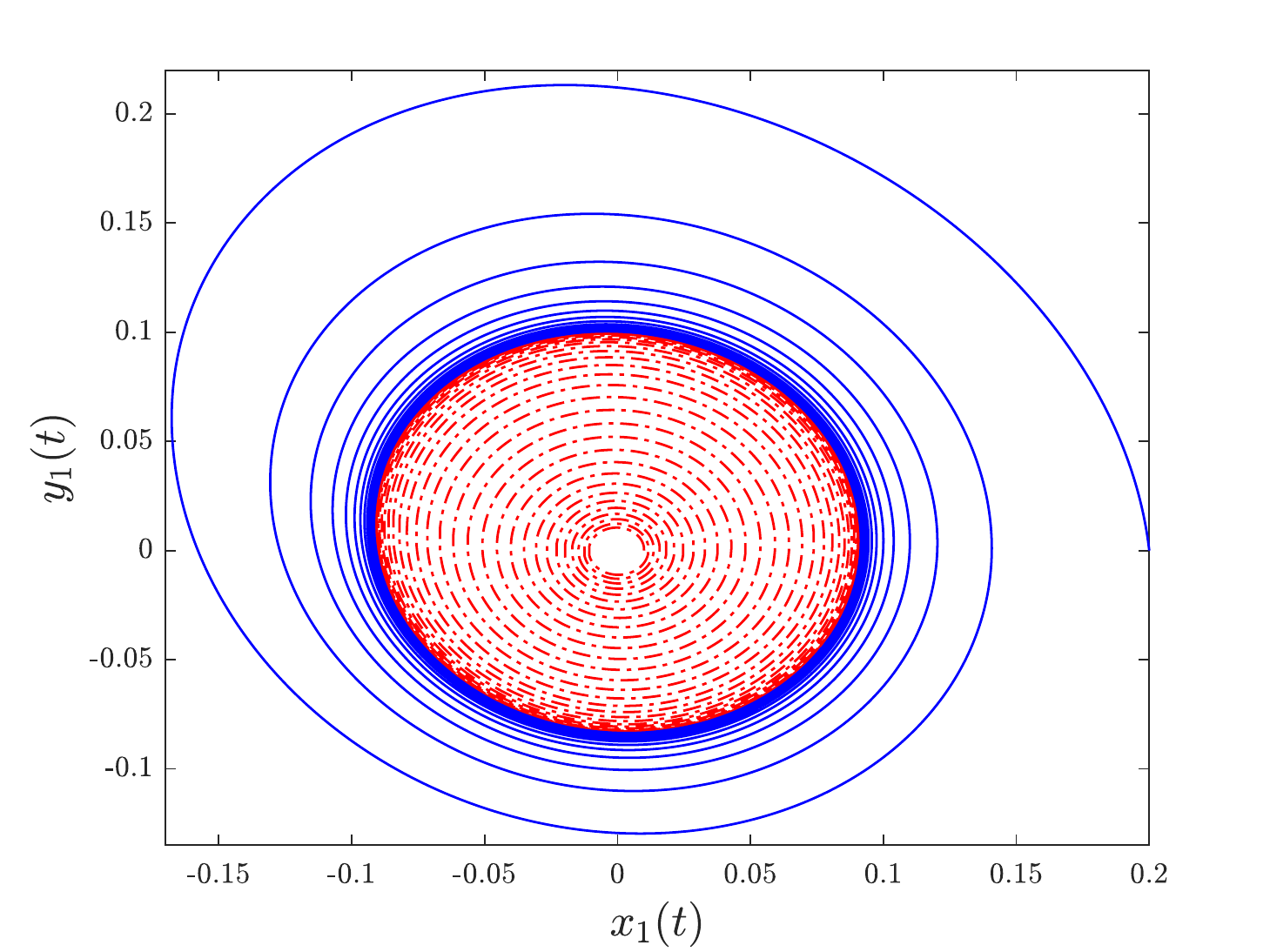}\hskip-1ex
\includegraphics[width=.25\linewidth,height=1.4in]{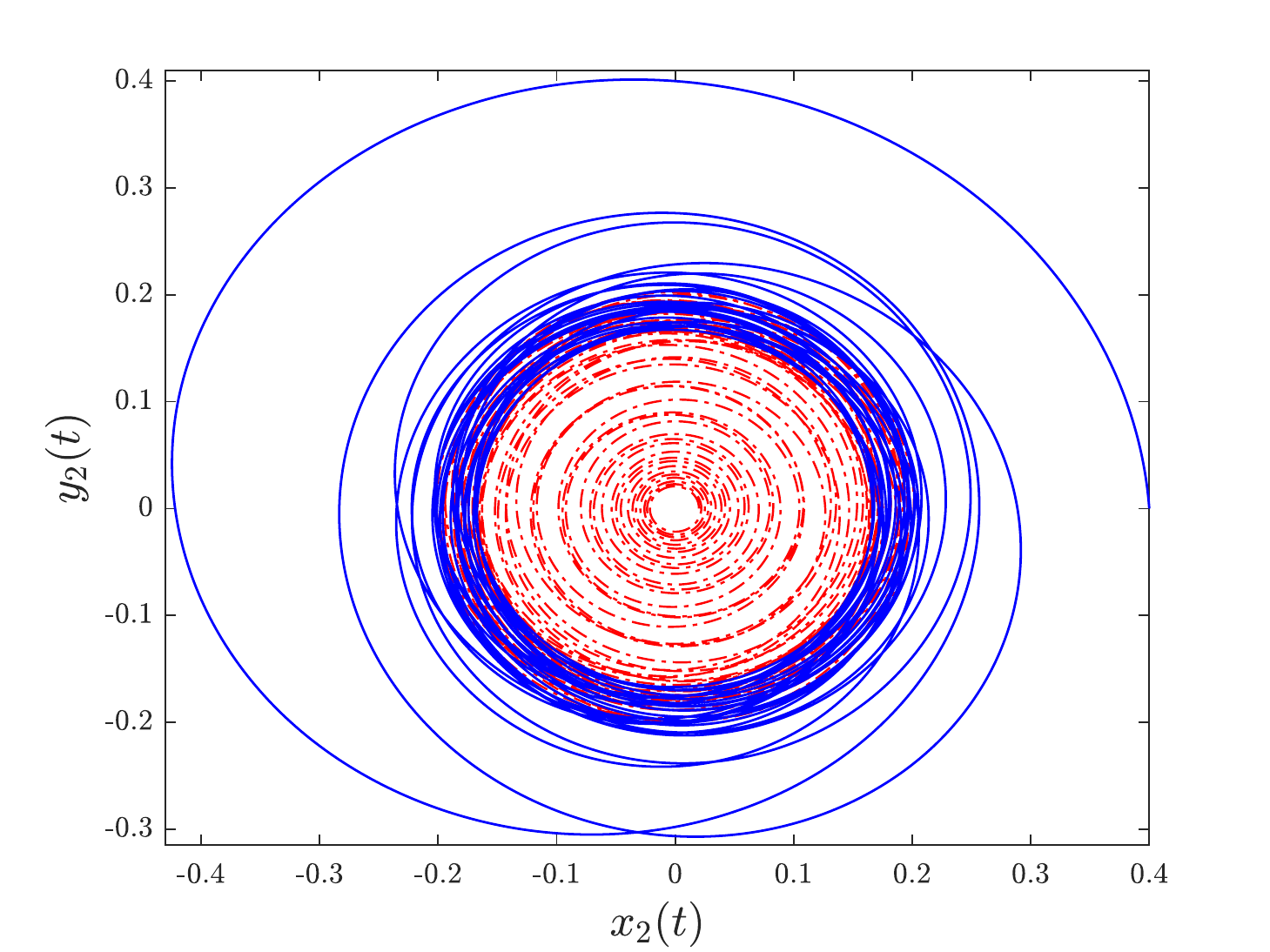}}\;
\subfloat[Leaf case \(s=2.\) Three numerical trajectories depicted in \((x_1(t), y_1(t))\)-plane phase portrait. There are two different invariant limit 4-tori for \((c_1, c_2)=(\frac{1}{\sqrt{2}}, \frac{1}{\sqrt{2}}).\) \label{x1y1deg}]{
\includegraphics[width=.48\linewidth,height=1.4in]{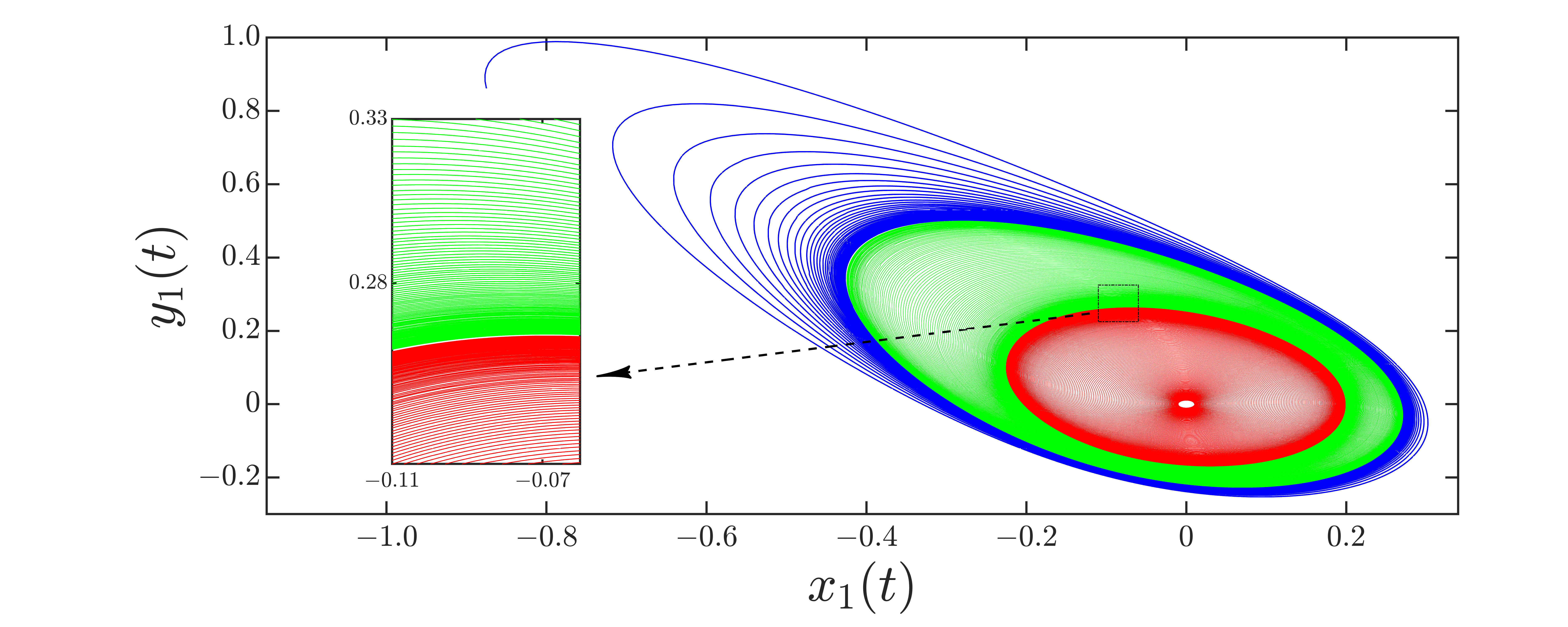}}
\caption{The controlled numerical phase portraits in \((x_1, y_1)\)- and \((x_2, y_2)\)-planes for the system \eqref{EulBifExm}, constants given by \eqref{S1cons} in Example \ref{DifLeafTrans1} and $\mu_0=0.025$. Figures \ref{x1y1} depict a stable invariant \(\mathbb{T}_4\)-torus on the leaf-\(\mathcal{M}_{2, \sigma}^{(\frac{1}{\sqrt{5}}, \frac{2}{\sqrt{5}})}.\) Figure \ref{x1y1deg} illustrate two tori living on the leaf-\(\mathcal{M}_{2, \sigma}^{(0.5\sqrt{2}, 0.5\sqrt{2})}.\) }\label{GenericCaseS1}
\end{figure}

Now take \(c_1=c_2=\frac{1}{\sqrt{2}}.\) Then, \(b_1(0, C)=0\) and \(b_2(0, C)=\frac{7{c_1}^2-2{c_2}^2}{4{c_1}^2}=\frac{5}{4}\neq 0\). Hence, we have the leaf case \(s=2\). Then, the amplitude equation of fourteenth-grade truncation of parametric leaf-normal form is
\begin{eqnarray*}
&\sum_{i=1}^{2}\frac{\sqrt{i}\partial}{\partial \theta_i}+\sum_{i=1}^{2}\left(\mu_0+\frac{163}{15}{\mu_0}^2+\frac{3}{10} \mu_3\mu_0+\left(\frac{1}{2}\mu_3-\mu_0\right){\rho_1}^2+\frac{5}{4}{\rho_1}^4\right)\frac{c_i\rho_{1}\partial}{{c_1}\partial\rho_{i}}. &
\end{eqnarray*} Then, the estimated transition varieties are given by $T_{SupP}=\{(\mu_0, \mu_3)|\mu_0=0, \mu_3<0\}$,
\begin{eqnarray*}
&T_{SubP}=\{(\mu_0, \mu_3)|\mu_0=0, \mu_3>0\}\text{\, and \,} T_{2SD}=\{(\mu_0, \mu_3)|\!\left(\frac{\mu_3}{2}\!-\!\mu_0\right)^2\!-\!5\mu_0\!-\!\frac{163{\mu_0}^2}{3}\!-\!\frac{3\mu_3\mu_0}{2}, \mu_3<0 \}.&
\end{eqnarray*} Note that \(T_{2SD}\) is not a transition set for the leaf-system associated with \((c_1, c_2)=(\frac{1}{\sqrt{5}}, \frac{2}{\sqrt{5}})\).
The transition variety \(T_{Pch}\) changes into two intrinsically different transition varieties \(T_{SupP}\) and \(T_{SubP}\) for the case \((c_1, c_2)=(\frac{1}{\sqrt{2}}, \frac{1}{\sqrt{2}})\). Figure \ref{x1y1deg} depicts the bifurcation of two invariant \(\mathbb{T}_2\)-tori from origin living on the leaf \(\mathcal{M}_{2, \sigma}^{(0.5\sqrt{2}, 0.5\sqrt{2})}.\) There are three trajectories in Figure \ref{x1y1deg} when \(\mu_0=0.005, \mu_5=-0.35\): 1) the blue trajectory starts from the initial condition $(-0.87, 0.86, -0.87, 0.86, 0, 0)$ outside the external unstable torus. 2) the green trajectory is associated with the initial condition $(-0.3, 0.3, -0.3, 0.3, 0,0)$ and converges to the stable internal invariant torus. Figure \ref{x1y1deg} depicts green trajectory in both forward and backward time. 3) the red trajectory starts at $(0.01, 0, 0.01, 0, 0, 0)$ from inside the internal stable torus. In order to illustrate the invariant tori, the trajectories associated with blue and red are plotted with inverse time (backward-time trajectory).
\end{exm}

\begin{figure}[t]
\centering
\subfloat[Forward-time series for \(x_1(t)\) and $y_1(t)$; also see Figure \ref{S2x2y2A}. \label{S2x1y1A}  ]{\includegraphics[width=2.2in]{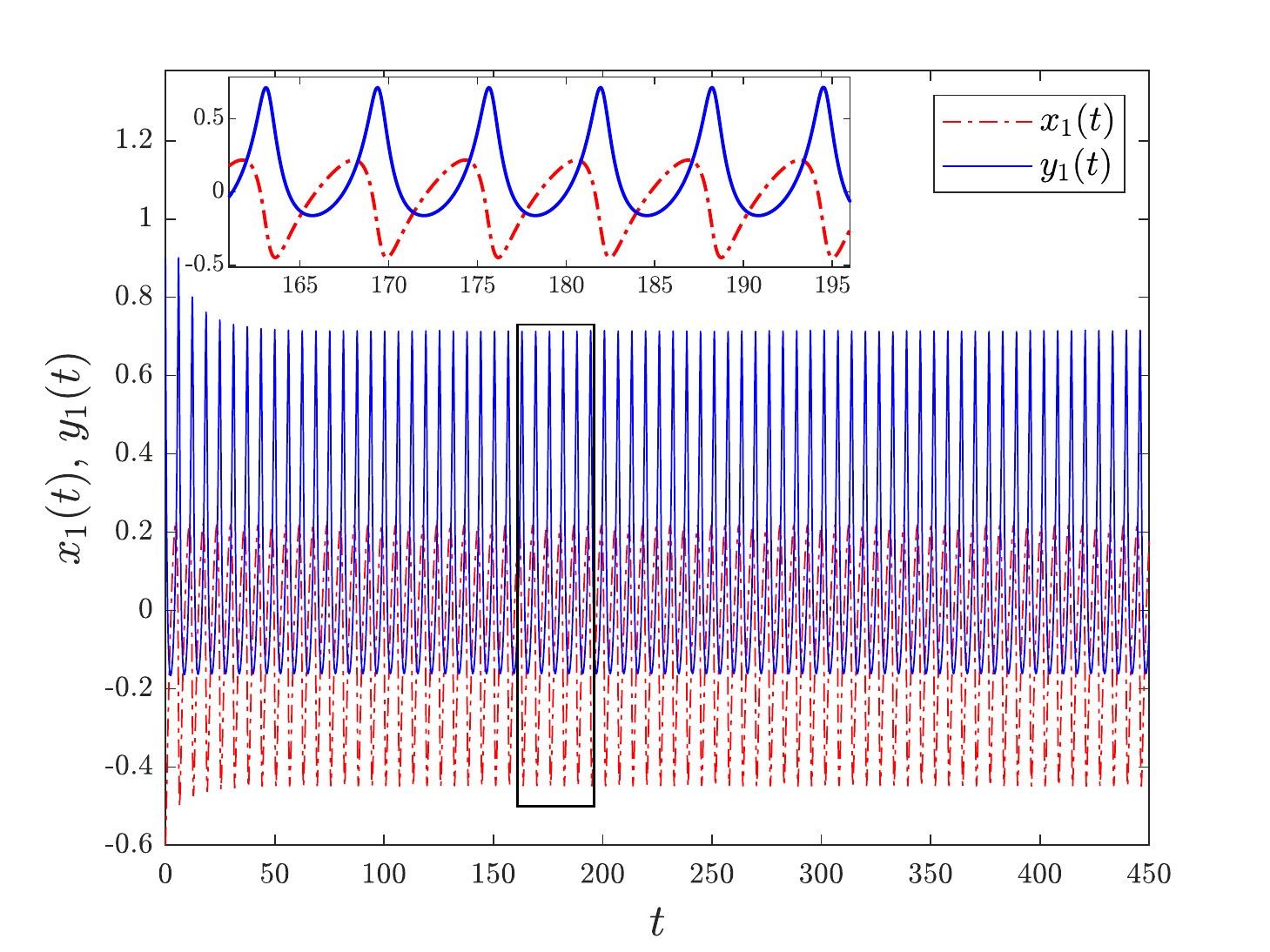}}\,
\subfloat[Two trajectories for \(x_1(t)\) and $y_1(t).$ They converge to two invariant tori in backward and forward time. \label{S2x2y2A} ]{\includegraphics[width=2.2in]{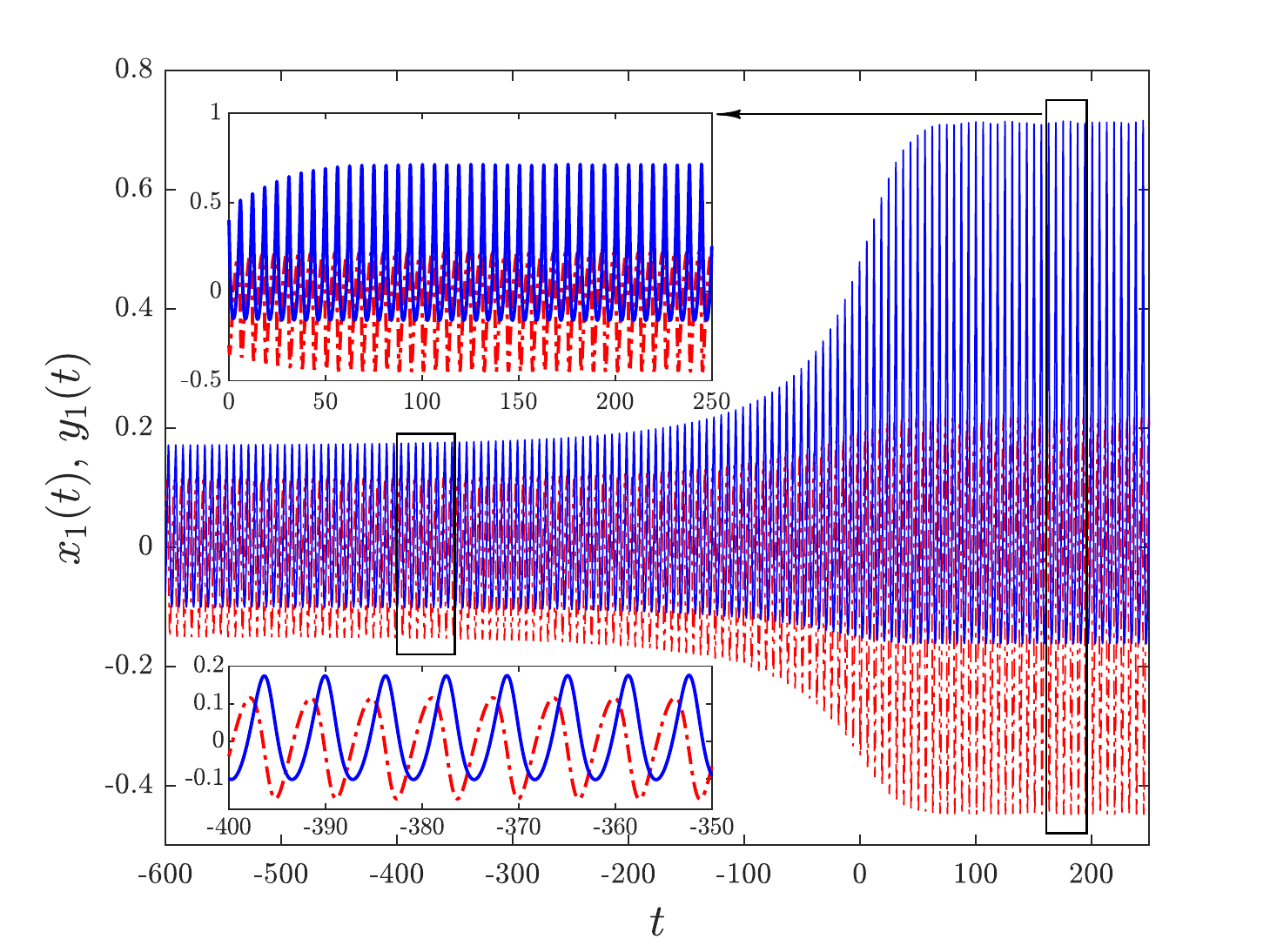}}\,
\subfloat[Trajectories for \(x_1(t)\) and $y_1(t)$ in inverse time converge to the internal invariant torus. \label{6c}]{\includegraphics[width=2.2in]{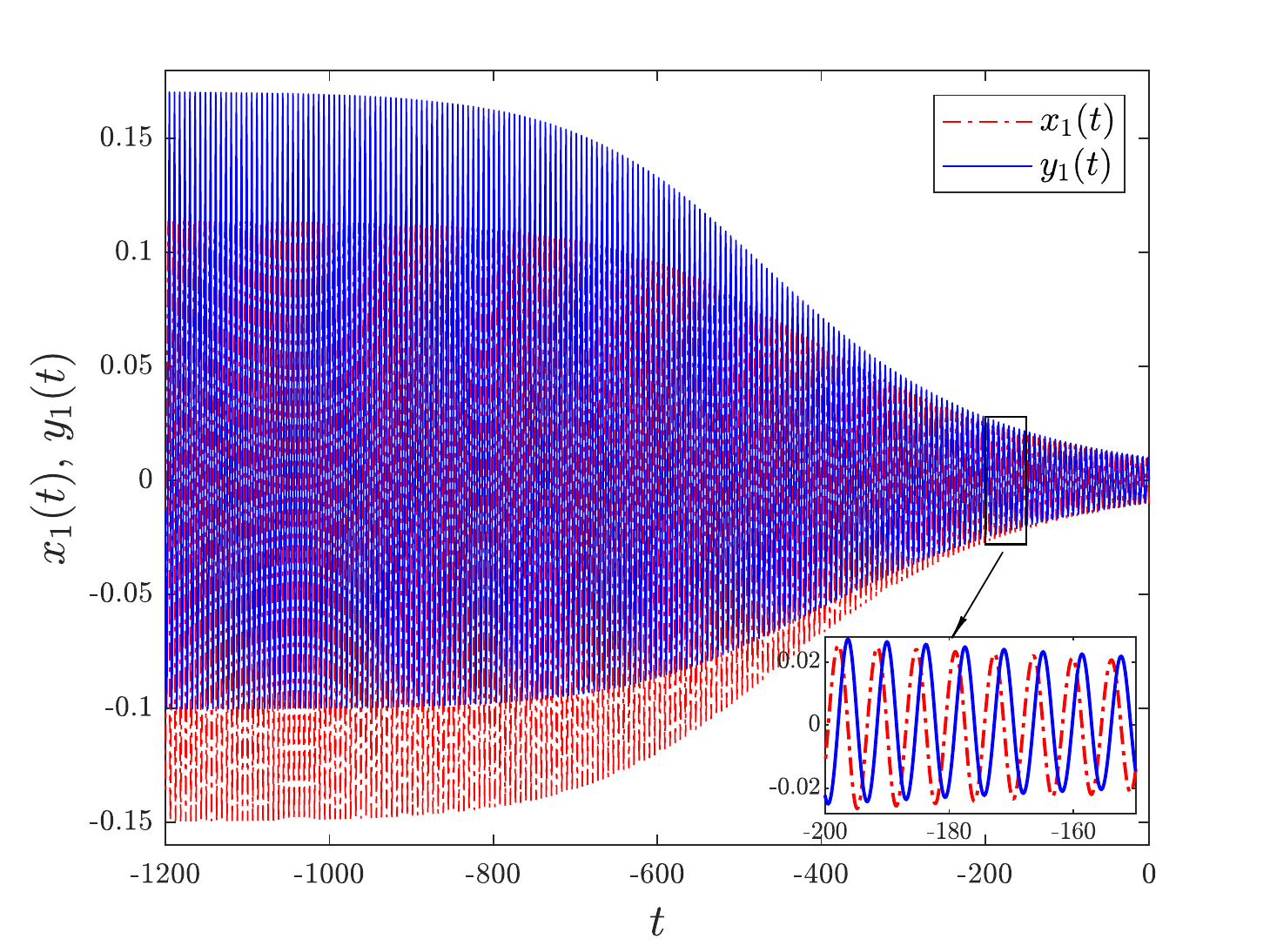}}\\
\subfloat[Time series for \(x_3(t)\) and $y_3(t)$. These along with Figure \ref{S2x1y1A} converge to the external \(\mathbb{T}_4\)-torus. \label{S2x2y2A}  ]{\includegraphics[width=2.2in]{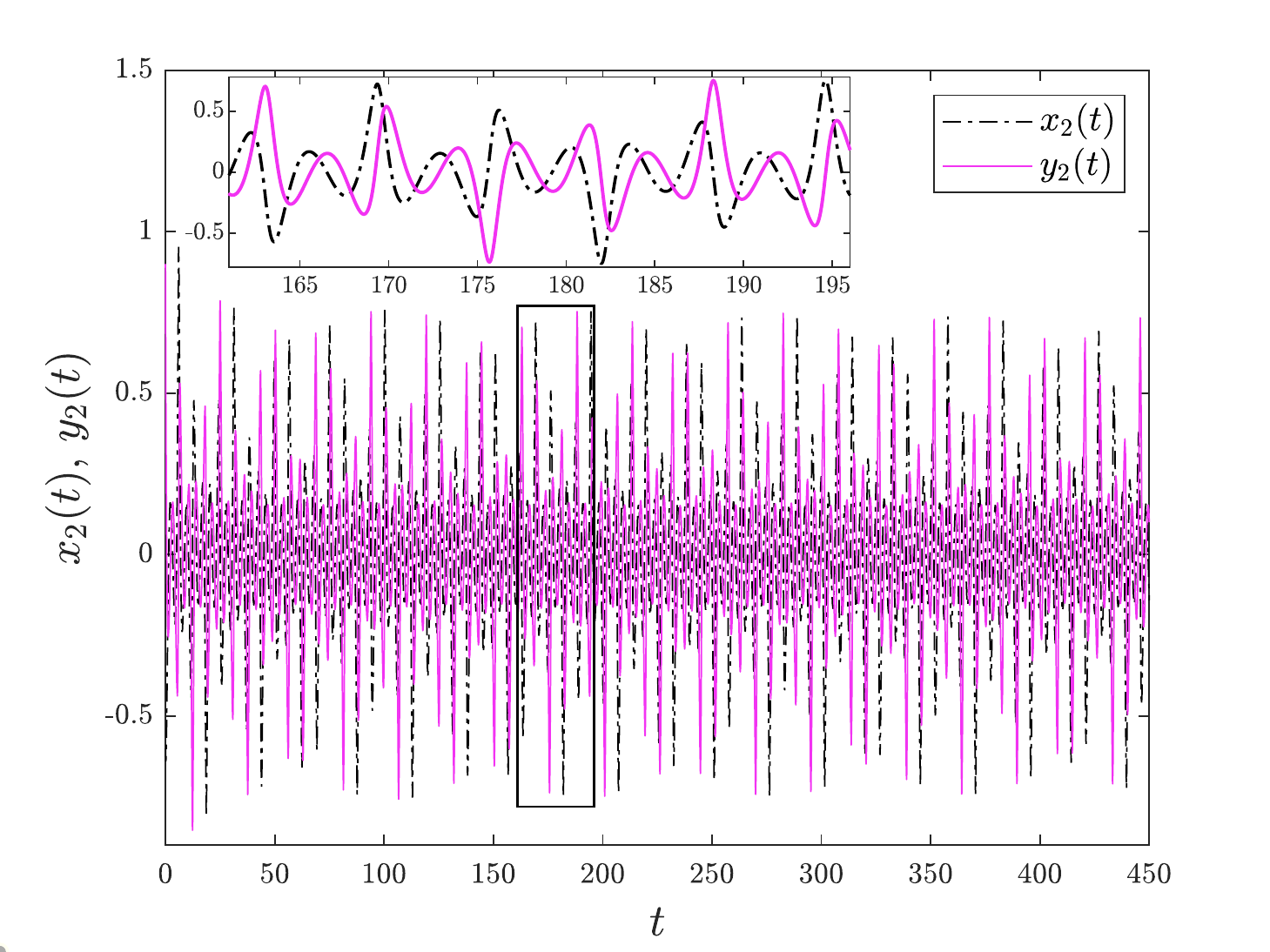}}\,
\subfloat[Two trajectories for \(x_3(t)\) and $y_3(t)$ whose \(\alpha\)- and \(\omega\)-limit sets are the internal and external 4-tori. \label{S2x2y2B} ]{\includegraphics[width=2.2in]{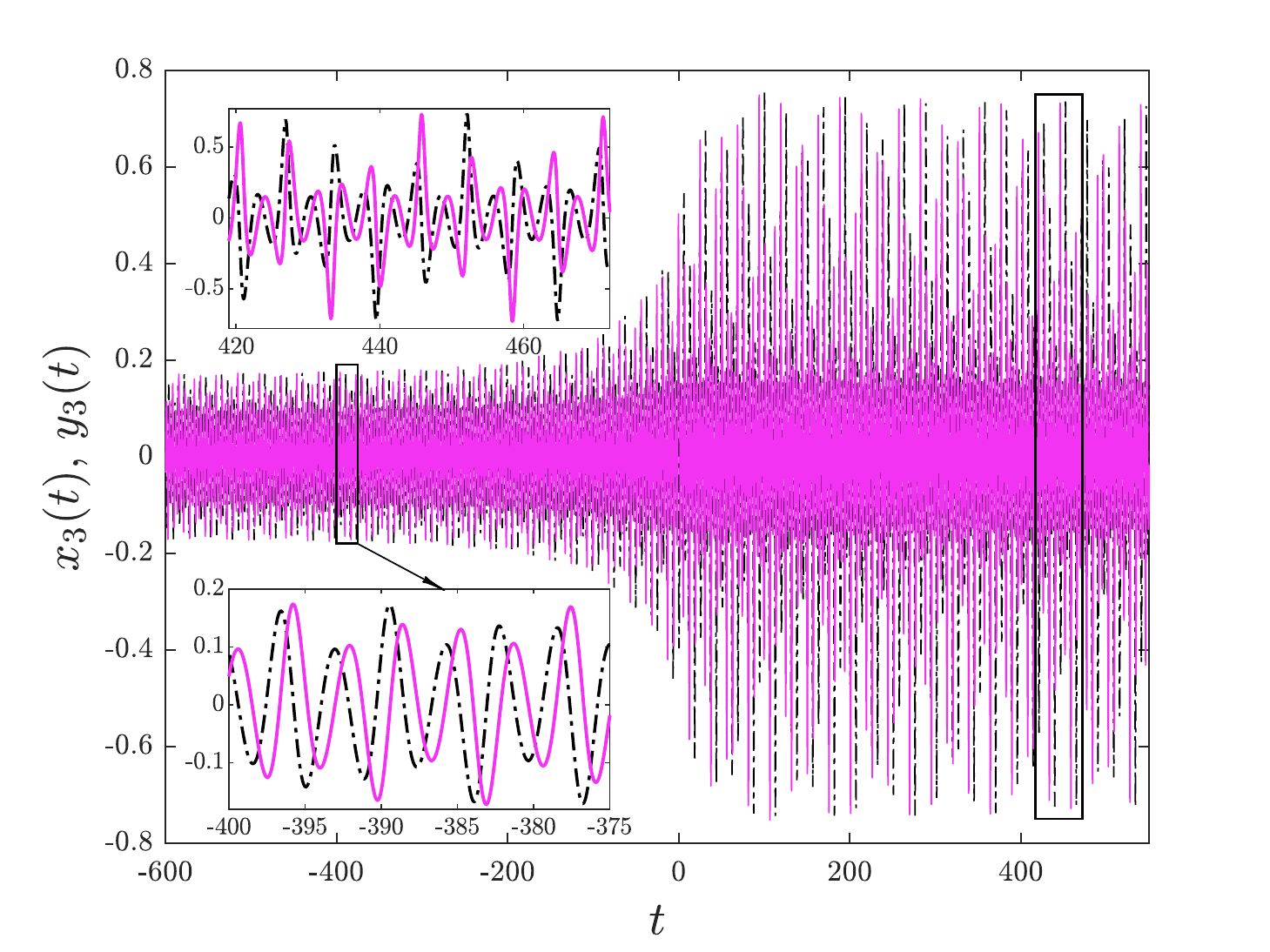}}\,
\subfloat[Backward-time trajectories of \(x_3(t)\) and $y_3(t)$.]{\includegraphics[width=2.2in]{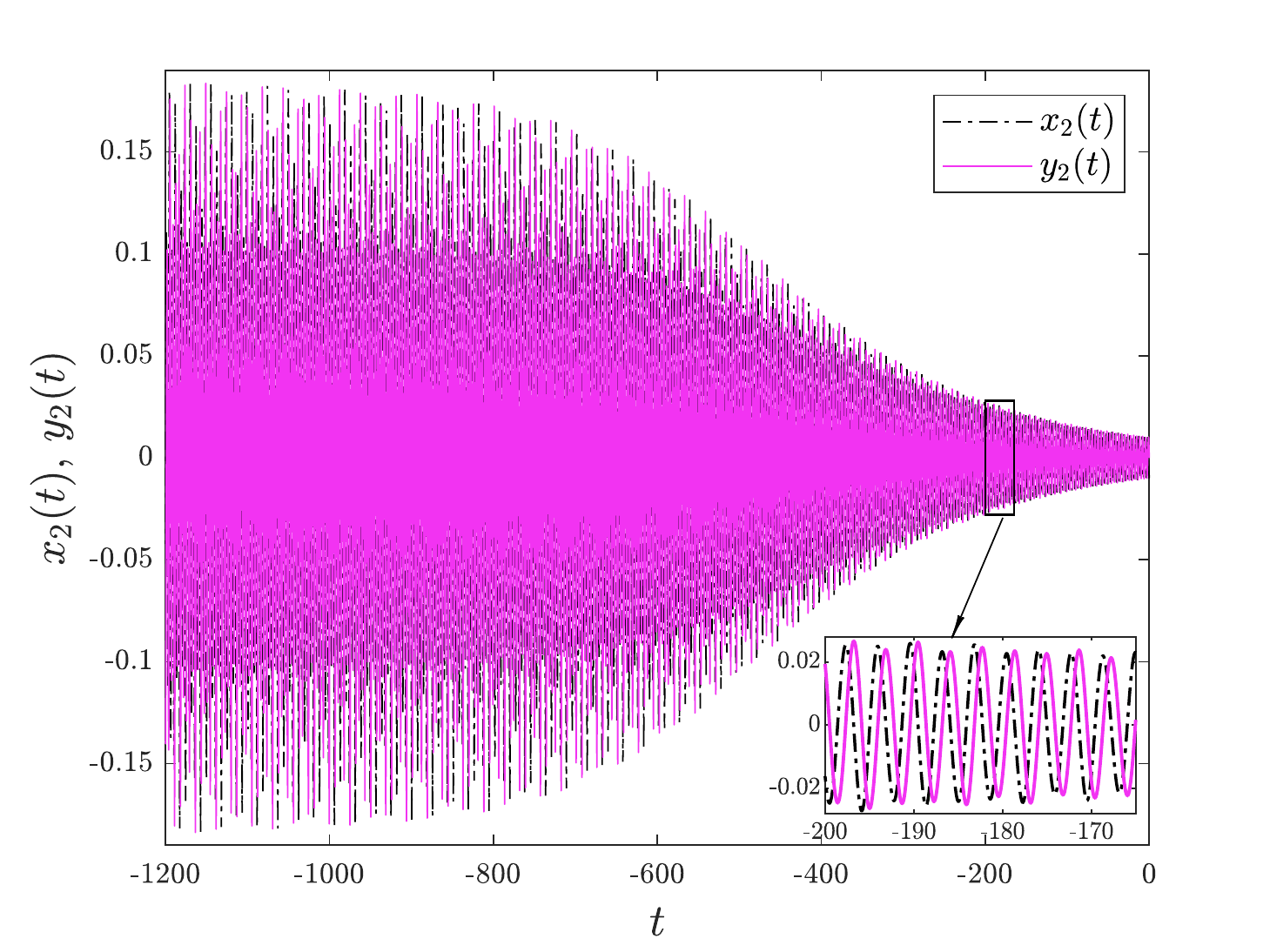}}
\caption{The controlled numerical trajectories depicting two invariant tori for system \eqref{EulBifExm}, \((c_1, c_2, c_3)= (\frac{2}{\sqrt{2}}, \frac{1}{\sqrt{2}}, 0)\in \mathbb{S}^2_{\geq 0},\) $\mu_0:=-0.005, \mu_3:=0.7, \mu_4:=0,$ and constants in example \ref{DifLeafTrans2}.  }\label{GenericCaseS1}
\end{figure}

\begin{exm}\label{DifLeafTrans2}
Let \(k=2\) and \(\sigma\in S^2_3\) where \((\sigma(1),\sigma(2), \sigma(3))=(1, 3, 2).\) As a result, \(\mathcal{M}_{2, \sigma}=\{\x\in\mathbb{R}^6| (x_1, y_1)\neq 0, (x_3, y_3)\neq 0, (x_2, y_2)=0\}\) and
\begin{equation*}
\mathcal{M}_{2, \sigma}^{(c_1, c_3)}=\{\x\in\mathcal{M}_{2, \sigma}\,|\,c_3\|(x_1, y_1) \|=c_1\|(x_3, y_3)\| \}
\end{equation*} where \((c_1, c_3)\) is denoted on behalf of \((c_1, 0, c_3)\in \mathbb{S}^2\). By transforming the \(\mathcal{M}_{2, \sigma}^{(c_1, c_3)}\)-leaf vector field associated with \eqref{EulBifExm} into the Lie algebra \(\mathscr{J}\) via the homeomorphism \(\Psi\), we obtain
\begin{eqnarray*}
&\sum_{i=1,3}\left(\frac{\sqrt{i}w_i\partial}{\mathbf{i}\partial w_i}-\frac{\sqrt{i}z_i\partial}{\mathbf{i}\partial z_i}\right)+\left(\mu_0+\left(\frac{\alpha_1-\mathbf{i}\alpha_2}{2}\right)rz_1+\left(\frac{\alpha_1+\mathbf{i}\alpha_2}{2}\right)rw_1
+\left(\frac{\alpha_5-\alpha_6}{4}\right)r^2{z_1}^2
+\left(\frac{\alpha_5-\alpha_6}{4}\right)r^2{w_1}^2  \right)\frac{r\partial}{\partial r}&\\
&+\left(\left(\frac{\alpha_5+\alpha_6}{2}\right)r^2\!+\!\frac{{c_3}^2}{{c_1}^2}\left(\frac{z_1+w_1}{2}\right)
\left(\left(\frac{\alpha_7-\alpha_8}{4}\right)r^3{z_3}^2+\left(\frac{\alpha_7+\alpha_8}{2}\right)r^3+\left(\frac{\alpha_7-\alpha_8}{4}\right)r^3{w_3}^2\right) \right)\frac{r\partial}{\partial r}.&
\end{eqnarray*}
For numerical bifurcation control simulation, let $a_1=2, a_2=1, a_3=-1, a_4=1, a_7=a_8=3$ and set \(\mu_i=0\) for \(i\neq 0, 3, 4.\) Then, the parametric leaf normal form up to degree seven is given by
\begin{eqnarray*}
&\sum_{i=1, 3}\!\frac{\sqrt{i}\partial}{\partial \theta_i}\!+\!\sum_{i=1, 3}\!\left(\mu_0\!+\!\frac{\mu_3\!+\!\mu_4}{2}{\rho_1}^2\!+\!\big(\frac{7\mu_3 \!+ 13\mu_4}{8}\!+\!\frac{2({c_1}^2\!+\!3{c_2}^2)\mu_0}{{c_1}^2}\!+\!
\frac{3({c_1}^2-4{c_2}^2)}{4{c_1}^2}\big)\!{\rho_1}^4\!+\!\frac{3(5{c_1}^2-34{c_2}^2)}{4{c_1}^2}{\rho_1}^6\right)\!
\frac{c_i\rho_{1}\partial}{{c_1}\partial\rho_{i}}.&
\end{eqnarray*}
Since \(b_1(0, C)=0\), we choose \(C=(c_1,0 , c_3)\in \mathbb{S}^3\) such that \(b_2(0, C)=\frac{3({c_1}^2-4{c_2}^2)}{{c_1}^2}\neq 0\). Then, the leaf case \(s=2\) is satisfied. Hence, we take \(c_1:= c_3:= \frac{1}{\sqrt{2}}\) and have \(b_2(0, C)= -\frac{9}{4}\neq 0.\) The infinite level parametric leaf-normal form up to twelfth-grade is as follows:
\begin{eqnarray*}
&\sum_{i=1, 3}\frac{\sqrt{i}\partial}{\partial \theta_i}+\sum_{i=1, 3} \left(\mu_0+\frac{4805{\mu_0}^2}{162}-\frac{95}{54}\mu_0\mu_3+(-\frac{29\mu_0}{3}+\frac{\mu_3}{2}){\rho_1}^2 -\frac{9}{4}{\rho_1}^4\right)\frac{c_i\rho_{1}\partial}{{c_1}\partial\rho_{i}}.&
\end{eqnarray*} For numerical simulation in Figures \ref{GenericCaseS1}, let \(\mu_0:=-0.005, \mu_3:=0.7,\) and \(\mu_4:=0.\) Solutions start from initial solutions $(-0.6, 0.9 ,0, 0, -0.6, 0.9),(-0.3, 0.4 ,0, 0, -0.3, 0.4,0)$ and $(-0.01, 0, 0, 0, -0.01, 0)$.
Figures \ref{S2x1y1A} and \ref{S2x2y2A} show forward time series converging to an external torus on the leaf-\(\mathcal{M}_{2, \sigma}^{(\frac{2}{\sqrt{2}}, \frac{1}{\sqrt{2}}, 0)}\) while trajectories in both backward and forward time are depicted in Figures \ref{S2x2y2A} and \ref{S2x2y2B}. These in forward time/backward time converge to the external/internal invariant torus. Figures \ref{6c} and \ref{S2x2y2B} depict a solution converging to the internal 4-torus in backward time.

Alternatively, we take \(c_1=2c_3=\frac{2}{\sqrt{5}}.\) Thereby, \(b_2(0, C)=0\) and \(b_3(0, C)=-\frac{21}{8}.\) This leads to the leaf case \(s=3.\) Next, the fourteenth-grade  truncation of the infinite level parametric leaf-normal form is
\begin{eqnarray*}
&\sum_{i=1, 3}\frac{\sqrt{i}\partial}{\partial \theta_i}+\sum_{i=1, 3}\left(\mu_0+(-\frac{3055\mu_0}{168}+\frac{\mu_3+\mu_4}{2}){\rho_1}^2+\left(\frac{7418665\mu_0}{28224}-\frac{2761\mu_3}{336} -\frac{2509\mu_4}{336}\right){\rho_1}^4-\frac{21}{8}r^6\right)\frac{c_i\rho_{1}\partial}{{c_1}\partial\rho_{i}}.&
\end{eqnarray*} We let \(\mu_4=-0.98\mu_3.\) The associated transition sets are depicted in Figures \ref{S2GTS} and \ref{S3TS} for the cases \(s=2\) and \(s=3\). The estimated transition varieties corresponding with equations \eqref{SupP}, \eqref{SubP} and \eqref{2SND} are given by
\begin{eqnarray*}
&T_{2SN}=\{(\mu_0, \mu_3)|-4(\frac{8\nu_1}{21}+\frac{64{\nu_2}^2}{1323})^3+27(\frac{8\mu_0}{21}+\frac{1024{\nu_2}^3}{250047}+\frac{64\nu_1\nu_2}{1323})^2=0, \mu_0>0\}&\\
&T_{Psup}=\{(\mu_0, \mu_3)| \mu_0=0, \mu_3> -0.13\}, \quad T_{Psub}=\{(\mu_0, \mu_3)| \mu_0=0, \mu_3< -0.13\}&
\end{eqnarray*}
where \(\nu_1:=\frac{3055\mu_0}{168}+\frac{1\mu_3}{100}\) and \(\nu_2:=\frac{7418665\mu_0}{7418665}+\frac{15109\mu_3}{16800}\); see Figure \ref{S3TS}.
\begin{figure}[t]
\centering
\subfloat[{Forward-time trajectories of $\rho_1(t)$ and $\rho_2(t).$}\label{S3R1R3A}]{\includegraphics[width=.47\linewidth,height=1.6in]{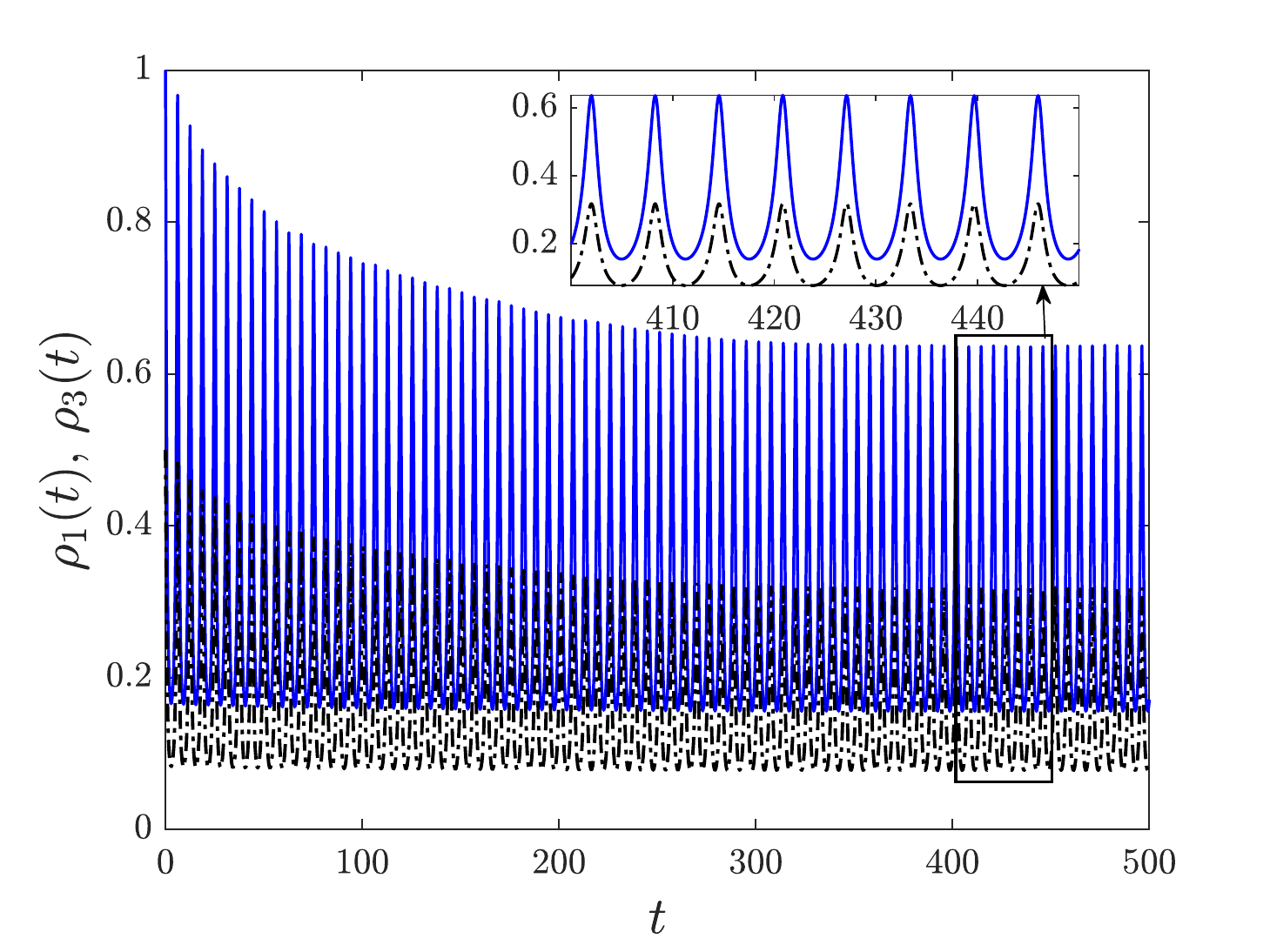}}\;
\subfloat[{Forward and backward-time trajectories of $\rho_1(t)$ and $\rho_2(t).$ } \label{S3R1R3B}]{\includegraphics[width=.47\linewidth,height=1.6in]{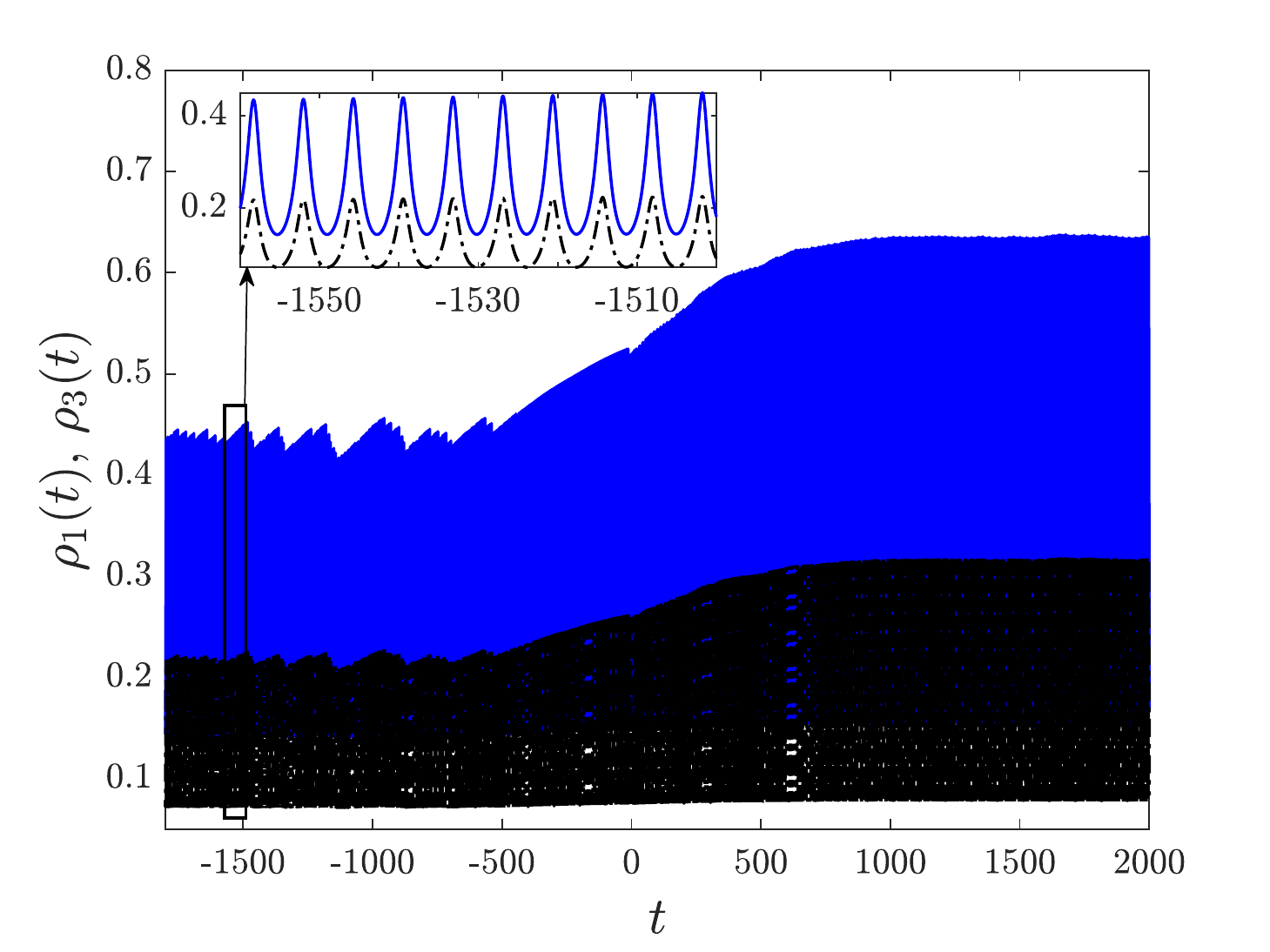}};
\subfloat[{Trajectories of $\rho_1(t)$ and $\rho_2(t)$.} \label{S3R1R3C}]{\includegraphics[width=.47\linewidth,height=1.6in]{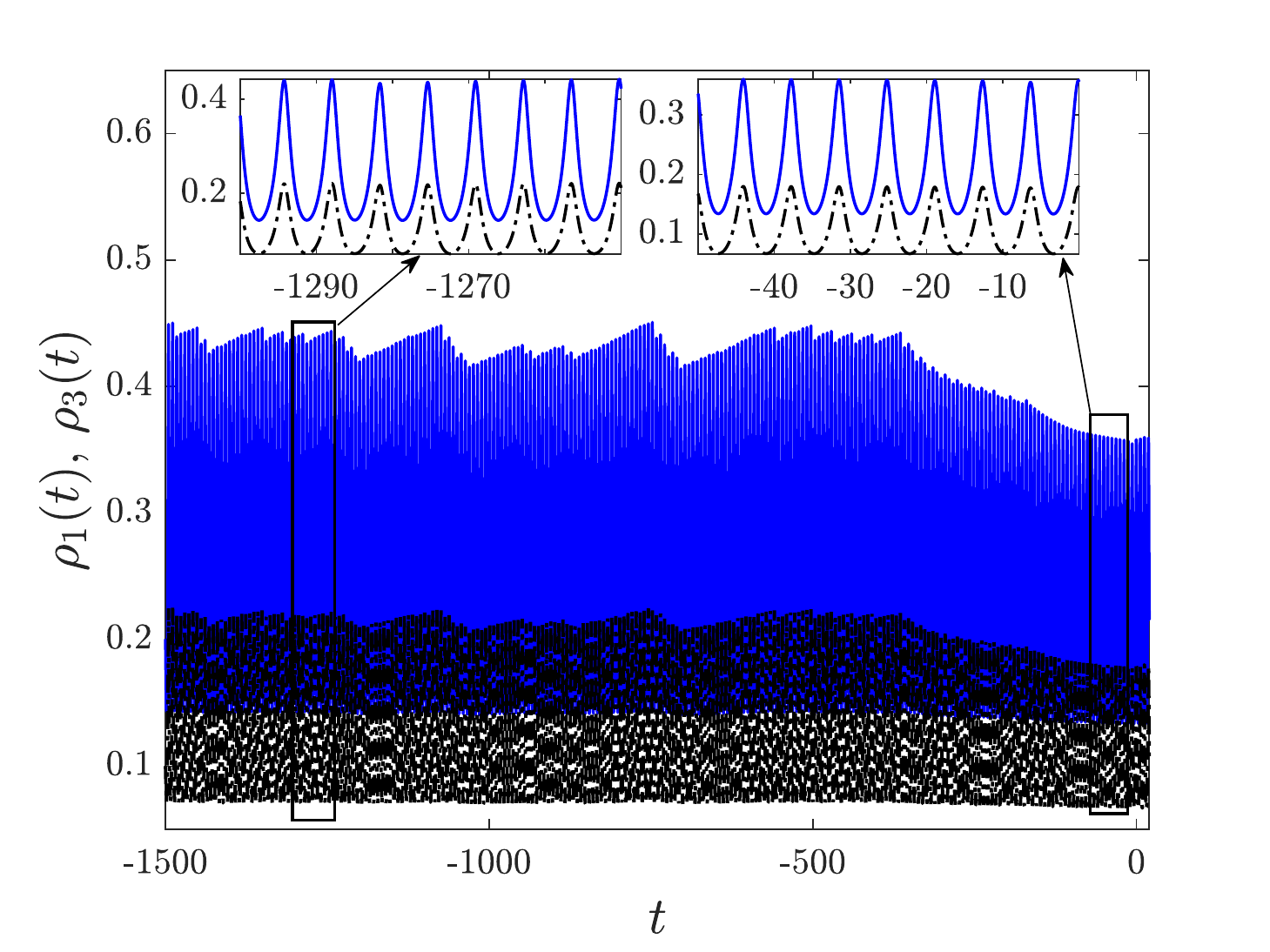}}\;
\subfloat[{Inverse-time trajectories of $\rho_1(t)$ and $\rho_2(t).$ } \label{S3R1R3D}]{\includegraphics[width=.47\linewidth,height=1.6in]{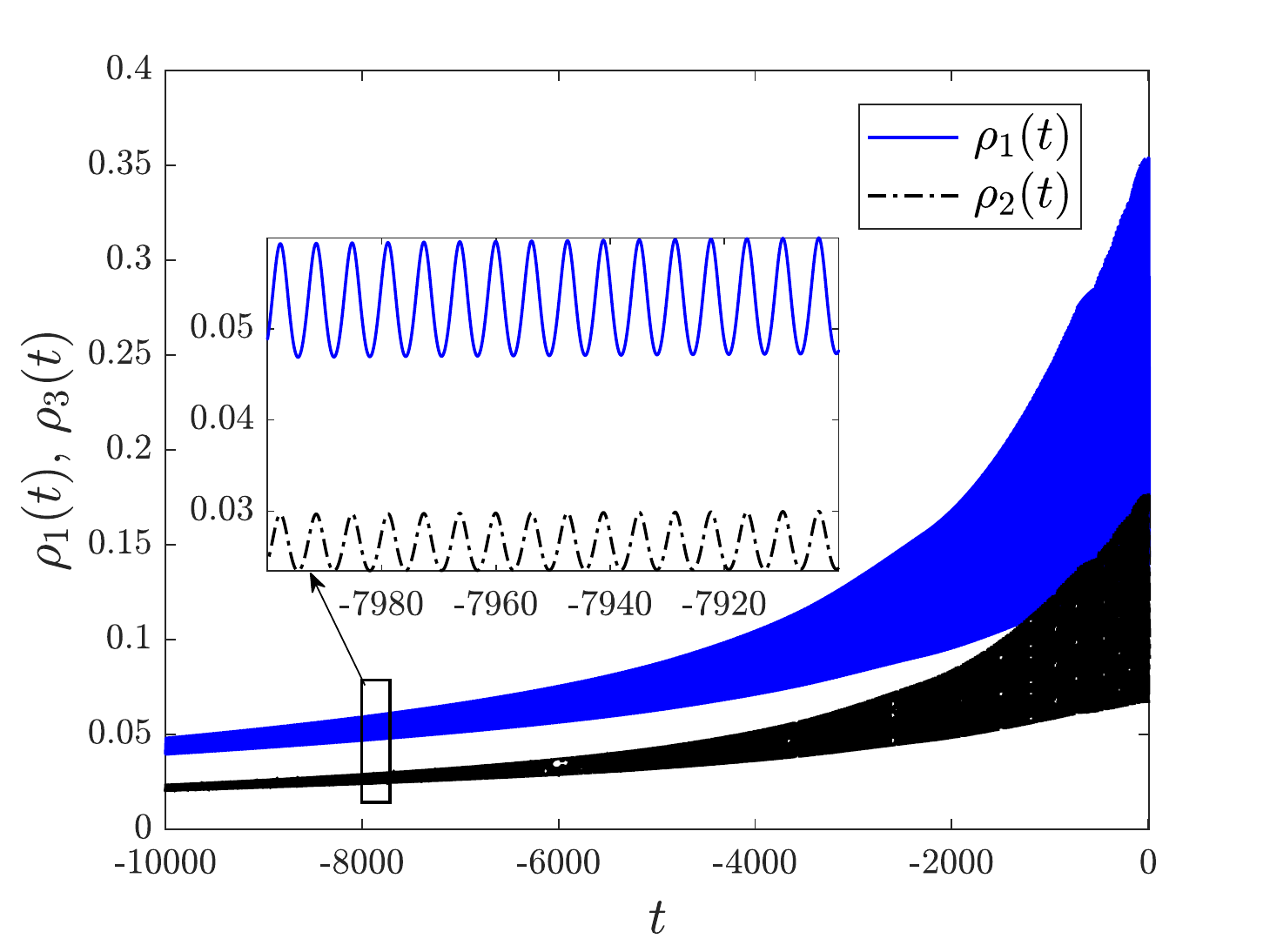}}
\caption{Three flow-invariant \(\mathbb{T}_4\)-tori on the leaf-\(\mathcal{M}_{2, \sigma}^{(\frac{2}{\sqrt{5}}, 0, \frac{1}{\sqrt{5}})}\) associated with example \ref{DifLeafTrans2}.\label{Fig3Tori} }
\end{figure}
For a numerical bifurcation control simulation, we take \(\mu_0=5e-5,\) \(\mu_3=-0.4\). Figure \ref{Fig3Tori} illustrates the existing three tori on the leaf \(\mathcal{M}_{2, \sigma}^{(\frac{2}{\sqrt{5}}, 0, \frac{1}{\sqrt{5}})}\). The forward-time trajectories of $\rho_1(t)$ and $\rho_2(t)$ with the initial values $(-0.6, 0.8, 0, 0, -0.3, 0.4)$ in Figure \ref{S3R1R3A} demonstrates a stable (external flow-invariant) \(\mathbb{T}_4\)-torus. Forward and backward-time trajectories of \(\rho_1(t)\) and \(\rho_2(t)\) corresponding with the initial values \((-0.35, 0.35, 0, 0, -0.175, 0.175)\) depicts a stable external (limit) torus and an unstable torus living inside the stable external torus; see Figure \ref{S3R1R3B}. The inverse-time trajectories of $\rho_1(t)$ and $\rho_2(t)$ with the initial values $(-0.194, 0.264, 0, 0, -0.097, 0.132)$ in Figure \ref{S3R1R3D} confirms the instability of the origin while it resides inside the unstable torus. Hence, there is also a third stable flow-invariant torus living the unstable torus illustrated by Figures \ref{S3R1R3B} and \ref{S3R1R3C}. Figure \ref{S3R1R3C} is the numerical solution of the system \eqref{EulBifExm}-\eqref{ScalarFunction} associated with initial values \((-0.2, 0.298, 0, 0, -0.1, 0.149).\)
\end{exm}

The local bifurcation of Eulerian flows are not only associated with the corresponding bifurcations of the reduced leaf-systems but also they are associated with the changes of the invariant leaf-manifolds. However, the analysis of the \(2n\)-dimensional system is not a straightforward corollary of those on individual leaves. Section \ref{SecTCWBif} deals with the bifurcation analysis of \(2k\)-dimensional cell-systems for \(k\leq n.\)

\section{Toral CW complexes and cell-bifurcations } \label{SecTCWBif}

Bifurcation varieties for a \(2n\)-dimensional vector field are not necessarily the same as leaf-transition sets. Leaf-transition sets provide a partition to the parameter space according to the topological qualitative changes in parametric leaf-vector fields. However, cell-bifurcation transition varieties here refer to the partition of the parameter space according to the dynamics of the Eulerian system on a closed cell \(\overline{\mathcal{M}_{k, \sigma}},\) that is, the {\it closure of an open \(2k\)-cell} \(\mathcal{M}_{k, \sigma}\) for \(k\leq n\) and \(\sigma\in S^k_n\). Cell-bifurcations are involved with flow-invariant toral CW complexes. Hence, we first describe toral CW complexes and then, deal with their cell-bifurcations for two most generic truncated one-parametric \(2k\)-cell normal form systems; also see \cite{GazorShoghiEulNF}.

\begin{notation}
\begin{itemize}
  \item Using the notation \(S^k_n\) in equation \eqref{Skn}, we introduce
\be
S^{l, \sigma}_n:= \left\{\gamma\in S^l_n|\, \{\sigma(i)| i> k\}\subseteq\{\gamma(i)| i> l\} \right\} \quad\hbox{ for } \sigma\in S^k_n\; \hbox{ and }\; l\leq k. \ee
Hence, \(S^{l, \sigma}_n\) has \({k\choose k-l}\)-number of elements and \(S^{k, \sigma}_n=\{\sigma\}\) for \(\sigma\in S^k_n.\) For instance, let \(n=4, k=2,\) \(\sigma(1):=2,\) \(\sigma(2):=3,\) \(\sigma(3):=1,\) and \(\sigma(4):=4.\) Then, \(\sigma\in S^2_n\) and \(S^{1, \sigma}_n=\{\sigma_1, \sigma_2\},\) where \(\sigma_1(1)=2, \sigma_1(2)=1, \sigma_1(j)=j\) for \(j=3, 4,\) and \(\sigma_2(1)=3,\) \(\sigma_2(2)=1,\) \(\sigma_2(3)=2,\) and \(\sigma_2(4)=4.\)
  \item Denote \(\mathbb{B}^{k}\subset\mathbb{R}^k\) for the \(k\)-open ball when \(k>0,\) and \(\mathbb{B}^{0}:= \{0\}\). Notation
\(\overline{\mathbb{B}^{k}}\) is used for the \(k\)-closed ball in \(\mathbb{R}^k\) while \(\mathbb{T}^x_{n+1}\) stands for an \(x\)-dependent \(n+1\)-dimensional Clifford torus. For \(\gamma\in S^{l, \sigma}_n,\) denote
\bas
&\mathbb{B}^{l, \gamma}:= \{(c_{i})^n_{i=1}\in \mathbb{R}^n |\, \sum^l_{j=1} c_{\gamma(j)}^2=1, c_{\gamma(j)}=0 \hbox{ for } j>l\},&
\eas and \(\mathbb{S}^{l-1, \gamma}_{>0}\) by equation \eqref{sksig}.
\end{itemize}
\end{notation}

Our main goal in the next lemma (and the illustrations in examples \ref{exm2}-\ref{exm3}) is to provide a regular CW complex decomposition for \(\overline{ \mathbb{S}^{k-1, \sigma}_{>0}}.\) This decomposition is the actual decomposition imposed by the closed cell-dynamics associated with Eulerian flows latter in this section.

\begin{lem}\label{Splus} The space \(\overline{ \mathbb{S}^{k-1, \sigma}_{>0}}\) is a regular CW complex.
\end{lem}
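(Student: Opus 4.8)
The plan is to realize $\overline{\mathbb{S}^{k-1,\sigma}_{>0}}$ as a spherical $(k-1)$-simplex and to transport to it the standard regular CW structure of a closed geometric simplex by radial projection. First I would record that, by \eqref{sksig},
$$\overline{\mathbb{S}^{k-1,\sigma}_{>0}}=\Big\{(c_i)^n_{i=1}\,\Big|\,{\textstyle\sum_{j=1}^{k}} c_{\sigma(j)}^2=1,\; c_{\sigma(j)}\geq 0 \text{ for } j\leq k,\; c_{\sigma(j)}=0 \text{ for } j>k\Big\},$$
i.e. the unit vectors supported on $\{\sigma(1),\dots,\sigma(k)\}$ with nonnegative entries. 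Let $\Delta\subset\mathbb{R}^n$ be the closed geometric simplex spanned by $\mathbf{e}^n_{\sigma(1)},\dots,\mathbf{e}^n_{\sigma(k)}$; it lies in the affine hyperplane $\sum_{j\leq k}x_{\sigma(j)}=1$ and therefore avoids the origin. Hence the radial map $\pi(x):=x/\|x\|$ is a well-defined continuous bijection $\Delta\to\overline{\mathbb{S}^{k-1,\sigma}_{>0}}$ whose inverse (rescale a unit vector to meet the hyperplane) is continuous, so $\pi$ is a homeomorphism and $\Delta\cong\overline{\mathbb{B}^{k-1}}$.

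Next I would set up the cells. For $1\leq l\leq k$ and $\gamma\in S^{l,\sigma}_n$ put $e_\gamma:=\mathbb{S}^{l-1,\gamma}_{>0}$, an open set of dimension $l-1$. By the definition of $S^{l,\sigma}_n$, the active indices $\gamma(1),\dots,\gamma(l)$ range precisely over the $\binom{k}{l}$ subsets of $\{\sigma(1),\dots,\sigma(k)\}$, each listed in increasing order, so $e_\gamma$ is exactly the image under $\pi$ of the relative interior of the face of $\Delta$ spanned by $\{\mathbf{e}^n_{\gamma(1)},\dots,\mathbf{e}^n_{\gamma(l)}\}$. Since every point of $\Delta$ lies in the relative interior of a unique face (the one indexed by its support), the open faces partition $\Delta$, and applying $\pi$ shows that $\{e_\gamma\}$ partitions $\overline{\mathbb{S}^{k-1,\sigma}_{>0}}$ with $\binom{k}{l}=|S^{l,\sigma}_n|$ cells in each dimension $l-1$.

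Then I would build the characteristic maps and verify regularity. For $\gamma\in S^{l,\sigma}_n$ let $F_\gamma\subset\Delta$ be the closed face spanned by $\{\mathbf{e}^n_{\gamma(1)},\dots,\mathbf{e}^n_{\gamma(l)}\}$; as an $(l-1)$-dimensional geometric simplex it admits an affine homeomorphism $\Phi_0:\overline{\mathbb{B}^{l-1}}\to F_\gamma$. Composing gives $\Phi_\gamma:=\pi\circ\Phi_0:\overline{\mathbb{B}^{l-1}}\to\overline{e_\gamma}$, which is a homeomorphism of the \emph{whole} closed ball onto the closed spherical simplex $\overline{e_\gamma}=\pi(F_\gamma)$ and restricts to a homeomorphism $\mathbb{B}^{l-1}\to e_\gamma$; this is exactly the regularity requirement. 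The incidence condition is the simplicial fact that $\overline{e_\gamma}\setminus e_\gamma$ is the image under $\pi$ of the proper boundary faces of $F_\gamma$, hence a union of cells $e_{\gamma'}$ of strictly smaller dimension, so each attaching map sends $\partial\overline{\mathbb{B}^{l-1}}$ into the $(l-2)$-skeleton. As the complex is finite, closure-finiteness is automatic and the CW topology agrees with the subspace topology on the compact set $\overline{\mathbb{S}^{k-1,\sigma}_{>0}}$, which completes the verification.

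The main obstacle I anticipate is bookkeeping rather than topology: confirming that $S^{l,\sigma}_n$ is in bijection with the $(l-1)$-faces of $\Delta$ (equivalently, with the $l$-element subsets of the active support of $\sigma$) and that the simplicial face-incidence relations translate correctly into the statement that each cell boundary is a union of lower cells. The genuinely substantive point — that radial projection is a cell-preserving homeomorphism carrying each closed face, a closed ball, onto the corresponding closed cell — is clean precisely because $\Delta$ misses the origin, so regularity is inherited directly from the elementary fact that closed faces of a geometric simplex are closed balls.
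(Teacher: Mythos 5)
Your proof is correct, and while it uses exactly the same cell decomposition as the paper --- one open cell \(\mathbb{S}^{l-1,\gamma}_{>0}\) for each \(\gamma\in S^{l,\sigma}_n\), \(1\leq l\leq k\) --- it constructs the characteristic maps by a genuinely different mechanism. The paper works intrinsically on the sphere: it fixes the point \(q_l=\sum_{i\leq l}\tfrac{1}{\sqrt{l}}\mathbf{e}^l_i\), identifies \(\overline{\mathbb{S}^{l-1}_{>0}}\) with the spherical cap \(\big(q_l+\tfrac{1}{2\sqrt{l}}\overline{\mathbb{B}^l}\big)\cap\mathbb{S}^{l-1}\) by uniformly rescaling the arcs cut out by hyperplanes through the origin and \(q_l\), and composes with a cap-to-ball homeomorphism to obtain the attaching map \(\tilde{\Phi}_{l,\gamma}\). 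You instead transport the standard regular CW structure of the geometric simplex \(\Delta\) spanned by \(\mathbf{e}^n_{\sigma(1)},\dots,\mathbf{e}^n_{\sigma(k)}\) through the radial projection \(\pi(x)=x/\|x\|\), which is a homeomorphism precisely because \(\Delta\) sits in the hyperplane \(\sum_{j\leq k}x_{\sigma(j)}=1\) and misses the origin (injectivity falls out of evaluating that linear functional on \(y=\lambda x\), and compactness of \(\Delta\) gives continuity of the inverse for free). Your route buys cleaner bookkeeping: the face lattice of \(\Delta\) automatically encodes the incidence relations, the bijection between \(S^{l,\sigma}_n\) and \(l\)-element subsets of \(\{\sigma(1),\dots,\sigma(k)\}\) matches faces to cells, and regularity is inherited from the elementary fact that closed faces of a simplex are closed balls --- though note that your ball-to-face identification \(\Phi_0\) cannot literally be affine (a ball and a simplex are not affinely equivalent); the usual radial-from-barycenter homeomorphism serves, and nothing in your argument depends on affineness. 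What the paper's bespoke arc-rescaling construction buys is reusability: essentially the same compression-of-arcs device reappears in the proof of Lemma \ref{TopLem} to put CW structures on the closures of \(\Gamma^{k,\sigma,\pm}_{a_1}\), regions cut out on the sphere by a quadric, where no linear simplex is available to project from, and it underlies the explicit attaching maps in Examples \ref{exm2}--\ref{exm3}. So your proof is a valid --- and arguably tidier --- substitute for Lemma \ref{Splus} itself, while the paper's method is shaped with the later, curved-boundary cell complexes in mind.
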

\begin{proof}
Recall that \(\mathbb{S}^{k-1, \sigma}_{>0}:=\{C=(c_1, \cdots, c_n)|\sum_{i=1}^{k}{c_{\sigma(i)}}^2=1, c_{\sigma(i)}>0 \hbox{ for } i\leq k \hbox{ and } c_{\sigma(i)}=0 \hbox{ for } j>k\}.\) Let
\be\label{ParI}\partial_i\mathbb{S}^{l-1,\gamma}_{>0}:=\sqcup_{\bar{\gamma}\in S^{l-i, \gamma}_{n}}\mathbb{S}^{l-i-1,\bar{\gamma}}_{>0}\ee be a union of disjoint \(l-i-1\)-dimensional submanifolds in the boundary of $\mathbb{S}^{l-1,\gamma}_{>0},$ where \(\gamma\in S^{l, \sigma}_n,\) \(l\leq k,\) and \(i\geq0.\) Then,
\begin{eqnarray*}
&\partial_0\mathbb{S}^{l-1, \gamma}_{>0}=\mathbb{S}^{l-1,\gamma}_{>0},\quad \partial\mathbb{S}^{l-1, \gamma}_{>0}=\partial_1\mathbb{S}^{l-1, \gamma}_{>0}\sqcup\partial\partial_1\mathbb{S}^{l-1, \gamma}_{>0}, \quad\partial\partial_1\mathbb{S}^{l-1, \gamma}_{>0}=\partial_2\mathbb{S}^{l-1, \gamma}_{>0}\sqcup\partial\partial_2\mathbb{S}^{l-1, \gamma}_{>0}, &
\end{eqnarray*} \(\partial\partial_i\mathbb{S}^{l-1, \gamma}_{>0}=\partial_{i+1}\mathbb{S}^{l-1, \gamma}_{>0}\sqcup\partial\partial_{i+1}\mathbb{S}^{l-1, \gamma}_{>0}\) and
\be\label{DecParI} \partial \mathbb{S}^{l-1, \gamma}_{>0}=\sqcup_{i=1}^{l-1}\partial_i\mathbb{S}^{l-1,\gamma}_{>0}, \qquad\hbox{ for }\; l\leq k \;\hbox{ and } \; \gamma\in S^{l, \sigma}_n.\ee
Further,
\(\partial \mathbb{S}^{l-1, \sigma}_{>0}=\sqcup_{i=1}^{l-1}\sqcup_{\bar{\gamma}\in S^{l-i, \gamma}_{n}}\mathbb{S}^{l-i-1,\bar{\gamma}}_{>0}\) and \(\overline{ \mathbb{S}^{l-1, \gamma}_{>0}}= \sqcup^l_{i=1}\sqcup_{\bar{\gamma}\in S^{i, \gamma}_n} \mathbb{S}^{i-1, \bar{\gamma}}_{>0}.\) We claim that each \(\mathbb{S}^{l-1, \gamma}_{>0}\) represents an \(l-1\)-cell, is homeomorphic to \(\mathbb{B}^{l-1}\), and this cell decomposition constitutes a CW complex structure for \(\overline{ \mathbb{S}^{l-1, \sigma}_{>0}}.\) For each \(l\) and \(\gamma,\) we need to introduce an attaching map
\ba\label{Philgamma}
&{\tilde{\Phi}_{l, \gamma}}: \overline{\mathbb{B}^{l-1}}\rightarrow \overline{\mathbb{S}^{l-1, \gamma}_{>0}}= \sqcup^{l}_{j=1}\sqcup_{\bar{\gamma}\in S^{j, \gamma}_n} \mathbb{S}^{j-1, \bar{\gamma}}_{>0},&
\ea that is a homeomorphism. Since \(\overline{\mathbb{S}^{l-1}_{>0}}\) and \(\overline{\mathbb{S}^{l-1, \gamma}_{>0}}\) are homeomorphic, let \(q_l=\sum^l_{i=1}\frac{1}{\sqrt{l}}\mathbf{e}^l_{i}\in \mathbb{S}^{l-1}_{>0}\subset\mathbb{R}^l.\) For a construction of \({\tilde{\Phi}_{l, \gamma}}\), consider the \(l\)-disc \(\mathbb{B}_{\frac{1}{2\sqrt{l}}}(q_l)\) centered at \(q_l\) and radius \(\frac{1}{2\sqrt{l}},\) the intersection of \(\overline{\mathbb{B}_{\frac{1}{2\sqrt{l}}}(q_l)}\) with the \(l-1\) sphere, \ie \(\big(q_l+\frac{1}{2\sqrt{l}}\overline{\mathbb{B}^l}\big)\cap \mathbb{S}^{l-1},\) and the family of all \(l-2\)-hyperplanes \(H\) passing through the origin and \(q_l.\) The spaces \((q_l+\frac{1}{2\sqrt{l}}\overline{\mathbb{B}^l})\cap \mathbb{S}^{l-1}\) and \(\overline{\mathbb{S}^{l-1}_{>0}}\) are homeomorphic. A homeomorphism can be constructed via a uniform rescaling of the arcs obtained from the intersection of \(H\) with \(\overline{\mathbb{S}^{l-1}_{>0}}\) and \(\mathbb{B}_{\frac{1}{2\sqrt{l}}}(q_l)\cap \mathbb{S}^{l-1},\) respectively. Since there is a homeomorphism between \((q_l+\frac{1}{2\sqrt{l}}\overline{\mathbb{B}^l})\cap \mathbb{S}^{l-1}\) and \(\overline{\mathbb{B}^{l-1}},\) the combination of these homeomorphisms constructs the expected homeomorphism \({\tilde{\Phi}_{l, \gamma}}.\) Hence, the space \(\overline{\mathbb{S}^{k-1, \sigma}_{>0}}\) is a regular CW complex.
\end{proof}

\begin{exm}\label{exm2} Let \(k=n=2\) and \(\overline{\mathbb{B}^{1}}= [-1, 1].\) The space \(\overline{\mathbb{S}^{1, I}_{>0}}=\{C=(c_1, c_2)| {c_{1}}^2+{c_{2}}^2=1, c_{i}\geq 0 \hbox{ for } i=1, 2\}\) is a regular CW complex, where \(\mathbb{S}^{1, I}_{>0}=\{C=(c_1, c_2)|\, {c_{1}}^2+{c_{2}}^2=1, c_{i}>0 \hbox{ for } i=1, 2\}.\) We have \(S^{1, I}_n= \{\gamma_1, \gamma_2\}\) for \((\gamma_1(1), \gamma_1(2)):= (1, 2)\) and  \((\gamma_2(1), \gamma_2(2)):= (2, 1).\) Hence,
\(\partial_1\mathbb{S}^{1, I}_{>0}= \mathbb{S}^{0, \gamma_1}_{>0}\sqcup \mathbb{S}^{0,\gamma_2}_{>0}= \{(1, 0), (0, 1)\}.\) A continuous attaching map associated with \(\mathbb{S}^{1, I}_{>0}\) follows
\ba\label{Philgamma2}
&{\tilde{\Phi}_{2, I}}: \overline{\mathbb{B}^{1}}\rightarrow \overline{ \mathbb{S}^{1, I}_{>0}}= \mathbb{S}^{1, I}_{>0}\sqcup (\mathbb{S}^{0, \gamma_1}_{>0}\sqcup \mathbb{S}^{0,\gamma_2}_{>0}), \hbox{ by } {\tilde{\Phi}_{2, \gamma}}(s)=\left(\cos(\frac{\pi}{4}s+\frac{\pi}{4}), \sin(\frac{\pi}{4}s+\frac{\pi}{4})\right). &
\ea Here, \({\tilde{\Phi}_{2, I}}\) is a homeomorphism.
\end{exm}

\begin{exm}[A CW-decomposition for \(\overline{\mathbb{S}^{2, I}_{>0}}\)]\label{exm3} Let \(k=n=3\) and consider \(\mathbb{S}^{2, I}_{>0}=\{C\in \mathbb{S}^{2}|\, c_{i}>0 \hbox{ for } i=1, 2, 3\}.\) Let \(S^{2, I}_3= \{\gamma_1, \gamma_2, \gamma_3\},\) where
\bas
&  (\gamma_1(1), \gamma_1(2), \gamma_1(3)):= (2, 3, 1), \quad (\gamma_2(1), \gamma_2(2), \gamma_2(3)):= (1, 3, 2), &
\eas  and \((\gamma_3(1), \gamma_3(2), \gamma_3(3)):= (1, 2, 3).\) Hence,
\(\partial_1\mathbb{S}^{2, I}_{>0}= (\mathbb{S}^{1, \gamma_1}_{>0}\sqcup \mathbb{S}^{1,\gamma_2}_{>0})\sqcup \mathbb{S}^{1,\gamma_3}_{>0},\) where
\bes \mathbb{S}^{1,\gamma_i}_{>0}= \{C\in \mathbb{S}^{2}| c_j>0 \hbox{ for } j\neq i, c_{i} =0\}.\ees
Further, \(S^{1, I}_n= \{\bar{\gamma}_1, \bar{\gamma}_2, \bar{\gamma}_3\}\) for \((\bar{\gamma}_1(1), \bar{\gamma}_1(2), \bar{\gamma}_1(3)):= (1, 2, 3),\) \((\bar{\gamma}_2(1), \bar{\gamma}_2(2), \bar{\gamma}_2(3)):= (2, 1, 3),\) and \((\bar{\gamma}_3(1), \bar{\gamma}_3(2), \bar{\gamma}_3(3)):= (3, 1, 2).\) Hence,
\bes \partial_2\mathbb{S}^{2, I}_{>0}= \{(1, 0, 0), (0, 1, 0), (0, 0, 1)\}, \; \hbox{ where } \; \mathbb{S}^{0, \bar{\gamma}_1}_{>0}= \{(1, 0, 0)\}, \mathbb{S}^{0, \bar{\gamma}_2}_{>0}= \{(0, 1, 0)\},\ees and \(\mathbb{S}^{0, \bar{\gamma}_3}_{>0}= \{(0, 0, 1)\}.\)
\begin{figure}[t]
\centering 
\subfloat[\(\overline{\mathbb{S}^{2}_{>0}}\) and its \(\Pi_1\)-projection on \((c_1, c_2)\)-plane \label{CWcom2}]{
\includegraphics[width=.53\linewidth,height=2in]{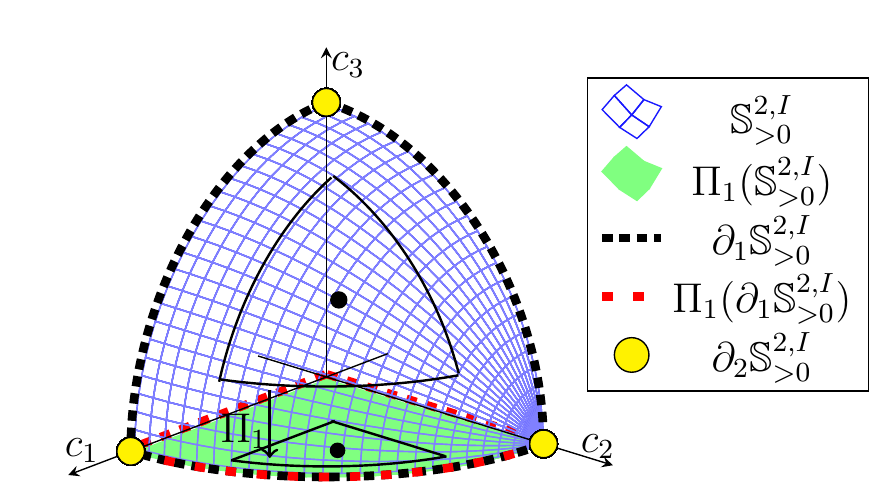}}\;
\subfloat[\(\Pi_1(\overline{\mathbb{S}^{2}_{>0}})\) circumscribed by a circle in \((c_1, c_2)\)-plane \label{CWcom1}]{
\includegraphics[width=.44\linewidth,height=1.7in]{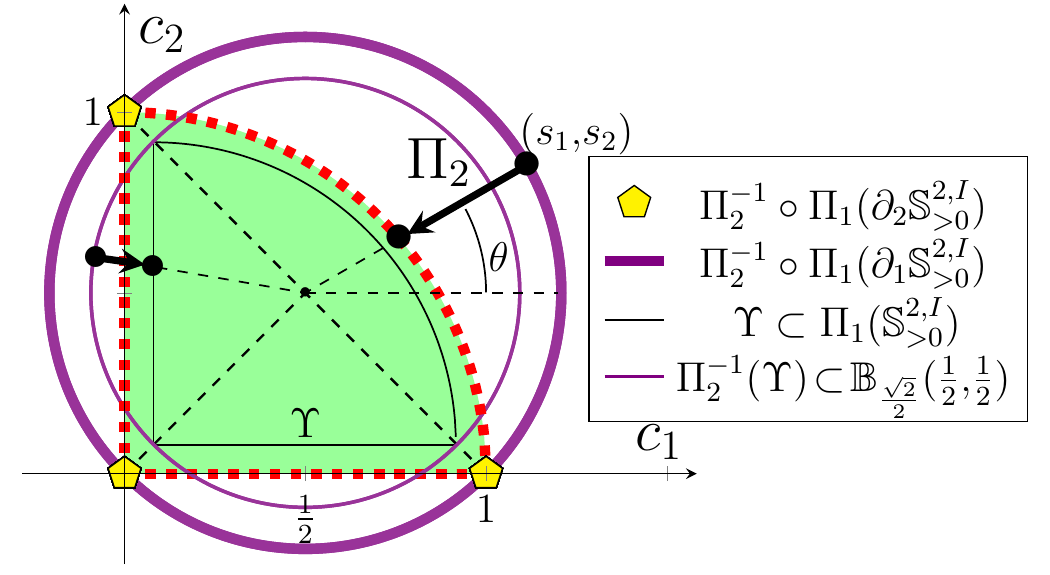}}
\caption{A CW complex decomposition for \(\overline{\mathbb{S}^{2}_{>0}}\).  }\label{CWcom}
\end{figure}
The space \(\overline{\mathbb{S}^{2, I}_{>0}}\) is homeomorphic with the sector in the \((x, y)\)-plane obtained by the projection map (see Figure \ref{CWcom2})
\begin{eqnarray*}
\Pi_1:D:=\{(c_1, c_2)|{c_1}^2+{c_2}^2\leq 1, c_1, c_2\geq 0\} \rightarrow \overline{\mathbb{S}^{2, I}_{>0}},\quad  \Pi_1(c_1, c_2)=\left(c_1, c_2, \sqrt{1-c_1^2-c_2^2}\right).
\end{eqnarray*} The projected (green) sector is circumscribed by a full circle as illustrated in Figure \ref{CWcom1}. Then, a uniform rescaling of segments of the circle's radius inside the (green) sector makes a homeomorphism between the sector and \(\overline{\mathbb{B}}^2\); see Figure \ref{CWcom1}. This is given by \(\Pi_2:\frac{\sqrt{2}}{2}\overline{\mathbb{B}}^2+(\frac{1}{2}, \frac{1}{2})\rightarrow D,\) \(\Pi_2(s_1, s_2)= (c_1, c_2),\) where
\begin{eqnarray*}
(c_1, c_2):=\left\{\!\begin{array}{lll}\!
(\frac{1}{2}, \frac{1}{2}) -\left({\frac {\left(\cos\theta+\sin\theta\right)-\sqrt{\sin 2\theta +3}}{\sqrt {2}}}\right) (s_1-\frac{1}{2},s_2 -\frac{1}{2}), &\qquad \theta\in [\frac{-\pi}{4},\frac{5\pi}{4}],\\ [2ex]
\frac{ r }{\sqrt{2}\max\{|2s_1-1|, |2s_2-1|\}} (2s_1-1, 2s_2-1)+ (\frac{1}{2}, \frac{1}{2}), &
\qquad \frac{5\pi}{4}\leq\theta\leq \frac{7\pi}{4},
\end{array}\right.
\end{eqnarray*}
\(\theta:=\tan^{-1}\frac{2s_2-1}{2s_1-1}.\) Further, \(\psi(0, 0)=(0, 0),\)
\begin{eqnarray*}
{\Pi_2}^{-1}(c_1, {c_2})=\left\{\!\begin{array}{lcc}\!
(\frac{1}{2}, \frac{1}{2})+\left(\frac{-1+c_1+{c_2}+\sqrt{3({c_1}^2+{c_2}^2)+2{c_1}{c_2}-4({c_1}+{c_2})+2}}{\sqrt{2}}\right)(\cos\theta, \sin\theta) & \quad \theta\in [\frac{-\pi}{4},\frac{5\pi}{4}], \\ [2ex]
\frac{\max\{|2{c_1}-1|, |2{c_2}-1|\}}{\sqrt{2}\sqrt{(2{c_1}-1)^2+(2{c_2}-1)^2}} (2{c_1}-1, 2{c_2}-1)+(\frac{1}{2}, \frac{1}{2}) & \quad \frac{5\pi}{4}\leq\theta\leq \frac{7\pi}{4}.
\end{array}\right.
\end{eqnarray*} We can transform \(\overline{\mathbb{B}}^2\) into \(\frac{\sqrt{2}}{2}\overline{\mathbb{B}}^2+(\frac{1}{2}, \frac{1}{2})\) using a shift and a rescaling. Then, the combination of this with \(\Pi_2\) and \(\Pi_1\) provides an attaching homeomorphism between \(\overline{\mathbb{B}}^2\) and \(\overline{\mathbb{S}^{2, I}_{>0}}.\)
\end{exm}

A toral CW complex \(X\) can be constructed by {\it attaching toral cells} associated with a cell decomposition of a regular CW complex \(\tilde{X}\). A {\it toral cell} is a smooth {\it toral bundle} and it here refers to a space homeomorphic to the Tychonoff product of an open CW-cell in a CW decomposition with a Clifford hypertorus. Thereby, a toral cell is a torus bundle over an open CW-cell whose fiber is a hypertorus. In this paper we encounter toral CW complexes as flow-invariant manifolds bifurcated from singular Eulerian cell-flows. Hence, we merely describe the toral CW complexes specifically to what appears in those cases. A toral cell here is homeomorphic to a Tychonoff product of an open CW \(k\)-cell with either a \(k+1\)-hypertorus or a \(k+2\)-hypertorus. Therefore, an even dimensional toral cell is always homeomorphic to a Tychonoff product of \(\mathbb{T}_{k+2}\) with a CW \(k\)-cell while odd dimensional toral cells are homeomorphic to the product of \(\mathbb{T}_{k+1}\) with an open CW \(k\)-cell. Hence, we denote a regular CW complex \(\tilde{X}\) as
\ba\label{CWC}
&\tilde{X}:=\sqcup_{m, i_{m}\in I_{m}}\tilde{\mathfrak{C}}^{i_{m}}_{\lfloor{\frac{m-1}{2}}\rfloor}\quad \hbox{ when } \tilde{\mathfrak{C}}^{i_{m}}_{\lfloor{\frac{m-1}{2}}\rfloor} \;\hbox{ is an open } \; \lfloor{\frac{m-1}{2}}\rfloor-\hbox{dimensional CW cell in } \tilde{X}&
\ea and the corresponding toral cell associated with \(\tilde{\mathfrak{C}}^{i_{m}}_{\lfloor{\frac{m-1}{2}}\rfloor}\) is homeomorphic to the hypertorus bundle \(\mathbb{T}_{m-\lfloor{\frac{m-1}{2}}\rfloor}\times \tilde{\mathfrak{C}}^{i_{m}}_{\lfloor{\frac{m-1}{2}}\rfloor}.\) In other words, \(m\) stands for the dimension of the toral cell associated with the CW cell \(\tilde{\mathfrak{C}}^{i_{m}}_{\lfloor{\frac{m-1}{2}}\rfloor}.\) This representation for the CW cell decomposition splits CW \({\lfloor{\frac{m-1}{2}}\rfloor}\)-cells into two categories based on their association with odd and even dimensional toral cells.

Let
\be\label{tildef} \tilde{\Phi}_{n, {i_n}}: \overline{\mathbb{B}^{n}}\rightarrow \overline{\tilde{\mathfrak{C}}^{i_n}_{\lfloor{\frac{n-1}{2}}\rfloor}}\subseteq \sqcup_{m\leq n, i_{m}\in I_{m}}\tilde{\mathfrak{C}}^{i_{m}}_{\lfloor{\frac{m-1}{2}}\rfloor}\subseteq \tilde{X}\ee
be the attaching homeomorphism associated with \(\tilde{\mathfrak{C}}^{i_n}_{\lfloor{\frac{n-1}{2}}\rfloor}.\) We shall correspond \(\tilde{\Phi}_{n, {i_n}}\) to an attaching map \(\Phi^{i_n}_{n}\) associated with \(\mathfrak{C}^{i_n}_{\lfloor{\frac{n-1}{2}}\rfloor}\) in the introduction of a {\it toral CW complex} \(X:=\sqcup_{i_n\in I_n, n} \mathfrak{C}^{i_n}_n\). Now we look for a regular CW decomposition on the closed disc \(\overline{\mathbb{B}^{n}}\) corresponding with toral cells of different odd and even dimensions. Denote \(\partial^{o, i_n}_{0} \overline{\mathbb{B}^{n}}:=\mathbb{B}^{n}\) and \(\partial^{e, i_n}_{0} \overline{\mathbb{B}^{n}}:=\emptyset\) while for \(1\leq k< n,\)
\be\label{6.8}
\partial^{o,i_n}_{n-k} \overline{\mathbb{B}^{n}}:={{\tilde{\Phi}_{n, {i_n}}}}^{-1}(\sqcup_{i_{2k+2}\in I_{2k+2}}\tilde{\mathfrak{C}}^{i_{2k+2}}_{k})\subseteq \partial \mathbb{B}^{n}\; \hbox{ and }\; \partial^{e,i_n}_{n-k} \overline{\mathbb{B}^{n}}:={{\tilde{\Phi}_{n, {i_n}}}}^{-1}(\sqcup_{i_{2k+1}\in I_{2k+1}}\tilde{\mathfrak{C}}^{i_{2k+1}}_{k})\subseteq \partial \mathbb{B}^{n}.
\ee Here, super-indices \(o\) and \(e\) stand for their associations with odd and even dimensional toral cell cases. For instance, the space \(\partial^{o,i_n}_{n-k} \overline{\mathbb{B}^{n}}\) will be associated with a \(2k+1\)-dimensional toral cell in \(X.\) Further, \(\overline{\mathbb{B}^{n}}=\cup^{n}_{k=0, \delta\in \{o, e\}}\partial^{\delta, i_n}_k \overline{\mathbb{B}^{n}}.\) We assume that the space
\bas
&\sqcup^{n}_{k=0}\sqcup_{x\in \partial^{o, i_n}_k \overline{\mathbb{B}^{n}}}\left(\mathbb{T}^x_{n-k+1}\times\{x\}
\right)\sqcup \sqcup^{n}_{k=1}\sqcup_{x\in \partial^{e, i_n}_k \overline{\mathbb{B}^{n}}}\left(\mathbb{T}^x_{n-k+2}\times\{x\}\right)&
\eas carries a metrizable topology so that
\begin{itemize}
  \item \(\sqcup_{x\in \mathbb{B}^n} \mathbb{T}^x_{n+1}\times \{x\}\) is homeomorphic to \(\mathbb{T}_{n+1}\times \mathbb{B}^n.\)
  \item \(\sqcup_{x\in \mathbb{B}^n} \mathbb{T}^x_{n+1}\times \{x\}\) is dense and a relatively compact open subset, \ie
\be\label{toralB}
\overline{\sqcup_{x\in \mathbb{B}^n} \mathbb{T}^x_{n+1}\times \{x\}}:= \sqcup^{n}_{k=0, x\in \partial^{o, i_n}_k \overline{\mathbb{B}^{n}}}\, (\mathbb{T}^x_{n-k+1}\times\{x\})\sqcup \sqcup^{n}_{k=1, x\in \partial^{e, i_n}_k \overline{\mathbb{B}^{n}}}\, (\mathbb{T}^x_{n-k+2}\times\{x\}).
\ee
\end{itemize}
 For brevity of notations, we shall denote the space \(\sqcup_{x\in \mathbb{B}^n} \mathbb{T}^x_{n+1}\times \{x\}\) by its homeomorphic space \(\mathbb{T}_{n+1}\times \mathbb{B}^n\). We now describe the assumed metrizable topology in more details as follows. Euclidian topology demonstrates the convergent sequences within individual open cells. More precisely, open \(2k+2\)-toral cell
\be
\sqcup_{x\in{{\tilde{\Phi}_{n, {i_n}}}}^{-1}(\tilde{\mathfrak{C}}^{i_{2k+2}}_{k})}(\mathbb{T}^x_{k+2}\times\{x\})\subset\overline{\mathbb{T}_{n+1}\times \mathbb{B}^n}\;\;\hbox{ is homeomorphic to } \;\;\mathbb{T}_{k+2}\times {{\tilde{\Phi}_{n, {i_n}}}}^{-1}(\tilde{\mathfrak{C}}^{i_{2k+2}}_{k})
\ee and open \(2k+1\)-toral cell \(\sqcup_{x\in{{\tilde{\Phi}_{n, {i_n}}}}^{-1}(\tilde{\mathfrak{C}}^{i_{2k+1}}_{k})}(\mathbb{T}^x_{k+1}\times\{x\})\) is homeomorphic to \(\mathbb{T}_{k+1}\times{{\tilde{\Phi}_{n, {i_n}}}}^{-1}(\tilde{\mathfrak{C}}^{i_{2k+1}}_{k})\) for \(k\leq n\). No sequence from an open cell in \(\overline{\mathbb{T}_{n+1}\times \mathbb{B}^n}\) converges to a point on another open cell with an equal or higher dimension, \eg
\bes
\partial\big(\sqcup_{x\in \partial^{e, i_n}_l \overline{\mathbb{B}^{n}}}\mathbb{T}^x_{n-l+2}\times\{x\}\big)\subseteq\sqcup^{n}_{k=l+1, x\in \partial^{o, i_n}_k \overline{\mathbb{B}^{n}}}\mathbb{T}^x_{n-k+1}\times\{x\}\sqcup \sqcup^{n}_{k=l+1, x\in \partial^{e, i_n}_k \overline{\mathbb{B}^{n}}}\mathbb{T}^x_{n-k+2}\times\{x\}.
\ees
Convergent sequences from a higher dimensional cell to a lower dimensional cell are as follows. For \(x_j\in \partial^{\delta_1, i_n}_k\overline{\mathbb{B}^{n}}\) and  \(1\leq j\leq \infty,\) a sequence
\bas
&(s_1^{x_j}, \ldots, s_{n-k+1}^{x_j}, x_j)\in \mathbb{T}^{x_j}_{n-k+1}\times \{x_j\}= (\prod^{n-k+1}_{j=1}\mathbb{S}^{1, x_j})\times \{x_j\},  &
\eas where \(\mathbb{S}^{1, x_j}\) is an \(x_j\)-dependent circle, approaches
\bas & (s_1^y, \ldots, s_{n-l+1}^y, y)\in \mathbb{T}^{y}_{n-l+1} \times \{y\}, \quad \hbox{for } \; k<l\leq n, \delta_1, \delta_2\in \{o, e\}, &\eas when \(x_j\in \partial^{\delta_1, i_n}_k \overline{\mathbb{B}^{n}}\subset \overline{\mathbb{B}^{n}}\) converges to \(y\in \partial^{\delta_2, i_n}_l \overline{\mathbb{B}^{n}}\subset \overline{\mathbb{B}^{n}}\) and \(n-l+1\)-number of \(s_i^{x_j}\)-components from \((s_1^{x_j}, \ldots, s_{n-k+1}^{x_j})\) correspond with and converge to \((s_1^y, \ldots, s_{n-l+1}^y)\) as \(j\) approaches infinity. However, the sequence of \(s_i^{x_j}\)-components from \((s_1^{x_j}, \ldots, s_{n-k+1}^{x_j})\) corresponding with the \(l-k\)-remaining indices collapse to a point as \(j\) converges to infinity. Roughly speaking, some \(\mathbb{S}^{1}\)-components of the toral fiber collapse to a point as points from a toral cell in \(\overline{\mathbb{T}_{n+1}\times \mathbb{B}^n}\) approaches a point on a neighboring lower dimensional toral cell. This naturally reduces the dimension of the corresponding torus.

\begin{defn}[Toral CW complexes]\label{TorCWDef} We refer to a Hausdorff space \(X\) with a finite partition \(\{\mathfrak{C}^{i_m}_m\},\) \ie \(X= \sqcup_{i_m\in I_m, m}\mathfrak{C}^{i_m}_m\) for finite number of finite index sets \(I_m\), as a {\it toral CW complex} and call each \(\mathfrak{C}^{i_{2k+1}}_{2k+1}\) and \(\mathfrak{C}^{i_{2k}}_{2k}\) by an open \(2k+1\)-toral cell and an open \(2k\)-toral cell in \(X\) when the following conditions hold.
\begin{itemize}
\item There exist a regular CW complex \(\tilde{X}\) given by \eqref{CWC} and homeomorphisms \(g^{i_{m}}_{m}\) so that \(\mathfrak{C}^{i_{2k+2}}_{2k+2}\) in \(X\) is \(g^{i_{2k+2}}_{2k+2}\)-homeomorphic to \(\mathbb{T}_{k+2}\times\tilde{\mathfrak{C}}^{i_{2k+2}}_{k}\) and each open \(2k+1\)-toral cell \(\mathfrak{C}^{i_{2k+1}}_{2k+1}\) is \(g^{i_{2k+1}}_{2k+1}\)-homeomorphic to \(\mathbb{T}_{k+1}\times\tilde{\mathfrak{C}}^{i_{2k+1}}_{k}\).
\item Consider the attaching map \(\tilde{\Phi}_{n, {i_n}}\) in \eqref{tildef} and the space
\(\overline{\mathbb{T}_{n+1}\times \mathbb{B}^n}\) described by \eqref{6.8}-\eqref{toralB}. Then,
there exists an attaching homeomorphism
\bes\Phi^{i_n}_n: \overline{\mathbb{T}_{n+1}\times \mathbb{B}^n}\twoheadrightarrow \overline{\mathfrak{C}^{i_n}_n}\subseteq {X}:=\sqcup_{m, i_m\in I_m}\mathfrak{C}^{i_m}_m\ees
for each toral cell \(\mathfrak{C}^{i_n}_n.\) %such that \(\Phi^{i_n}_n|_{\mathbb{T}_{n+1}\times\mathbb{B}^{n}}: \mathbb{T}_{n+1}\times\mathbb{B}^{n}\twoheadrightarrow \mathfrak{C}^{i_n}_n\) is a homeomorphism.
Furthermore,
\be\label{Pg}
\Phi^{i_n}_n\big(\partial({\mathbb{T}_{n+1}\times \mathbb{B}^n})\big)\subseteq \sqcup_{k< n, i_k\in I_k} \mathfrak{C}^{i_k}_{k}\;\hbox{ and }\; g^{i_n}_n(\Phi^{i_n}_n(x, y))= (x, {\tilde{\Phi}_{n, {i_n}}}(y))
\ee for \((x, y)\in \mathbb{T}_{n+1}\times\mathbb{B}^{n}.\)
\end{itemize} We say that {\it toral CW complex} \(X\) is associated with the regular CW complex \(\tilde{X}\).
\end{defn}

\begin{rem}[An alternative description via toral fibers] Let \(\tilde{X}\) be a CW complex and \(P: X \rightarrow \tilde{X}\) be a continuous surjective map with the homotopy lifting property with respect to the closed interval \([0, 1]\). The toral CW complexes appearing in this paper admit such function \(P.\) The fiber of \(P\) over a point \(b\in \tilde{X}\) is \(F_b = P^{-1}(b)\). We here assume that fibers over all open \(k\)-cells is homeomorphic to a hypertorus of either \(k+1\) or \(k+2\)-dimension.
%We recall that \(P\) has the {\it homotopy lifting property} with respect to a space \(C\) when for any continuous map \(h: C \times \{0\} \rightarrow X\) and a homotopy \(H: C \times [0, 1] \rightarrow \tilde{X}\) so that \(P\circ h(y, 0) = H(y, 0)\), there is a homotopy \(\hat{H}:C \times [0, 1] \rightarrow X,\) where \(\hat{H}(y, 0) = h(y, 0)\) and \(P\circ\hat{H} = H\). We call a continuous map \(P\) satisfying the homotopy lifting property with respect to the closed interval \([0, 1],\) a {\it fibration} of weak type over the CW complex \(\tilde{X}\). This provides an alternative descriptive statement on toral CW complexes that actually bifurcates from the Eulerian singular systems in this section.
\end{rem}
\begin{lem}\label{TS1} There exists a toral CW complex associated with the space \(\overline{\mathbb{S}^{k-1, \sigma}_{>0}}\) and its CW-cell decomposition described by equations \eqref{ParI} and \eqref{DecParI} for \(l=k\) and \(\sigma=\gamma\). Elements of the partition associated with this toral CW complex are homomorphic to those in
\be\label{Partition}\left\{ \mathbb{T}_{l+1}\times \mathbb{S}^{l, {\gamma}}_{>0} \,|\, {\gamma}\in S^{l+1, \sigma}_n, 0\leq l\leq k-1\right\}.\ee
Here, there is no even dimensional toral cell.
\end{lem}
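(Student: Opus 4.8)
The plan is to realize $X$ as a quotient of the trivial hypertorus bundle over the regular CW complex $\overline{\mathbb{S}^{k-1,\sigma}_{>0}}$ furnished by Lemma \ref{Splus}, collapsing in each fiber exactly the angular circles whose radial coordinates vanish. First I would record, from the decomposition \eqref{ParI}--\eqref{DecParI} with $l=k$ and $\gamma=\sigma$, that the open cells of $\overline{\mathbb{S}^{k-1,\sigma}_{>0}}$ are the manifolds $\mathbb{S}^{l,\gamma}_{>0}$ with $0\leq l\leq k-1$ and $\gamma\in S^{l+1,\sigma}_n$; each is an $l$-cell homeomorphic to $\mathbb{B}^{l}$ through the attaching homeomorphisms $\tilde{\Phi}_{l+1,\gamma}$, and the top cell is $\mathbb{S}^{k-1,\sigma}_{>0}$ with attaching map $\tilde{\Phi}_{k,\sigma}\colon\overline{\mathbb{B}^{k-1}}\to\overline{\mathbb{S}^{k-1,\sigma}_{>0}}$. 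On the compact space $\mathbb{T}_k\times\overline{\mathbb{S}^{k-1,\sigma}_{>0}}$ I would then declare $(\vartheta,C)\sim(\vartheta',C')$ precisely when $C=C'$ and $\vartheta_{j}=\vartheta'_{j}$ for every index $j$ with $c_{j}\neq 0$, and set $X:=\bigl(\mathbb{T}_k\times\overline{\mathbb{S}^{k-1,\sigma}_{>0}}\bigr)/\!\sim$ with the quotient topology.

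The next step is to identify the partition and match it with \eqref{Partition}. Over an open cell $\mathbb{S}^{l,\gamma}_{>0}$ the nonzero coordinates are exactly $c_{\gamma(1)},\dots,c_{\gamma(l+1)}$, so an equivalence class retains only the angles $\vartheta_{\gamma(1)},\dots,\vartheta_{\gamma(l+1)}$; hence the part of $X$ lying over this cell is canonically homeomorphic to $\mathbb{T}_{l+1}\times\mathbb{S}^{l,\gamma}_{>0}$. This homeomorphism is the $g^{i_{2l+1}}_{2l+1}$ required by the first bullet of Definition \ref{TorCWDef}, with the associated CW cell $\tilde{\mathfrak{C}}^{i_{2l+1}}_{l}=\mathbb{S}^{l,\gamma}_{>0}$. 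Every toral cell so obtained has dimension $(l+1)+l=2l+1$, so all toral cells are odd dimensional, the partition elements are exactly those listed in \eqref{Partition}, and no even dimensional toral cell occurs.

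To verify the second bullet I would produce the attaching homeomorphism of the top toral cell from the disc model. Building $\overline{\mathbb{T}_k\times\mathbb{B}^{k-1}}$ as prescribed by \eqref{6.8}--\eqref{toralB} (with the ambient index of the definition taken to be $k-1$), I would define $\Phi\colon\overline{\mathbb{T}_k\times\mathbb{B}^{k-1}}\to X$ by $\Phi(\vartheta,y):=[(\vartheta,\tilde{\Phi}_{k,\sigma}(y))]$. Since $\tilde{\Phi}_{k,\sigma}$ is a homeomorphism onto $\overline{\mathbb{S}^{k-1,\sigma}_{>0}}$ carrying $\partial\mathbb{B}^{k-1}$ into the lower CW cells, and since $\sim$ forgets precisely the angles whose radii vanish, $\Phi$ descends to a homeomorphism onto the closed top toral cell; it sends $\partial(\mathbb{T}_k\times\mathbb{B}^{k-1})$ into $\sqcup_{j<2k-1}\mathfrak{C}^{i_j}_j$ and satisfies $g\circ\Phi=(\mathrm{id})\times\tilde{\Phi}_{k,\sigma}$, which is exactly \eqref{Pg}. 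Hausdorffness (indeed metrizability) follows because $\sim$ is a closed equivalence relation with compact saturated classes on a compact metric space, so $X$ is compact metrizable.

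The step I expect to be the genuine obstacle is checking that the fiberwise collapse imposed by $\sim$ reproduces the combinatorial boundary maps $\partial_{i}\mathbb{S}^{l-1,\gamma}_{>0}=\sqcup_{\bar\gamma\in S^{l-i,\gamma}_n}\mathbb{S}^{l-i-1,\bar\gamma}_{>0}$ of \eqref{ParI}, that is, confirming that when a boundary point is approached and one coordinate $c_{\gamma(i)}\to 0$ it is exactly the circle carrying the angle $\vartheta_{\gamma(i)}$ that degenerates, so that the torus fiber drops one dimension and agrees with the reduced fiber over the neighbouring lower cell, precisely as in the convergence prescribed in \eqref{6.8}--\eqref{toralB}. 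This is the geometric heart of the argument: the collapse is forced because $\vartheta_{\gamma(i)}$ parameterizes the circle of the pair $(x_{\gamma(i)},y_{\gamma(i)})$ whose radius is proportional to $c_{\gamma(i)}$, so the toral quotient faithfully mirrors the regular CW structure of $\overline{\mathbb{S}^{k-1,\sigma}_{>0}}$ established in Lemma \ref{Splus}.
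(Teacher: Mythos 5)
Your proposal is correct and follows essentially the same route as the paper's own proof: both construct the toral CW complex as the quotient \(\big(\mathbb{T}_k\times \overline{\mathbb{S}^{k-1, \sigma}_{>0}}\big)/\!\sim\) under exactly the same equivalence relation (collapsing the fiber circles whose coordinates \(c_j\) vanish over the boundary cells of Lemma \ref{Splus}), identify the piece over each open cell \(\mathbb{S}^{l, \gamma}_{>0}\) with \(\mathbb{T}_{l+1}\times \mathbb{S}^{l, \gamma}_{>0}\) to obtain the homeomorphisms \(g^{i_{2l+1}}_{2l+1}\) and the odd-dimensionality of all toral cells, and induce the attaching homeomorphisms from \(\tilde{\Phi}_{l, \gamma}\) so that \eqref{Pg} holds. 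Your added verifications (that the relation is closed with compact classes, hence the quotient is compact metrizable, and that the fiber collapse reproduces the convergence criterion of \eqref{6.8}--\eqref{toralB}) are sound details the paper leaves implicit.
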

\bpr The main idea of the proof is to construct a toral CW complex via a quotient space \(Y/\sim\) of the space \(Y= \mathbb{T}_{k}\times \overline{\mathbb{S}^{k-1, \sigma}_{>0}}\) over an equivalence relation \(\sim,\) where \(\sim\) is generated by identifying the extra dimensions of the hypertori corresponding with the lower dimensional open cells in the boundary set \(\partial \overline{\mathbb{S}^{k-1, \sigma}_{>0}}\). By Lemma \ref{Splus}, for \(l\leq k\) and  \(\gamma\in S^{l, \sigma}_n\), we have
\bes\overline{\mathbb{S}^{k-1, \sigma}_{>0}}= \mathbb{S}^{k-1, \sigma}_{>0}\sqcup \sqcup^{k-1}_{i=1} \partial_i\mathbb{S}^{k-1,\sigma}_{>0}, \ees where
\bes\partial_i\mathbb{S}^{k-1,\sigma}_{>0}=\sqcup_{\gamma\in S^{k-i, \sigma}_{n}}\mathbb{S}^{k-i-1,\gamma}_{>0}\quad\hbox{ and }\quad \partial \mathbb{S}^{k-1, \gamma}_{>0}=\sqcup_{i=1}^{k-1}\partial_i\mathbb{S}^{k-1,\gamma}_{>0}.\ees We consider the Tychonoff product space
\bas
&Y:=\left(\prod_{j=1}^{k}\mathbb{S}_{\sigma(j)}^1\right)\times \overline{\mathbb{S}^{k-1, \sigma}_{>0}} \qquad\hbox{ for }\; \mathbb{S}_{\sigma(j)}^1= \mathbb{S}^1.&
\eas Given the \(\mathbb{S}^1\)-components collapsing criteria for the converging sequences to lower dimensional cells, the equivalence relation \(\sim\) is generated by identifying elements of
\bes
\widehat{Y}^{(s_{\gamma(j)})^{k-i}_{j=1}}_{x, i, \gamma} :=\left\{ \big((s_{\sigma(j)})^k_{j=1}, x\big)\in Y \,|\, s_{\gamma(l)}\in \mathbb{S}_{\gamma(l)}^1 \; \hbox{ for } l>k-i \right\}
\ees as an equivalent class for \(x\in\mathbb{S}^{k-i-1,\gamma}_{>0},\) \(1\leq i<k,\) and \(\gamma\in S^{k-i, \sigma}_{n}.\) This gives rise to the quotient space \(Y/\sim.\) Thus, the open cell \(\mathbb{S}^{k-i-1,\gamma}_{>0}\) in \(\partial \mathbb{S}^{k-1, \gamma}_{>0}\) corresponds with a space in the quotient space \(Y/\sim\) that is homeomorphic to \(\mathbb{T}_{k-i}\times\mathbb{S}^{k-i-1,\gamma}_{>0}.\) This introduces the homeomorphism \(g^{i_{2k-2i-1}}_{2k-2i-1}\) for \(i_{2k-2i-1}:= \gamma \in S^{k-i, \sigma}_{n}.\) The quotient space \(Y/\sim\) is the desired toral CW complex associated with CW-cell decomposition described by equations \eqref{ParI} and \eqref{DecParI}. The homeomorphism \({\tilde{\Phi}_{k, \gamma}}\) given in equation \eqref{Philgamma} induces the corresponding topology from \(Y/\sim\) onto \(\overline{\mathbb{T}_{k}\times \mathbb{B}^{k-1}}\) and the homeomorphism \(\Phi^{\gamma}_{2k-2i-1}\). The equivalence relation \(\sim\) is designed such that the homeomorphisms \(g^{\gamma}_{2k-2i-1}\) and \(\Phi^{\gamma}_{2k-2i-1}\) follow equations \eqref{Pg}.
\epr

\begin{thm}[A toral CW complex bifurcation associated with \(\overline{ \mathbb{S}^{k-1, \sigma}_{>0}}\)]\label{s1SingleHyp}
Consider \(k>1,\) \(\sigma\in S^k_n,\) \(s=1,\) \(\sign(a_{\mathbf{e}_{\sigma(j)}}a_{\mathbf{e}_{\sigma(i)}})=1\) for all \(1\leq i<j\leq k,\) and the closure of an open \(2k\)-cell \(\overline{\mathcal{M}_{k, \sigma}}.\) Then, there is a cell-bifurcation variety at
\be\label{Hopfs1} T_{Pch}:= \{\nu_0| \nu_0=0\}\ee
for the one-parametric Eulerian flow associated with (See the normal form in \cite[Theorem 4.7]{GazorShoghiEulNF})
\begin{eqnarray}\label{EulSysParGens1}
&\Theta+v:=\Theta+\sum_{i=1}^{n}\left(\nu_0+\sum_{j=1}^{n}a_{\mathbf{e}_{j}}({x_j}^2+{y_j}^2)\right)\big(\frac{x_{i}\partial}{\partial x_{i}}+\frac{{y_{i}}\partial}{\partial y_{i}}\big). &
\end{eqnarray} Here, a flow-invariant toral CW complex associated with the CW complex \(\overline{\mathbb{S}^{k-1, \sigma}_{>0}}= \sqcup^{k-1}_{l=0}\sqcup_{{\gamma}\in S^{l+1, \sigma}_n} \mathbb{S}^{l, {\gamma}}_{>0}\) bifurcates from the origin corresponding with the dynamics on \(\overline{\mathcal{M}_{k, \sigma}}.\) This toral CW complex and its partition are homeomorphic to the one given in Lemma \ref{TS1} and the partition \eqref{Partition}. This flow-invariant toral manifold exists when \(a_{\mathbf{e}_{\sigma(k)}}\nu_0<0.\) This is asymptotically stable when \(a_{\mathbf{e}_{\sigma(k)}}<0\) and otherwise, it is unstable. A trajectory associated with \(\rho_{\sigma(i)},\) for \(1\leq i\leq k,\) converges to/diverges from the stable/unstable toral CW complex at frequency \(\frac{\omega_{\sigma(i)}}{2\pi}\)[hz] and radial velocity \(\frac{c_i}{c_k}\rho_{\sigma(k)}\left(\nu_0+\rho_{\sigma(k)}\sum_{j=1}^{n} a_{\mathbf{e}_{j}} \frac{{c_j}^2}{{c_k}^2}\right)\)[m/s], where \(C\) is determined by the initial conditions and \({\rho_{\sigma(k)}}^2= {x_j}^2+{y_j}^2\).
\end{thm}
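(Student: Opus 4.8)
The plan is to reduce the $2k$-cell system \eqref{EulSysParGens1} to its leaf dynamics, run the $s=1$ leaf analysis of Theorem \ref{Thms1} \emph{simultaneously on every leaf}, and then reassemble the resulting family of invariant tori into the toral CW complex furnished by Lemma \ref{TS1}. Because the cell decomposition of Lemma \ref{CellDecom} and the foliation of Theorem \ref{ThmMK} are both flow-invariant, the flow on $\overline{\mathcal{M}_{k, \sigma}}$ splits cell-by-cell and, within each cell, leaf-by-leaf, so that the parameter $C\in\mathbb{S}^{k-1, \sigma}_{>0}$ is conserved by the flow. This is precisely what licenses a fibrewise argument.

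First I would carry out the $\MKC$-leaf reduction of \eqref{EulSysParGens1} via Lemma \ref{LeafReduction}. Writing $x_j^2+y_j^2=\rho_{\sigma(j)}^2$ and using $\rho_{\sigma(j)}=\tfrac{c_{\sigma(j)}}{c_{\sigma(k)}}\rho_{\sigma(k)}$ on the leaf (with $\rho_{\sigma(j)}=0$ for $j>k$), the Eulerian scalar factor becomes $\nu_0+a_1(C)\rho_{\sigma(k)}^2$ with $a_1(C):=\sum_{j=1}^{k}a_{\mathbf{e}_{\sigma(j)}}\tfrac{c_{\sigma(j)}^2}{c_{\sigma(k)}^2}$, and the leaf dynamics collapses to the single amplitude equation $\dot\rho_{\sigma(k)}=\rho_{\sigma(k)}\bigl(\nu_0+a_1(C)\rho_{\sigma(k)}^2\bigr)$ together with $\dot\theta_{\sigma(i)}=\omega_{\sigma(i)}$. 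The hypothesis $\sign(a_{\mathbf{e}_{\sigma(j)}}a_{\mathbf{e}_{\sigma(i)}})=1$ enters here: it forces $a_{\mathbf{e}_{\sigma(1)}},\ldots,a_{\mathbf{e}_{\sigma(k)}}$ to share one sign, whence $\sign(a_1(C))=\sign(a_{\mathbf{e}_{\sigma(k)}})$ \emph{independently of $C$}. Thus Theorem \ref{Thms1} applies identically on every leaf: a $\mathbb{T}_k$-torus of radius $\rho^*(C)=\sqrt{-\nu_0/a_1(C)}$ bifurcates from the origin at $T_{Pch}$ in \eqref{Hopfs1}, exists exactly when $a_{\mathbf{e}_{\sigma(k)}}\nu_0<0$, and is uniformly attracting or repelling since the linearization of the amplitude equation at $\rho^*$ equals $-2\nu_0$, again independent of $C$.

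Next I would assemble these fibrewise tori. Over the open top cell, letting $C$ range over $\mathbb{S}^{k-1, \sigma}_{>0}$ sweeps the leaf-tori into a set homeomorphic to $\mathbb{T}_k\times\mathbb{S}^{k-1, \sigma}_{>0}$, the top member of the partition \eqref{Partition}. On each boundary face $\mathbb{S}^{l, \gamma}_{>0}$ with $\gamma\in S^{l+1, \sigma}_n$ --- i.e.\ on the lower cell $\mathcal{M}_{l+1, \gamma}$ appearing in the CW decomposition of $\overline{\mathbb{S}^{k-1, \sigma}_{>0}}$ from Lemma \ref{Splus} --- the identical reduction produces $\mathbb{T}_{l+1}$-tori, since the active index set $\{\gamma(1),\ldots,\gamma(l+1)\}\subseteq\{\sigma(1),\ldots,\sigma(k)\}$ inherits the common sign and hence the same existence and stability verdict. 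This reproduces exactly the partition \eqref{Partition}, and flow-invariance of the whole union is immediate from Lemma \ref{CellDecom}. To identify the union with the toral CW complex of Lemma \ref{TS1} I would exhibit, cell by cell, the homeomorphisms $g^{i_m}_m$ of Definition \ref{TorCWDef} and check that passing to a lower face collapses precisely the circle factors prescribed by the equivalence relation $\sim$: as $c_{\sigma(j)}\to0$ one has $\rho_{\sigma(j)}=\tfrac{c_{\sigma(j)}}{c_{\sigma(k)}}\rho_{\sigma(k)}\to0$, so the $j$-th circle of the fibre degenerates to a point, which is the very identification generating $\sim$ in the proof of Lemma \ref{TS1}.

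I expect this last gluing to be the main obstacle: one must show the bifurcated set is not merely a disjoint family of tori but is \emph{homeomorphic as a stratified space} to the complex of Lemma \ref{TS1}, so that the attaching maps $\Phi^{i_n}_n$ and the relations \eqref{Pg} genuinely hold and $\rho^*(C)$ extends continuously across faces compatibly with the collapsing circles. Everything else is bookkeeping. Stability of the complex transverse to itself is just the radial stability $-2\nu_0$ above: the complex has codimension one in $\overline{\mathcal{M}_{k, \sigma}}$, the single transverse direction being the overall scale $\rho_{\sigma(k)}$, while the $C$-directions are tangent, so uniform radial attraction on conserved leaves yields asymptotic stability when $a_{\mathbf{e}_{\sigma(k)}}<0$ and repulsion otherwise. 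Finally, the quantitative data follow by reading $\dot\theta_{\sigma(i)}=\omega_{\sigma(i)}$ and the amplitude equation back in the original coordinates, giving frequency $\tfrac{\omega_{\sigma(i)}}{2\pi}$ and the stated radial velocity on $\MKC$.
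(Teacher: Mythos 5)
Your proposal is correct and follows essentially the same route as the paper's proof: the paper likewise reduces to the leaf amplitude equation, uses the common-sign hypothesis so that the torus of squared radius $-\nu_0/a^{\sigma(j)}_1(C)$ exists simultaneously on every leaf exactly when $a_{\mathbf{e}_{\sigma(k)}}\nu_0<0$, verifies the collapse of circle factors by computing $\lim_{c_{\gamma(q)}\to 0}\rho_{\gamma(q)}=0$ together with the nonzero limits of the surviving radii, and then identifies the union with the complex of Lemma \ref{TS1}. The gluing step you flag as the main obstacle is handled in the paper exactly as you anticipate, by defining the attaching maps $\Phi^{\gamma}_{l+1}(r,\theta,x):=(\rho_{\tilde{\Phi}_{l+1,\gamma}(x)},\theta,\tilde{\Phi}_{l+1,\gamma}(x))$ through the CW attaching homeomorphisms of Lemma \ref{Splus} and the continuous radius function, so that the relations \eqref{Pg} hold.
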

\bpr The dynamics of \eqref{EulSysParGens1} on \(\overline{\mathcal{M}_{k, \sigma}}\) follows the governing dynamics on open \(2l\)-cells \(\mathcal{M}_{l, \gamma}\) for \(\gamma\in S^{l, \sigma}_n\) and \(l\leq k.\) The hypertorus exists when \(a^{\sigma(l)}_1(C)\nu_0<0\) and it is stable only when \(a^{\sigma(l)}_1(C)<0\). Hence, a hypertoral flow-invariant manifold, say \(X,\) inside the invariant space \(\overline{\mathcal{M}_{k, \sigma}}\) (the closure of a \(2k\)-cell) bifurcates from the origin when the parameter \(\nu_0\) crosses the variety \(T_{Pch}\) given by \eqref{Hopfs1}.

Now we introduce a toral CW decomposition associated with \(\overline{ \mathbb{S}^{k-1, \sigma}_{>0}}\) for the flow-invariant manifold \(X.\) For \(\gamma\in S^{l, \sigma}_n\) and by equation \eqref{Philgamma}, the attaching map associated with a CW complex \(\mathbb{S}^{l-1, \gamma}_{>0}\) in \(\overline{\mathbb{S}^{k-1, \sigma}_{>0}}\) is the homeomorphism
\bes
{\tilde{\Phi}_{l, \gamma}}: \overline{\mathbb{B}^{l-1}}\twoheadrightarrow \overline{\mathbb{S}^{l-1, \gamma}_{>0}}= \sqcup^{l-1}_{j=0}\sqcup_{\bar{\gamma}\in S^{j+1, \gamma}_n} \mathbb{S}^{j, \bar{\gamma}}_{>0}\subseteq \sqcup^{k-1}_{j=0}\sqcup_{{\gamma}\in S^{j+1, \sigma}_n} \mathbb{S}^{j, {\gamma}}_{>0} \qquad \hbox{ for } 1\leq l\leq k-1.
\ees Since the index set \(I_{2l-2j+1}:= S^{l-j+1, \gamma}_n,\) we may replace the index
\(i_{2l-2j+1}\in I_{2l-2j+1}\) with \(\bar{\gamma}\in S^{l-j+1, \gamma}_n.\) The flow-invariant toral manifold \(X\) is determined by its sectional hypertorus in each \(\MKC\)-leaf. For each open toral cell
\bes
\mathfrak{C}^{\gamma}_{2l-1}:= \sqcup_{C\in\mathbb{S}^{l-1, \gamma}_{>0}}\mathbb{T}^{C, \gamma}_{l}\times \{C\} \quad\hbox{ for }\gamma\in I_{2l-1}= S^{l, \sigma}_n,
\ees
the hypertorus \(\mathbb{T}^{C, \gamma}_{l}\) is determined by the radius vector \(\rho_C=( \rho_1, \ldots \rho_n),\) where
\bes
{\rho_{\gamma(q)}}= \sqrt{\frac{-\nu_0}{a^{\sigma(q)}_1(C)}} \;\hbox{ for } q=1, \ldots l, \qquad\hbox{ and }\qquad {\rho_{\gamma(q)}}=0 \; \hbox{ for } q>l.
\ees
Hence, for any \(\gamma\in S^{l, \sigma}_n\) we have
\begin{eqnarray}\nonumber
&\lim_{c_{\gamma(q)}\to 0}\rho_{\gamma(q)}=0 \quad \hbox{ and } &\\&
\lim_{c_{\gamma(q)}\to 0, q=l+1, \ldots, k} {\rho_{\sigma(j)}}^2=\frac{-\nu_0{c_{\sigma(j)}}^2}{\sum_{i=1, i\notin \{\gamma(q)\}^k_{q=l+1}}^{k}{a_{\mathbf{e}_{\sigma(i)}}{c_{\sigma(i)}}^2}}\neq 0\; \hbox{ for } \sigma(j)\in \{\gamma(q)\}^l_{q=1}.&
\end{eqnarray} Thus, the squared radiuses corresponding with bifurcated \(k\)-hypertori within \(\MKC\) converge to \(\frac{-\nu_0{c_{\sigma(j)}}^2}{\sum_{i=1, i\neq \{\gamma(q)\}^k_{q=l+1}}^{k}{a_{\mathbf{e}_{\sigma(i)}}{c_{\sigma(i)}}^2}}\) when \(C\) approaches \(\mathbb{S}^{l-1, \gamma}_{>0}\subset \partial_{k-l}\mathbb{S}^{k-1, \sigma}_{>0}.\)

On the other hand,
\bes
\partial^{o, \gamma}_{j-1} \overline{\mathbb{B}^{l-1}}:= \sqcup_{\bar{\gamma}\in S^{l-j+1, \gamma}_n} {\tilde{\Phi}_{l, \gamma}}^{-1}\big(\mathbb{S}^{l-j, \bar{\gamma}}_{>0}\big)\; \hbox{ and }\; \partial^{e, i_l}_j \overline{\mathbb{B}^{l}}=\emptyset\quad \hbox{ for } 1\leq j\leq l \hbox{ and } 1\leq l\leq k.
\ees
The \(j\)-th nonzero squared radius of the bifurcated hypertorus for the \(\MKC\)-leaf normal form is \({\rho_{\sigma(j)}}^2:=\frac{-\nu_0}{a^{\sigma(j)}_1(C)}.\) Hence, we introduce \(\sqcup_{x\in {\tilde{\Phi}_{l, \gamma}}^{-1}\big(\mathbb{S}^{l-j, \bar{\gamma}}_{>0}\big)}\mathbb{T}^{x, \gamma, \bar{\gamma}}_{l-j+1}\times \{x\}\) and for notation simplicity denote it by \(\mathbb{T}^{\gamma, \bar{\gamma}}_{l-j+1}\times{\tilde{\Phi}_{l, \gamma}}^{-1}\big(\mathbb{S}^{l-j, \bar{\gamma}}_{>0}\big).\) Similarly, \(\mathbb{T}^\gamma_{l-j+1}\times \partial^{o, \gamma}_{j-1} \mathbb{B}^{l-1}\) stands for
\(\sqcup_{x\in \partial^{o, \gamma}_{j-1} \mathbb{B}^{l-1}}\mathbb{T}^{x, \gamma}_{l-j+1}\times \{x\}\) and \(\mathbb{T}^{x, \gamma}_{l-j+1}\) is a \(l-j+1\)-dimensional torus. Here,
\be\label{TPhi}
\mathbb{T}^{x, \gamma, \bar{\gamma}}_{l-j+1}\times \{x\}:=
\Big\{(\rho_x, \theta, x)\big|\, 0<\theta_{\gamma(i)}<2\pi, {\rho_{\bar{\gamma}(i)}}^2:=\frac{-\nu_0}{a^{\bar{\gamma}(i)}_1({\tilde{\Phi}_{l, \bar{\gamma}}}(x))} \hbox{ for } 0\leq i\leq l-j+1\Big\},
\ee where %\(C= {\tilde{\Phi}_{l+1, \bar{\gamma}}}(x)\)
\(x\in \partial^{o, \gamma}_{j-1} \overline{\mathbb{B}^{l-1}},\) \((\rho_x, \theta):= (\rho_1, \rho_2, \ldots, \rho_n, \theta_1, \theta_2, \ldots, \theta_n)\) represents an action-angle coordinate system for the \(l-j+1\)-hypertorus \(\mathbb{T}^{x, \gamma, \bar{\gamma}}_{l-j+1},\) and \(\rho_{\bar{\gamma}(i)}=0\) for \(i> l-j+1.\) Thus, \(\mathbb{T}^{\gamma, \bar{\gamma}}_{l-j+1}\times{\tilde{\Phi}_{l, \gamma}}^{-1}\big(\mathbb{S}^{l-j, \bar{\gamma}}_{>0}\big)\) is homeomorphic to \(\mathbb{T}_{l-j+1}\times \mathbb{B}^{l-j}\) for any \(\gamma\in S^{l, \sigma}_n,\) \(\bar{\gamma}\in S^{l-j+1, \gamma}_n\) and \(l\leq k.\) Now we claim that
\be\label{FormalNot}
\overline{\mathbb{T}^\gamma_{l-j+1}\times \partial^{o, \gamma}_{j-1} \mathbb{B}^{l-1}}:=\sqcup_{\bar{\gamma}\in S^{l-j+1, \gamma}_n} \mathbb{T}^{\gamma, \bar{\gamma}}_{l-j+1}\times{\tilde{\Phi}_{l, \gamma}}^{-1}\big(\mathbb{S}^{l-j, \bar{\gamma}}_{>0}\big).
\ee Therefore,
\be
\overline{\mathbb{T}^\gamma_{l}\times \mathbb{B}^{l-1, \gamma}}=\sqcup^{l}_{j=1}\big(\mathbb{T}^\gamma_{l-j+1}\times\partial^{o, \gamma}_{j-1} \overline{\mathbb{B}^{l-1}}\big)= \sqcup^{l}_{j=1}\sqcup_{\bar{\gamma}\in S^{l-j+1, \gamma}_n} \mathbb{T}^{\gamma, \bar{\gamma}}_{l-j+1}\times{\tilde{\Phi}_{l, \gamma}}^{-1}\big(\mathbb{S}^{l-j, \bar{\gamma}}_{>0}\big).
\ee Furthermore, the space \(\overline{\mathbb{T}^\gamma_{l+1}\times \mathbb{B}^{l, \gamma}}\) is homeomorphic to the toral CW complex constructed in Lemma \ref{TS1}. Following Definition \ref{TorCWDef}, we define
\bes
\Phi^{\gamma}_{l+1}: \overline{\mathbb{T}^\gamma_{l}\times \mathbb{B}^{l-1, \gamma}}\twoheadrightarrow \overline{\mathfrak{C}^{\gamma}_{2l-1}}= \sqcup_{\bar{\gamma}\in S^{m, \gamma}_n, m\leq l}\mathfrak{C}^{\overline{\gamma}}_{2m+1} \; \hbox{ by } \Phi^{\gamma}_{l+1}(r, \theta, x):= (\rho_{{\tilde{\Phi}_{l+1, \gamma}}(x)}, \theta, {\tilde{\Phi}_{l+1, \gamma}}(x)).
\ees Here, \(\Phi^{\gamma}_{l+1}\) is a homeomorphism. This completes the proof.
\epr
%\begin{thm}[Chain complex] The chain complex associated with the toral CW complex associated with \(\overline{\mathbb{S}^{k-1, \sigma}_{>0}}\) is
% \(\mathbb{Z}^{(2l+1){{k}\choose{l+1}}}\) for \(2l+1\)-cells when \(0\leq l\leq k.\) \be\left\{ \mathbb{T}_{l+1}\times \mathbb{S}^{l, {\gamma}}_{>0}| {\gamma}\in S^{l+1, \sigma}_n, 0\leq l\leq k-1\right\}.\ee \end{thm}

%\begin{exm} Figure ?? illustrates the toral CW complex constructed in Lemma \ref{TS1} for the case \(n=k=2\) and \(\sigma=I.\) Figure ?? depicts the radius changes of \((r_1(x), r_2(x))\) corresponding with the \(2\)-torus \(\mathbb{T}^x_2\) when \(x\) moves on \(\mathbb{S}^{1, I}_{>0}.\) \coR It can be seen that \(r_1\) converges to zero when \(x\) approaches \((1, 0).\) Similarly, \(r_2\) vanishes when \(x\) nears the point \((0, 1).\) In other words, the \(2\)-torus \(\mathbb{T}^x_2\) collapses to a one-dimensional circle when \(x\) converges to the boundary \(\partial\mathbb{S}^{1, I}_{>0}.\)
%\end{exm}

\begin{thm}\label{TopEqu1} Let the hypotheses of Theorem \ref{s1SingleHyp} hold.
Then, the parametric \(2k\)-cell vector fields \(v_k(\nu_0^1)\) and \(v_k(\nu_0^2)\) for either \(\nu_0^1, \nu_0^2\in\{\nu_0| \nu_0>0\}\) or \(\nu_0^1, \nu_0^2\in\{\nu_0| \nu_0<0\}\) are orbitally equivalent. Hence, the variety given by \eqref{Hopfs1} is the only \(2k\)-cell bifurcation variety for the \(2k\)-cell truncated normal form system.
\end{thm}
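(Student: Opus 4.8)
The plan is to exploit the product structure of the normal form \eqref{EulSysParGens1}: its rotational part $\Theta=\sum_i\omega_{\sigma(i)}\Theta_{\0}^{\sigma(i)}$ is rigid and \emph{parameter-independent} (it gives $\dot\theta_{\sigma(i)}=\omega_{\sigma(i)}$), while the Eulerian part is purely radial. Writing $\rho_i:=\|(x_i,y_i)\|$, the amplitude equations are $\dot\rho_{\sigma(i)}=\rho_{\sigma(i)}\,g(\rho)$ with common factor $g(\rho):=\nu_0+\sum_{j=1}^{n}a_{\mathbf{e}_j}\rho_j^2$, so that $\tfrac{d}{dt}(\rho_{\sigma(i)}/\rho_{\sigma(j)})=0$ and every ray $\{C=\text{const}\}$, i.e. every leaf $\MKC$ of Theorem \ref{ThmMK}, is invariant. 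First I would record the invariant-set structure on $\overline{\mathcal{M}_{k,\sigma}}$: the origin, and the ellipsoidal zero set $\{g=0,\ \rho_{\sigma(i)}>0\}$, which is precisely the radial trace of the bifurcated toral CW complex of Theorem \ref{s1SingleHyp}. Under the sign hypothesis $\sign(a_{\mathbf{e}_{\sigma(i)}}a_{\mathbf{e}_{\sigma(j)}})=1$ the leaf-coefficient $A(C):=\sum_{j}a_{\mathbf{e}_j}c_j^2/c_{\sigma(k)}^2$ keeps a fixed sign, bounded away from $0$ uniformly over $\overline{\mathbb{S}^{k-1,\sigma}_{>0}}$; hence for each fixed sign of $\nu_0$ the factor $g$ has a constant qualitative behaviour, and the presence or absence of the interior ellipsoid (equivalently, of the invariant torus, via $a_{\mathbf{e}_{\sigma(k)}}\nu_0<0$) is the same for $\nu_0^1$ and $\nu_0^2$.

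Next I would build the equivalence fibrewise over the leaves. On the leaf $C$ the amplitude dynamics reduces to the scalar field $f^C_{\nu_0}(\rho):=\rho\,(\nu_0+A(C)\rho^2)$ of the leaf case $s=1$ (Theorem \ref{Thms1}); for $\nu_0^1,\nu_0^2$ of the same sign these one-dimensional flows have identical phase portraits (the origin together with at most one interior zero $\rho^\ast_{\nu_0}(C)=\sqrt{-\nu_0/A(C)}$, with monotone flow in between), and the crucial simplification is that the ratio $\rho^\ast_{\nu_0^2}(C)/\rho^\ast_{\nu_0^1}(C)=\sqrt{\nu_0^2/\nu_0^1}$ is \emph{independent of $C$}. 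I would therefore choose on each leaf a homeomorphism $R_C$ of the radial half-line carrying $0\mapsto 0$ and $\rho^\ast_{\nu_0^1}(C)\mapsto\rho^\ast_{\nu_0^2}(C)$ and conjugating the monotone pieces of $f^C_{\nu_0^1}$ to those of $f^C_{\nu_0^2}$ (explicitly $R_C=\psi_2^{-1}\circ\psi_1$ with $\psi_i(\rho)=\int d\rho/f^C_{\nu_0^i}$ on each monotone interval, which linearises each flow to a translation), and set $H(\theta,\rho):=(\theta,R(\rho))$ with $R(C,\rho_{\sigma(k)}):=(C,R_C(\rho_{\sigma(k)}))$. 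Since $R$ conjugates the radial flows leaf-by-leaf and the angular flow is the same rigid rotation for both parameters, a one-line computation gives $H\circ\Phi^t_{\nu_0^1}=\Phi^t_{\nu_0^2}\circ H$; thus $H$ is a topological conjugacy, and in particular an orbital equivalence, of $v_k(\nu_0^1)$ and $v_k(\nu_0^2)$.

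The main obstacle is \emph{regularity and gluing}, not existence of $H$ on a single leaf. Two points need care. First, one cannot hope to make $H$ smooth, nor to realise the equivalence by a diffeomorphism plus time-rescaling: the normal eigenvalue at the invariant torus is $\partial_\rho f^C_{\nu_0}(\rho^\ast)=-2\nu_0$, so the normal-rate-to-frequency ratio $-2\nu_0/\omega_{\sigma(i)}$ is an invariant of smooth orbital equivalence and differs for $\nu_0^1\neq\nu_0^2$; this forces $R_C$ to be a mere homeomorphism (its log-derivative blows up at $\rho^\ast$), which is exactly why the statement is phrased as orbital (topological) equivalence. Second, $R_C$ must depend continuously on $C$ and extend to the boundary of the cell. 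I would deduce continuity in $C$ from the uniform non-degeneracy of $A(C)$ together with the $C$-independence of the scaling ratio $\sqrt{\nu_0^2/\nu_0^1}$, and then promote $H$ to $\overline{\mathcal{M}_{k,\sigma}}$ by checking compatibility with the toral CW complex gluing of Lemma \ref{TS1}: as $C$ approaches a lower cell $\mathbb{S}^{l-1,\gamma}_{>0}$ the vanishing amplitudes $\rho_{\gamma(q)}\to 0$ collapse the corresponding $\mathbb{S}^1$-fibres, and since $R$ fixes directions and the surviving radii obey the same limiting law on the lower-dimensional cell system, $H$ descends to a homeomorphism of the whole bifurcated toral CW complex. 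Finally, the ``only bifurcation variety'' assertion follows at once: $\{\nu_0>0\}$ and $\{\nu_0<0\}$ carry qualitatively distinct flows (the invariant toral CW complex is present on exactly one side by $a_{\mathbf{e}_{\sigma(k)}}\nu_0<0$, and the stability of the origin flips across $\nu_0=0$), so \eqref{Hopfs1} is the unique cell-bifurcation set of the truncated $2k$-cell normal form.
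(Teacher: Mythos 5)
Your construction is correct, but it takes a genuinely different route from the paper's, which is far more economical. The paper does not work fibrewise over the leaves at all: it exploits the exact scaling symmetry of the truncated amplitude system, writes down the closed-form flow \(\mathbf{r}(t,\mathbf{r}^0)\) of \(\dot{\mathbf{r}}=\mathbf{r}\big(\nu_0^1+\sum_{i=1}^{k}a_{\mathbf{e}_{\sigma(i)}}r_{\sigma(i)}^2\big)\), and then verifies by direct substitution that the single global \emph{linear} map \(h(\mathbf{r})=\sqrt{\nu_0^2/\nu_0^1}\,\mathbf{r}\), together with the constant time rescaling \(\tau(t)=(\nu_0^2/\nu_0^1)\,t\), carries this flow exactly to the flow with parameter \(\nu_0^2\). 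That is the entire proof: one explicit smooth orbital equivalence, with no leafwise time maps, no continuity-in-\(C\) argument, and no gluing over the toral CW complex of Lemma \ref{TS1} --- all of which your route must supply by hand. What your heavier route buys in exchange is stronger output at the level of the full vector field: because your \(R_C=\psi_2^{-1}\circ\psi_1\) conjugates the one-dimensional radial flows in \emph{real time} (each monotone piece is linearized to a translation), your map \(H=(\mathrm{id}_\theta,R)\) is an honest topological conjugacy of the full flows \(\Theta+v\), angles included. The paper's map does not achieve this: rescaling time by the constant \(\nu_0^2/\nu_0^1\) also scales the angular speeds, so the image of a full orbit under \((\theta,\mathbf{r})\mapsto(\theta,h(\mathbf{r}))\) is in general not an orbit of the target full system; the paper's verification lives entirely at the level of the amplitude equations, which is evidently how the theorem's phrase ``\(2k\)-cell vector fields \(v_k(\nu_0^\delta)\)'' is meant.

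One caveat on your regularity paragraph: your claim that the equivalence cannot be realized by a diffeomorphism plus time rescaling is contradicted, for the amplitude/cell system the paper actually treats, by the paper's linear \(h\). The normal eigenvalue \(-2\nu_0\) at the invariant torus is not an obstruction there, because the constant time factor \(\nu_0^2/\nu_0^1\) converts \(-2\nu_0^1\) into exactly \(-2\nu_0^2\). Your normal-rate-to-frequency invariant \(-2\nu_0/\omega_{\sigma(i)}\) only obstructs smooth orbital equivalence of the \emph{full} system \(\Theta+v\), where the frequencies \(\omega_{\sigma(i)}\) are parameter-independent and would be scaled along with the normal rate. This does not damage your argument --- your nonsmooth fibrewise conjugacy is precisely what handles the full system --- but the impossibility claim should be restricted to that setting.
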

\bpr Consider the following two differential equations
\begin{eqnarray}\label{TopEqivEqs.1}
&\frac{d}{dt}\mathbf{r}=\mathbf{r}\left(\nu^1_0+\sum_{i=1}^{k}a_{\mathbf{e}_{\sigma(i)}}{r_{{\sigma(i)}}}^2\right)\quad\hbox{ and }\quad
\frac{d}{dt}\mathbf{R}=\mathbf{R}\left(\nu_0^2+\sum_{i=1}^{k}a_{\mathbf{e}_{\sigma(i)}}{R_{{\sigma(i)}}}^2\right),&
\end{eqnarray} where \(\nu^1_0\nu_0^2>0,\) \(\mathbf{r}=(r_{\sigma(1)},\cdots, r_{\sigma(k)}),\) and \(\mathbf{R}=(R_{\sigma(1)}, \cdots, R_{\sigma(k)}).\) We show that these equations are orbitally equivalent. Consider the homeomorphism \(h:\mathbb{R}^k\rightarrow\mathbb{R}^k\) and the map \(\tau:\mathbb{R}^k\times\mathbb{R}\rightarrow\mathbb{R}\) defined by
\begin{eqnarray*}
&h(\mathbf{r})=(h_1(\mathbf{r}),\cdots,h_k(\mathbf{r})):=\sqrt{\frac{\nu^2_0}{\nu^1_0}}\mathbf{r}\; \hbox{ and }\;  \tau(\mathbf{r},t)=\frac{\nu^2_0}{\nu^1_0}t.&
\end{eqnarray*}
The flow associated with the first equation in \eqref{TopEqivEqs.1} follows
\begin{eqnarray*}
&\mathbf{r}(t,\mathbf{r}^0)=\left(\frac{-\nu^1_0{r_{{\sigma(k)}}^0}^2\exp(2\nu^1_0(t-t_0))}{\sum_{i=1}^{k}a_{\mathbf{e}_{{\sigma(i)}}}{r_{{\sigma(i)}}^0}^2
\left(\exp(2\nu^1_0(t-t_0))-1\right)-\nu^1_0}\right)
^\frac{1}{2}\frac{\mathbf{r}^0}{r_{{\sigma(k)}}^0} \quad \hbox{ for } \mathbf{r}(t_0, \mathbf{r}^0)= \mathbf{r}^0.&
\end{eqnarray*} Then, \(\mathbf{R}(t_0, h(\mathbf{r}^0))= h(\mathbf{r}^0)\) and
\begin{small}
\begin{eqnarray*}
h(\mathbf{r}(\tau(\mathbf{r}^0, t),\mathbf{r}^0))&=&\Big(\frac{\nu^2_0}{\nu^1_0}\Big)^{\frac{1}{2}}\mathbf{r}(\tau(\mathbf{r}^0, t),\mathbf{r}^0)
=\Big(\frac{-\nu^2_0{r_{{\sigma(k)}}^0}^2\exp(2\nu^2_0(t-t_0)}{\sum_{i=1}^{k}a_{\mathbf{e}_{{\sigma(i)}}}{r_{{\sigma(i)}}^0}^2\left(\exp(2\nu^2_0(t-t_0))-1\right)
-\nu^1_0}\Big)
^\frac{1}{2}\frac{\mathbf{r}^0}{r_{{\sigma(k)}}^0}\\
&=&\Big(\,\frac{-\nu^2_0h_k(\mathbf{r}^0)^2\exp(2\nu^2_0(t-t_0))}{\sum_{i=1}^{k}a_{\mathbf{e}_{{\sigma(i)}}}h_i(\mathbf{r}^0)^2
(\exp(2\nu^2_0(t-t_0))-1)-\nu^2_0}\,\Big)
^\frac{1}{2}\frac{h(\mathbf{r}^0)}{h_k(\mathbf{r}^0)}=\mathbf{R}(t, h(\mathbf{r}^0)).
\end{eqnarray*}\end{small} This completes the proof.
\epr

Now we consider a one-parameter 5-degree truncated \(2k\)-cell normal form (see \cite{GazorShoghiEulNF}) given by \(\Theta_k+v^{\sigma}_{k}\) for
\begin{eqnarray}\label{ClassicalNF2}
&v^{\sigma}_{k}:= \sum^{k}_{i=1}\left(\nu_0\big(\frac{x_{\sigma(i)}\partial}{\partial x_{\sigma(i)}}+\frac{y_{\sigma(i)}\partial}{\partial y_{\sigma(i)}}\big)+\sum^{2}_{|\m|=1} \prod_{j=1}^{k}({x_{\sigma(j)}}^2+{y_{\sigma(j)}}^2)^{m_{\sigma(j)}} \big(a_{\m}\frac{x_{\sigma(i)}\partial}{\partial x_{\sigma(i)}}+a_{\m}\frac{y_{\sigma(i)}\partial}{\partial y_{\sigma(i)}}\big)\right),\;&
\end{eqnarray} where \(a_{\0}=\nu_0, \m=(m_i)^n_{i=1}\in \mathbb{R}^n,\) \(m_{\sigma(i)}=0\) for \(i>k.\) Using a leaf-invariant \(\MKC\), we have
\begin{eqnarray}
\label{LeafNormalForm2}&\Theta_k+ v^{\sigma}_{k, C}:=\Theta_k+ \sum^2_{j= 0}\sum_{l=1}^{k}\big(a^{\sigma(l)}_j(C) \frac{\left({x_{\sigma(l)}}^2+{y_{\sigma(l)}}^2\right)^jx_{\sigma(l)}\partial}{\partial x_{\sigma(l)}}+a^{\sigma(l)}_j(C) \frac{\left({x_{{\sigma(l)}}}^2+{y_{{\sigma(l)}}}^2\right)^jy_{{\sigma(l)}}\partial}{\partial y_{{\sigma(l)}}}\big).&
\end{eqnarray} Here, \(c_{\sigma(j)}\neq 0\) for \(j\leq k\) and  \(c_{\sigma(j)}= 0\) for \(j>k.\) Then, for any \(l\leq k\) we have \(a^{\sigma(l)}_0(C)=\nu_0,\)
\ba\label{a12}
& a^{\sigma(l)}_1(C)= \frac{1}{{c_{\sigma(l)}}^2}\sum^k_{i=1} {c_{\sigma(i)}}^2a_{\mathbf{e}_{\sigma(i)}}, \; a^{\sigma(l)}_2(C)=\frac{1}{{c_{\sigma(l)}}^4}\sum_{1\leq i\leq j\leq k} {c_{\sigma(i)}}^2{c_{\sigma(j)}}^2a_{\mathbf{e}_{\sigma(i)}+\mathbf{e}_{\sigma(j)}}. &
\ea

Denote \(\diag(a_{\mathbf{e}_{\sigma(i)}})\) for a \(n\times n\) diagonal matrix where \(a_{\mathbf{e}_{\sigma(i)}}\) (\(1\leq i\leq k\)) is the \({\sigma(i)}\)-th diagonal entry and the rest of entries are zero. Further, denote
\bes L_{(a_{\mathbf{e}_{\sigma(i)}})^k_{i=1}}:=\big\{C\in \mathbb{R}^n\,|\, \big\langle\diag(a_{\mathbf{e}_{\sigma(i)}})C, C\big\rangle=0 \big\}\ees for a quadric \(k\)-hypersurface passing through the origin. For a \(\gamma\in S^{l, \sigma}_n\), let
\ba\label{Gammas}
&\Gamma^{l, \gamma}_{a_1}:=L_{(a_{\mathbf{e}_{\sigma(i)}})^k_{i=1}}\cap \mathbb{S}^{l-1, \gamma}_{>0} \hbox{ and } \Gamma^{l, \gamma, \pm}_{a_1}\!:= \! \left\{C\in \mathbb{S}_{>0}^{l-1, \gamma}| \sign\!\left(a_{2\mathbf{e}_{\sigma(1)}}\big\langle\diag(a_{\mathbf{e}_{\sigma(i)}})C, \; C\big\rangle\right)=\pm1\right\}.&
\ea Hence, \(\Gamma^{l, \gamma, -}_{a_1}:= \mathbb{S}_{>0}^{l-1, \gamma}\setminus (\Gamma^{l, \gamma}_{a_1}\sqcup\Gamma^{l, \gamma, +}_{a_1}).\)

\begin{lem}[CW complex structures for \(\overline{\Gamma^{k, \sigma, \pm}_{a_1}}\)]\label{TopLem} Assume that  \(k\geq 2,\) \(\sigma\in S^k_n,\) and for at least a pair of indices \((i, j),\) \(1\leq i<j\leq k,\)
\be\label{s1DoblTor}\sign(a_{\mathbf{e}_{\sigma(j)}}a_{\mathbf{e}_{\sigma(i)}})=-1, \quad \hbox{ while }\quad \sign(a_{\mathbf{e}_{\sigma(i_1)}+\mathbf{e}_{\sigma(i_2)}})\sign(a_{\mathbf{e}_{\sigma(j_1)}+\mathbf{e}_{\sigma(j_2)}})=1\ee
for all \(1\leq i_1\leq i_2\leq k\) and \(1\leq j_1\leq j_2\leq k.\) Then,
\(a^{\sigma(i)}_{1}(C_*)=0\) for any \(C_*\in \Gamma^{k, \sigma}_{a_1}=L_{(a_{\mathbf{e}_{\sigma(i)}})^k_{i=1}}\cap \mathbb{S}_{>0}^{k-1}\) and \(i\leq k.\) Besides, the \(C\)-parameter space \(\mathbb{S}_{>0}^{k-1}\) is partitioned into a union of disjoint three topological subspaces \(\Gamma^{k, \sigma, -}_{a_1},\) \(\Gamma^{k, \sigma, +}_{a_1},\) and \(\Gamma^{k, \sigma}_{a_1}\) given by \eqref{Gammas}. Each of the topological closures of \(\Gamma^{k, \sigma, +}_{a_1}\) and \(\Gamma^{k, \sigma, -}_{a_1}\) constitutes a CW complex.
\end{lem}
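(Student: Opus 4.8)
The plan is to translate the assertion into elementary convex geometry via a coordinate-squaring homeomorphism. Two of the three claims are immediate. For the first, recall from \eqref{a12} that $a^{\sigma(l)}_1(C)=\frac{1}{c_{\sigma(l)}^2}\sum_{i=1}^{k}a_{\mathbf{e}_{\sigma(i)}}c_{\sigma(i)}^2$, while by definition of $L_{(a_{\mathbf{e}_{\sigma(i)}})^k_{i=1}}$ a point $C_*\in\Gamma^{k,\sigma}_{a_1}$ satisfies $\langle\diag(a_{\mathbf{e}_{\sigma(i)}})C_*,C_*\rangle=\sum_{i=1}^{k}a_{\mathbf{e}_{\sigma(i)}}(c_*)_{\sigma(i)}^2=0$; since $(c_*)_{\sigma(l)}>0$ for $l\le k$ on $\mathbb{S}^{k-1}_{>0}$, this forces $a^{\sigma(l)}_1(C_*)=0$ for every $l\le k$. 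For the partition, the function $q(C):=a_{2\mathbf{e}_{\sigma(1)}}\langle\diag(a_{\mathbf{e}_{\sigma(i)}})C,C\rangle$ is continuous on $\mathbb{S}^{k-1}_{>0}$, and the second half of \eqref{s1DoblTor} forces every second-order coefficient, in particular $a_{2\mathbf{e}_{\sigma(1)}}$, to be nonzero; hence the level sets $\{q>0\}$, $\{q=0\}$, $\{q<0\}$ are precisely $\Gamma^{k,\sigma,+}_{a_1}$, $\Gamma^{k,\sigma}_{a_1}$, $\Gamma^{k,\sigma,-}_{a_1}$ from \eqref{Gammas}, and they partition $\mathbb{S}^{k-1}_{>0}$ into three disjoint subspaces.

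For the third claim I would introduce the squaring map
\[
\Sigma:\overline{\mathbb{S}^{k-1}_{>0}}\to\overline{\Delta^{k-1}},\qquad \Sigma(C)=(c_{\sigma(1)}^2,\dots,c_{\sigma(k)}^2)=:(u_1,\dots,u_k),
\]
where $\overline{\Delta^{k-1}}=\{u\,|\,u_i\ge0,\ \sum_i u_i=1\}$ is the standard closed $(k-1)$-simplex and $\Delta^{k-1}_{>0}=\Sigma(\mathbb{S}^{k-1}_{>0})$ its relative interior. The map $\Sigma$ is a continuous bijection of compact Hausdorff spaces with continuous inverse $u\mapsto(\sqrt{u_1},\dots,\sqrt{u_k})$, hence a homeomorphism (merely topological, not smooth, along the boundary faces, which suffices here). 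Under $\Sigma$ the quadratic form becomes the \emph{affine} functional $\ell(u):=\sum_{i=1}^{k}a_{\mathbf{e}_{\sigma(i)}}u_i$, so $\Sigma$ sends $\Gamma^{k,\sigma}_{a_1}$ to $\{u\in\Delta^{k-1}_{>0}\,|\,\ell(u)=0\}$ and $\Gamma^{k,\sigma,\pm}_{a_1}$ to the open half-simplices $\{u\in\Delta^{k-1}_{>0}\,|\,\pm a_{2\mathbf{e}_{\sigma(1)}}\ell(u)>0\}$. The first half of \eqref{s1DoblTor} provides indices $i_0,j_0$ with $a_{\mathbf{e}_{\sigma(i_0)}}>0>a_{\mathbf{e}_{\sigma(j_0)}}$; evaluating $\ell$ at the corresponding simplex vertices shows $\ell$ takes both signs, so $\{\ell=0\}$ meets the open simplex and both half-simplices (hence all three pieces above) are nonempty.

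It remains to show that $\overline{\Gamma^{k,\sigma,\pm}_{a_1}}$ are CW complexes; by the homeomorphism $\Sigma$ it suffices to treat their images. Put $P^{\pm}:=\{u\in\overline{\Delta^{k-1}}\,|\,\pm a_{2\mathbf{e}_{\sigma(1)}}\ell(u)\ge0\}$, each the intersection of a compact simplex with a closed half-space and therefore a compact convex polytope. I would prove $\Sigma(\overline{\Gamma^{k,\sigma,+}_{a_1}})=P^{+}$ (and symmetrically for the minus sign): the inclusion $\subseteq$ holds because $P^{+}$ is closed and contains $\Sigma(\Gamma^{k,\sigma,+}_{a_1})$, while for $\supseteq$ I fix a point $u_1$ with all coordinates strictly positive and $a_{2\mathbf{e}_{\sigma(1)}}\ell(u_1)>0$, and for arbitrary $u_0\in P^{+}$ note that the segment $u_t=(1-t)u_0+tu_1$ satisfies $(u_t)_i\ge t(u_1)_i>0$ and $a_{2\mathbf{e}_{\sigma(1)}}\ell(u_t)\ge t\,a_{2\mathbf{e}_{\sigma(1)}}\ell(u_1)>0$ for $t\in(0,1]$ by linearity of $\ell$; thus $u_t\in\Sigma(\Gamma^{k,\sigma,+}_{a_1})$ and $u_0=\lim_{t\to0^+}u_t$ lies in the closure. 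Finally, a compact convex polytope carries its face poset as a regular CW decomposition, which transports through $\Sigma^{-1}$ to give the desired CW structure on $\overline{\Gamma^{k,\sigma,\pm}_{a_1}}$.

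The genuinely delicate point is this last identification: one must be sure that squaring coordinates sends the topological closure of the open half-simplex onto the \emph{entire} polytope $P^{\pm}$, including its faces lying in $\partial\Delta^{k-1}$, and not onto a proper subset. The convexity (segment) argument settles this cleanly and, importantly, avoids any case analysis on whether the defining inequalities $u_i\ge0$ and $\pm a_{2\mathbf{e}_{\sigma(1)}}\ell\ge0$ are redundant, which is the usual pitfall when one tries to argue that closure commutes with the defining constraints.
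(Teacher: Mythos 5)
Your proof is correct, and it takes a genuinely different route from the paper. The paper proceeds combinatorially: it fixes permutations \(\gamma_+\in S^{l,\sigma}_n\) and \(\gamma_-\in S^{k-l,\sigma}_n\) separating the indices with \(a_{\mathbf{e}_{\sigma(i)}}a_{2\mathbf{e}_{\sigma(1)}}>0\) from those with the opposite sign, writes \(\overline{\Gamma^{k,\sigma,\pm}_{a_1}}\) explicitly as a disjoint union of cells of the form \(\mathbb{S}^{j-1,\bar\gamma}_{>0}\), \(\Gamma^{j,\bar\gamma,\pm}_{a_1}\) and \(\Gamma^{j,\bar\gamma}_{a_1}\), and then builds attaching maps by hand, composing the spherical attaching maps \(\tilde\Phi_{j,\bar\gamma}\) of Lemma \ref{Splus} with an ad hoc ``arc-compression'' homeomorphism \(h^{j,\gamma,-}_{a_1}:\overline{\mathbb{S}^{j-1,\gamma}_{>0}}\to\overline{\Gamma^{j,\gamma,-}_{a_1}}\) obtained by rescaling arcs cut out by two-planes through a chosen interior point. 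You instead linearize the whole problem: under the squaring homeomorphism \(\Sigma\) the quadric becomes the affine functional \(\ell\), so \(\Sigma(\overline{\Gamma^{k,\sigma,\pm}_{a_1}})\) is the compact convex polytope \(P^\pm=\overline{\Delta^{k-1}}\cap\{\pm a_{2\mathbf{e}_{\sigma(1)}}\ell\ge0\}\), whose face poset is a regular CW decomposition for free; your segment argument establishing \(\Sigma(\overline{\Gamma^{k,\sigma,+}_{a_1}})=P^+\) is exactly the point the paper's construction leaves implicit (that closure commutes with the defining constraints, including along \(\partial\Delta^{k-1}\)), and it is the cleanest part of your write-up. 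What your approach buys is rigor and economy: the regularity of the CW structure is inherited from polytope theory rather than from a sketched compression map whose uniform continuity and extension the paper merely asserts, and your argument does not even need every \(a_{\mathbf{e}_{\sigma(i)}}\) to be nonzero, which the paper's dichotomy \(\gamma_+/\gamma_-\) tacitly assumes. What the paper's approach buys is the explicit permutation-indexed labelling of the cells, which is then consumed verbatim in Theorem \ref{LemS1Gamma+} to index the toral cells; to make your proof equally usable downstream you should add one remark, namely that the open faces of \(P^\pm\) pull back under \(\Sigma^{-1}\) precisely to the pieces \(\mathbb{S}^{j-1,\bar\gamma}_{>0}\) (faces of \(\overline{\Delta^{k-1}}\) lying strictly inside the open half-space), \(\Gamma^{j,\bar\gamma,\pm}_{a_1}\) (faces cut by the hyperplane) and \(\Gamma^{j,\bar\gamma}_{a_1}\) (faces meeting \(\{\ell=0\}\)), so that the two decompositions coincide. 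With that one sentence added, your argument is a complete and arguably tighter replacement for the paper's proof.
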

\bpr
There exists a unique natural number \(l<k\) and a \(\gamma_+\in S^{l, \sigma}_n\) so that \(a_{\mathbf{e}_{\sigma({\gamma_+(i)})}}a_{2\mathbf{e}_{\sigma(i_1)}}>0\) for all \(1\leq i\leq l\) and \(a_{\mathbf{e}_{\sigma(j)}}a_{2\mathbf{e}_{\sigma(i_1)}}<0\) for all \(j\notin \{\gamma_+(i)| 1\leq i\leq l\}.\) Similarly, there is a unique \(\gamma_-\in S^{k-l, \sigma}_n\) so that \(\{1, 2, \ldots, k\}\setminus\{\gamma_+(i)| 1\leq i\leq l\}= \{\gamma_-(i)| 1\leq i\leq k-l\}.\) Hence, the CW decomposition of \(\overline{\Gamma^{k, \sigma, +}_{a_1}}\) is given by the disjoint sets appearing in the equation
\be
\overline{\Gamma^{k, \sigma, +}_{a_1}}= (\sqcup^{l}_{j=1, \bar{\gamma}\in S^{j, \gamma_+}_n} \mathbb{S}^{j-1, \bar{\gamma}}_{>0})\sqcup( \sqcup^{k}_{j=1,\bar{\gamma}\in S^{j, \sigma}_n\setminus (S^{j, \gamma_+}_n\cup S^{j, \gamma_-}_n)}  (\Gamma^{j, \bar{\gamma}, +}_{a_1}\sqcup \Gamma^{j, \bar{\gamma}}_{a_1})).
\ee Note that \(S^{j, \gamma_+}_n=\emptyset\) for \(j>l\) while \(S^{j, \gamma_-}_n=\emptyset\) for \(j>k-l.\) Similarly, the CW-decompositions for \(\overline{\Gamma^{k, \sigma, -}_{a_1}}\) and \(\overline{\Gamma^{k, \sigma}_{a_1}}\) are derived by the disjoint subsets appearing in
\ba\nonumber
&\overline{\Gamma^{k, \sigma, -}_{a_1}}= \sqcup^{l}_{j=1, \bar{\gamma}\in S^{j, \gamma_-}_n} \mathbb{S}^{j-1, \bar{\gamma}}_{>0}\;\sqcup\sqcup^{k}_{j=1, \bar{\gamma}\in S^{j, \sigma}_n\setminus (S^{j, \gamma_+}_n\cup S^{j, \gamma_-}_n)}  (\Gamma^{j, \bar{\gamma}, -}_{a_1}\sqcup \Gamma^{j, \bar{\gamma}}_{a_1})&\\\label{gamma-}& \hbox{ and }\qquad \overline{\Gamma^{k, \sigma}_{a_1}}= \sqcup^{k}_{j=1, \gamma\in S^{j, \sigma}_n}  \Gamma^{j, \gamma}_{a_1}.&
\ea Remark that the spaces \(\Gamma^{k, \sigma, +}_{a_1}\) and \(\Gamma^{k, \sigma, -}_{a_1}\) are relatively compact connected open subsets of \(\mathbb{S}_{>0}^{k-1}.\) Further, the spaces \(\Gamma^{k, \sigma, +}_{a_1}\) and \(\Gamma^{k, \sigma, -}_{a_1}\) are homeomorphic to \(\mathbb{B}^{k-1}\) while \(\Gamma^{k, \sigma}_{a_1}\) is homeomorphic to \(\mathbb{B}^{k-2}.\) We need to introduce the attaching maps to complete the proof. The attaching map \({\tilde{\Phi}_{j, \bar{\gamma}}}\) given by \eqref{Philgamma} works fine in the cases of \(j-1\)-CW cells \(\mathbb{S}^{j-1, \bar{\gamma}}_{>0}\) for \(\bar{\gamma}\in S^{j, \gamma{\pm}}_n\) and \(1\leq j\leq l.\) Thus, we only refine the attaching maps to work for \(\Gamma^{j, \bar{\gamma}, -}_{a_1}\)-cells. The other cases are similar. We first introduce a homeomorphism \(h^{j, \gamma, -}_{a_1}: \overline{\mathbb{S}^{j-1, \gamma}_{>0}}\rightarrow \Gamma^{j, \gamma, -}_{a_1}.\) The idea is to choose a point, say \(P\), from the interior of \(\Gamma^{j, \gamma, -}_{a_1}\subset \mathbb{S}^{j-1, \gamma}_{>0}.\) Consider all two dimensional planes passing through the origin and \(P.\) The intersections of each of these planes with \(\mathbb{S}^{j-1, \gamma}_{>0}\) and \(\Gamma^{j, \gamma, -}_{a_1}\) give rise to two open arcs (an arc here refers to a one-manifold). The point \(P\) divides each of these two arcs into two connected arc-pieces and the homeomorphism \(h^{j, \gamma, -}_{a_1}\) is defined as identity on one piece while it compresses the other piece in \(\mathbb{S}^{j-1, \gamma}_{>0}\) to homeomorphically match it with the corresponding arc-piece in \(\Gamma^{j, \gamma, -}_{a_1}\). The homeomorphism \(h^{j, \gamma, -}_{a_1}\) is readily defined as a uniformly continuous map on \({\mathbb{S}^{j-1, \gamma}_{>0}}\). Thus, it can also be uniquely extended to \(h^{j, \gamma, -}_{a_1}: \overline{\mathbb{S}^{j-1, \gamma}_{>0}} \rightarrow\overline{\Gamma^{j, \gamma, -}_{a_1}}.\) Using this map and the attaching map \({\tilde{\Phi}_{j, \bar{\gamma}}}\) from \eqref{Philgamma}, we introduce an attaching homeomorphism for the space decomposition \eqref{gamma-} by
\be\label{AtachGamma-}
{\widetilde{\Phi_{j, \gamma}}}: \overline{\mathbb{B}^{j-1}}\rightarrow \overline{\Gamma^{j, \gamma, -}_{a_1}} \quad \hbox{ where }\; {\widetilde{\Phi_{j, \gamma}}}:= {h^{j, \gamma, -}_{a_1}}\circ {\tilde{\Phi}_{j, \bar{\gamma}}}.
\ee The CW-decomposition \eqref{gamma-} and the attaching map \eqref{AtachGamma-} provide a CW complex structure for \(\overline{\Gamma^{j, \gamma, -}_{a_1}}.\) Hence, the proof is complete.
\epr

\begin{thm}[Cell-bifurcation of a toral CW complex associated with the CW complex \(\overline{\Gamma^{k, \sigma, +}_{a_1}}\)]\label{LemS1Gamma+} Assume that the hypotheses described by \eqref{s1DoblTor} hold and \(\sigma\in S^k_n.\) Let \(\mathcal{M}_{k, \sigma}\) be an open cell and \(\overline{\mathcal{M}_{k, \sigma}}\) as its closure. Consider a one-parametric (normal form) vector field \(\Theta+v(\mathbf{r}, \theta, \nu_0)\) given by
%\begin{eqnarray}\label{CellS1DeGen}
%&\Theta+v(\mathbf{r}, \theta, \nu_0):= \Theta+\sum_{i=1}^{n}r_{i}\left(\nu_0+\sum_{j=1}^{n}a_{\mathbf{e}_{j}}{r_{j}}^2 +\sum_{1\leq j\leq l\leq n}a_{\mathbf{e}_{j}+\mathbf{e}_{l}}{r_{j}}^2{r_{l}}^2\right)\frac{\partial}{\partial {r_{i}}}. \coR&
%\end{eqnarray}
\begin{eqnarray}\label{CellS1DeGen}
&\Theta\!+\!\sum_{i=1}^{n}\left(\nu_0+\sum_{j=1}^{n}a_{\mathbf{e}_{j}}({x_{j}}^2+{y_{j}}^2) +\sum_{1\leq j\leq l\leq n}a_{\mathbf{e}_{j}+\mathbf{e}_{l}}({x_{j}}^2+{y_{j}}^2)({x_{l}}^2+{y_{l}}^2)\right)(\frac{x_{i}\partial}{\partial {x_{i}}}+\frac{y_{i}\partial}{\partial {y_{i}}}).\; &
\end{eqnarray} Then, there is a primary cell-bifurcation variety given by
\be\label{T2Pch}
T_{2Pch}:= \{\nu_0| \nu_0=0\}.
\ee
\begin{enumerate}
\item When \(\nu_0a_{2\mathbf{e}_{\sigma(k)}}<0.\) A secondary flow-invariant toral CW complex associated with the CW complex \(\overline{\Gamma^{k,
\sigma, +}_{a_1}}\) bifurcates from the origin exactly when \(\nu_0a_{2\mathbf{e}_{{\sigma(k)}}}<0\). There exist a natural number \(l<k,\) \(\gamma_+\in S^{l, \sigma}_n\) and a \(\gamma_-\in S^{k-l, \sigma}_n\) so that its toral CW decomposition is homeomorphic to
\begin{small}\ba
&\{{\mathbb{T}_j}\times\mathbb{S}^{j-1, \bar{\gamma}}_{>0}| \bar{\gamma}\in S^{j, \gamma_+}_n\}^l_{j=1}\sqcup \{(\mathbb{T}_j\times\Gamma^{j, \bar{\gamma}, +}_{a_1})\sqcup (\mathbb{T}_j\times\Gamma^{j, \bar{\gamma}}_{a_1})| \bar{\gamma}\in S^{j, \sigma}_n\setminus (S^{j, \gamma_+}_n\cup S^{j, \gamma_-}_n)\}^k_{j=1}.&
\ea\end{small} This manifold is unbounded when \(|\nu_0|\) approaches to infinity and \(\nu_0a_{2\mathbf{e}_{{\sigma(k)}}}<0.\) This toral manifold, when it exists, is asymptotically stable for \(a_{2\mathbf{e}_{{\sigma(k)}}}<0\) and is unstable otherwise.
\item For \(\nu_0a_{2\mathbf{e}_{\sigma(k)}}\geq 0,\) there is no flow-invariant hypertorus for the system \eqref{S1DeGen} when \(C\in \overline{\Gamma^{k, \sigma, +}_{a_1}}.\)
\end{enumerate}
\end{thm}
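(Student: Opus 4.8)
The plan is to reduce the $2k$-cell flow of \eqref{CellS1DeGen} to a single scalar amplitude equation on each leaf and then to assemble the resulting fibre-wise hypertori into a toral CW complex. First I would invoke Lemma \ref{LeafReduction} together with the leaf-normal form \eqref{LeafNormalForm2}--\eqref{a12} to reduce the dynamics on each leaf $\MKC$ to
\begin{equation*}
\dot{\rho}_{\sigma(k)} = \rho_{\sigma(k)}\,g_C(\rho_{\sigma(k)}^2), \qquad g_C(R) := \nu_0 + a^{\sigma(k)}_1(C)\,R + a^{\sigma(k)}_2(C)\,R^2,
\end{equation*}
subject to the leaf constraint $\rho_{\sigma(i)} = (c_{\sigma(i)}/c_{\sigma(k)})\rho_{\sigma(k)}$. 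A flow-invariant $\mathbb{T}_k$-torus on $\MKC$ then corresponds exactly to a positive root $R^{*}(C)>0$ of $g_C$, and $T_{2Pch}$ in \eqref{T2Pch} is the locus where such a root emerges from the origin.

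The core analytic step is a positive-root count for the quadratic $g_C$. Hypothesis \eqref{s1DoblTor} forces all quadratic coefficients $a_{\mathbf{e}_{\sigma(i_1)}+\mathbf{e}_{\sigma(i_2)}}$ to share one sign, so \eqref{a12} gives $\sign(a^{\sigma(k)}_2(C)) = \sign(a_{2\mathbf{e}_{\sigma(k)}})$ for every admissible $C$, while the defining inequality of $\Gamma^{k,\sigma,+}_{a_1}$ in \eqref{Gammas} yields $\sign(a^{\sigma(k)}_1(C)) = \sign(a_{2\mathbf{e}_{\sigma(k)}})$ precisely on $\overline{\Gamma^{k,\sigma,+}_{a_1}}$. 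Hence on $\overline{\Gamma^{k,\sigma,+}_{a_1}}$ the two nonconstant coefficients are co-signed: the root product equals $\nu_0/a^{\sigma(k)}_2(C)$ and the root sum is negative, so a single positive root exists iff the product is negative, i.e. iff $\nu_0 a_{2\mathbf{e}_{\sigma(k)}}<0$ (the discriminant $a_1^2-4a_2\nu_0$ is then automatically positive). This simultaneously establishes existence in item~1 and the non-existence assertion in item~2, since for $\nu_0 a_{2\mathbf{e}_{\sigma(k)}}\ge 0$ the quadratic $g_C$ has no positive root.

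Next I would assemble the leaf-tori over $C\in\overline{\Gamma^{k,\sigma,+}_{a_1}}$ into a toral CW complex, mirroring the proof of Theorem \ref{s1SingleHyp}. Lemma \ref{TopLem} supplies the regular CW decomposition of the base $\overline{\Gamma^{k,\sigma,+}_{a_1}}$ together with the attaching maps \eqref{AtachGamma-}; over each open cell one attaches the corresponding hypertorus fibre, exactly as in Lemma \ref{TS1} and Definition \ref{TorCWDef}, producing the partition listed in item~1. The gluing is controlled by the degeneration of the radii: using $\rho_{\sigma(i)} = (c_{\sigma(i)}/c_{\sigma(k)})\rho_{\sigma(k)}$ together with the explicit boundary limits computed in the proof of Theorem \ref{s1SingleHyp}, as $C$ approaches a lower face where some $c_{\sigma(i)}\to 0$ the corresponding circle radius tends to $0$ and that $\mathbb{S}^1$-factor collapses, lowering the torus dimension in accordance with the metrizable topology \eqref{6.8}--\eqref{toralB}. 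Along the stratum $\Gamma^{k,\sigma}_{a_1}$, by contrast, the torus persists with $R^{*}\to\sqrt{-\nu_0/a^{\sigma(k)}_2(C)}>0$, which is why $\Gamma^{k,\sigma}_{a_1}$ appears in the decomposition as a boundary cell rather than as a collapse.

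The remaining two claims are short computations. Differentiating the amplitude field yields $\tfrac{d}{d\rho_{\sigma(k)}}[\rho_{\sigma(k)} g_C(\rho_{\sigma(k)}^2)] = 2R^{*}g_C'(R^{*})$ at the equilibrium, and since $g_C'(R^{*}) = a^{\sigma(k)}_2(C)(R^{*}-R_{-})$ with $R_{-}<0<R^{*}$, this derivative has sign $\sign(a_{2\mathbf{e}_{\sigma(k)}})$; the toral CW complex is therefore asymptotically stable when $a_{2\mathbf{e}_{\sigma(k)}}<0$ and unstable otherwise. Unboundedness follows from $R^{*}(C)\sim\sqrt{-\nu_0/a^{\sigma(k)}_2(C)}\to\infty$ as $|\nu_0|\to\infty$. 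The main obstacle is the third step: one must verify that the fibre-wise tori glue into a genuine toral CW complex in the sense of Definition \ref{TorCWDef}, i.e. that the radius degenerations are uniform enough to be compatible with the attaching homeomorphisms $\Phi^{\gamma}_{l+1}$ and the collapse topology \eqref{toralB}. The root-counting, stability, and unboundedness steps are otherwise routine.
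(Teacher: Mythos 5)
Your proposal is correct and takes essentially the same route as the paper's own proof: the same leafwise quadratic root analysis (the paper's radius formula \eqref{rbar} together with the sign bookkeeping coming from \eqref{s1DoblTor}, \eqref{a12} and \eqref{Gammas}), the same assembly of the fibrewise hypertori into a toral CW complex by mirroring Theorem \ref{s1SingleHyp} and the CW structure of Lemma \ref{TopLem}, and the same stability and unboundedness conclusions. Your Vieta-style positive-root count and the explicit derivative \(2R^{*}g_C'(R^{*})\) at the equilibrium merely spell out steps the paper asserts directly, so there is no substantive difference in method.
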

\bpr The possible radiuses of flow-invariant tori are given by
\begin{small}\ba\label{rbar}
&{r_{\sigma(i)}}^2=\frac{-a^{\sigma(i)}_1(C)\pm \sqrt{{a^{\sigma(i)}_1(C)}^2-4\nu_0a^{\sigma(i)}_2(C)}}{2a^{\sigma(i)}_2(C)}, \hbox{ and }  {a^{\sigma(i)}_1(C)}^2-4\nu_0a^{\sigma(i)}_2(C)>0 \;\hbox{ when }\; \nu_0a^{\sigma(k)}_2(C)<0.\quad&
\ea\end{small} The assumption \eqref{s1DoblTor} implies that either \(a^{\sigma(i)}_2(C)>0\) for all \(C\) and \(i\leq k,\) or \(a^{\sigma(i)}_2(C)<0\) for all \(C\) and \(i\leq k.\) Further, \(\frac{{a^{\sigma(i)}_1(C)}^2}{4a^{\sigma(i)}_2(C)}= \frac{{a^{\sigma(j)}_1(C)}^2}{4a^{\sigma(j)}_2(C)}\) for all \(i\leq j\leq k.\) Thereby, there is always precisely one hypertorus on each leaf \(\MKC\) for \(C\in \Gamma^{k+}_{a_1}\cup\Gamma^{k}_{a_1}\) as long as \(\nu_0a^{\sigma(k)}_2(C)<0.\) A secondary toral CW complex parameterized by \(C\in\overline{\Gamma^{k, \sigma+}_{a_1}\sqcup\Gamma^{k, \sigma}_{a_1}}\) bifurcates from the origin at \(T_{2Pch}\). This secondary manifold exists when \(\nu_0a^{\sigma(k)}_2(C)<0\) and vanishes for \(\nu_0a^{\sigma(k)}_2(C)>0.\) Hence, the flow-invariant toral manifold bifurcates from the origin via a simultaneous \(\MKC\)-leaf bifurcation of hypertori at the variety \(T_{2Pch}\) for all \(C\in\overline{\Gamma^{k, \sigma, +}_{a_1}}\). The attaching maps for toral cells indexed with \(\bar{\gamma}\in S^{j, \gamma_+}_n\) is similar to the cases in the proof of Theorem \ref{s1SingleHyp} and we skip them here. We instead assume that \(\nu_0a^{\sigma(k)}_2(C)<0\) and \(\bar{\gamma}\in S^{j, \sigma}_n\setminus (S^{j, \gamma_+}_n\cup S^{j, \gamma_-}_n).\) Then, we introduce
\bes
\mathbb{T}^{\sigma, \bar{\gamma}}_j\times{\widetilde{\Phi}_{k, \sigma}}^{-1}(\Gamma^{j, \bar{\gamma}, +}_{a_1}\sqcup \Gamma^{j, \bar{\gamma}}_{a_1}) := \left\{(r_x, \theta, x)\, |\, 0<\theta_{\gamma(i)}<2\pi, \bar{r}_{\gamma(i)}:=0 \hbox{ for } i> j\right\},
\ees where \(x\in \partial^{o, \gamma}_j \overline{\mathbb{B}^{l}},\) \(r_x:= (\bar{r}_1, \bar{r}_2, \ldots, \bar{r}_n)\) and \(\bar{r}_{\gamma(i)}\) follows \eqref{rbar} for \(i\leq j\) and \(C= {\widetilde{\Phi}_{k, \sigma}}(x)\).

A sequence
\bes
((r_1^{x_p}, \theta_1^p), \ldots, (r_{l}^{x_p}, \theta_{l}^p), x_p)^\infty_{p=1}\subset \mathbb{T}^{\sigma, {\gamma}}_l \times {\widetilde{\Phi}_{k, \sigma}}^{-1}(\Gamma^{l, {\gamma}, +}_{a_1}\sqcup \Gamma^{l, {\gamma}}_{a_1}),
\ees
approaches
\bas & ((R_1^y, \vartheta_1), \ldots, (R_{j}^y, \vartheta_{j}), y)\in \mathbb{T}^{\sigma, \bar{\gamma}}_j \times {\widetilde{\Phi}_{k, \sigma}}^{-1}(\Gamma^{j, \bar{\gamma}, +}_{a_1}\sqcup \Gamma^{j, \bar{\gamma}}_{a_1}), \quad \hbox{for } \; j<l,  &\eas when \(x_p\) converges to \(y\) and \(j\)-number of angles from the sequence \((\theta_1^p, \ldots, \theta_{l}^p)\) correspond with and converge to \((\vartheta_1, \ldots, \vartheta_{j}).\) Further, the radiuses corresponding with the same \(j\)-number of indices from the sequence of \((r_1^{x_p}, \ldots, r_{l}^{x_p})\) converges to those in \((R_1^y, \ldots, R_{j}^y).\) However, the sequence of radiuses corresponding with the \(l-j\)-remaining indices either converges to zero. Hence, we have
\be
\overline{\mathbb{T}^\sigma_{k}\times \Gamma^{k, \sigma, +}_{a_1}}:= \sqcup^{k}_{j=1,\bar{\gamma}\in S^{j, \sigma}_n\setminus (S^{j, \gamma_+}_n\cup S^{j, \gamma_-}_n)}\mathbb{T}^{\sigma, \bar{\gamma}}_j\times{\widetilde{\Phi}_{k, \sigma}}^{-1}(\Gamma^{j, \bar{\gamma}, +}_{a_1}\sqcup \Gamma^{j, \bar{\gamma}}_{a_1})
\sqcup\sqcup^{l}_{j=1, \bar{\gamma}\in S^{j, \gamma_+}_n} \mathbb{T}^{\sigma, \bar{\gamma}}_j
\times{\widetilde{\Phi}_{k, \sigma}}^{-1}(\mathbb{S}^{j-1, \bar{\gamma}}_{>0}).
\ee
For \(C\in \Gamma^{k+}_{a_1}\cup\Gamma^{k}_{a_1},\) the vector radius of the hypertorus approaches the origin and then, the invariant hypertorus vanishes as \(\nu_0\) converges to and crosses the transit variety \(T_{2Pch}\). In other words, there is no invariant hypertorus on \(\MKC\)-leaves when \(\nu_0a^{\sigma(k)}_2(C)>0\) and \(a^{\sigma(k)}_1(C)a^{\sigma(k)}_2(C)\geq 0\). The radiuses of the hypertorus diverges to infinity, as \(\nu_0a^{\sigma(k)}_2(C)\) diverges to the negative infinity.
\epr

For any \(\gamma\in S^{l, \sigma}_n\) define
\ba\nonumber&D^{\gamma}_{\nu_0}:=\left\{C\in \Gamma^{l, \gamma, -}_{a_1}\,|\, 0<\nu_0< \frac{{a^{\gamma(l)}_1(C)}^2}{4a^{\gamma(l)}_2(C)} \hbox{ when }  a_{2\mathbf{e}_{\gamma(l)}}>0, \hbox{while } \frac{{a^{\gamma(l)}_1(C)}^2}{4a^{\gamma(l)}_2(C)}< \nu_0<0 \hbox{ for } a_{2\mathbf{e}_{\gamma(l)}}<0\right\},&\\\label{PartofGam}
&D^{\gamma, \partial}_{\nu_0}:=\left\{C\in \Gamma^{l, \gamma, -}_{a_1}\,|\, \nu_0=\frac{{a^{\gamma(l)}_1(C)}^2}{4a^{\gamma(l)}_2(C)}\right\}, \hbox{ and } N^{\gamma}_{\nu_0}:= \Gamma^{l, \gamma, -}_{a_1}\setminus (D^{\gamma}_{\nu_0}\sqcup D^{\gamma, \partial}_{\nu_0}).&
\ea

For an instance, we illustrate \(D^{\sigma}_{\nu_0}\) by assuming that \(\nu_0a^{\sigma(k)}_2(\mathbf{e}_{\sigma(1)})>0\) and \(a_{2\mathbf{e}_{\sigma(k)}}>0.\) (The case for the conditions \(\nu_0a^{\sigma(k)}_2(\mathbf{e}_{\sigma(1)})>0\) and \(a_{2\mathbf{e}_{\sigma(k)}}<0\) will be similar.) Then, there exist a unique \(0\leq l\leq k\) and a \(\widetilde{\gamma}\in S^{l, \sigma}_n\) such that
\be\label{gammatilde}
4\nu_0a^{\widetilde{\gamma}(l)}_2(\mathbf{e}_{\widetilde{\gamma}(i)})<{a^{\widetilde{\gamma}(l)}_1(\mathbf{e}_{\widetilde{\gamma}(i)})}^2 \hbox{ for all } i\leq l,\; \hbox{ and } 4\nu_0a^{\widetilde{\gamma}(l)}_2(\mathbf{e}_{\widetilde{\gamma}(i)})\geq {a^{\widetilde{\gamma}(l)}_1(\mathbf{e}_{\widetilde{\gamma}(i)})}^2
\hbox{ for } i> l.
\ee In this case, \(\mathbf{e}_{\widetilde{\sigma}(i)}\in D^{\sigma}_{\nu_0}\) for all \(i\leq l,\) and \(\mathbf{e}_{\widetilde{\gamma}(i)}\notin D^{\sigma}_{\nu_0}\) for all \(i> l.\) When \(l=0,\) \(D^{\sigma}_{\nu_0}=\emptyset.\) For \(l=k,\) \(D^{\sigma}_{\nu_0}=\Gamma^{k, \sigma, -}_{a_1}.\)
Let \(0< l<k.\) Since \(\Gamma^{k, \sigma, -}_{a_1},\) \({D^{\sigma}_{\nu_0}}\) and \(N^{\sigma}_{\nu_0}\) are three connected \(k-1\)-dimensional open manifolds and \(D^{\sigma, \partial}_{\nu_0}\) is a connected \(k-2\)-dimensional open manifold, the spaces \(\Gamma^{l, \sigma, -}_{a_1},\) \({D^{\sigma}_{\nu_0}}\) and \(N^{\sigma}_{\nu_0}\) are all homeomorphic to \(\mathbb{B}^{k-1}\), while \(D^{\sigma, \partial}_{\nu_0}\) is homeomorphic to \(\mathbb{B}^{k-2}.\) The closure of \(D^{\sigma}_{\nu_0}\) is a regular CW complex whose CW decomposition is given by the disjoint sets
\be
\overline{D^{\sigma}_{\nu_0}}:= (\sqcup^{l}_{j=1, \gamma\in S^{j, \widetilde{\gamma}}_n} \mathbb{S}^{j-1, \gamma}_{>0})\sqcup( \sqcup^{k}_{j=1,\gamma\in S^{j, \sigma}_n\setminus S^{j, \widetilde{\gamma}}_n}  (D^{\gamma}_{\nu_0}\sqcup D^{\gamma, \partial}_{\nu_0})).
\ee Here, \(\tilde{\gamma}\) follows the conditions \eqref{gammatilde}. The associated attaching maps is defined similar to what is given in the proof of Lemma \ref{TopLem}.

Consider the symmetric matrix
\begin{small}\ba\label{Agama}
&M_\gamma:=[M^1_\gamma, \ldots, M^{l}_\gamma], \; M_\gamma^j:=a_{2\mathbf{e}_{{\gamma(j)}}}\mathbf{e}_j+ \frac{1}{2}\sum_{i=1, i\neq j}^{l} a_{\mathbf{e}_{{\gamma(i)}}+\mathbf{e}_{{\gamma(l)}}}\mathbf{e}_i, \hbox{ and } \mathbf{a}_\gamma:=\left(a_{\gamma(1)}, \ldots, a_{\gamma(l) }\right)^t&
\ea\end{small}
for \(\gamma\in S^{l, \sigma}_n.\) Then by equation \eqref{a12}, \(a^{\gamma(l)}_2(C)= \frac{1}{{c_{\gamma(l)}}^4}\langle\mathcal{C}_\gamma, M_\gamma{\mathcal{C}_\gamma}\rangle\) for \(\mathcal{C}_\gamma:= \left({c_{\gamma(1)}}^2, \ldots ,{c_{\gamma(l)}}^2\right)^t.\)

\begin{thm}[Toral CW complex bifurcations associated with CW complex subspaces in \(\Gamma^{k, \sigma, -}_{a_1}\)]\label{Lem7.5}
Consider the closed cell \(\overline{\mathcal{M}_{k, \sigma}}\) and the vector field \eqref{CellS1DeGen} along with the assumptions in Lemma \ref{LemS1Gamma+}. Further, assume that \(M_\sigma\) given by \eqref{Agama} is either a positive definite or a negative definite matrix and
\ba\label{nuMinMax}
&\nu_{\min}:=\min\Big\{0,\frac{\langle{\mathbf{a}_\gamma}, {M_\gamma}^{-1}\mathbf{a}_\gamma\rangle}{4}| {\gamma\in S^{l, \sigma}_n, l\leq k}\Big\},\; \nu_{\max}:= \max\Big\{0,\frac{\langle{\mathbf{a}_\gamma}, {M_\gamma}^{-1}\mathbf{a}_\gamma\rangle}{4}| {\gamma\in S^{l, \sigma}_n, l\leq k}\Big\}.&
\ea
\begin{enumerate}
\item When \(\nu_0a_{2\mathbf{e}_{\sigma(k)}}>0\), there is a bifurcation variety given by
\ba\label{Tmin1}
&T_{SN}:= \{\nu_0| \nu_0=\nu_0^{\min} \;\hbox{ when }\; a_{2\mathbf{e}_k}<0, \,\hbox{ and }\, \nu_0=\nu_0^{\max}\; \hbox{ for } \;a_{2\mathbf{e}_k}>0\}. &
\ea Two flow-invariant toral CW complex manifolds \((\mathcal{T}^{int}_{\nu_0}\) and \(\mathcal{T}^{ext}_{\nu_0})\) associated with the topological closure of \(D^{\sigma}_{\nu_0}\subseteq\Gamma^{k, \sigma, -}_{a_1}\) simultaneously exists when \(\nu_0^{\min}<\nu_0<\nu_0^{\max}\) and \(\nu_0a_{2\mathbf{e}_{\sigma(k)}}>0.\) The hypertoral manifold \(\mathcal{T}^{int}_{\nu_0}\) lives inside \(\mathcal{T}^{ext}_{\nu_0}\). There is no flow-invariant hypertori corresponding with \(C\in N^{\sigma}_{\nu_0}\) for positive values of \(\nu_0a_{2\mathbf{e}_{\sigma(k)}}.\) The external toral CW complex \(\mathcal{T}^{int}_{\nu_0}\) is asymptotically stable when \( a_{2\mathbf{e}_k}<0\) while it is unstable for \( a_{2\mathbf{e}_k}>0\). The internal toral CW complex \(\mathcal{T}^{int}_{\nu_0}\) is asymptotically unstable/stable when \(\mathcal{T}^{ext}_{\nu_0}\) is asymptotically stable/unstable. As \(\nu_0\) approaches \(T_{SN}\) when \(\nu_0a_{2\mathbf{e}_{\sigma(k)}}>0,\) the space \(N^{\sigma}_{\nu_0}\) enlarges and converges to \(\Gamma^{k, \sigma, -}_{a_1}.\)

\item The toral manifolds \(\mathcal{T}^{int}_{\nu_0}\) and \(\mathcal{T}^{ext}_{\nu_0}\) collide (intersect) on \(D^{\sigma, \partial}_{\nu_0}\) and
construct a flow-invariant {\it  bi-stable} toral CW complex associated with the CW complex \(\overline{D^{\sigma, \partial}_{\nu_0}}\).

\item When \(\nu_0a_{2\mathbf{e}_{\sigma(k)}}>0\) and \(\nu_0\) is outside of the interval \([\nu_0^{\min}, \nu_0^{\max}],\) the vector field
\eqref{S1DeGen} does not admit any flow-invariant hypertorus.
\item For \(\nu_0a_{2\mathbf{e}_{\sigma(k)}}<0,\) there is precisely one flow-invariant toral CW complex associated with
\(\Gamma^{k, \sigma, -}_{a_1}.\) When sign of \(\nu_0\) changes, \ie \(\nu_0a_{2\mathbf{e}_{\sigma(k)}}>0,\) this toral CW complex turns to be \(\mathcal{T}^{ext}_{\nu_0}\). In other words, the CW complex associated with this toral CW complex shrinks to the CW complex \(D^{\sigma}_{\nu_0}\subsetneq \Gamma^{k, \sigma, -}_{a_1}.\) This toral CW complex coalesces with the secondary toral CW complex \(\mathcal{T}^{int}_{\nu_0}\) on \(D^{\sigma}_{\nu_0}\) and disappear when the parameter crosses the transition variety \(T_{SN}\) defined by \eqref{Tmin1}.
\end{enumerate}
\end{thm}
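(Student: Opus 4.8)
The plan is to reduce the whole analysis to the one-dimensional amplitude dynamics on each leaf $\MKC$ and then to glue the resulting leaf-wise invariant hypertori into toral CW complexes. First I would invoke the $\MKC$-leaf reduction so that, for $C\in\Gamma^{k,\sigma,-}_{a_1}$, the dynamics of \eqref{CellS1DeGen} is governed by the scalar equation $\dot\rho_{\sigma(k)}=\rho_{\sigma(k)}\,g(R)$ with $R:=\rho_{\sigma(k)}^2$ and $g(R):=\nu_0+a^{\sigma(k)}_1(C)R+a^{\sigma(k)}_2(C)R^2$, whose nonzero equilibria are exactly the radii \eqref{rbar}. Using \eqref{a12} together with the hypotheses \eqref{s1DoblTor} and the definiteness of $M_\sigma$, I would record that on $\Gamma^{k,\sigma,-}_{a_1}$ the coefficients $a^{\sigma(k)}_1(C)$ and $a^{\sigma(k)}_2(C)=\frac{1}{c_{\sigma(k)}^4}\langle\mathcal{C}_\sigma,M_\sigma\mathcal{C}_\sigma\rangle$ have strictly opposite signs, with $\sign(a^{\sigma(k)}_2(C))=\sign(a_{2\mathbf{e}_{\sigma(k)}})$ fixed by the definiteness of $M_\sigma$; indeed the defining sign condition of $\Gamma^{k,\sigma,-}_{a_1}$ in \eqref{Gammas} forces $a_{2\mathbf{e}_{\sigma(1)}}$ and $a^{\sigma(k)}_1(C)$ to disagree, while \eqref{s1DoblTor} makes all quadratic diagonal entries, hence $a^{\sigma(k)}_2(C)$, share the sign of $a_{2\mathbf{e}_{\sigma(1)}}$.

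Second, I would carry out the phase-line count for the parabola $g$. Since the product of its two roots equals $\nu_0/a^{\sigma(k)}_2(C)$ and their sum equals $-a^{\sigma(k)}_1(C)/a^{\sigma(k)}_2(C)>0$, the regime $\nu_0a_{2\mathbf{e}_{\sigma(k)}}<0$ forces exactly one positive root on every leaf (item 4), whereas $\nu_0a_{2\mathbf{e}_{\sigma(k)}}>0$ yields two, one, or zero positive roots according as $\nu_0$ is smaller than, equal to, or larger than the leaf-threshold $a^{\sigma(k)}_1(C)^2/\big(4a^{\sigma(k)}_2(C)\big)$; these three alternatives are precisely the defining conditions of $D^{\sigma}_{\nu_0}$, $D^{\sigma,\partial}_{\nu_0}$ and $N^{\sigma}_{\nu_0}$ in \eqref{PartofGam}. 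Differentiating $g$ at a root gives $g'(R^\ast)=\pm\sqrt{a^{\sigma(k)}_1(C)^2-4\nu_0a^{\sigma(k)}_2(C)}$, so the outer (larger-$R$) torus is repelling and the inner one attracting when $a_{2\mathbf{e}_{\sigma(k)}}>0$, with the reverse when $a_{2\mathbf{e}_{\sigma(k)}}<0$; the coincident root on $D^{\sigma,\partial}_{\nu_0}$ is a tangency of $g$ with the axis, hence a semistable (bi-stable) saddle-node collision. This settles the leaf-wise existence, nesting $R^\ast_-<R^\ast_+$, stability, and collision assertions of items 1--4.

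Third, and this is the analytic core, I would locate the variety $T_{SN}$ by optimizing the leaf-threshold over the closed CW complex. Writing $a^{\sigma(k)}_1(C)^2/\big(4a^{\sigma(k)}_2(C)\big)=\langle\mathbf{a}_\sigma,\mathcal{C}_\sigma\rangle^2/\big(4\langle\mathcal{C}_\sigma,M_\sigma\mathcal{C}_\sigma\rangle\big)$ via \eqref{a12}, Cauchy--Schwarz in the inner product induced by $M_\sigma$ (by $-M_\sigma$ in the negative-definite case) gives $\langle\mathbf{a}_\sigma,\mathcal{C}_\sigma\rangle^2\le\langle\mathbf{a}_\sigma,M_\sigma^{-1}\mathbf{a}_\sigma\rangle\,\langle\mathcal{C}_\sigma,M_\sigma\mathcal{C}_\sigma\rangle$, with equality exactly when $\mathcal{C}_\sigma\propto M_\sigma^{-1}\mathbf{a}_\sigma$. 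Hence the extreme value of the threshold over the interior is $\tfrac14\langle\mathbf{a}_\sigma,M_\sigma^{-1}\mathbf{a}_\sigma\rangle$; when the optimizing direction $M_\sigma^{-1}\mathbf{a}_\sigma$ leaves the positive orthant the optimum is forced onto a boundary face, i.e.\ onto a stratum $\gamma\in S^{l,\sigma}_n$ with the analogous value $\tfrac14\langle\mathbf{a}_\gamma,M_\gamma^{-1}\mathbf{a}_\gamma\rangle$ built from \eqref{Agama}. Taking the extremum over all faces, together with the value $0$ recording the primary pitchfork endpoint $T_{2Pch}$ where tori are born from the origin, reproduces $\nu_{\max}$ and $\nu_{\min}$ of \eqref{nuMinMax} and yields $T_{SN}$ as in \eqref{Tmin1} (the endpoint $\nu_0^{\max}$ when $a_{2\mathbf{e}_{\sigma(k)}}>0$, $\nu_0^{\min}$ when $a_{2\mathbf{e}_{\sigma(k)}}<0$). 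As $\nu_0a_{2\mathbf{e}_{\sigma(k)}}$ increases toward the relevant endpoint, $D^{\sigma,\partial}_{\nu_0}$ sweeps across and $N^{\sigma}_{\nu_0}$ enlarges to exhaust $\Gamma^{k,\sigma,-}_{a_1}$, which is the disappearance announced in items 1 and 4; for $\nu_0$ outside $[\nu_0^{\min},\nu_0^{\max}]$ the discriminant is negative on every leaf, giving item 3.

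Finally, I would assemble the leaf-wise tori into genuine flow-invariant toral CW complexes. The radii \eqref{rbar} are continuous on each open stratum and, as a leaf $C$ approaches a lower face $\mathbb{S}^{l-1,\bar\gamma}_{>0}\subset\partial_{k-l}\mathbb{S}^{k-1,\sigma}_{>0}$, the components $\rho_{\bar\gamma(q)}$ with $q>l$ vanish, so the corresponding $\mathbb{S}^1$-fibres collapse exactly according to the convergence rule built into Definition \ref{TorCWDef}; this is the same collapse used in Lemma \ref{TS1}. Reusing the attaching homeomorphisms $\tilde\Phi_{l,\gamma}$ from \eqref{Philgamma}, the refined maps of Lemma \ref{TopLem}, and the toral attaching maps of Definition \ref{TorCWDef} exactly as in the proofs of Theorems \ref{s1SingleHyp} and \ref{LemS1Gamma+}, the branch $R^\ast_-(C)$ parameterized by $\overline{D^{\sigma}_{\nu_0}}$ and the branch $R^\ast_+(C)$ furnish the decompositions of $\mathcal{T}^{int}_{\nu_0}$ and $\mathcal{T}^{ext}_{\nu_0}$ claimed in item 1, with $R^\ast_-<R^\ast_+$ placing $\mathcal{T}^{int}_{\nu_0}$ inside $\mathcal{T}^{ext}_{\nu_0}$, and the coalescence of the two branches along $\overline{D^{\sigma,\partial}_{\nu_0}}$ producing the bi-stable complex of item 2. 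The main obstacle I anticipate is not the one-dimensional dynamics but this last gluing step: I must verify that the threshold function $C\mapsto a^{\sigma(k)}_1(C)^2/(4a^{\sigma(k)}_2(C))$ and both radius branches extend continuously and compatibly across every boundary face, so that $\overline{D^{\sigma}_{\nu_0}}$, $\overline{D^{\sigma,\partial}_{\nu_0}}$ and $\overline{N^{\sigma}_{\nu_0}}$ genuinely inherit the regular CW structure of $\overline{\Gamma^{k,\sigma,-}_{a_1}}$ from Lemma \ref{TopLem}, and that the two branches merge along $\overline{D^{\sigma,\partial}_{\nu_0}}$ without violating the Hausdorff attaching conditions \eqref{Pg}.
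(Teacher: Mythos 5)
Your proposal is correct, and it follows the same overall architecture as the paper's proof: a leaf-wise root count for the quadratic $g(R)=\nu_0+a^{\sigma(k)}_1(C)R+a^{\sigma(k)}_2(C)R^2$ (equivalently the explicit radii \eqref{rbar}), an extremal analysis of the threshold $\frac{{a^{\sigma(k)}_1(C)}^2}{4a^{\sigma(k)}_2(C)}$ over the closed CW complex producing \eqref{nuMinMax} and \eqref{Tmin1}, and a gluing step deferred to the attaching constructions of Theorem \ref{LemS1Gamma+} (the paper, too, omits that step as ``similar ... for briefness''). The genuine difference lies in the analytic core. The paper locates the critical parameter values by a constrained Lagrange computation: it shows the multiplier vanishes by pairing $\nabla_{C,\lambda}L$ with $C$ via the homogeneity identities $\langle C,\nabla_C a_1\rangle=2a_1$ and $\langle C,\nabla_C a_2\rangle=4a_2$, excludes interior critical points because stationarity forces $\sign(a_{\mathbf{e}_{\sigma(i)}}a_{\mathbf{e}_{\sigma(j)}})>0$ in contradiction with \eqref{s1DoblTor}, and then solves $\mathbf{a}_\gamma=\frac{a^{\gamma(l)}_1(C)}{a^{\gamma(l)}_2(C)}M_\gamma\mathcal{C}_\gamma$ on each face to extract the extremal value $\frac{1}{4}\langle\mathbf{a}_\gamma,{M_\gamma}^{-1}\mathbf{a}_\gamma\rangle$. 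Your Cauchy--Schwarz argument in the $M_\sigma$-inner product (in $-M_\sigma$ for the negative definite case) obtains the same value together with the equality condition $\mathcal{C}_\sigma\propto {M_\sigma}^{-1}\mathbf{a}_\sigma$ in one line, and it propagates to every boundary stratum for free, since principal submatrices of a definite matrix are definite of the same type, so each $M_\gamma$ of \eqref{Agama} inherits the hypothesis on $M_\sigma$; this is cleaner than the paper's three-step stationarity analysis. One refinement you should make explicit: your conditional phrase ``when the optimizing direction ${M_\sigma}^{-1}\mathbf{a}_\sigma$ leaves the positive orthant'' is in fact always the case on the top stratum under \eqref{s1DoblTor}, because $\mathcal{C}_\sigma\propto {M_\sigma}^{-1}\mathbf{a}_\sigma$ with $\mathcal{C}_\sigma>0$ would force all $a_{\mathbf{e}_{\sigma(i)}}=\frac{a_1}{a_2}(M_\sigma\mathcal{C}_\sigma)_i$ to share one sign, which is exactly the contradiction the paper derives; spelling this out recovers the interior-exclusion step and guarantees that the extrema in \eqref{nuMinMax} are attained on the boundary strata, so that $T_{SN}$ is a genuine transition value rather than a mere bound. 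Your Vieta-style bookkeeping (root sum $-a_1/a_2>0$ on $\Gamma^{k,\sigma,-}_{a_1}$, product $\nu_0/a_2$) correctly reproduces the trichotomy $D^{\sigma}_{\nu_0}$, $D^{\sigma,\partial}_{\nu_0}$, $N^{\sigma}_{\nu_0}$ of \eqref{PartofGam} and the stability pattern via $g'(R^\ast)=\pm\sqrt{{a_1}^2-4\nu_0a_2}$, and your closing caution about continuity and compatibility of the two radius branches across boundary faces identifies precisely the verification that the paper's omitted gluing argument would have to supply.
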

\bpr Let \(\nu_0a_{2\mathbf{e}_{\sigma(k)}}>0\). The squared radiuses \({r^{\pm}_{\sigma(i)}}^2\) are positive when \(\frac{{a^{\sigma(i)}_1(C)}^2}{4a^{\sigma(i)}_2(C)}<\nu_0\) for \(a^{\sigma(i)}_2(C)<0.\) Hence, for \(C\in D^{\sigma}_{\nu_0}\subseteq\Gamma^{k, \sigma, -}_{a_1},\) two Clifford hypertori bifurcate from the origin through a secondary saddle-node type leaf-bifurcation at
\ba
&T^C_{SN}:=\Big\{\nu_0\big|\, \nu_0=\frac{{a^{\sigma(k)}_1(C)}^2}{4a^{\sigma(k)}_2(C)}\Big\}.&
\ea Here, one of the hypertori live inside the other one. On the other hand,
\bas
&\frac{{a^{\sigma(i)}_1(C)}^2}{4a^{\sigma(i)}_2(C)}= \frac{\left(\sum^k_{l=1} a_{\mathbf{e}_{\sigma(l)}}{c_{\sigma(l)}}^2\right)^2}{4\sum_{1\leq l\leq j\leq k} a_{\mathbf{e}_{\sigma(l)}+\mathbf{e}_{{\sigma(j)}}}{{c_{\sigma(l)}}^2{c_{\sigma(j)}}^2}}\quad\hbox{ and } \quad\sum^k_{i=1}{c_{\sigma(i)}}^2=1.&
\eas In order to find possible critical values of the parameter \(\nu_0,\) we consider the Lagrange function
\ba
&L(C, \lambda):= \frac{\left(\sum^k_{l=1} a_{\mathbf{e}_{\sigma(l)}}{c_{\sigma(l)}}^2\right)^2}{4\sum_{1\leq j\leq l\leq k} a_{\mathbf{e}_{\sigma(l)}+\mathbf{e}_{{\sigma(j)}}}{{c_{\sigma(l)}}^2{c_{\sigma(j)}}^2}}+\lambda \left(\sum^k_{i=1}{c_{\sigma(i)} }^2-1\right),&
\ea where \(\lambda\) is the Lagrange multiplier. Let \(a_1:= \sum^k_{l=1} a_{\mathbf{e}_{\sigma(l)}}{c_{\sigma(l)}}^2\) and \(a_2:=\sum_{1\leq j\leq l\leq k} a_{\mathbf{e}_{\sigma(l)}+\mathbf{e}_{{\sigma(j)}}}{{c_{\sigma(l)}}^2{c_{\sigma(j)}}^2}.\) Then, we have
\begin{eqnarray*}
&\nabla_{C, \lambda}L=\frac{a_1}{2a_2}(\nabla_{C}\, a_1-\frac{a_1}{2a_2}\nabla_{C}\, a_2)+2\lambda C+ \big(\sum^k_{i=1}{c_{\sigma(i)} }^2-1\big){\textbf{e}_{k+1}}.&
\end{eqnarray*} We show that \(\nabla_{C, \lambda}L=0\) has no roots on the manifold \(\mathbb{S}^{k-1, \sigma}_{>0}\). Since \(\langle C,\nabla_{C}\, a_1\rangle=2a_1,\) \(\langle C,\nabla_{C}\, a_2\rangle=4a_2,\) and \(\langle C, C\rangle=\sum^k_{i=1}{c_{\sigma(i)} }^2=1,\) we compute
\begin{eqnarray*}
&\langle C, \nabla_{C, \lambda}L\rangle=\frac{a_1}{2a_2}\left(\langle C,\nabla_{C}\, a_1\rangle-\frac{a_1}{2a_2}\langle C, \nabla_{C}\, a_2\rangle\right)+2\lambda\langle C, C\rangle= 2\lambda\langle C, C\rangle=0.&
\end{eqnarray*} Hence, \(\lambda=0.\) Now assume that \(C\in \overline{\mathbb{S}^{k-1, \sigma}_{>0}}\) is a solution of \(\nabla_{C, \lambda}L=0=\frac{a_1}{2a_2}(\nabla_{C}\, a_1-\frac{a_1}{2a_2}\nabla_{C}\, a_2).\) Thus,
\begin{eqnarray*}
&a_{\mathbf{e}_{\sigma(i)}}c_{\sigma(i)}=\frac{c_{\sigma(i)}{a_1}\sum_{1\leq l\leq k}{a_{\mathbf{e}_{{\sigma(i)}}+\mathbf{e}_{{\sigma(l)}}}}{c_{\sigma(l)}}^2}{2{a_2}}+\frac{a_{2\mathbf{e}_{{\sigma(i)}}}{a_1} {c_{\sigma(i)}}^3}{2{a_2}} &
\\&\hbox{ and } \quad a_{\mathbf{e}_{\sigma(i)}}=\frac{{a_1}}{2{a_2}}(\sum_{1\leq l\leq k}{a_{\mathbf{e}_{{\sigma(i)}}+\mathbf{e}_{{\sigma(l)}}}}{c_{\sigma(l)}}^2+a_{2\mathbf{e}_{{\sigma(i)}}}{c_{\sigma(i)}}^2) \qquad \hbox{ if }\quad c_{\sigma(i)}\neq0.&
\end{eqnarray*} Hence, for \(c_{\sigma(i)}c_{\sigma(j)}\neq 0\) we have \(\sign(a_{\mathbf{e}_{\sigma(i)}}a_{\mathbf{e}_{{\sigma(j)}}})>0.\) Otherwise,
 \(c_{\sigma(i)}=0\) or \(c_{\sigma(j)}=0.\) Therefore, the critical values do not occur for \(C\)-values on \(\mathbb{S}^{k-1, \sigma}_{>0}\).

Let \(\gamma\in S^{l, \sigma}_n,\) \(c_{\gamma(i)}c_{\gamma(j)}\neq0\) for all \(i\neq j\leq l<k\) and \(c_{\gamma(i)}=0\) for \(i>l.\) Since
\bes \frac{a_1}{a_2}= \frac{{a^{\gamma(l)}_1(C)}^2}{a^{\gamma(l)}_2(C)},\; \nabla_{C}\, a_1=\frac{a_1}{2a_2}\nabla_{C}\, a_2,\;
\; a_2=\langle\mathcal{C}_\gamma, M_\gamma {\mathcal{C}_\gamma}\rangle, \; \hbox{ and } \; \mathbf{a} \hbox{ as given by \eqref{Agama}},
\ees we have \(\mathbf{a}_\gamma=\frac{a^{\gamma(l)}_1(C)}{a^{\gamma(l)}_2(C)} M_\gamma {\mathcal{C}_\gamma}\). Since \(M_\gamma\) is invertible,
\begin{eqnarray*}
&{M_\gamma}^{-1}\mathbf{a}_\gamma=\frac{a^{\gamma(l)}_1(C)}{a^{\gamma(l)}_2(C)}{\mathcal{C}_\gamma}\; \hbox{ and }\; \frac{1}{4}\langle{\mathbf{a}_\gamma}, {M_\gamma}^{-1}\mathbf{a}_\gamma\rangle= \frac{1}{4}\frac{a^{\gamma(l)}_1(C)}{a^{\gamma(l)}_2(C)}\langle{\mathbf{a}_\gamma}, {\mathcal{C}_\gamma} \rangle= \frac{1}{4}\frac{{a^{\gamma(l)}_1(C)}^2}{a^{\gamma(l)}_2(C)}.&
\end{eqnarray*} Therefore, the local extremum values of \(\frac{1}{4}\frac{{a^{\gamma(l)}_1(C)}^2}{a^{\gamma(l)}_2(C)}\) is given by \(\langle\frac{1}{4}{\mathbf{a}_\gamma}, {M_\gamma}^{-1}\mathbf{a}_\gamma\rangle.\) Thereby, the critical values of parameters are \(\nu_{\min}\) and \(\nu_{\max}\) given by equations \eqref{nuMinMax} and we always have \(\nu_{\min}\leq \frac{{a^{\sigma(i)}_1(C)}^2}{4a^{\sigma(i)}_2(C)}\leq
\nu_{\max}.\)

When \(\nu_0a_{2\mathbf{e}_{\sigma(k)}}>0,\) a secondary flow-invariant internal hypertoral CW complex manifold \(\mathcal{T}^{int}_{\nu_0}\) associated with
\(D^{\sigma, \circ}_{\nu_0}\subset\Gamma^{k, \sigma, -}_{a_1}\) defined in \eqref{PartofGam} bifurcates from the origin through an instant bifurcation at \(T_{2Pch}\) given by \eqref{T2Pch}. It shrinks through a continuous leaf-dependent family of saddle-node type bifurcation of hypertori.
In other words, the external hypertoral manifold \(\mathcal{T}^{ext}_{\nu_0}\) bifurcates from this leaf-dependent continuous hypertoral saddle-node type bifurcation when \(\nu_0a^{\sigma(k)}_2(C)=\frac{{a^{\sigma(k)}_1(C)}^2}{4}.\) Therefore, we call the internal manifold by a {\it secondary} hypertoral manifold while the external manifold is referred by a {\it tertiary hypertoral manifold}. Part of these flow-invariant manifolds associated with \(\mathbb{S}^{k-1, \sigma}_{>0}\) is homeomorphic to \(\mathbb{B}^{k-1}\times \mathbb{T}_k\) and is relatively compact. Their topological closure, however, represent the actual flow-invariant toral CW complex bifurcated compact manifolds \(\mathcal{T}^{int}_{\nu_0}\) and \(\mathcal{T}^{ext}_{\nu_0}\). The proof for the toral CW complex structure of \(\mathcal{T}^{int}_{\nu_0}\) and \(\mathcal{T}^{ext}_{\nu_0}\) is similar to Theorem \ref{LemS1Gamma+} and is thus omitted for briefness. For \(\nu_0a_2(C)<0,\) there is always a hypertorus corresponding with the CW complex \(\Gamma^{k, \sigma, -}_{a_1}\) in the \(2k\)-cell.

The radiuses of the tori converge to \(\frac{-a^{\sigma(i)}_1(C)}{a^{\sigma(i)}_2(C)}\) for \(a^{\sigma(k)}_1(C)a^{\sigma(k)}_2(C)<0\) and \(i\leq k,\) when \(\nu_0\) approaches zero. Hence, the existing hypertorus for \(\nu_0a_2(C)<0\) continues to live as the external hypertorus when \(\nu_0\) changes its sign and remains sufficiently small associated with \(C\in D^{\sigma}_{\nu_0}\subset\Gamma^{k, \sigma, -}_{a_1}.\) More precisely, when \(\nu_0a_2(C)>0\) and \(\nu_0\) lie in the interval \((\nu_{\min}, \nu_{\max})\), there are always two hypertori (one inside the other one) corresponding to the \(\MKC\)-leaf for \(C\in D^{\sigma}_{\nu_0}\subset\Gamma^{k, \sigma, -}_{a_1}\subset \mathbb{S}_{>0}^{k-1},\) while there is no hypertorus associated with \(C\in\Gamma^{k, \sigma, +}_{a_1}\subset \mathbb{S}_{>0}^{k-1}\). When \(\nu_0a_2(C)>0\) and \(\nu_0\) is outside the interval \((\nu_{\min}, \nu_{\max}),\) there is no flow-invariant hypertorus throughout the \(2k\)-cell.
\epr

\begin{exm}[Case \(n=k=2\)]\label{Ex2} Let \(n=k=2,\) \(a_{\mathbf{e}_1}=-a_{\mathbf{e}_2}=1,\) and \(a_{2\mathbf{e}_1}= a_{2\mathbf{e}_2}=a_{\mathbf{e}_1+\mathbf{e}_2}=1.\)

\begin{figure}[h]
\floatbox[{\capbeside\thisfloatsetup{capbesideposition={left,top},capbesidewidth=8.5cm}}]{figure}[\FBwidth]
{\caption{The space \(\mathbb{S}_{>0}^{1, I}\) and its partition given by the spaces \(\Gamma^{2, I}_{a_1}, \Gamma^{2, I, +}_{a_1}, \Gamma^{2, I, -}_{a_1},\) \(N^{I}_{\nu_0}\) and \(D^{I}_{\frac{1}{13}}.\) When \(\nu_0>0,\) there exist two invariant toral CW complex over \(\overline{D^{I}_{\nu_0}}\) and there is no hypertorus corresponding with \(C\in N^{I}_{\nu_0}\cup\Gamma^{2, I, +}_{a_1}.\) There is a toral CW complex over the entire CW complex \(\overline{\mathbb{S}_{>0}^{1, I}}\) for \(\nu_0<0.\) A bistable toral CW complex exists over the CW complex \(\overline{D^{I, \partial}_{\nu_0}}\) when \(0<\nu_0\leq \nu_0^{max}.\) }\label{DNGam-}}
{\;\includegraphics[width=7.5cm,height=5.2cm]{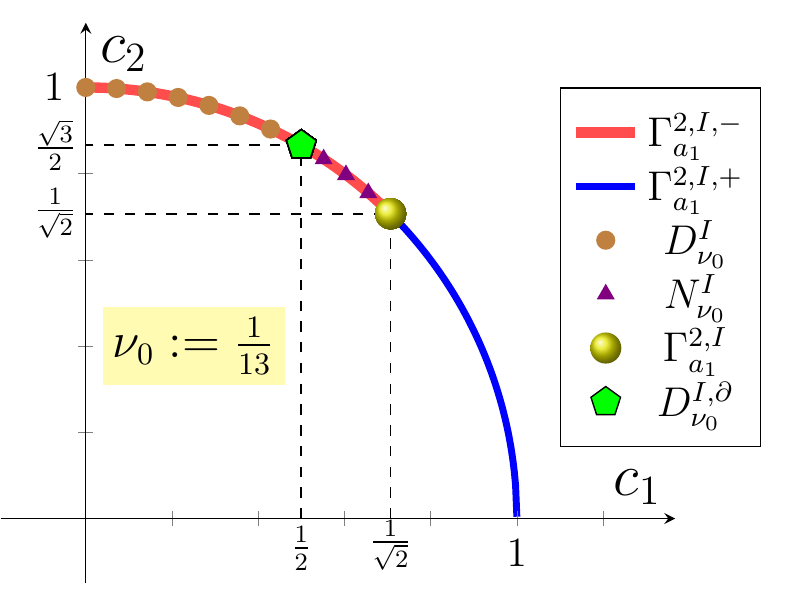}}
\end{figure}

\noindent Then, for \(C=(c_1, c_2)\in \mathbb{S}_{>0}^{1, I}, c_1\neq0,\) we have
\begin{eqnarray*}
&(a^1_1(C), a^1_2(C))= (\frac{{c_1}^2-{c_2}^2}{{c_1}^2}, \frac{{c_1}^4+{c_1}^2{c_2}^2+{c_2}^4}{{c_1}^4})\; \hbox{ and } \; (a^2_1(C), a^2_2(C))= (\frac{{c_1}^2-{c_2}^2}{{c_2}^2}, \frac{{c_1}^4+{c_1}^2{c_2}^2+{c_2}^4}{{c_2}^4}).&
\end{eqnarray*}

\begin{figure}[h!]
\centering
\subfloat[Red curves represents the \(\rho_1\)-radiuses of two tori for \(\nu_0>0.\)\label{Rho1} ]{
\includegraphics[width=3.2in,height=2in]{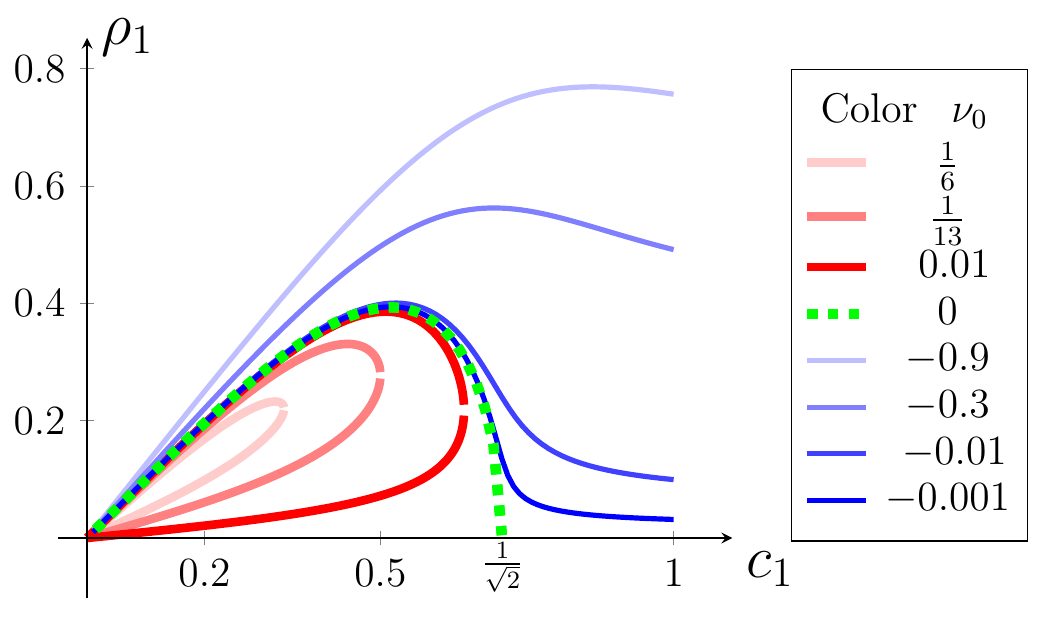}}\,
\subfloat[Blue curves represents the \(\rho_2\)-radiuses of a torus for \(\nu_0<0.\)\label{Rho2} ]{
\includegraphics[width=3.2in,height=2in]{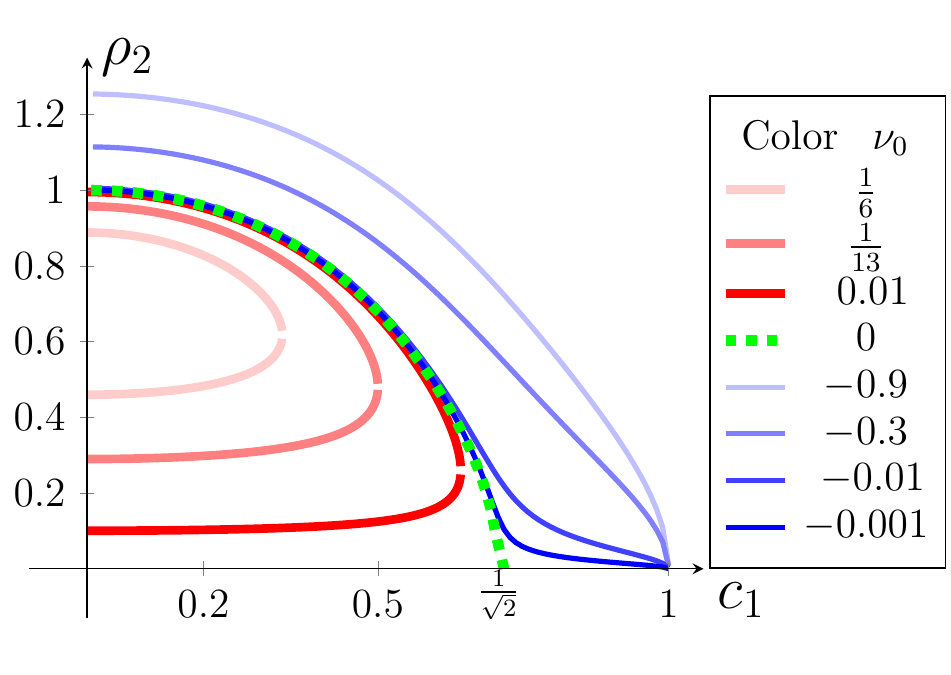}}\,
\caption{The radius curves of the invariant toral CW complexes versus \(c_1,\) where \((c_1, c_2)\in \overline{\mathbb{S}_{>0}^{1, I}}.\) }\label{Fig6}
\end{figure}
\noindent Thus, \(\Gamma^{2, I}_{a_1}= \{(\frac{\sqrt{2}}{2}, \frac{\sqrt{2}}{2})\}\) and \(\Gamma^{2, I, \pm}_{a_1}= \left\{C\in \mathbb{S}_{>0}^{1, I}|\; \sign({c_1}^2-{c_2}^2)=\pm1\right\}\). In particular, \(\Gamma^{2, I, +}_{a_1}= \left\{C\in \mathbb{S}_{>0}^{1, I}|\; \frac{1}{\sqrt{2}}<c_1\leq 1\right\}.\) Further,
\bas &D^{I}_{\frac{1}{13}}:=\left\{(c_1, c_2)\in \mathbb{S}_{>0}^{1, I}| 0\leq c_1< \frac{1}{2}\right\}, \quad D^{I, \partial}_{\frac{1}{13}}:=\left\{(\frac{1}{2}, \frac{\sqrt{3}}{2})\right\},\\&
 \hbox{ and }\quad  N^{I}_{\frac{1}{13}}:= \left\{C\in \Gamma^{2, I, -}_{a_1}| \frac{1}{2}< c_1< \frac{\sqrt{2}}{2}\right\}.&
\eas These are depicted in Figure \ref{DNGam-}. The \(\rho_1\)- and \(\rho_2\)-radiuses of tori corresponding with positive and negative values of \(\nu_0>0\) are depicted in Figure \ref{Rho1} and \ref{Rho2}. Red closed curves demonstrate two flow-invariant tori corresponding with the CW complex \(D^{I}_{\nu_0}\) for \(\nu_0= 0.01, \frac{1}{13}, \frac{1}{6}.\) The blue curves demonstrates an invariant toral CW complex on \(\mathbb{S}_{>0}^{1, I}\) for \(\nu_0= -0.9, -0.3, -0.01, -0.001.\) Part of the blue curves corresponding with \(\Gamma^{2, I, +}_{a_1}\) coalesces to the origin and disappear when \(\nu_0\) approaches to zero, \ie the green curve corresponds with \(\nu_0=0\). The family of tori collapse to an invariant limit cycle when \(c_1\) converges to either zero or \(1.\)

\end{exm}

\begin{exm} Let \(n=k=3,\) \(a_{\mathbf{e}_1}=a_{\mathbf{e}_3}=-a_{\mathbf{e}_2}=1,\) \(a_{\mathbf{e}_i+\mathbf{e}_j}=1\) for all \(1\leq i\leq j\leq 3,\) and \(\sigma=I\) as the identity permutation.

\begin{figure}[h]
\centering
\floatbox[{\capbeside\thisfloatsetup{capbesideposition={left,top},capbesidewidth=7cm}}]{figure}[\FBwidth]
{\caption{Case \(k=n=3,\) \(a_{\mathbf{e}_1}=a_{\mathbf{e}_3}=-a_{\mathbf{e}_2}=1,\) and \(a_{\mathbf{e}_i+\mathbf{e}_j}=1\) for all \(1\leq i\leq j\leq 3.\) The space \(\mathbb{S}_{>0}^{2, I}\) and \(\Gamma^{3, I}_{a_1}, \Gamma^{3, I, +}_{a_1}, \Gamma^{3, I, -}_{a_1},\) \(N^{I}_{\nu_0}\) and \(D^{I}_{\nu_0}.\) The space \(D^{I, \partial}_{\nu_0}\) is indexed with \(\nu_0=0.1\) and \(0.25,\) while
\(\Gamma^{3, I}_{a_1}= \{(c_1, c_2, c_3)| c_2=\frac{\sqrt{2}}{2}, {c_1}^2+{c_3}^2=0.5\}\). There are two toral CW complexes \(\mathcal{T}^{int}_{\nu_0}\) and \(\mathcal{T}^{ext}_{\nu_0}\) over \(\overline{D^{I, \partial}_{\nu_0}}\) and no other toral object elsewhere when \(\nu_0>0.\) For negative values of \(\nu_0,\) the toral CW complex is associated with the whole space \(\mathbb{S}_{>0}^{2, I}.\)
}\label{DNGam-}}
{\;\includegraphics[width=4.2in,height=2.7in]{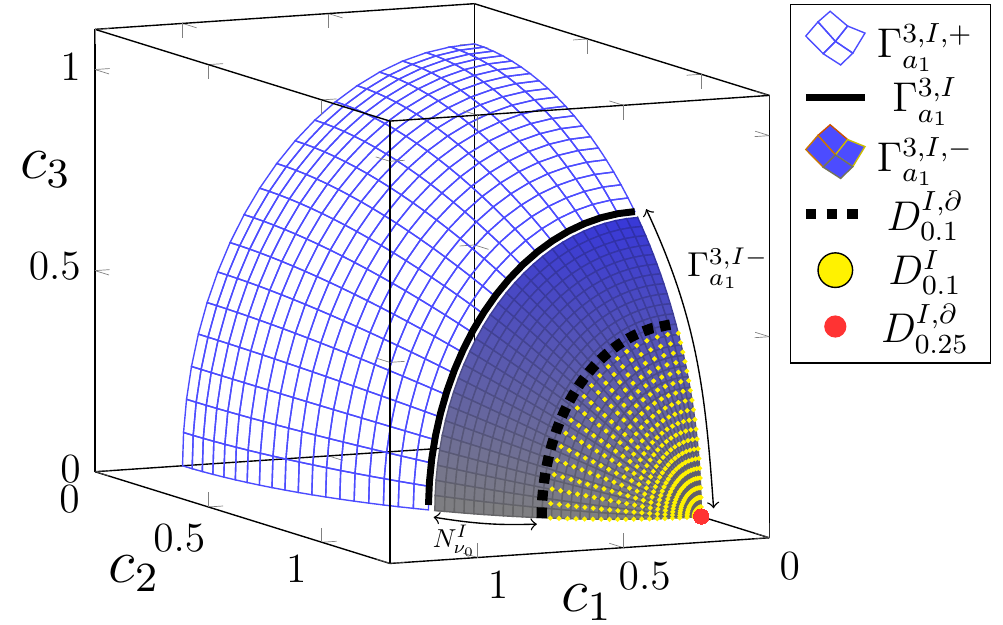}}
\end{figure}

\noindent Thus,  \(\Gamma^{3, I}_{a_1}=L_{(a_{\mathbf{e}_{i}})^3_{i=1}}\cap \mathbb{S}^{2, I}_{>0}= \{(c_1, c_2, c_3)\in \mathbb{S}_{>0}^{2, I}| c_2=\frac{\sqrt{2}}{2}, {c_1}^2+{c_3}^2=0.5\}\) and
\bas
&\Gamma^{3, I, \pm}_{a_1}= \left\{C\in \mathbb{S}_{>0}^{2, I}|\, \sign(1-2{c_2}^2)=\pm1\right\}.&
\eas Further,
\bas &D^{I}_{\nu_0}:=\left\{C\in \Gamma^{3, I, -}_{a_1}|\, \nu_0< \frac{{a^1_1(C)}^2}{4a^1_2(C)}\leq \frac{1}{4}\right\},&\\
&D^{I, \partial}_{\nu_0}:=\left\{C\in \Gamma^{3, I, -}_{a_1}|\, \nu_0=\frac{{a_1(C)}^2}{4a_2(C)}\right\}, \hbox{ and } N^{I}_{\nu_0}:= \Gamma^{3, I, -}_{a_1}\setminus (D^{I}_{\nu_0}\sqcup D^{I, \partial}_{\nu_0}). &
\eas where \(\nu_{\min}=0\) and \(\nu_{\max}=\frac{1}{4}\); see Figure \ref{DNGam-}. Two toral CW complex \(\mathcal{T}^{int}_{\nu_0}\) and \(\mathcal{T}^{ext}_{\nu_0}\) exists when \(\nu_0>0.\) These are associated with \(\overline{D^{I}_{\nu_0}},\) that is depicted in Figure \ref{DNGam-} with yellow bullets on part of the blue region; the blue region stands for \(\Gamma^{3, I, -}_{a_1}\). For positive values of \(\nu_0,\) there is a toral CW complex associated with \(\overline{\mathbb{S}_{>0}^{2, I}}.\) Part of this toral manifold associated with \(\Gamma^{3, I, +}_{a_1}\)
simultaneously collapses with the origin (\ie all radiuses of the tori converge to zero) as soon as the parameter \(\nu_0\) converges to zero. In this case, we only have a toral CW complex associated with \(\overline{\Gamma^{3, I, -}_{a_1}}.\) When \(\nu_0\) further increases from zero, this toral manifold shrinks to be only associated with \(\overline{D^{I}_{\nu_0}}\); this turns out to be \(\mathcal{T}^{ext}_{\nu_0}\). More precisely, both the internal and external toral CW complexes \(\mathcal{T}^{int}_{\nu_0}\) and \(\mathcal{T}^{ext}_{\nu_0}\) exist over \(\overline{D^{I}_{\nu_0}}.\) The intersection of the manifolds \(\mathcal{T}^{int}_{\nu_0}\) and \(\mathcal{T}^{ext}_{\nu_0}\) is a bistable toral CW complex on \(D^{I, \partial}_{\nu_0}\) for all \(0<\nu_0\leq \nu_0^{max}=0.25\). When \(\nu_0>0\) increases and approaches to \(0.25,\) \(D^{I, \partial}_{\nu_0}\) shrinks to the point \(D^{I, \partial}_{0.25}= \{(0, 1, 0)\}\); this is depicted by red bullet in Figure \ref{DNGam-}. Hence, the toral manifolds \(\mathcal{T}^{int}_{\nu_0}\) and \(\mathcal{T}^{ext}_{\nu_0}\) shrink and collapse to a bistable limit cycle. Then, the limit cycle disappears when \(\nu_0>0\) crosses over the transition variety \(T_{SN}= \{\nu_0=0.25\}\) given by \eqref{Tmin1}.
\end{exm}

\begin{thm}\label{TopEqu} Consider the parametric vector field \eqref{CellS1DeGen}, and assume that the condition \eqref{s1DoblTor} and hypotheses in Theorem \ref{Lem7.5} hold. Then, the varieties \eqref{T2Pch} and \eqref{Tmin1} are the only \(2k\)-cell bifurcation varieties for the differential system corresponding with \eqref{CellS1DeGen}. More precisely, the parametric vector fields \(v_\sigma(\mathbf{r}, \theta, \nu_0^1)\) and \(v_\sigma(\mathbf{r}, \theta, \nu_0^2)\) are topologically equivalent when one of the following holds.
\begin{enumerate}
\item For \(\delta=1, 2,\) \(\nu_0^{\min}<\nu^\delta_0<\nu_0^{\max}\) and \(\nu^\delta_0a_{2\mathbf{e}_{\sigma(k)}}>0.\)
\item \(\nu^\delta_0a_{2\mathbf{e}_{\sigma(k)}}<0\) for both \(\delta=1, 2.\)
\item When \(\nu^\delta_0a_{2\mathbf{e}_{\sigma(k)}}>0\) and \(\nu^\delta_0\) for \(\delta=1, 2\) is outside of the interval \([\nu_0^{\min}, \nu_0^{\max}].\)
\end{enumerate}

\end{thm}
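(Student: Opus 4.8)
The plan is to exploit the skew-product structure of the vector field \eqref{CellS1DeGen}: the angular component \(\Theta_k=\sum_i\omega_{\sigma(i)}\Theta^{\sigma(i)}_{\0}\) is a rigid quasi-periodic rotation that is completely independent of \(\nu_0\), whereas all of the \(\nu_0\)-dependence resides in the amplitude (radial) dynamics. Consequently, the two flows \(v_\sigma(\mathbf{r},\theta,\nu^1_0)\) and \(v_\sigma(\mathbf{r},\theta,\nu^2_0)\) are topologically equivalent as soon as their radial dynamics are equivalent by a fiber-preserving homeomorphism, since one may take the identity on each toral fiber \(\mathbb{T}_k\) and tensor it with a radial homeomorphism. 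First I would fix a leaf \(\MKC\) and recall from \eqref{rbar} that on this leaf the amplitude obeys the scalar \(\mathbb{Z}_2\)-equivariant equation \(\dot\rho_{\sigma(k)}=\rho_{\sigma(k)}\bigl(\nu_0+a^{\sigma(k)}_1(C)\rho_{\sigma(k)}^2+a^{\sigma(k)}_2(C)\rho_{\sigma(k)}^4\bigr)\), whose positive zeros are exactly the radii of the invariant tori sitting over \(C\).

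Second, I would observe that each of the three parameter sets in the statement is, by the analysis underlying Theorem \ref{Lem7.5}, a connected region on which the number of positive roots of this scalar equation and their stability types are constant in \(\nu_0\): region (3) carries only the trivial rest point, region (2) carries a single nontrivial root, and region (1) carries the two roots \(r^\pm_{\sigma(k)}\) of \eqref{rbar} with opposite stability. Since a \(\mathbb{Z}_2\)-equivariant scalar flow on \(\mathbb{R}^+\) is determined up to topological equivalence by its ordered list of rest points together with their stability signs, for each fixed \(C\) there is a monotone radial homeomorphism \(h_C\colon\mathbb{R}^+\to\mathbb{R}^+\) carrying the rest points of the \(\nu^1_0\)-flow onto those of the \(\nu^2_0\)-flow and respecting the direction of the flow; combined with a direction-preserving time-reparametrisation this yields leafwise topological equivalence, in the spirit of the explicit construction in the proof of Theorem \ref{TopEqu1}. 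Because the roots \(r^\pm_{\sigma(k)}\) vary continuously in \(C\) and, within any one region, never collide or leave \(\mathbb{R}^+\), the family \(\{h_C\}\) can be chosen to depend continuously on \(C\in\mathbb{S}^{k-1,\sigma}_{>0}\), assembling into a homeomorphism of the open cell \(\mathcal{M}_{k,\sigma}\).

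Finally I would pass to the closure through the toral CW complex structure produced in Theorems \ref{s1SingleHyp} and \ref{Lem7.5}: on each boundary cell \(\mathbb{S}^{l-1,\gamma}_{>0}\) the very same scalar analysis applies with \(k\) replaced by \(l\), and the collapsing \(\mathbb{S}^1\)-factors of the fiber are matched by the attaching homeomorphisms \(\Phi^{\gamma}_{l+1}\). The hard part will be this last step, namely verifying that the continuously-varying radial homeomorphisms \(\{h_C\}\) glue across the CW-complex skeleton and extend continuously over the whole compact toral CW complex without tearing. The delicate points are the stratum \(\Gamma^{k,\sigma}_{a_1}\), where \(a^{\sigma(i)}_1(C)\) vanishes and the root structure degenerates, and the cell boundary where some \(c_{\sigma(i)}\to 0\) so that individual torus radii tend to \(0\); here the limiting radii computed as in the proof of Theorem \ref{s1SingleHyp} must be respected so that \(h_C\) descends through the equivalence relation defining the toral cells. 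Once this extension is continuous, the claim that \eqref{T2Pch} and \eqref{Tmin1} are the \emph{only} \(2k\)-cell bifurcation varieties follows, because crossing either variety changes the number of positive roots, hence the number of invariant toral CW complexes, which is a topological invariant of the flow.
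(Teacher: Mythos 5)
There is a genuine gap in your second paragraph, and it occurs precisely at the point the theorem is most delicate. You assert that within region (1) the number of positive roots of the amplitude equation \(\dot\rho_{\sigma(k)}=\rho_{\sigma(k)}\bigl(\nu_0+a^{\sigma(k)}_1(C)\rho_{\sigma(k)}^2+a^{\sigma(k)}_2(C)\rho_{\sigma(k)}^4\bigr)\) is constant in \(\nu_0\) \emph{on each fixed leaf}, so that a leaf-preserving family \(\{h_C\}\) of radial homeomorphisms exists. This is false. The set of leaves carrying two invariant tori is \(D^{\sigma}_{\nu_0}\subset\Gamma^{k,\sigma,-}_{a_1}\), defined by the \(C\)-dependent inequality \(\nu_0<\frac{{a^{\sigma(k)}_1(C)}^2}{4a^{\sigma(k)}_2(C)}\) (in the case \(a_{2\mathbf{e}_{\sigma(k)}}>0\)), and this set moves with \(\nu_0\): Theorem \ref{Lem7.5} states explicitly that as \(\nu_0\) approaches \(T_{SN}\), the complement \(N^{\sigma}_{\nu_0}\) enlarges and converges to \(\Gamma^{k,\sigma,-}_{a_1}\). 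Hence for a fixed \(C\in\Gamma^{k,\sigma,-}_{a_1}\) and two parameters \(\nu^1_0<\nu^2_0\) both in region (1), one can have \(C\in D^{\sigma}_{\nu^1_0}\) but \(C\in N^{\sigma}_{\nu^2_0}\): the leaf carries two tori at \(\nu^1_0\) and none at \(\nu^2_0\), because the leafwise saddle-node occurs at the \(C\)-dependent value \(\nu_0=\frac{{a^{\sigma(k)}_1(C)}^2}{4a^{\sigma(k)}_2(C)}\), which sweeps the whole interval \((\nu_0^{\min},\nu_0^{\max})\). No leaf-preserving \(h_C\) can conjugate a phase line with three rest points to one with a single rest point, so your family \(\{h_C\}\) does not exist throughout region (1). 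This is exactly the paper's repeatedly emphasized distinction between leaf-bifurcation varieties and cell-bifurcation varieties: leafwise topological type is \emph{not} constant on region (1), yet the \(2k\)-cell systems are still equivalent.

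The paper's proof repairs this by abandoning leaf-preservation in case (1): it first chooses a homeomorphism \(h:\overline{\mathbb{S}_{>0}^{k-1,\sigma}}\rightarrow\overline{\mathbb{S}_{>0}^{k-1,\sigma}}\) carrying \(\overline{D^{\sigma}_{\nu^1_0}}\) onto \(\overline{D^{\sigma}_{\nu^2_0}}\) and complements onto complements (this exists because both closures are CW complexes homeomorphic via Lemma \ref{TopLem} and the discussion around \eqref{gammatilde}), and then defines \(\tilde{h}\) to map the leaf \(\mathcal{M}^{C}_{l,\gamma}\) onto the \emph{different} leaf \(\mathcal{M}^{h(C)}_{l,\gamma}\), with the radial part built flow-invariantly by matching reference radii \(\mathbf{r}^{\delta,\gamma}_{\iota}\) interlaced with \(r^{\pm}_{\gamma(l)}(\nu^{\delta}_0,h^{\delta-1}(C))\) and transporting points by trajectory times \(t_{r_*}\). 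Only in cases (2) and (3), where the leafwise root structure genuinely is \(\nu_0\)-independent (since \(\Gamma^{k,\sigma,\pm}_{a_1}\) does not depend on \(\nu_0\)), does the paper take \(h\) to be the identity — which is the regime where your leafwise argument is sound and essentially coincides with the paper's. To salvage your write-up, replace the claim of constancy on fixed leaves in region (1) by the leaf-matching homeomorphism \(h\) above, and note that the paper's time-transport construction also explains the bi-stable collision stratum \(D^{\sigma,\partial}_{\nu_0}\), where the dynamics on the omitted radius interval \(\bigl(r^{-}_{\gamma(l)},r^{+}_{\gamma(l)}\bigr)\) degenerates; your final paragraph's gluing concerns then go through along the lines of Theorem \ref{LemS1Gamma+}.
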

\bpr The idea is to use a homeomorphism on the CW complex subspaces of \(\mathbb{S}_{>0}^{k-1, \sigma}\) to transform flow-invariant leaves associated with \(v_\sigma(\mathbf{r}, \theta, \nu_0^1)\) to those associated with \(v_\sigma(\mathbf{r}, \theta, \nu_0^2)\).

Let \(\nu_0^{\min}<\nu^\delta_0<\nu_0^{\max}\) and \(\nu^\delta_0a_{2\mathbf{e}_{\sigma(k)}}>0\) for both \(\delta=1, 2.\) The CW complexes \(\overline{D^{\sigma}_{\nu^1_0}}\) and \(\overline{D^{\sigma}_{\nu^2_0}}\) are homeomorphic. Further, the complement of these spaces are also homeomorphic; see the proof of Lemma \ref{TopLem} and argument above Theorem \ref{Lem7.5}. Assume that these are given by the homeomorphism
\bes
h: \overline{\mathbb{S}_{>0}^{k-1, \sigma}}\rightarrow \overline{\mathbb{S}_{>0}^{k-1, \sigma}},\hbox{ where }\;
h\big(\,\overline{D^{\sigma}_{\nu^1_0}}\,\big)=\overline{D^{\sigma}_{\nu^2_0}}\;\hbox{ and }\; h\big({\overline{D^{\sigma}_{\nu^1_0}}}^c\big)={\overline{D^{\sigma}_{\nu^2_0}}}^c.
\ees Given the radiuses \({r^{\pm}_{\sigma(i)}}^2(\nu_0, C)\) in equation \eqref{rbar}, the map
\(\tilde{h}: \mathcal{T}^{int}_{\nu^1_0}\rightarrow \mathcal{T}^{int}_{\nu^2_0}\) defined by \(\tilde{h}(\mathbf{r}(\nu^1_0, C), \theta):= (\mathbf{r}(\nu^2_0, h(C)), \theta)\) is a homeomorphism between the toral CW complex manifolds \(\mathcal{T}^{int}_{\nu^1_0}\) and \(\mathcal{T}^{int}_{\nu^2_0}.\) Similarly, we may assume that \(\mathcal{T}^{ext}_{\nu^1_0}\) and \(\mathcal{T}^{ext}_{\nu^2_0}\) are \(\tilde{h}\)-homeomorphic.

We shall extend the homeomorphism \(\tilde{h}\) to a flow-invariant homeomorphism \(\tilde{h}: \overline{\mathcal{M}_{k, \sigma}}\rightarrow\overline{\mathcal{M}_{k, \sigma}}.\) Thus, we merely need to consider the extension to the space \(\mathcal{M}_{l, \gamma}\) for any \(\gamma\in S^{l, \sigma}_n\) and \(l\leq k.\)

Let \(\mathbf{r}(t, \mathbf{r}^{\delta,\gamma}, \nu^\delta_0)\) stand for the trajectory of \(\mathbf{r}\) in action-angle \((\mathbf{r}, \theta)\)-coordinates corresponding with \(v_\sigma(\mathbf{r}, \theta, \nu^\delta_0)\) with the initial condition \(\mathbf{r}(0, \mathbf{r}^{\delta,\gamma}, \nu^\delta_0)= \mathbf{r}^{\delta,\gamma},\) \(\delta=1, 2\). Denote \(\mathbf{r}_{\gamma(l)}\) for \({\gamma(l)}\)-th component of \(\mathbf{r}\). Assume that \((\mathbf{r}_*, \theta_*)\in \mathcal{M}^C_{l, \gamma}\subset\mathcal{M}_{l, \gamma},\) \(C\in D^{\gamma}_{\nu^1_0},\) and \({\mathbf{r}_*}_{\gamma(l)}=r_*.\) Let \(r^{-}_{\gamma(l)}(\nu^\delta_0, h^{\delta-1}(C))\) and \(r^{+}_{\gamma(l)}(\nu^\delta_0, h^{\delta-1}(C))\) denote the \({\gamma(l)}\)-th radiuses of the flow-invariant internal and external tori associated with \(C, h(C)\in D^{\gamma}_{\nu^1_0},\) \(\nu^1_0\) and \(\nu^1_0,\) respectively.

Now consider the positive numbers \(r^{\delta, \gamma}_{0}:=\frac{{r^{-}_{\gamma(l)}}(\nu^\delta_0, h^{\delta-1}(C))}{2},\) \(r^{\delta, \gamma}_{1}:=\frac{{r^{-}_{\gamma(l)}}(\nu^\delta_0, h^{\delta-1}(C))+{r^{+}_{\gamma(l)}}(\nu^\delta_0, h^{\delta-1}(C))}{2},\) \(r^{\delta, \gamma}_{2}:=2{r^{+}_{\gamma(l)}}(\nu^\delta_0, h^{\delta-1}(C)),\) and \(\mathbf{r}^{\delta, \gamma}_{\iota}:= \frac{r^{\delta, \gamma}_{\iota}}{c_{\gamma(l)}} C\in \mathcal{M}^C_{l, \gamma},\) for \(\iota=0, 1, 2.\) Note that \(r^{\delta, \gamma}_{\iota}\) is the \(\gamma(l)\)-th component of \(\mathbf{r}^{\delta, \gamma}_{\iota}, \iota=0, 1, 2.\) Then, define \(\tilde{h}(\mathbf{r}^{1, \gamma}_{\iota}):=\mathbf{r}^{2, \gamma}_{\iota}.\) Note that we have \(r^{\delta, \gamma}_{0}< {r^{-}_{\gamma(l)}}(\nu^\delta_0, h^{\delta-1}(C))< r^{\delta, \gamma}_{1}< {r^{+}_{\gamma(l)}}(\nu^\delta_0, h^{\delta-1}(C))< r^{\delta, \gamma}_{2},\) where \(\delta=1, 2.\)

For any point \(r_*\) such that \(r_*< {r^{-}_{\gamma(l)}}(\nu^1_0, C),\) there is a unique time \(t_{r_*}\) (positive or negative) such that \(r_*= \mathbf{r}_{\gamma(l)}(t_{r_*}, \mathbf{r}^{1, \gamma}_0, \nu^1_0),\) where \(\mathbf{r}(0, \mathbf{r}^{1, \gamma}_0, \nu^1_0)=\mathbf{r}^{1,\gamma}_0.\) Then, we define  \(\tilde{h}((\mathbf{r}_*, \theta_*))= (\mathbf{r}(t_{r_*}, \mathbf{r}^{2, \gamma}_0, \nu^2_0), \theta^*).\) When either \({r^{-}_{\gamma(l)}}(\nu^1_0, C)<r_*<{r^{+}_{\gamma(l)}}(\nu^1_0, C),\) or \({r^{+}_{\gamma(l)}}(\nu^1_0, C)<r_*,\) we may similarly introduce \(\tilde{h}((\mathbf{r}_*, \theta_*))= (\mathbf{r}(t_{r_*}, \mathbf{r}^{2, \gamma}_1, \nu^2_0), \theta_*)\) and \(\tilde{h}((\mathbf{r}_*, \theta_*))= (\mathbf{r}(t_{r_*}, \mathbf{r}^{2, \gamma}_2, \nu^2_0), \theta_*),\) respectively. Here, \(t_{r_*}\) stands for the time required for \(\mathbf{r}_{\gamma(l)}(t_{r_*}, \mathbf{r}^{1,\gamma}_1, \nu^1_0)= r_*\) or \(\mathbf{r}_{\gamma(l)}(t_{r_*}, \mathbf{r}^{1,\gamma}_2, \nu^1_0)= r_*,\) accordingly. This yields a flow-invariant construction for \(\tilde{h}\) on the family of leaf-manifolds \(\MKC\) for all \(C\in D^{\gamma}_{\nu^1_0}.\) When \(C\) converges to \(D^{\gamma, \partial}_{\nu_0}\), the internal and external hypertori coalesce into a single hypertori and the dynamics associated with the radius interval \(({r^{-}_{\gamma(l)}}(\nu^1_0, C), {r^{+}_{\gamma(l)}}(\nu^1_0, C))\) is omitted. This justifies the bi-stability of the toral CW complex associated with \(D^{\gamma, \partial}_{\nu_0}\). Thus, the flow-invariant homeomorphism \(\tilde{h}\) is well-defined on \(\cup_{C\in \overline{D^{\gamma}_{\nu^1_0}}, \gamma\in S^{l, \sigma}_n, l\leq k}\mathcal{M}^C_{l, \gamma}=\overline{\cup_{C\in \overline{D^{\sigma}_{\nu^1_0}}} \MKC}.\)

Now we only need to introduce the homeomorphism \(\tilde{h}\) on \(\overline{\cup_{C\in {\overline{D^{\sigma}_{\nu^1_0}}}^c}\MKC}= \cup_{C\in \overline{D^{\gamma}_{\nu^1_0}}^c, \gamma\in S^{l, \sigma}_n, l\leq k}\mathcal{M}^C_{l, \gamma}.\) Consider \((\mathbf{r}^*, \theta^*)\in \mathcal{M}^C_{l, \gamma},\) \(C\in \overline{D^{\gamma}_{\nu^1_0}}^c,\) and \(\mathbf{r}^*_{\gamma(l)}=r^*.\) Then, there is a unique time \(t_{r^*}\) (positive or negative) such that \(\mathbf{r}^*:=r^*\frac{C}{c_{\gamma(l)}}= \mathbf{r}_{\gamma(l)}(t_{r^*}, \frac{C}{c_{\gamma(l)}}, \nu^1_0)\) where \(\mathbf{r}_{\gamma(l)}(0, \frac{C}{c_{\gamma(l)}}, \nu^1_0)=1.\) Then, \(\tilde{h}((\mathbf{r}^*, \theta^*))\) is defined by \((\mathbf{r}(t_{r^*}, \frac{C}{c_{\gamma(l)}}, \nu^2_0), \theta^*).\)

For the second part, we remark that there is only a flow-invariant hypertoral CW complex associated with \(\Gamma^{k, \sigma, \pm}_{a_1}\) for both \(\nu^1_0\) and \(\nu^2_0.\) Since the space \(\Gamma^{k, \sigma, \pm}_{a_1}\) is independent of \(\nu^1_0\) and \(\nu^2_0,\) we may simply consider the homeomorphism \(h: \overline{\mathbb{S}_{>0}^{k-1, \sigma}}\rightarrow \overline{\mathbb{S}_{>0}^{k-1, \sigma}}\) as the identity map. The rest of the proof is similar to the above. Let \(\nu^\delta_0a_{2\mathbf{e}_{\sigma(k)}}>0\) and \(\nu^\delta_0\) for \(\delta=1, 2\) is outside of the interval \([\nu_0^{\min}, \nu_0^{\max}].\) Then, there is no invariant hypertori associated with neither of the parameters. Hence, we again use \(h\) as the identity map on  \(\overline{\mathbb{S}_{>0}^{k-1, \sigma}}.\) For any \(C\in \mathbb{S}_{>0}^{l-1, \gamma}\) and \(r^*,\) there exists a unique time \(t_{r^*}\) (positive or negative) so that \(r^*\frac{C}{c_{\gamma(l)}}= \mathbf{r}_{\gamma(l)}(t_{r^*}, \frac{C}{c_{\gamma(l)}}, \nu^1_0)\) where \(\mathbf{r}(0, \frac{C}{c_{\gamma(l)}}, \nu^1_0)=\frac{C}{c_{\gamma(l)}}.\) Then, \(\tilde{h}((\frac{r^*}{c_{\gamma(l)}}C, \theta^*))\) is defined by \((\mathbf{r}(t_{r^*}, \frac{C}{c_{\gamma(l)}}, \nu^2_0), \theta^*).\)
\epr

%\addcontentsline{toc}{section}{References}


\begin{thebibliography}{00}

%\bibitem{Algaba2018}
%A. Algaba, N. Fuentes, E. Gamero, C. Garcia,
%{\it Normal form for a class of three-dimensional systems with free-divergence principal part,}
%Nonlinear Systems {\bf 1} (2018) 37--65.


%\bibitem{AlgabaHopfZ}
%A. Algaba, E. Freire, E. Gamero,
%{\it Hypernormal form for the Hopf-zero bifurcation,}
%Internat. J. Bifur. Chaos {\bf 8} (1998) 1857--1887.

\bibitem{AlgabaNNL}
A. Algaba, E. Freire, E. Gamero, A.J. Rodr{\'\i}guez-Luis,
{\it A three-parameter study of a degenerate case of the Hopf-pitchfork bifurcation,}
Nonlinearity {\bf 12} (1999) 1177--206.

\bibitem{AshwinN2Oscillators}
P. Ashwin, G. Dangelmayr,
{\it Reduced dynamics and symmetric solutions for globally coupled weakly dissipative oscillators,}
Dyn. Syst. {\bf 20} (2005) 333--367.


\bibitem{AshwinN2Oscillators}
P. Ashwin, J.W. Swift,
{\it The Dynamics of \(n\)-Weakly Coupled identical Oscillators,}
J. Nonlinear Sci., {\bf 2} (1992) 69--108.


\bibitem{AshwinHopfSN}
P. Ashwin, A. Rodrigues,
{\it Hopf normal form with \(\mathbb{S}_N\) symmetry and reduction to systems of nonlinearly coupled phase oscillators. }
Phys. D {\bf 325} (2016) 14--24.

\bibitem{MacKayCuckPhysicaD}
C. Baesens, J. Guckenheimer, S. Kim, R.S. MacKay,
{\it Three Coupled Oscillators: Mode-locking, Global Bifurcations and Toroidal Chaos,}
Physica D {\bf 49} (1991) 387--485.


\bibitem{MacKayOnTorus}
C. Baesens, R.S. MacKay,
{\it Simplest bifurcation diagrams for monotone families of vector fields on a torus,}
Nonlinearity {\bf 31} (2018) 2928--2981.

%\bibitem{baiderchurch} A. Baider, R.C. Churchill,
%{\it Unique normal forms for planar vector fields,}
%Math. Z. {\bf 199} (1988) 303--310.

%\bibitem{baidersanders} A. Baider, J.A. Sanders,
%{\it Further reductions of the Takens--Bogdanov normal form,}
%J. Differential Equations {\bf 99} (1992) 205--244.
%\bibitem{BaidSand91} A. Baider, J.A. Sanders,
%{\it Unique normal forms: The nilpotent Hamiltonian case,}
%J. Differential Equations {\bf 92} (1991) 282--304.

\bibitem{SlotineRobot}
P. Bechon, J.J. Slotine,
{\it Synchronization and quorum sensing in a swarm of humanoid robots,} (2013) arXiv:1205.2952.

%\bibitem{benderchur} M. Benderesky, R. Churchill,
%A spectral sequence approach to normal forms,
%{\it Recent developements in algebraic topology,
%Contemp. Math. Amer. Math. Soc., Providence, RI.} {\bf 407} (2006) 27--81.

\bibitem{ChenBifuControl}
G. Chen, D.J. Hill, X. Yu,
``Bifurcation Control Theory and Applications,''
Lecture Notes in Control and Information Sciences, Springer-Verlag, Berlin 2003.

\bibitem{ChenBifControl2000}
G. Chen, J.L. Moiola, H.O. Wang,
{\it Bifurcation control: theories, methods and applications,}
Internat. J. Bifur. Chaos {\bf 10} (2000) 511--548.

\bibitem{BookCommCircts}
K.K. Clarke, D. T. Hess,
{\it Communication Circuits: Analysis and Design},
Addison-Wesley 1971.


\bibitem{DiasSNSymmetry}
A.P.S. Dias, A. Rodrigues,
{\it Hopf bifurcation with \(S_N\)-symmetry,}
Nonlinearity {\bf 22} (2009) 627--666.


\bibitem{DumortierSingularitiesR3}
F. Dumortier, S. Ib\'{a}\~{n}ez,
{\it Singularities of vector fields on \(\mathbb{R}^3,\)}
Nonlinearity {\bf 11} (1998) 1037--1047.

\bibitem{DumortierSingularities}
F. Dumortier, {\it Local study of planar vector fields: Singularities and their unfoldings,}
in H. W. Broer et al., ed., Structures in Dynamics: Finite Dimensional Deterministic
Studies, Stud. Math. Phys. 2, Springer, Amsterdam, Netherlands, 1991, pp. 161--241.

%\bibitem{GazorKazemi} M. Gazor, M. Kazemi,
%{\it \texttt{Singularity}: A {\sc Maple} library for local zeros of scalar smooth maps},
%ArXiv preprint ArXiv:1507.06168 (2016) 31 pages.


\bibitem{Walcher2012}
G. Gaeta, S. Walcher,
{\it Embedding and splitting ordinary differential equations in normal form,}
J. Differential Equations {\bf 224} (2006) 98--119.

%\bibitem{GaoNonl2010}
%B. Gao, W. Zhang,
%{\em Parametric normal forms of vector fields and their further simplification,}
%Nonlinearity {\bf 23} (2010) 2539--2557.


\bibitem{GazorMoazeni}
M. Gazor, M. Moazeni,
{\it Parametric normal forms for Bogdanov--Takens singularity; the generalized saddle-node case,}
Discrete and Continuous Dynamical Systems {\bf 35} (2015) 205--224.

%\bibitem{GazorMokhtari} M. Gazor, F. Mokhtari,
%{\it Normal forms of Hopf-zero singularity,}
%Nonlinearity {\bf 28} (2015) 311--330.
%\bibitem{GazorMokhtariInt} M. Gazor, F. Mokhtari,
%{\it Volume-preserving normal forms of Hopf-zero singularity,}
%Nonlinearity {\bf 26} (2013) 2809--2832.

%\bibitem{GazorMokhtariEul} M. Gazor, F. Mokhtari, J.A. Sanders,
%{\it Normal forms for Hopf-zero singularities with nonconservative nonlinear part,}
%J. Differential Equations {\bf 254} (2013) 1571--1581.

%\bibitem{GazorMokhtariInteg}
%M. Gazor, F. Mokhtari, J.A. Sanders,
%{\it Vector potential normal forms for integrable solenoidal nilpotent singularities,}
%J. Differential Equations {\bf 267} (2019) 407--442.

\bibitem{GazorSadri}
M. Gazor, N. Sadri,
{\it Bifurcation control and universal unfolding for Hopf-zero singularities with leading solenoidal terms,}
\emph{SIAM J. Applied Dynamical Systems} {\bf 15} (2016) 870--903.

\bibitem{GazorSadriBT}
M. Gazor, N. Sadri,
{\it Bifurcation controller designs for the generalized cusp plants of Bogdanov--Takens singularity with an application to ship control,}
SIAM J. Control and Optimization {\bf 57} (2019) 2122--2151.

\bibitem{GazorShoghiEulNF}
M. Gazor, A. Shoghi,
{\it Parametric normal form classification for Eulerian and rotational non-resonant double Hopf singularities,}
ArXiv:1812.11528v2 (2019) 38 pages.

\bibitem{GazorShoghiRobotic}
M. Gazor, A. Shoghi,
{\it A multiple Hopf bifurcation control coordination approach for a team of multiple parametric robotic-oscillators,}
(2020) in progress. (Preprint is available upon request)


\bibitem{GazorShoghiMusic}
M. Gazor, A. Shoghi,
{\it Computer harmonic music and dynamics analysis: an Eulerian multiple Hopf bifurcation control approach,}
(2020) preprint. (Preprint is available upon request)

\bibitem{GazorYuSpec} M. Gazor, P. Yu,
{\it Spectral sequences and parametric normal forms,}
J. Differential Equations {\bf 252} (2012) 1003--1031.

%\bibitem{GazorYuFormal} M. Gazor, P. Yu,
%{\it Formal decomposition method and parametric normal forms,}
%Internat. J. Bifur. Chaos {\bf 20} (2010) 3487--3515.
%\bibitem{GolubitskyStewartBook} M. Golubitsky, I. Stewart, D.G. Schaeffer,
%\emph{Singularities and Groups in Bifurcation Theory},
%Vol {\rm I} and {\rm II}, Springer, New York 1985 and 1988.

%\bibitem{Gelfreikh2018}
%V. Gelfreich, N. Gelfreikh,
%{\it Unique normal forms near a degenerate elliptic fixed point in two-parametric families of area-preserving maps,}
%Nonlinearity {\bf 27} (2014) 1645--1667.

\bibitem{BifStablizationSICON99}
G. Gu, X. Chen, A.G. Sparks, and S.V. Banda,
Bifurcation stabilization with local output feedback,
SIAM J. Control Optim. {\bf 37} (1999) 934--956.

\bibitem{Kang04}
B. Hamzi, W. Kang, J.P. Barbot,	
{\it Analysis and control of Hopf bifurcations,}
{\em SIAM J. Control and Optimization} {\bf 42} (2004) 2200--2220.

\bibitem{HamziKangCenter05}
B. Hamzi, W. Kang and A. J. Krener,
{\it The Controlled Center Dynamics,}
{\em SIAM J. Multiscale Modeling and Simulation} {\bf 3} (2005) 838--852.


\bibitem{HamziCharNormalF}
B. Hamzi, J.S.W. Lamb, D. Lewis,
{\it A Characterization of Normal Forms for Control Systems,}
J. Dynamics and Control Systems {\bf 21} (2015) 273--284.


\bibitem{IoosNF}
G. Iooss, E. Lombardi,
{\it Polynomial normal forms with exponentially small remainder for analytic vector fields,}
J. Differential Equations {\bf 212} (2005) 1--61.


%\bibitem{LangfCuspHopf}
%J. Harlim, W.F. Langford,
%{\it The cusp-Hopf bifurcation, }
%Internat. J. Bifur. Chaos {\bf 17 } (2007) 2547--2570.


%\bibitem{SelfOscillation}
%A. Jenkins, {\it Self-oscillation,} Physics Reports {\bf 525} (2013) 167--222.


\bibitem{Kang98}
W. Kang, {\it Bifurcation and normal form of nonlinear control systems, PART I and II,}
SIAM J. Control and Optimization {\bf 36} (1998) 193--212 and 213--232.

\bibitem{KangKrener}
W. Kang, A.J. Krener,
{\it Extended quadratic controller normal form and dynamic state feedback linearization of nonlinear systems,}
SIAM J. Control and Optimization {\bf 30} (1992) 1319--1337.


\bibitem{KangIEEE}
W. Kang, M. Xiao, I.A. Tall,
{\it Controllability and local accessibility: A normal form approach,}
IEEE Transaction on Automatic Control {\bf 48} (2003) 1724--1736.


\bibitem{Knobloch86}
E. Knobloch,
Normal form coefficients for the nonresonant double Hopf bifurcation.
Phys. Lett. A {\bf 116} (1986) 365--369.


%\bibitem{KokubuWang}
%H. Kokubu, H. Oka, D. Wang,
%{\it Linear grading function and further reduction of normal forms,}
%J. Differential Equations {\bf 132} (1996) 293--318.


\bibitem{Langford2HopdSIADS}
P. M. Kitanov, W.F. Langford, and A.R. Willms,
{\it Double Hopf Bifurcation with Huygens Symmetry,}
SIAM J. Applied Dynamical Systems {\bf 12} (2013) 126--174.


\bibitem{NFTAMS07}
J.S.W. Lamb, I. Melbourne,
{\it Normal form theory for relative equilibria and relative periodic solutions,}
Trans. Amer. Math. Soc. {\bf 359} (2007) 4537--4556.


\bibitem{LangfTori}
W.F. Langford,
{\it Periodic and steady-state mode interactions lead to tori,}
SIAM J. Appl. Math. {\bf 37} (1979) 649--686.

%\bibitem{LangfHopfSteady}
%W.F. Langford, {\it A review of interactions of Hopf and steady-state bifurcations,} Nonlinear Dynamics and Turbulence,
%Interaction Mech. Math. Ser., Pitman, Boston, MA (1983) 215--237.

%\bibitem{LangfHopfHyst}
%W.F. Langford, {\it Hopf bifurcation at a hysteresis point,} Differential Equations: Qualitative Theory,
%Colloq. Math. Soc. J\'{a}nos Bolyai, {\bf 47} (North Holland),
%(1984) 649--686.

\bibitem{Wang2014}
J. Li, L. Zhang, D. Wang,
{\it Unique normal form of a class of 3 dimensional vector fields with symmetries,}
J. Differential Equations {\bf 257} (2014) 2341--2359.

\bibitem{Wang3DJDE2014}
J. Li, L. Zhang, D. Wang,
{\it Unique normal form of a class of 3 dimensional vector fields with symmetries,}
Journal of Differential Equations {\bf 257} (2014) 2341--2359.

\bibitem{MacKayPhysicaD}
R.S. MacKay,
{\it A renormalisation approach to invariant circles in area preserving maps,}
Physica D {\bf 7} (1983) 283--300.


\bibitem{MacKay}
R.S. MacKay, J.D. Meiss, J. Stark,
{\it An approximate renormalisation for the breakup of invariant tori with three frequencies,}
Phys Lett A {\bf 190} (1994) 417--424.


%\bibitem{Murd98} J. Murdock, {\it Asymptotic unfoldings of dynamical systems by normalizing beyond the normal form,} J. Differential Equations {\bf 143} (1998) 151--190.

\bibitem{MurdBook} J. Murdock,
``Normal Forms and Unfoldings for Local Dynamical Systems,''
Springer-Verlag, New York, 2003.

%\bibitem{Murd04} J. Murdock, {\it Hypernormal form theory: foundations and algorithms,} J. Differential Equations {\bf 205} (2004) 424--465.
%\bibitem{Murd16} J. Murdock, Box products in nilpotent normal form theory: The factoring method, J. Differential Equations {\bf 260} (2016) 1010--1077.

%\bibitem{Murd09} J. Murdock, D. Malonza,
%{\it An improved theory of asymtotic unfoldings,}
%J. Differential Equations {\bf 247} (2009) 685--709.

\bibitem{Rucklidge2017}
A. M. Rucklidge, E. Knobloch,
{\it Chaos in the Takens-Bogdanov bifurcation with O(2) symmetry,}
Dyn. Syst. {\bf 32} (2017) 354--373.

\bibitem{BookSinusoidalOscillators}
R. Senani, D.R. Bhaskar, V.K. Singh, R.K. Sharma,
{\it Sinusoidal Oscillators and Waveform Generators using Modern Electronic Circuit Building Blocks,}
Springer 2016.

\bibitem{Stroyzyna17}
E. Str\'{o}\.{z}yna,
{\em Normal forms for germs of vector fields with quadratic leading part. The remaining cases,}
Studia Math. {\bf 239} (2017) 133--173.

%\bibitem{Stroyzyna}
%E. Str\'{o}\.{z}yna,
%{\it The analytic and formal normal form for the nilpotent singularity. The case of generalized saddle-node,}
%\emph{Bull. Sci. Math.} {\bf 126} (2002) 555--579.

\bibitem{ZoladekNF2015}
E. Str\'{o}\.{z}yna, H. \.{Z}oladek,
{\it  The complete formal normal form for the Bogdanov--Takens singularity,}
\emph{Moscow Math. J.} {\bf 15} (2015) 141--178.

%\bibitem{TakensSingularities} F. Takens, {\it Singularities of vector fields,} Publ. Math. IHES {\bf 43} (1974) 47--100.

\bibitem{ZoladekNF2015}
E. Str\'{o}\.{z}yna, H. \.{Z}oladek,
{\it  The complete formal normal form for the Bogdanov--Takens singularity,}
\emph{Moscow Math. J.} {\bf 15} (2015) 141--178.

%\bibitem{Zoladek03}
%E. Str\'{o}\.{z}yna, H. \.{Z}oladek,
%{\it Orbital formal normal forms for general Bogdanov--Takens singularity,}
%\emph{J. Differential Equations} {\bf 193} (2003) 239--259.

%\bibitem{Zoladek02}
%E. Str\'{o}\.{z}yna, H. \.{Z}oladek,
%{\it The analytic and formal normal form for the nilpotent singularity,}
%\emph{J. Differential Equations} {\bf 179} (2002) 479--537.

\bibitem{StolovitchAnnMath}
L. Stolovitch,
{\it Holomorphic normalization of Cartan-type algebras of singular holomorphic vector fields,}
Ann. of Math. {\bf 161} (2005) 589--612.

\bibitem{StolovitchNonl}
L. Stolovitch,
{\it Progress in normal form theory,}
Nonlinearity {\bf 22} (2009) 77--99.

\bibitem{YuNonlinearity}
P. Yu, A.Y.T. Leung,
{\it The simplest normal form of Hopf bifurcation,}
Nonlinearity {\bf 16} (2003) 277--300.

\bibitem{YuHopfZero} P. Yu, Y. Yuan,
{\it The simplest normal form for the singularity of a pure imaginary pair and a zero eigenvalue,}
Dyn. Contin. Discrete Impuls. Syst. Ser. B Appl. Algorithms {\bf 8} (2001) 219--249.

\bibitem{IEEE95FeasiblityRegion}
V. Venkatasubramanian, H. Schattler, J. Zaborszky,
{\it Local bifurcations and feasibility regions in differential-algebraic systems,}
IEEE Trans. Automat. Control 40 (1995) 1992--2013

\bibitem{BookOscilOscilSys}
J.R. Westra, C.J.M. Verhoeven, A.H.M. van Roermund,
{\it Oscillators and Oscillator Systems: Classification, Analysis and Synthesis,}
Springer, New York 1999.

\bibitem{AnnMathZung}
N.T. Zung,
{\it Convergence versus integrability in Birkhoff normal form,}
Ann. of Math. {\bf 161} (2005) 141--156.

\end{thebibliography}
\end{document}